\numberwithin{equation}{section}
\newtheorem{proposition}[equation]{Proposition}
\newtheorem{corollary}[equation]{Corollary}
\newtheorem{lemma}[equation]{Lemma}
\newtheorem{theorem}[equation]{Theorem}
\newtheorem*{theorem*}{Theorem}
\newtheorem*{corollary*}{Corollary}
\newtheorem*{proposition*}{Proposition}
\newtheorem*{lemma*}{Lemma}
\theoremstyle{definition}
\newtheorem{definition}[equation]{Definition}
\newtheorem{construction}[equation]{Construction}
\newtheorem{assumption}[equation]{Assumption}
\newtheorem*{definition*}{Definition}
\newtheorem*{construction*}{Construction}
\theoremstyle{remark}
\newtheorem{remark}[equation]{Remark}
\newtheorem{example}[equation]{Example}
\newcommand{\triv}{\mathrm{triv}}
\newcommand{\ins}{\mathrm{ins}}
\newcommand{\id}{\operatorname{id}}
\newcommand{\Z}{\mathbb{Z}}
\def\C{\mathbb C}
\newcommand{\F}{\mathbb{F}}
\newcommand{\LEq}{\mathrm{LEq}}
\newcommand{\SSigma}{\mathlarger{\sum}}
\let\scr=\mathcal
\let\bb=\mathbb
\newcommand{\Gm}{{\mathbb{G}_m}}
\newcommand{\Gmp}[1]{{\mathbb{G}_m^{\wedge #1}}}
\def\A{\bb A}
\def\P{\bb P}
\def\DD{\bb D}
\def\R{\bb R}
\newcommand{\1}{\mathbbm{1}}
\newcommand{\hh}{\mathbbm{h}}
\newcommand{\eff}{{\text{eff}}}
\newcommand{\veff}{{\text{veff}}}
\newcommand{\DM}{\mathcal{DM}}
\newcommand{\SH}{\mathcal{SH}}
\newcommand{\Pic}{\scr Pic}
\def\ph{\mathord-}
\newcommand{\Th}{\mathrm{Th}}
\newcommand{\lra}[1]{\langle #1 \rangle}
\newcommand{\sw}{\mathrm{sw}}
\DeclareMathOperator*{\colim}{colim}
\let\lim=\relax
\DeclareMathOperator*{\lim}{lim}
\def\Map{\mathrm{Map}}
\def\imap{\ul{\mathrm{map}}}
\def\Zar{\mathrm{Zar}}
\def\Spec{\mathrm{Spec}}
\def\PTM{\mathrm{PTM}}
\def\CAlg{\mathrm{CAlg}}
\def\NAlg{\mathrm{NAlg}}
\def\HNAlg{\mathrm{HNAlg}}
\def\Spc{\mathcal{S}\mathrm{pc}{}}
\def\Fun{\mathrm{Fun}}
\def\triv{\mathrm{triv}}
\newcommand{\iso}{\cong}
\newcommand{\wequi}{\simeq}
\newcommand{\Mod}{\mathcal{M}\mathrm{od}}
\DeclareRobustCommand{\ul}{\underline}
\newcommand{\Hom}{\operatorname{Hom}}
\def\op{\mathrm{op}}
\let\cat=\mathrm
\def\Sm{{\cat{S}\mathrm{m}}}
\def\et{\mathrm{\acute et}}
\def\pt{*}
\def\MGL{\mathrm{MGL}}
\def\Sq{\mathrm{Sq}}
\def\mot{\mathrm{mot}}
\def\naive{\mathrm{naive}}
\def\Ops{\mathrm{Ops}}
\def\EE{\mathbb{E}}
\def\B{\mathrm{B}}
\def\H{\mathrm{H}}
\def\BB{\mathbb{B}}
\def\tfib{\mathrm{tfib}}
\def\fib{\mathrm{fib}}
\providecommand{\ceil}[1]{\left \lceil #1 \right \rceil }
\providecommand{\floor}[1]{\left \lfloor #1 \right \rfloor }
\newcommand{\limone}{\mathrm{lim}^1}
\newcommand{\fpsr}[1]{[\![ #1 ]\!]}
\newcommand{\TODO}[1]{\todo[color=red]{#1}}
\newcommand{\NB}[1]{}
\newcommand{\tombubble}[1]{}
\newcommand{\tom}[1]{}
\newcommand{\elden}[1]{}
\newcommand{\eldenbubble}[1]{}
\newcommand{\jeremiah}[1]{}
\newcommand{\sjeremiah}[1]{}
\renewcommand{\todo}[1]{}
\newcommand{\NB}[1]{\todo[color=gray!40]{#1}}
\newcommand{\tombubble}[1]{\todo[color=green!40]{#1}}
\newcommand{\tom}[1]{{\color{green!60!black}#1}}
\newcommand{\elden}[1]{{\color{blue!60!black}#1}}
\newcommand{\eldenbubble}[1]{\todo[color=blue!40]{#1}}
\newcommand{\jeremiah}[1]{{\color{red!60!black}#1}}
\newcommand{\sjeremiah}[1]{{\todo[color=red!40]{#1}}}
\author{Tom Bachmann}
\address{Department of Mathematics, University of Oslo, Oslo, Norway}
\email{tom.bachmann@zoho.com}
\author{Elden Elmanto}
\address{Department of Mathematics, Harvard University, Cambridge, MA, USA}
\email{eldenelmanto@gmail.com}
\author{Jeremiah Heller}
\address{Department of Mathematics, University of Illinois, Urbana-Champaign, IL, USA}
\email{jbheller@illinois.edu}
\date{\today}
\newenvironment{subappendices}{\appendix}{}
\newcommand{\myexternaldocument}[2][] {{
\let\nl\newlabel
\def\nlxx##1##2##3##4##5##6{\nl{##1}{{#1##2}{}{}{}{}}}
\renewcommand\newlabel[2]{\IfBeginWith{##1}{tocindent}{}{\nlxx{##1}##2}}
\externaldocument{#2}
}}
\title{Normed motivic spectra and power operations}
\begin{document}
\maketitle

\begin{abstract}
We use motivic colimits to construct power operations on the homotopy groups of normed motivic spectra admitting a (normed) map from $H\F_2$.
We establish enough of their standard properties to prove that the motivic dual Steenrod algebra is generated by one element under ring and power operations, establishing a motivic analog of Steinberger's theorem.
\end{abstract}

\tableofcontents


\section{Introduction}
Power operations have a long and well established history within classical algebraic topology. They are an essential computational tool for making effective use of generalized homology theories. Essential to the definition of power operations is some amount of extra structure. In order to construct power operations on a spectrum $X$, one requires at minimum a ``coherently commutative binary multiplication'', also known as an $\mathcal{H}_2$ structure. Such structure is encoded as a map $(X^{\wedge 2})_{h\Sigma_2}\to X$ which satisfies some basic properties.   
Given such an $X$, there are operations on its homology which take the form
\[
Q^{i}:H_*(X)\to H_{*+i}(X),
\]
which are sometimes re-indexed as $Q_s$ defined by $Q_s(x):=Q^{s+|x|}(x)$.

Consider the following important classical theorem of Steinberger. 

\begin{theorem}[Steinberger, {\cite[III.2.2-III.2.4]{hinfty}}]\label{thm:steinberger} Let $\mathcal{A}_* = H\mathbb{F}_{2*}H\mathbb{F}_2$ be the dual Steenrod algebra which is a free polynomial algebra \cite{milnor-dual} of the form
	\[
	\mathcal{A}_* = \mathbb{F}_2[\xi_1, \xi_2, \cdots, \xi_j, \cdots ] \qquad |\xi_j| = 2^j-1.
	\]
Applying the power operation $Q_1$ repeatedly to the generator $\xi_1$ yields elements in the dual Steenrod algebra which also generate it freely:
	\begin{equation}\label{dl}
		\mathcal{A}_*  = \mathbb{F}_2[\xi_1, Q_1(\xi_1), (Q_1)^2(\xi_1), \cdots].
	\end{equation}
\end{theorem}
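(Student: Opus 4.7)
The strategy is to prove by induction on $n \ge 0$ that $Q_1^n(\xi_1) \equiv \xi_{n+1}$ modulo the ideal of decomposable elements in $\mathcal{A}_*$. From this the theorem follows formally: $\mathcal{A}_*$ is a free graded polynomial algebra whose indecomposable quotient is one-dimensional in each degree $2^j - 1$, and any sequence of homogeneous elements surjecting onto a basis of the indecomposables is automatically a polynomial basis (by comparing Hilbert series). The degrees agree on the nose, since $|Q^i x| = |x| + i$ gives $|Q_1 x| = 2|x|+1$, and iterating yields $|Q_1^n(\xi_1)| = 2^{n+1}-1 = |\xi_{n+1}|$.

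The base case is trivial. For the inductive step one must show
\[
Q_1(\xi_n) \equiv \xi_{n+1} \pmod{\text{decomposables}}.
\]
Two natural approaches are available. The first is purely algebraic: apply the Nishida relations to rewrite $\Sq^a_* Q^{2^n}(\xi_n)$ in terms of iterated applications of $Q^? \Sq^?_*$ to $\xi_n$, then use Milnor's explicit formulas for the dual Steenrod action on the $\xi_n$'s together with the Milnor comultiplication $\Delta \xi_n = \sum_{i+j=n}\xi_j^{2^i} \otimes \xi_i$ to extract the indecomposable part and identify it inductively with $\xi_{n+1}$. The second is geometric: realize $Q^{2^n}(\xi_n)$ inside $H_*(D_2 H\F_2)$ via the $H_\infty$-structure on $H\F_2$, and compute the relevant coefficient using Milnor's description of $H_*(B\Sigma_2;\F_2)$ together with the behaviour of the fundamental class of $H\F_2$ under the coaction $H\F_2 \to H\F_2 \wedge H\F_2$.

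I would prefer the second route, as it is closer in spirit to the normed-motivic-spectrum framework developed in the rest of the paper, where power operations are built from motivic colimits over analogs of $B\Sigma_2$; in that language the identification amounts to tracing the fundamental class through the transgression in the extended power and applying a Kudo-type formula. The main obstacle in either approach is precisely this key congruence: the degree matching, the principle that indecomposables determine a polynomial algebra, and the mere existence of Dyer-Lashof operations are all routine, but identifying $Q_1(\xi_n)$ up to decomposables is a genuine computation whose inputs are the multiplicative and comultiplicative structure of $\mathcal{A}_*$ together with the compatibility of the $H_\infty$-operations with that structure. This is the step to which the cited argument in \cite{hinfty} devotes its effort, and it is the same step one must redo in the motivic setting.
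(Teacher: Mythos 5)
Your overall strategy---reduce to showing $Q_1^n(\xi_1) \equiv \xi_{n+1}$ modulo decomposables, then conclude via a polynomial-algebra/indecomposables argument---is sound, and your identification of the Nishida relations as the algebraic engine matches the route the paper takes for its motivic analog (Theorem~\ref{thm:bockstein-generates}, proved via the co-Nishida relations of Theorem~\ref{thm:conishida}). But the proposal leaves the heart of the theorem unproved: the congruence $Q_1(\xi_n) \equiv \xi_{n+1}$ mod decomposables is named as the crux, two possible routes to it are sketched, and neither is carried out. That congruence \emph{is} the theorem; everything else (degree bookkeeping, indecomposables determine a graded polynomial algebra) is, as you say, routine. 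As written this is an outline, not a proof.

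Two further comments on the route. First, both \cite{hinfty} and the paper's motivic version sidestep the mod-decomposables/Hilbert-series reduction by working with \emph{conjugated} generators. The Nishida relations yield the \emph{exact} recursion $Q_1(\overline{\xi}_k) = \overline{\xi}_{k+1}$ (motivically: $Q^{2^{k+1}-2}\tau_0 = \overline{\tau}_k$ and $Q^{2^{k+1}-3}\tau_0 = \overline{\xi}_k$; Corollary~\ref{cor:power-ops-on-dual-steenrod}), and since the antipode $\chi$ is a ring automorphism of $\mathcal{A}_*$, the $\overline{\xi}_k$ are a free polynomial basis precisely because the $\xi_k$ are---no counting needed. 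Having exact identities rather than congruences makes the final step formal. Second, you express a preference for the geometric route (tracing the fundamental class through $H_*(D_2 H\F_2)$ and invoking a Kudo-type argument), but this is not what the paper does: the paper builds its operations geometrically from motivic extended powers, yet the generation statement itself is extracted algebraically by applying the co-Nishida relations to the coaction on $H^{**}(B\Sigma_2)$ and comparing coefficients---precisely the ``first route'' you set aside.
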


Theorem~\ref{thm:steinberger} is an ``economical" presentation of the dual Steenrod algebra --- it expresses other elements in this very large ring, corresponding to the duals of Steenrod's operations, in terms of $\xi_1$ (which corresponds to the Bockstein operation) and the $Q_1$-operation. The latter comes from a fundamental piece of structure of the spectrum $H\mathbb{F}_2$ --- that it is an $\mathcal{H}_2$-ring spectrum. In particular, it comes equipped with a map
\begin{equation}\label{eq:h2}
(H\F_2^{\wedge 2})_{h\Sigma_2} \rightarrow H\F_2.
\end{equation}
In particular, the $Q_1$-operations are functorial for maps of $\mathcal{H}_2$-ring spectra; this makes it possible to describe the homology of $\mathcal{H}_2$-ring maps out of $\mathbb{F}_2$.

The map~\eqref{eq:h2} is refined by a much richer structure on $H\mathbb{F}_2$, that is of an $\mathcal{E}_2$-algebra. This richer structure, combined with Theorem~\ref{thm:steinberger}, leads to one of the deepest and most far-reaching theorems in modern homotopy theory. This theorem is due to Mahowald \cite{mahowald}, and expresses the Eilenberg-Maclane spectrum $H\mathbb{F}_2$ as a certain Thom spectrum over the space $\Omega^2S^3$. The key input to this result is the observation that, up to the Thom isomorphism, the Dyer-Lashof algebra described by the right-hand-side of~\eqref{dl} is isomorphic to the Dyer-Lashof algebra of $\Omega^2S^3$. When $p$ is odd, the analogous theorem involving $H\mathbb{F}_p$ was later proved by Hopkins and we refer the reader \cite[Appendix A]{kn-lectures} or \cite[Theorem 5.1]{AB:Thom} for a modern version of the proof, which are based on the same ideas as the original. This result has far-reaching consequences, beyond homotopy theory itself. Indeed, one deduces from this B\"okstedt's famous periodicity theorem computing the topological Hochschild homology ($\mathrm{THH}$) of $\mathbb{F}_p$ as a free polynomial $\mathbb{F}_p$-algebra on one generator in degree $2$; see \cite[Theorem 1.1, Appendix A]{kn} for a proof and an extensive discussion on how these results are intimately related. B\"okstedt's periodicity has since been responsible for the heavy influence of homotopy theory on recent advances in $p$-adic Hodge theory \cite{bhatt2019topological, nikolaus-scholze} where $p$-adic cohomology theories have been constructed as associated graded pieces of motivic filtrations on various homotopy-theoretic invariants of $p$-adic schemes.


In the same vein, the Steenrod algebra had previously entered algebraic geometry in another crucial way via the Rost-Voevodsky proofs of Milnor and Bloch-Kato conjectures \cite{voevodsky2003reduced, voevodsky2003motivic,voevodsky2011motivic}. One of the crucial ingredients of the proof is the construction of the motivic analog of Margolis homology, which is a certain chain complex whose differentials are the (motivic analogs of the) Milnor operations. We refer the interested reader to \cite[Section 1.6]{bk-book} for a brief explanation on the role of power operations for motivic cohomology in the proof of the Milnor of Bloch-Kato conjectures, and \cite[Chapter 13]{bk-book} for a more extensive exposition.

Now we turn to our results. 
Let $S$ be a base scheme and $H\Z/2 \in \SH(S)$ be the motivic spectrum representing mod-$2$ motivic cohomology as constructed by Spitzweck. 
Write $\mathcal{A}_{**} = \pi_{**}(H\Z/2\wedge H\Z/2)$ for the dual motivic Steenrod algebra. This is a Hopf algebroid with underlying algebra
\[
\mathcal{A}_{**} \cong H\Z/2_{**}[\tau_0,\tau_1,\ldots, \xi_1,\xi_2,\ldots]/(\tau_i^2-\tau\xi_{i+1} -\rho\tau_{i+1}-\rho\tau_0\xi_{i+1})
\]
where $|\tau_i|= (2^{i+1}-1,2^{i}-1)$ and $|\xi_{i}|=(2(2^i-1), 2^i-1)$; see Appendix~\ref{app:steenrod} for details and the full Hopf algebroid structure.

The bulk of this paper concerns the construction and analysis of motivic power operations on the homotopy of ``quadratically normed" $H\Z/2$-modules. We discuss what this means shortly,  for the moment we simply note that our constructions apply to  $H\Z/2\wedge H\Z/2$. In particular, we obtain power operations acting on $\mathcal{A}_{**}$. Our main result is the following motivic version of Steinberger's theorem. 

\begin{theorem}[see Theorem \ref{thm:bockstein-generates}] \label{thm:mot-steinberger} 
	Let $S$ be a scheme where $\tfrac{1}{2} \in \scr O_S$.  Then:
	\begin{enumerate}
		\item there are \emph{motivic power operations}:
		\[
		Q^i: \mathcal{A}_{**} \rightarrow \mathcal{A}_{*+i, *+\ceil{i/2}} \qquad i \in \mathbb{Z}
		\]
		\item the smallest $H\Z/2_{**}$-subalgebra of $\mathcal{A}_{**}$ containing $\tau_0$ and stable under the operations $Q^i$, is $\mathcal{A}_{**}$ itself.
	\end{enumerate}
\end{theorem}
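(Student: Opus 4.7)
Claim (1) is a direct consequence of the construction of motivic power operations developed in the bulk of the paper, applied to the normed $H\Z/2$-algebra $H\Z/2\wedge H\Z/2$; the bidegree shift $(+i,+\lceil i/2\rceil)$ reflects the bidegrees of the Thom class of the tautological bundle over $B_{\et}\Sigma_2 = B\mu_2$ in motivic cohomology.

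For claim (2), let $\mathcal{B}\subseteq\mathcal{A}_{**}$ denote the smallest $H\Z/2_{**}$-subalgebra containing $\tau_0$ and stable under all $Q^i$. By the polynomial presentation recalled above, it suffices to show that $\tau_i\in\mathcal{B}$ and $\xi_i\in\mathcal{B}$ for every $i\ge 0$; I would proceed by induction on $i$. Direct bidegree checks show that $Q^1(\tau_0)\in\mathcal{A}_{2,1}=|\xi_1|$, that $Q^2(\tau_0)\in\mathcal{A}_{3,1}=|\tau_1|$, and more generally that $Q^{2^{i+1}}(\tau_i)$ lies in the bidegree of $\tau_{i+1}$ while $Q^{2^{i+1}}(\xi_i)$ lies in the bidegree of $\xi_{i+1}$. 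The heart of the argument is to establish Steinberger-type leading-term identities
\[
Q^1(\tau_0)\equiv \xi_1,\qquad Q^{2^{i+1}}(\tau_i)\equiv\tau_{i+1},\qquad Q^{2^{i+1}}(\xi_i)\equiv\xi_{i+1},
\]
modulo decomposable elements of the subalgebra generated at the previous inductive stage, each with a unit leading coefficient in $H\Z/2_{**}$. Granting these, and using the relation $\tau_j^2=\tau\xi_{j+1}+\rho\tau_{j+1}+\rho\tau_0\xi_{j+1}$ to rewrite $\tau$-squares as polynomials in the $\tau_k,\xi_k$, the induction closes out and yields $\mathcal{B}=\mathcal{A}_{**}$.

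The main obstacle is pinning down the leading coefficients and verifying that the lower-order corrections lie in the previously-generated part of $\mathcal{B}$. I would approach this via base change: the $Q^i$, the ring $H\Z/2_{**}$, and the polynomial generators of $\mathcal{A}_{**}$ are all compatible with pullback from $\Spec\Z[1/2]$, so one reduces to the universal base and then further to $\Spec\C$ (where $\rho=0$ and the motivic Steenrod algebra degenerates in a controlled way), using Betti realization to import Steinberger's classical leading-coefficient calculation. A motivic Cartan formula for the $Q^i$ on products, a consequence of the normed-module structure set up earlier, then controls the decomposable contributions and places them in the inductively-generated portion of $\mathcal{B}$.
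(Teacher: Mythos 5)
Your claim (1) matches the paper's construction (via the spectrum of operations $\Ops_{H\Z/2}$ and the computation in Theorem~\ref{thm:hpty-of-ops}); the bidegree shift indeed ultimately comes from Thom isomorphisms for $BC_2$. But your route for claim (2) diverges from the paper's, and the proposed method of proof has a genuine gap.

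\textbf{A different inductive scheme.} You propose to generate $\tau_{i+1}$ and $\xi_{i+1}$ by applying $Q^{2^{i+1}}$ to $\tau_i$ and $\xi_i$. The paper instead generates everything by applying power operations to $\tau_0$ alone: the first proof derives
\[
Q^{2(2^i-1)}(\tau_0) + \tau_i + \sum_{k=0}^{i-1}\tau_k\, Q^{2(2^i-2^k-1)+1}(\tau_0) = 0,
\]
and obtains the $\xi_i$ from operations on the $\tau_j$; the alternative proof uses the closed formulas $Q^{2^{k+1}-2}\tau_0 = \overline{\tau}_k$, $Q^{2^{k+1}-3}\tau_0 = \overline{\xi}_k$ from Corollary~\ref{cor:power-ops-on-dual-steenrod}. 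Interestingly, your proposed leading-term identities are \emph{also} correct as stated: from Theorem~\ref{thm:power-ops-on-dual-steenrod} one can extract, e.g., $Q^2\tau_0 = \tau_1 + \tau_0\xi_1$ and $Q^4\xi_1 = \xi_2 + \xi_1^3$, and similarly in general. So your induction could in principle be made to work — but it would still require exactly the explicit formulas the paper establishes, at which point the paper's route is both cleaner and shorter.

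\textbf{The gap.} You propose to pin down the leading coefficients by base change to $\Spec\C$ and Betti realization, and then to control the error terms by the Cartan formula. Neither ingredient suffices. Betti realization collapses the motivic picture: it annihilates $\rho$ and trivializes $\tau$, so it cannot determine a formula like $Q^2\tau_0 = \tau_1 + \tau_0\xi_1$ in $\mathcal{A}_{**}$ — at best it determines the answer modulo the kernel of realization, which is precisely the part you would need to show lies in $\mathcal{B}$. The Cartan formula, for its part, describes $Q^i(xy)$; it says nothing about what $Q^i$ does to the \emph{generators} $\tau_n$, $\xi_n$, and so cannot by itself establish that the correction terms are decomposable over the previously-generated subalgebra. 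The missing technical tool is the coherence between the power operations and the Steenrod coaction — the co-Nishida relations (Theorem~\ref{thm:conishida}), built out of the comparison of left and right comodule structures on $\pi_{**}\Ops_{H\Z/2}$ (Theorem~\ref{thm:homotopy-of-ops}). The paper flags this as the heart of the argument, and it is exactly this that converts the equivariant/geometric input into the explicit generating-function identities from which the motivic Steinberger theorem can be read off. Without it, the inductive scheme you sketch has no way to verify its central hypotheses.
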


We note that one of the themes of this project is the indispensible nature of \emph{genuine} structures in motivic homotopy theory.
Indeed, one can easily construct \emph{naive} or \emph{categorical} power operations $Q_{\naive}^i: H\Z/2_{**}H\Z/2 \rightarrow H\Z/2_{*+i,2*}H\Z/2$ which will not interact correctly with the Tate twists, which are so central to the deeper properties of motivic cohomology. In particular, the element $\tau$ and applications of $Q_{\naive}^i$ on it will not be able to generate the dual Steenrod algebra. In classical homotopy theory, the map~\eqref{eq:h2} and its relationship with maps of the same flavor are ultimately responsible for the construction of these power operations. 
\subsection{Motivic power operations} 

The first author together with Marc Hoyois, introduced the theory of normed motivic spectra in \cite{norms}. Given $E\in\SH(S)$ and a finite \'etale morphism $p:T\to S$, a parameterized smash power $E^{\wedge T}$ is constructed in loc.~cit. (where it is denoted $p_\otimes p^*E$).
Now, informally speaking a normed motivic spectrum over $S$ is a motivic spectrum $E\in \SH(S)$ equipped with multiplications $(E|_{X})^{\wedge Y}\to E|_{X}$ parameterized by finite \'etale morphisms $Y\to X$ over $S$, which are required to satisfy coherency conditions. Examples normed motivic spectrum include familiar motivic spectra such as the sphere spectrum, $MGL$, $KGL$, and importantly $H\Z/2$. 

Even in the informal description just given, it is clear that the concept of a normed motivic spectrum encodes an incredible amount of structure. We prefer to rely on as little structure as possible for our construction of power operations. To this end we introduce the category of ``quadratically normed" spectra. In brief a  quadratically normed spectrum is one equipped with multiplications parameterized by finite \'etale morphisms of degree 2. This is formalized as a map $D^\mot_2(X)\to X$, where $D^\mot_2(-)$ is the motivic extended powers functor \cite{colimits}.  

Our construction relies on the interplay of two viewpoints: categorical and equivariant. On the one hand the motivic extended powers functor is a motivic colimit 
$D_2^\mot(E) \simeq \colim_{p:Y\to X}(E|_{X})^{\wedge Y}$. On the other hand we retain computational control via an equivariant model 
$\DD_2(E)\simeq \EE C_{2+}\wedge_{C_2}E^{\wedge \underline{2}}$, again see \cite{colimits}.

We construct a spectrum $\Ops_{H\Z/2}$ which parameterizes power operations on quadratically normed $H\Z/2$-modules. In Theorem~\ref{thm:hpty-of-ops}  we compute that if $S$ is essentially smooth over a Dedekind domain with residue characteristics $\neq 2$, then
\begin{equation*}
	\pi_{**}  \Ops_{H\Z/2} \iso \underset{n \in \Z}{\bigoplus} H\Z/2_{**}\{ e_{2n}\} \oplus \underset{n \in \Z}{\bigoplus} H\Z/2_{**}\{ e_{2n+1}\},
\end{equation*}
where $e_{i}$ are  classes with $|e_{2n}| = (2n, n)$ and $|e_{2n+1}| = (2n+1, n+1)$. 
These give rise to operations for a quadratically normed $H\Z/2$-module $E$ (over any base containing $1/2$),
\[
Q^{i}:\pi_{**}E\to \pi_{*+i,*+\ceil{i/2}}E.
\]

The following proposition summarizes the results of Section~\ref{sub:bp}, establishing the basic properties of these operations.

\begin{proposition} 
	Let $E \in \HNAlg_2(\SH(S)_{H\Z/2/}$ and $x,y\in \pi_{**}E$.
	\begin{description}
		\item[(1) naturality] If $\alpha:E\to F$ is a map of quadratically normed $H\Z/2$-modules, then $Q^i\alpha_* = \alpha_*Q^i$.
		
		\item[(2) base change] Let $f: S' \to S$ be any morphism of schemes. Then, $Q^if^* = f^*Q^i$. 
		
		\item[(3) additivity]  $Q^i(x+y) = Q^i(x) + Q^i(y)$.
		\item[(4) squaring] If $|x| = (i, \ceil{i/2})$ then  $Q^i(x) = x^2$.
		\item[(5) vanishing] If $|x| = (p,q)$ with $\floor{n/2}<p-q$ and $\ceil{n/2}\leq q$, we have $Q^{n}(x) = 0$.
		
		\item[(6) stability] the  square commutes
		\begin{equation*}
			\begin{tikzcd}
				\pi_{m+p+q, n+q}(E) \ar[r, "\cong"]\ar[d, "Q^i"'] & \pi_{m,n} \Omega^{p+q,q} E \ar[d, "Q^{i}"]\\
				\pi_{m+p+q+i, n+q+\ceil{i/2}}(E) \ar[r, "\cong"] & \pi_{m+i,n+\ceil{i/2}} \Omega^{p+q,q} E. 
			\end{tikzcd}
		\end{equation*}
		
		\item[(7) Cartan formula] 
		\[ 
		Q^{2i}(xy) = \sum_{j+k=i} Q^{2j}(x)Q^{2k}(y) + \tau\sum_{j+k=i+1} Q^{2j-1}(x)Q^{2k-1}(y) 
		\] 
		and 
		\[ 
		Q^{2i-1}(xy) = \sum_{j+k=i} (Q^{2j-1}(x)Q^{2k}(y) + Q^{2j}(x)Q^{2k-1}(y)) + \rho\sum_{j+k=i+1} Q^{2j-1}(x)Q^{2k-1}(y). 
		\]
		
	\end{description}
\end{proposition}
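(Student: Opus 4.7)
The plan is to reduce each property to a statement about the universal classes $e_n \in \pi_{**}\Ops_{H\Z/2}$ together with the action map $\mu: \Ops_{H\Z/2}\wedge E \to E$ coming from the quadratically normed structure, under which each $Q^n$ is, by construction, external multiplication by $e_n$ followed by $\mu_*$. Given this presentation, properties (1) naturality, (2) base change, and (6) stability are essentially formal: naturality is the compatibility of a morphism in $\HNAlg_2(\SH(S)_{H\Z/2/})$ with $\mu$; base change follows from pullback compatibility of both $\Ops_{H\Z/2}$ and $\mu$, which in turn rests on the base change behavior of $D^\mot_2$; and stability is the observation that external multiplication by a fixed element of $\pi_{**}\Ops_{H\Z/2}$ is $H\Z/2_{**}$-linear and so commutes with the $\Omega^{p+q,q}$-suspension isomorphism.

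For (4) squaring, the point is to identify the particular generator of bidegree $(n, \lceil n/2 \rceil)$ singled out in the proof of Theorem~\ref{thm:hpty-of-ops} with the class in $\Ops_{H\Z/2}$ representing the squaring map $E \to D^\mot_2(E) \xrightarrow{\mu} E$. For (3) additivity, the plan is to write $x + y$ through $S^{p,q} \vee S^{p,q} \to E$ and unpack $D^\mot_2$ of a wedge: the off-diagonal summand is parameterized by the free $C_2$-set $\underline{2}$, hence factors through a transfer whose effect after $\mu$ into an $H\Z/2$-module is multiplication by $2 = 0$. For (5) vanishing, the bidegree inequalities $\lfloor n/2 \rfloor < p - q$ and $\lceil n/2 \rceil \leq q$ cut out precisely the range in which the pairing of $e_n$ with $x: S^{p,q} \to E$ lands in cells of $\EE C_{2+} \wedge_{C_2} (S^{p,q})^{\wedge \underline{2}}$ that contribute trivially, by the same cell computation underlying Theorem~\ref{thm:hpty-of-ops}.

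The main obstacle is the Cartan formula (7). My approach is to determine the coproduct $\psi$ on $\Ops_{H\Z/2}$ induced by the lax symmetric monoidal diagonal $D^\mot_2(E \wedge F) \to D^\mot_2(E) \wedge D^\mot_2(F)$, composed with the multiplication when $E = F$ is a ring. On the generators I expect $\psi(e_n) = \sum_{i + j = n} e_i \otimes e_j + \tau \cdot (\text{odd} \otimes \text{odd corrections}) + \rho \cdot (\text{odd} \otimes \text{odd corrections})$, where the $\tau$- and $\rho$-terms come from the genuinely motivic classes in $\pi_{**}(\EE C_{2+} \wedge_{C_2} H\Z/2)$ that have no classical analogue. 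Once $\psi$ is identified, the Cartan formula follows formally from naturality of $\mu$ with respect to the multiplication; the hard work is the explicit motivic $C_2$-equivariant cell computation pinning down $\psi$, and in particular the exact coefficients on the diagonal $\tau$- and $\rho$-contributions that produce the shifts by $1$ and the sign pattern in the stated formulas.
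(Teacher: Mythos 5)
Your overall framework (universal classes $e_n$ paired against the normed structure map, with the formal properties following from the construction) is the paper's approach, and your outlines of (1), (2), and (7) are essentially correct in spirit. However, several of the less formal properties have genuine gaps.

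For \textbf{(3) additivity}, your transfer argument does not hold up. Under the splitting $D_2^\mot(T^p\vee T^p)\simeq D_2^\mot(T^p)\vee D_2^\mot(T^p)\vee T^{2p}$, the cross summand $T^{2p}=T^p\wedge T^p$ is a single copy of the smash, not doubled: the two off-diagonal summands $A\wedge B$ and $B\wedge A$ are already identified by the $\Sigma_2$-quotient. Composing with the structure map $D_2^\mot(E)\to E$ recovers the multiplication, so the cross term contributes $xy$, not $2xy$; nothing cancels for free. The paper instead writes $D_2^\mot(x+y)$ through $D_2^\mot(\nabla)$, identifies the cross term with $\alpha_3'\circ(xy)$, and then \emph{kills $xy$ by a delooping trick}: after replacing $E$ by $\Omega^{1,0}E$ (using stability, which is why it is proved first), the induced multiplication is null because the pinch $S^1\to S^1\wedge S^1$ is null. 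This is the same trick that makes the vanishing statement work, and your proposal is missing it entirely.

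For \textbf{(6) stability}, the issue is that the operation $Q^i$ on $\Omega^{p+q,q}E$ is defined via the quadratically normed structure that $\Omega^{p+q,q}E$ acquires from Construction \ref{cons:map-HNAlg} (mapping out of a spectrum with diagonal), which is a genuinely different structure from that on $E$. Checking that the two $Q^i$'s agree under the suspension isomorphism is precisely the content of the statement; it is \emph{not} a tautology about $H\Z/2_{**}$-linearity of external multiplication. The paper's proof is a careful diagram chase comparing the conormed transformation $\Delta$ with the transition maps defining $\Ops$, and you would need some version of that argument. Similarly, for \textbf{(5) vanishing}, the paper does \emph{not} use a direct cell-count in $\EE C_{2+}\wedge_{C_2}(S^{p,q})^{\wedge\underline 2}$: the coefficient ring $H\Z/2_{**}$ is not concentrated in degree zero, so cells ``out of range'' does not by itself force vanishing. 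Instead the paper reduces via stability to the case $i=q$, interprets $Q^{2i-\epsilon}(x)$ as a square $x'^2$ in a delooping $E'=\Omega^{p-(2q-\epsilon),0}E$, and again uses that the pinch on $S^1$ is null. Your proposal should be reorganized to make (6) precise first, then deduce (5), (3), and the low-degree part of (4) as consequences of the null-homotopy of the pinch map.

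For \textbf{(7) Cartan}, the outline is right but you will need the Ur-Cartan Lemma (Lemma \ref{lemm:ur-cartan-formula}), the auxiliary spectrum $\Ops^{(2)}$ of pairs of operations, and a concrete computation of the diagonal on the generators $e_i$; the paper does the latter by dualizing the multiplication on $H\Z/2^{**}(B\Sigma_2)\simeq H\Z/2^{**}\fpsr{u,v}/(u^2-\tau v-\rho u)$, which is where the $\tau$- and $\rho$-correction terms come from. You correctly anticipate their origin but leave the computation as aspirational; it is the technical heart of the statement.
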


Furthermore we establish a variant of the ``co-Nishida relations'', see Theorem~\ref{thm:conishida}, and determine the action on the dual Steenrod algebra, see Theorem~\ref{thm:power-ops-on-dual-steenrod}.

\subsection{Outline of the paper and guide to proof of Theorem~\ref{thm:steinberger}}
We begin in \S\ref{sec:homotopy-normed-spectra} by formalizing our notion of (quadratically) homotopy normed spectrum.
Next in \S\ref{sec:diagonals} we introduce spectra with ``equivariant diagonals'', the most common example being suspension spectra.
One of the main points is that if $E$ has a 2-fold diagonal and $F$ is (quadratically) homotopy normed, then the internal mapping object $\imap(E,F)$ is still (quadratically) normed.
The next two sections contain our main results.
In \S\ref{sec:spectra-of-ops} we define the spectrum of (quadratic) operation and determine its homotopy groups and comodule structure.
These results are used in \S\ref{sec:power-ops} to define power operations and establish their standard properties.
With all of this in hand, the proof of our motivic version of Steinberger's theorem is a straightforward translation of the classical argument.

For convenience of the reader, we provide the following chronological guide to how one derives Theorem~\ref{thm:steinberger}; it also highlights the key role that the Nishida relations play:
\begin{enumerate}
\item the first thing to do is assemble to \emph{spectra of operations} $\Ops_{H\Z/2}$ which parametrizes these $Q^i$-operations and compute its homotopy groups as done in Theorem~\ref{thm:hpty-of-ops}.
\item Next, one establishes the coaction of the motivic Steenrod algebra $\scr A_{**}$ on the homotopy of the spectra of operations $\pi_{**} \Ops_{H\Z/2}$ as discussed on \S\ref{sec:coact}. 
\item We then establish two \emph{coherence results} concerning this coaction: 1) a comparison between the \emph{left} co-action with the \emph{right} co-action on a more explicit, geometric model for the spectra of operations as expressed in  Theorem~\ref{thm:homotopy-of-ops}. This results in the formulas for the left coaction is Corollary~\ref{cor:psil}. And 2) what we call, following Wilson \cite{wilson2017power}, the \emph{co-Nishida relations} in Corollary~\ref{cor:cohere} which explains how the $Q^i$ operation and the left coaction interacts.
\item Both coherence results crystalizes into the very explicit identity furnished in Theorem~\ref{thm:conishida} which is, at heart, the main result of the present paper. 
\item From this, one can read off formulas like Corollary~\ref{cor:power-ops-on-dual-steenrod} which proves Theorem~\ref{thm:steinberger}. While reading off these formulas can be involved, we emphasize that there are no further geometric or homotopical ideas involved; see the first proof of Theorem~\ref{thm:bockstein-generates} which only proves the claimed generation statement. It is also possible to give formulas for the action of the operations on $\tau_0$ as in Corollary~\ref{cor:power-ops-on-dual-steenrod} which gives a more explicit proof of the claimed theorem.
\end{enumerate}

We conclude the article with a sequence of somewhat technical appendices.
In \S\ref{app:dualization} we recall that over appropriate bases, certain infinite sums of Tate motives are reflexive, and use this to relate their homology and cohomology.
In \S\ref{sec:8fold} we recall standard formalism regarding the algebraic structures (Hopf algebroids, comodules, etc.) seen on the homotopy and homology groups of ring spectra and their modules.
Next in \S\ref{sec:oriented-spectra} we recall some facts about oriented motivic spectra, and in \S\ref{sec:mot-coh} we recall facts about Spitzweck's motivic cohomology spectrum.
Finally in \S\ref{app:excisive-functors} we show that given a $2$-excisive and reduced functor $F$, its values $F(X)$ and $F(\Sigma X)$ are related by a fiber sequence.

\subsection{Acknowledgements}
During the very long gestation period of this work, many people have provided helpful comments, among them David Gepner, Jeremy Hahn, John Rognes, and Paul Arne \O stv\ae r. Our special thanks goes to Dylan Wilson for answering our torrent of questions about classical and equivariant power operations. Heller was partially supported by the NSF under award DMS-1710966.

\subsection{Notation and conventions}

All schemes are assumed to be quasi-compact and quasi-separated (qcqs), $\Sm_S$ denotes the category of finitely presented smooth $S$-schemes. 
We denote antipodes in various algebraic structures by either $\chi$ or overlines, whichever is more convenient.
We freely use the notation from \cite{colimits}.
References thereto are displayed as in e.g. \S\ref{sec:motivic-colim-first-props}.

\section{Homotopy normed spectra}\label{sec:homotopy-normed-spectra}

We now introduce a category of minimally structured normed spectra, which is an analog of the category of $\mathcal{H}_2$-ring spectra in classical homotopy theory.

\begin{definition}
The category of \emph{(quadratically) homotopy normed spectra} $\HNAlg_2(\SH(S))$ is the 1-category with objects consisting of pairs $(E, m_E)$ with $E \in h(\SH(S))$ and $m_E: D_2^\mot(E) \to E$ a map in $h(\SH(S))$. The morphisms $(E, m_E) \to (F, m_F)$ are given by maps $\alpha: E \to F$ in $h(\SH(S))$ such that the following diagram commutes
\begin{equation*}
\begin{CD}
D_2^\mot(E) @>{D_2^\mot(\alpha)}>> D_2^\mot(F) \\
@Vm_EVV                            @Vm_FVV     \\
E           @>\alpha>>             F.
\end{CD}
\end{equation*}

Equivalently $\HNAlg_2(\SH(S))$ is the lax equalizer in (the $(2,1)$-category of) $1$-categories
\[
\LEq(D_2^\mot, \id): h(\SH(S)) \rightrightarrows h(\SH(S)).
\]
In other words, it fits into following pullback square
\begin{equation} \label{eq:lax-pull}
\begin{tikzcd}
\HNAlg_2(\SH(S)) \ar{r} \ar{d} & h(\SH(S))^{\Delta^1} \ar{d}{(\partial_0, \partial_1)} \\
h(\SH(S)) \ar{r}{(D_2^\mot, \id)} & h(\SH(S)) \times h(\SH(S)).
\end{tikzcd}
\end{equation}
\end{definition}

\begin{remark}\label{rmk:HNAlg2-modules}
The functor $D_2^\mot: \SH(S) \to \SH(S)$, and hence the functor on homotopy categories $h(D_2^\mot): h(\SH(S)) \to h(\SH(S))$, can be given the structure of an op-lax symmetric monoidal functor (see Proposition \ref{prop:Dmot-oplax-monoidal}). This can be used to promote $\HNAlg_2(\SH(S))$ to a symmetric monoidal category; see \cite[Construction IV.2.1]{nikolaus-scholze} for details.
\end{remark}

\begin{remark}There is a forgetful functor from $\HNAlg_2(\SH(S))$ to the category of non-unital commutative monoids in $h(\SH(S))$; given $(E,m_E) \in \HNAlg_2(\SH(S))$ the composite $m_E\circ \alpha_E$ defines a multiplication $E \wedge E \to E$  where $\alpha_E: E \wedge E \to D_2^\mot(E)$ is the canonical map (see Definition \ref{def:op-lax-witness}). In particular, given a map $E \to F \in \HNAlg_2(\SH(S))$, then $F$ canonically becomes an $E$-module (in the homotopy category).
\end{remark}

\begin{construction}
We have a forgetful functor \[ U_2: \NAlg(S) \to \HNAlg_2(\SH(S)), E \mapsto (UE, m_E: D_2(UE) \to UE) \] obtained as follows.
Note that the adjunction $U:\SH(S) \rightleftarrows \NAlg(S):N$ is monadic \cite[Proposition 7.6.2]{norms} with the free functor computed by $E \mapsto \bigvee D_n^\mot E$.
The map $m_E$ is obtained by restricting the structure map $\bigvee D_n^\mot E \rightarrow E$ to the $n=2$ summand.
\end{construction}

\begin{example}
By Proposition \ref{prop:HZ-higher}(1), for every $n \ge 0$ we obtain $\H\Z/n \in \NAlg(\Spec(\Z))$.
Hence we obtain $\H\Z/n \in \HNAlg_2(\Spec(\Z))$, and thus for every scheme $S$ we obtain $\H\Z/n \in \HNAlg_2(\SH(S))$, by Example \ref{ex:HNAlg2-base-change}.
\end{example}

\begin{example} \label{ex:HNAlg2-base-change}
Here is a simple procedure to produce new examples out of old ones. Let $f: S' \to S$ be an arbitrary morphism of schemes, and $(E, m_E) \in \HNAlg_2(\SH(S))$. Since motivic extended powers commute with base change by Example \ref{ex:Dn-stable-base-change-specific}, we obtain a map $m_{f^* E}: D_2^\mot(f^* E) \wequi f^* D_2^\mot(E) \xrightarrow{f^* m_E} f^* E$. One checks easily that this defines a functor $f^*: \HNAlg_2(\SH(S)) \to \HNAlg_2(\SH(S'))$.
\end{example}

For a further example, see Construction \ref{cons:map-HNAlg} below. The following result will eventually lead to the Cartan formula for power operations.
\begin{lemma}[Ur-Cartan formula] \label{lemm:ur-cartan-formula}
Let $E \in \HNAlg_2(\SH(S))$ and $A, B \in \SH(S)$. Given maps $a: A \to E$ and $b: B \to E$ in $\SH(S)$, we
denote by $ab: A \wedge B \to E \wedge E \to E$, $D_2(a): D_2^\mot(A) \to D_2^\mot(E) \to E$, and so on, the maps obtained by using multiplication in $E$. Then the following diagram in $\SH(S)$ commutes (up to homotopy)
\begin{equation*}
\begin{tikzcd}
D_2^\mot(A \wedge B) \ar[r, "D_2(ab)"] \ar[d, "\beta_{A,B}" swap] & E \\
D_2^\mot(A) \wedge D_2^\mot(B) \ar[ru, "D_2(a)D_2(b)" swap].
\end{tikzcd}
\end{equation*}
Here $\beta_{A,B}$ denotes the op-lax witness transformation from Definition \ref{def:op-lax-witness}.
\end{lemma}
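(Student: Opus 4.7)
The plan is to use functoriality of $D_2^\mot$ and naturality of $\beta$ to reduce to a universal identity, then verify that identity by exploiting the coherence between the canonical map $\alpha$ and the op-lax witness $\beta$.

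First, unwind the definitions. The map $ab\colon A\wedge B\to E$ is $\mu_E\circ(a\wedge b)$ where $\mu_E = m_E\circ\alpha_E$, and for any $c\colon X\to E$ the notation $D_2(c)$ stands for $m_E\circ D_2^\mot(c)$. Functoriality of $D_2^\mot$ yields $D_2^\mot(ab) = D_2^\mot(\mu_E)\circ D_2^\mot(a\wedge b)$, while naturality of $\beta$ applied to $a\wedge b\colon A\wedge B\to E\wedge E$ yields
\[
(D_2^\mot(a)\wedge D_2^\mot(b))\circ \beta_{A,B} \;=\; \beta_{E,E}\circ D_2^\mot(a\wedge b).
\]
Thus the claim of the lemma is a consequence (by post-composition with $D_2^\mot(a\wedge b)$) of the universal identity
\[
m_E\circ D_2^\mot(\mu_E) \;=\; \mu_E\circ (m_E\wedge m_E)\circ \beta_{E,E}
\]
as maps $D_2^\mot(E\wedge E)\to E$, corresponding to $A=B=E$ and $a=b=\id_E$.

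To establish the universal identity, I precompose both sides with the canonical map $\alpha_{E\wedge E}\colon (E\wedge E)^{\wedge 2}\to D_2^\mot(E\wedge E)$. Naturality of $\alpha$ applied to $\mu_E$, i.e.\ $D_2^\mot(\mu_E)\circ\alpha_{E\wedge E} = \alpha_E\circ(\mu_E\wedge\mu_E)$, converts the LHS into $\mu_E\circ (\mu_E\wedge \mu_E)$. The op-lax coherence
\[
\beta_{E,E}\circ \alpha_{E\wedge E} \;=\; (\alpha_E\wedge \alpha_E)\circ \sigma,
\]
where $\sigma$ is the middle-four interchange on $E^{\wedge 4}$ (built into the op-lax symmetric monoidal data for $D_2^\mot$, cf.\ Proposition \ref{prop:Dmot-oplax-monoidal} and Definition \ref{def:op-lax-witness}), converts the RHS into $\mu_E\circ (\mu_E\wedge \mu_E)\circ \sigma$. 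The equality of the resulting maps out of $D_2^\mot(E\wedge E)$ is then read off from the motivic colimit description $D_2^\mot(X) = \colim_{p\colon T\to S}p_\otimes p^\ast X$ over degree~$2$ finite \'etale covers in \cite{colimits}: at each stage of the colimit, the strong monoidality of the norm functor $p_\otimes$ supplies the homotopy that absorbs $\sigma$ upon descent to the $\Sigma_2$-coinvariants.

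The main obstacle is this final identification in the universal case: showing that two $\Sigma_2$-equivariant maps $(E\wedge E)^{\wedge 2}\to E$ that differ a priori by the middle-four interchange become homotopic as maps out of $D_2^\mot(E\wedge E)$. Since we work in the $1$-category $h(\SH(S))$ only a single homotopy needs to be produced, so no higher coherences enter; nevertheless the construction of this homotopy is not purely formal and relies on the explicit motivic-specific structure of $D_2^\mot$ and of its op-lax symmetric monoidal witnesses as developed in \cite{colimits}.
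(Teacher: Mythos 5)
Your reduction to the universal identity
\[
m_E\circ D_2^\mot(\mu_E)\;\simeq\;\mu_E\circ(m_E\wedge m_E)\circ\beta_{E,E}
\quad\text{as maps } D_2^\mot(E\wedge E)\to E
\]
via functoriality of $D_2^\mot$ and naturality of $\beta$ is correct and matches the first step of the paper's diagram chase. The problem is the argument you give for the universal identity itself. You precompose both sides with $\alpha_{E\wedge E}\colon (E\wedge E)^{\wedge 2}\to D_2^\mot(E\wedge E)$ and observe (using commutativity/associativity of $\mu_E$ to absorb the middle-four interchange $\sigma$) that the two restrictions agree. But $\alpha_{E\wedge E}$ is not an epimorphism in $h(\SH(S))$, so agreement after precomposition with it establishes nothing about the original maps out of $D_2^\mot(E\wedge E)$. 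You acknowledge this, but the appeal to ``the colimit presentation $D_2^\mot(X)=\colim p_\otimes p^* X$ and strong monoidality of $p_\otimes$'' is exactly the place where the hard coherence work needs to happen, and it is left entirely as a gesture.

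The paper closes this gap by invoking a genuinely stronger coherence than the op-lax unit coherence $\beta_{E,E}\circ\alpha_{E\wedge E}\simeq(\alpha_E\wedge\alpha_E)\circ\sigma$ you use. Namely, Theorem \ref{thm:ur-cartan-reln} of \cite{colimits} asserts that
\[
\alpha_{D_2^\mot(E)}\circ\beta_{E,E}\;\simeq\;D_2^\mot(\alpha_E)
\quad\text{as maps } D_2^\mot(E\wedge E)\to D_2^\mot(D_2^\mot(E)).
\]
Combined with naturality of $\alpha$ applied to $m_E\colon D_2^\mot(E)\to E$, i.e.\ $\alpha_E\circ(m_E\wedge m_E)\simeq D_2^\mot(m_E)\circ\alpha_{D_2^\mot(E)}$, this \emph{directly} gives the universal identity without any need to pass through a restriction along $\alpha$. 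The difference is essential: the paper's coherence relates $\beta_{E,E}$ to a \emph{post}-composed $\alpha$ (via $D_2^\mot(\alpha_E)$ and $\alpha_{D_2^\mot(E)}$), whereas your coherence is the one obtained by \emph{pre}-composing with $\alpha_{E\wedge E}$; the latter is a weaker statement and cannot be upgraded to the former by formal arguments since $\alpha$ is not epi. Unless you can supply an independent proof of the ur-Cartan relation from the colimit model, your argument does not close.
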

\begin{proof}
Consider the diagram
%
\[
\begin{tikzcd}[column sep=6em]
D_2^\mot(A \wedge B) \ar[r, "D_2(a \wedge b)"] \ar[d, "\beta_{A,B}"'] & D_2^{\mot}(E \wedge E) \ar[r, "="] \ar[d, "\beta_{E, E}"] &  D_2^\mot(E \wedge E) \ar[d, "D_2(\alpha_E)"]   \\
D_2^\mot(A) \wedge D_2^\mot(B) \ar[r, "D_2(a) \wedge D_2(b)"] & D_2^\mot(E) \wedge D_2^\mot(E) \ar[r, "\alpha_{D_2(E)}"] \ar[d, "m_E \wedge m_E"] & D_2^\mot(D_2^\mot(E)) \ar[d, "D_2(m_E)"] \\
&  E \wedge E                 \ar[r, "\alpha_E"] &    D_2^\mot(E) \ar[d, "m_e"]          \\        
& &  E.
\end{tikzcd}
\]
The composite from top left to bottom right via top right is $D_2(ab)$, whereas the composite along the bottom is $D_2(a)D_2(b) \circ \beta_{A,B}$.
It remains to show that the diagram commutes. The top left square commutes by naturally of the op-lax witness transformation. The top right square commutes by Theorem \ref{thm:ur-cartan-reln}. The bottom right square commutes by naturality of $\alpha$ (see Remark \ref{rmk:alpha-naturality}).
\end{proof}

\section{Diagonal maps} \label{sec:diagonals}
The spectrum parametrizing operations is constructed by taking the limit of a certain diagram. In this section, we construct the transition maps. 

\begin{definition} \label{def:spectra-w-diagonal} A \emph{motivic spectrum with an $n$-fold diagonal} is a motivic spectrum $E \in \SH(S)$ equipped with a map
\begin{equation} \label{eq:diagonal}
\Delta_n: E^\triv \rightarrow E^{\wedge \ul{n}} \in \SH^{\Sigma_n}(S),
\end{equation}
where $E^{\wedge \ul{n}}$ denotes the functor from \S\ref{sec:enhanced-smash-powers}.
\end{definition}
When no confusion can arise, we will write $E \in \SH^{\Sigma_n}(S)$ instead of $E^\triv$.

\begin{example}
The construction from Definition \ref{def:diagonal-transform} provides the suspension spectrum of a motivic space $X \in \Spc(S)_*$ with the structure of a motivic spectrum with $n$-fold diagonals for all $n$.
\end{example}


The transition maps for the spectrum parametrizing operations will be obtained from the following.
\begin{construction} \label{constr:enh-diag} Fix $n \geq 0$, and let $F \in \SH(S)$ be a motivic spectrum with a $n$-fold diagonal $\Delta_n: F \rightarrow F^{\wedge \ul{n}}$. Let $E \in \SH(S)$. We have a map
\[
F^\triv \wedge E^{\wedge \ul{n}} \stackrel{\Delta_n \wedge \id}{\rightarrow} F^{\wedge \ul{n}}\wedge E^{\wedge \ul{n}} 
\]
in $\SH^{\Sigma_n}(S)$. Applying the geometric homotopy orbits functor of Definition~\ref{def:gho} and using Lemma~\ref{lemm:triv-quot-projection-formula} we obtain a map in $\SH(S)$
\[
\Delta'_{F, E}:F \wedge \DD_n(E) \wequi (F^\triv \wedge E^{\wedge \ul n})_{\hh \Sigma_n} \rightarrow \DD_n( F \wedge E).
\]
By the natural equivalence of Corollary~\ref{cor:d-vs-d}, we obtain a transformation
\begin{equation} \label{conorm-general}
\Delta_{F, -}: F \wedge D_n^{\mot}(\ph) \Rightarrow D_n^{\mot}(F \wedge (\ph)): \SH(S) \rightarrow \SH(S),
\end{equation}
and a transformation
\begin{equation} \label{conorm}
\Delta_{(-,-)}: \Sigma^{\infty}(\ph) \wedge D_n^{\mot}(\ph) \Rightarrow D_n^{\mot}( \Sigma^{\infty}(\ph) \wedge (\ph)): \Spc(S)_* \times \SH(S) \rightarrow \SH(S).
\end{equation}
\end{construction}

\begin{definition} \label{def:conorm} Let $F \in \SH(S)$ be a motivic spectrum with a $n$-fold diagonal $\Delta_n: F \rightarrow F^{\wedge \ul{n}}$. The transformation of endofunctors $\Delta_{F,-}$ of \eqref{conorm-general} is called the \emph{$n$-fold conormed transformation of $F$}.
When $n=2$ we simply call it the \emph{conormed transformation of $F$}.
\end{definition}

The conormed transformation lets us construct the structure of homotopy normed spectrum on certain mapping spectra. 

\begin{construction} \label{cons:map-HNAlg}
Let $E \in \HNAlg_2(\SH(S))$ and let $F \in \SH(S)$ be a motivic spectrum with a $2$-fold diagonal $\Delta_2: F \rightarrow F^{\wedge \ul{2}}$ (for example, the suspension spectrum of a motivic space).
Then we promote $\imap(F, E)$ to an object of $\HNAlg_2(\SH(S))$.
More precisely, we factor the internal mapping spectrum functor as
\[
\begin{tikzcd}
 & \HNAlg_2(\SH(S)) \ar{d} \\
\HNAlg_2(\SH(S)) \ar[r, "{\imap(F, U\ph)}" swap]  \ar[dashed]{ur}& \SH(S).
\end{tikzcd}
\]
To do so we define $m: D_2^\mot(\imap(F, E)) \to \imap(F, E)$ as \begin{gather*} D_2^\mot(\imap(F, E)) \xrightarrow{u} \imap(F, F \wedge D_2^\mot(\imap(F, E))) \to \imap(F, D_2^\mot(F \wedge \imap(F, E))) \\ \xrightarrow{c}  \imap(F, D_2^\mot(E)) \xrightarrow{m} \imap(F, E), \end{gather*} where the unlabelled map in the middle is induced by the conormed transformation from Construction~\ref{constr:enh-diag}.
\end{construction}
\begin{remark}\label{rmk:mapping-action-conormed}
If $E \in \HNAlg_2(\SH(S))_{R/}$ for some $R \in \HNAlg_2(\SH(S))$, it need not be the case that $\imap(F, E) \in \HNAlg_2(\SH(S))_{R/}$.
In fact, to specify a map $R \to \imap(F,E)$ we need to give a map $R \wedge F \to E$, which seems difficult to produce without a $0$-fold diagonal $F \to \1$.

However, $\imap(F, E)$ is still a module under the non-unital ring $R$, i.e. there is an action $R \wedge \imap(F, E) \to \imap(F, E)$.
More precisely, $\imap(F,E)$ promotes to an object of \[ \Mod_R^w(\HNAlg_2(\SH(S))) := \HNAlg_2(\SH(S)) \times_{h\SH(S)} \Mod_R(h\SH(S)). \]
\end{remark}

The following coherence result will be used when defining the spectra of operations in \S\ref{sec:def-spectra-of-ops}.
\begin{lemma} \label{lem:cohere-diag} Suppose that $\scr X, \scr Y \in \Spc(S)_\pt$ and $\scr F \in \SH(S)$. Then the following diagram commutes
\begin{equation*}
\begin{tikzcd} 
\scr X \wedge \scr Y \wedge D_2^{\mot}(\scr F) \ar{r}{\id \wedge \Delta_{\scr Y, \scr F}} \ar{d}{\tau \wedge \id} &  \scr X \wedge D_2^{\mot}(\scr Y \wedge \scr F) \ar{d}{\Delta_{\scr X, \scr Y \scr X}}\\\
\scr Y \wedge \scr X \wedge D_2^{\mot}(\scr F) \ar{d}{\id \wedge \Delta_{\scr X, \scr F}} & D_2^{\mot}(\scr X \wedge \scr Y \wedge \scr F) \ar{d}{D_2^{\mot}(\tau \wedge \id)}\\
\scr Y \wedge D_2^{\mot}(\scr X \wedge \scr F)\ar{r}{\Delta_{\scr Y, \scr X \wedge \scr Y}} & D_2^{\mot}(\scr Y \wedge \scr X \wedge \scr F),
\end{tikzcd}
\end{equation*}
where $\tau$ denotes the switch automorphism.
\end{lemma}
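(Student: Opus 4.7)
The plan is to unravel both paths of the square to instances of the conormed transformation associated with $\scr X \wedge \scr Y$ (equipped with its canonical diagonal), after which commutativity of the diagram reduces to naturality of the conormed transformation in its first variable along the swap map, which is essentially immediate.

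First I would recall from Construction~\ref{constr:enh-diag} that for a pointed motivic space $F$ with its canonical diagonal, the conormed map $\Delta_{F, E}: F \wedge D_2^\mot(E) \to D_2^\mot(F \wedge E)$ is obtained by applying geometric homotopy orbits to
$$F^\triv \wedge E^{\wedge \ul 2} \xrightarrow{\Delta_F \wedge \id} F^{\wedge \ul 2} \wedge E^{\wedge \ul 2} \simeq (F \wedge E)^{\wedge \ul 2},$$
together with the projection-formula equivalence $F \wedge \DD_2(E) \simeq (F^\triv \wedge E^{\wedge \ul 2})_{\hh \Sigma_2}$. The main intermediate claim is the following compatibility: for any pointed spaces $\scr X, \scr Y$, the composite
$$\scr X \wedge \scr Y \wedge D_2^\mot(\scr F) \xrightarrow{\id \wedge \Delta_{\scr Y, \scr F}} \scr X \wedge D_2^\mot(\scr Y \wedge \scr F) \xrightarrow{\Delta_{\scr X, \scr Y \wedge \scr F}} D_2^\mot(\scr X \wedge \scr Y \wedge \scr F)$$
coincides with $\Delta_{\scr X \wedge \scr Y, \scr F}$. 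After unwinding both sides at the level of the $\Sigma_2$-equivariant smash power, this reduces to the equivariant identity in $\Spc^{\Sigma_2}(S)_\pt$
$$\Delta_{\scr X \wedge \scr Y} = \Bigl(\scr X \wedge \scr Y \xrightarrow{\Delta_{\scr X} \wedge \Delta_{\scr Y}} \scr X^{\wedge \ul 2} \wedge \scr Y^{\wedge \ul 2} \xrightarrow{\sigma} (\scr X \wedge \scr Y)^{\wedge \ul 2}\Bigr),$$
where $\sigma$ is the canonical interchange isomorphism. This factorization is tautological for diagonals on smash products of pointed motivic spaces and is built into the definition of the canonical diagonal on suspension spectra.

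Granted this compatibility, the top--right path of the diagram equals $\Delta_{\scr X \wedge \scr Y, \scr F}$, while the left--bottom path equals $\Delta_{\scr Y \wedge \scr X, \scr F} \circ (\tau \wedge \id)$. The whole square then collapses to the identity
$$D_2^\mot(\tau \wedge \id) \circ \Delta_{\scr X \wedge \scr Y, \scr F} = \Delta_{\scr Y \wedge \scr X, \scr F} \circ (\tau \wedge \id),$$
which is naturality of $\Delta_{-, \scr F}$ along the swap $\tau: \scr X \wedge \scr Y \to \scr Y \wedge \scr X$, and is immediate from the functoriality of all the constituents of Construction~\ref{constr:enh-diag}.

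The main obstacle is the compatibility claim itself: one must track the $\Sigma_2$-action carefully through the projection formula and through the interchange $F^{\wedge \ul 2} \wedge E^{\wedge \ul 2} \simeq (F \wedge E)^{\wedge \ul 2}$ applied twice (once for $F=\scr Y$ and once for $F=\scr X$). This is coherence bookkeeping rather than a conceptual difficulty, but it must be performed carefully because the source $\scr X^\triv \wedge \scr Y^\triv$ carries the trivial $\Sigma_2$-action while the target $(\scr X \wedge \scr Y)^{\wedge \ul 2}$ carries the diagonal action, so the shuffle $\sigma$ plays an essential role in matching the two.
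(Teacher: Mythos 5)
Your proposal is correct and takes essentially the same approach as the paper's own proof: the paper likewise inserts the two diagonal maps $\Delta_{\scr X\wedge\scr Y,\scr F}$ and $\Delta_{\scr Y\wedge\scr X,\scr F}$ to split the square into two triangles and a parallelogram, establishes the triangles at the level of $\Sigma_2$-equivariant smash powers by the compatibility of diagonals with smash products (your "compatibility claim"), and disposes of the parallelogram via naturality of the conormed transformation in the first variable.
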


\begin{proof} We expand the diagram as
\begin{equation*}
\begin{tikzcd} 
\scr X \wedge \scr Y \wedge D_2^{\mot}(\scr F) \ar{rr }{\id \wedge \Delta_{\scr Y, \scr F}} \ar{d}{\tau \wedge \id} \ar{drr}{\Delta_{\scr X \wedge \scr Y, \scr F}} & &  \scr X \wedge D_2^{\mot}(\scr Y \wedge \scr F) \ar{d}{\Delta_{\scr X, \scr Y \scr X}}\\\
\scr Y \wedge \scr X \wedge D_2^{\mot}(\scr F) \ar{d}{\id \wedge \Delta_{\scr X, \scr F}} \ar{drr}{\Delta_{\scr Y \wedge \scr X, \scr F}} & &  D_2^{\mot}(\scr X \wedge \scr Y \wedge \scr F) \ar{d}{D_2^{\mot}(\tau)}\\
\scr Y \wedge D_2^{\mot}(\scr X \wedge \scr F)\ar{rr}{\Delta_{\scr Y, \scr X \wedge \scr Y}} & &  D_2^{\mot}(\scr Y \wedge \scr X \wedge \scr F)
\end{tikzcd}
\end{equation*}
The functoriality of Construction~\ref{constr:enh-diag} gives us the commutativity of the middle parallelogram. The commutativity of the upper triangle follows from the commutativity of the following diagram in $\SH^{\Sigma_n}(S)$
\[
\begin{tikzcd}
\scr X \wedge \scr Y \wedge \scr F^{\wedge \ul{n}} \ar{r} \ar{dr} & \scr X \wedge \scr Y^{\wedge \ul{n}}\wedge \scr F^{\wedge \ul{n}} \ar{d}\\
& (\scr X \wedge \scr Y)^{\wedge \ul{n}} \wedge \scr F^{\wedge \ul{n}}
\end{tikzcd}
\]
and the fact that the equivalence $\DD_2 \simeq D_2^{\mot}$ is functorial. The commutativity of the bottom triangle follows similarly.
\end{proof}

\section{Spectra of operations} \label{sec:spectra-of-ops}

\subsection{Definition of the spectra of operations} \label{sec:def-spectra-of-ops}


We are now ready to define the spectra of operations. We shall construct a functor \[ C: \Z \times \Z \to \SH(S), \] which takes the following value on objects \[ C(m, n) = S^{-m} \wedge \Gmp{-n} \wedge D_2^\mot(S^m \wedge \Gmp{n}) = \Sigma^{-m-n,-n}D_2^\mot(S^{n+m, n}). \] Using \cite[Tag 00J4]{kerodon} we may replace the source $\infty$-category $\Z \times \Z$ by the simplicial set corresponding to the graph with vertices $\Z \times \Z$ and a unique edge from $(m,n)$ to $(m+1,n)$ and $(m,n+1)$. That is, in order to construct the functor, we need to give objects $C(m,n)$, morphisms $C(m,n) \to C(m+1,n)$ and $C(m,n) \to C(m,n+1)$, and homotopies making the square with corners $(m,n), (m+1,n), (m, n+1), (m+1,n+1)$ commute. In other words, we have to construct a commutative diagram
\[
\begin{tikzcd}
C(m, n) \ar{d} \ar{r}  & C(m+1, n) \ar{d}\\
C(m, n+1) \ar{r}\ar[ur, Leftrightarrow] & C(m+1, n+1).
\end{tikzcd}
\]
We define the squares above to be $S^{-m-1} \wedge \Gmp{-n-1} \wedge D_{m,n}$, where $D_{m,n}$ is the commutative square obtained from Lemma \ref{lem:cohere-diag} by putting $\scr X = S^1, \scr Y = \Gm$ and $\scr F = S^{-m} \wedge \Gmp{-n}$.


\begin{definition} \label{def:spectrum-of-ops}
Let $R \in \SH(S)$. We define the \emph{spectrum of $R$-quadratic operations} as
\begin{align*}
  \Ops_R &= \lim_{\Z \times \Z} (C \wedge R) \\
\end{align*}
For $(a, b) \in \Z \times \Z$ we have the canonical projection map 
\[\pi_{a, b}: \Ops_R \rightarrow C(a, b) \wedge R.  \]
\end{definition}

\begin{remark} \label{rmk:Ops-explicit}
By considering an appropriate final subcategory, the above homotopy limit can be made more concrete. Indeed
\[ \Ops_R \wequi \lim R \wedge (\dots \to C(-2, -2) \to C(-1, -1) \to C(0, 0)) \]
where $C(-n, -n) = T^n \wedge D_2^\mot(T^{-n})$.
\end{remark}

\subsection{The homotopy groups of spectra of operations}
\label{subsec:homotopy-groups-spectra-ops}

The various spectra of operations defined in \S\ref{sec:def-spectra-of-ops} parametrize power operations that we shall define in \S\ref{def-ops}. We will now determine the homotopy groups of the spectrum $\Ops_{H\Z/2},$ at least over appropriate bases.

When $R = H\Z/2$, Proposition~\ref{prop:homology-of-dmot} in conjunction with Proposition~\ref{prop:bmu} gives us that
\begin{gather*}
H\Z/2 \wedge D_2^{\mot}(T^{\wedge n}) \simeq  \bigoplus_{p_{\geq 0}}\Sigma^{4n,2n} H\Z/2\{e_{2p}\} \oplus \bigoplus_{p_{\geq 0}}\Sigma^{4n,2n}H\Z/2\{e_{2p+1}\},
\\\text{ where } |e_{2p}| = (2p, p) \text{ and } |e_{2p+1}| = (2p+1,p+1).
\end{gather*}

Now we have the ingredients to obtain a computation of the homotopy groups of the spectrum of operations. 

\begin{theorem} \label{thm:hpty-of-ops} Suppose that $\frac{1}{2} \in \scr O_S$, where $S$ is essentially smooth over a Dedekind domain. Then there is an isomorphism of $H\Z/2_{**}(S)$-modules
\begin{equation*}
\pi_{**}  \Ops_{H\Z/2} \iso \underset{n \in \Z}{\bigoplus} H\Z/2_{**}\{ e_{2n}\} \oplus \underset{n \in \Z}{\bigoplus} H\Z/2_{**}\{ e_{2n+1}\},
\end{equation*}
where $e_{i}$ are certain canonical classes with $|e_{2n}| = (2n, n)$ and $|e_{2n+1}| = (2n+1, n+1)$
\end{theorem}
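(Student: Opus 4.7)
The plan is to use the sequential description of $\Ops_{H\Z/2}$ from Remark \ref{rmk:Ops-explicit}, namely
\[
\Ops_{H\Z/2} \simeq \lim_N\; H\Z/2 \wedge T^N \wedge D_2^\mot(T^{-N}),
\]
and to apply the Milnor $\lim$--$\lim^1$ exact sequence for the homotopy groups of a sequential limit of spectra.

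First I would extend the formula displayed just above the theorem to negative Thom twists, obtaining
\[
H\Z/2 \wedge T^N \wedge D_2^\mot(T^{-N}) \simeq \bigoplus_{p \geq 0} \Sigma^{-2N,-N} H\Z/2\{e_{2p}\} \oplus \bigoplus_{p \geq 0} \Sigma^{-2N,-N} H\Z/2\{e_{2p+1}\},
\]
where $e_{2p}$ and $e_{2p+1}$ have absolute bidegrees $(2p, p)$ and $(2p+1, p+1)$. Reindexing $i = 2p - 2N$ (respectively $i = 2p+1 - 2N$), these summands become copies of $H\Z/2\{e_i\}$ for $i \geq -2N$, each placed in the bidegree $(i, \lceil i/2 \rceil)$ predicted by the theorem. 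Thus $\pi_{**}(H\Z/2 \wedge C(-N,-N))$ already matches the claimed answer, truncated to indices $i \geq -2N$.

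Next I would analyse the transition maps $C(-N-1,-N-1) \wedge H\Z/2 \to C(-N,-N) \wedge H\Z/2$. Unwinding the coherent squares $D_{m,n}$ from Lemma \ref{lem:cohere-diag} that define the diagram $C$, these are built from the conormed transformation $\Delta_{T,-}$ of Construction \ref{constr:enh-diag} together with the canonical diagonal on $T = S^1 \wedge \Gm$. The key compatibility to check is that on $H\Z/2$-homology the map $T \wedge D_2^\mot(T^{-N-1}) \to D_2^\mot(T^{-N})$ sends the stable class $e_i$ (when it is defined on both sides) to $e_i$. This follows by transferring the classical calculation of extended powers of a line to the motivic setting through the equivariant model $\DD_2$ of \S\ref{sec:diagonals}, together with the explicit description of the diagonal on Thom spaces.

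Given this identification, the Milnor exact sequence
\[
0 \to \lim_N{}^1\, \pi_{a+1,b}\bigl(H\Z/2 \wedge C(-N,-N)\bigr) \to \pi_{a,b} \Ops_{H\Z/2} \to \lim_N\, \pi_{a,b}\bigl(H\Z/2 \wedge C(-N,-N)\bigr) \to 0
\]
finishes the argument: in each fixed bidegree $(a,b)$ only finitely many of the $e_i$ can contribute, so the system is eventually constant in $N$, forcing $\lim^1 = 0$ and computing the $\lim$ as the claimed direct sum. The $H\Z/2_{**}(S)$-module structure is inherited term-by-term, since all of the terms and transition maps in the tower are already maps of $H\Z/2$-modules. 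The main obstacle I anticipate is the precise identification of the transition maps on $H\Z/2$-homology: checking that the conormed transformation preserves the generators $e_i$ rather than introducing corrections by $\tau$, $\rho$, or a Bockstein requires carefully tracing through the $\DD_2 \simeq D_2^\mot$ comparison and unpacking the $C_2$-diagonal on $T$. Once this is in hand, both the stabilization and the vanishing of $\lim^1$ drop out immediately.
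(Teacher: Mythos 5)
Your outline matches the paper's own proof of Theorem~\ref{thm:hpty-of-ops}: Milnor exact sequence, Thom isomorphism for each $H\Z/2 \wedge C(-n,-n)$, identification of the transition maps so that $\limone$ vanishes, and collapse of the resulting product to a direct sum. The two steps you flag as delicate are handled there exactly as you anticipate: the transition maps are computed by Lemma~\ref{lem:diagonal-euler} (the equivariant diagonal on $T$ is $\id_T \wedge e(\sigma)$, so cup with $c_1(\scr L)$ in cohomology, then dualize as in Lemma~\ref{lem:action}), and the finiteness of contributions in each fixed bidegree --- which you invoke both for the $\limone$-vanishing and for the product-to-sum collapse --- is exactly where the hypothesis on $S$ enters, via Lemma~\ref{lemm:tate-sum-product} and Proposition~\ref{prop:HZ-dedekind}(4).
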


\begin{remark} \label{rmk:Ops-HZ2-projection}
It follows from the proof that under the projection \[ \Ops_{H\Z/2} \to T^n \wedge D_2(T^{-n}) \wedge H\Z/2 \wequi T^{-n} \wedge \B_\et \Sigma_2 \wedge H\Z/2 \] the class $e_{2r + \epsilon}$ is sent to $T^{-n} \wedge e_{2(r+n) + \epsilon}$ provided that $r+n \ge 0$, and else it is sent to zero.
\end{remark}

\begin{proof} Using the Milnor exact sequence \cite[Proposition VI.2.15]{goerss2009simplicial} we obtain
\begin{equation*}
0 \rightarrow \mathrm{lim}^1 \pi_{*+1*}C(-n,-n) \rightarrow \pi_{**}\Ops_{H\Z/2} \rightarrow \lim \pi_{**}C(-n,-n) \rightarrow 0.
\end{equation*}
From Proposition~\ref{prop:homology-of-dmot}, for each $n \in \Z$ we have a string of equivalences
\begin{eqnarray*}
H\Z/2 \wedge C(-n,-n) & = &  H\Z/2 \wedge \Sigma^{2n,n} D_2^{\mot}(S^{-2n,-n}) \\
& \simeq & H\Z/2 \wedge \Sigma^{2n,n}\Sigma^{-4n,-2n}BC_{2+}\\
 & \simeq & H\Z/2 \wedge \Sigma^{-2n,-n}BC_{2+}\\
& \simeq & \Sigma^{-2n, -n}((H\Z/2 \wedge \P_+^{\infty}) \oplus (H\Z/2 \wedge \P_+^{\infty}(1)[1]) )\\
& \simeq & \underset{j \geq -n}{\bigoplus} \Sigma^{2j, j} H\Z/2 \oplus \underset{j \geq -n}{\bigoplus} \Sigma^{2j+1, j+1}H\Z/2. 
\end{eqnarray*}
On homotopy groups, we thus have the following isomorphism
\begin{equation*}
H\Z/2_{**}C(-n,-n) \cong \underset{j \geq -n}{\bigoplus} H\Z/2_{*, *}\{ v^{(n)}_{j} \} \oplus \underset{j \geq -n}{\bigoplus} H\Z/2_{*, *}\{ u^{(n)}_{j} \} 
\end{equation*}
where $|v^{(n)}_j| = (2j, j)$ and $|u^{(n)}_{j}| = (2j+1, j+1)$. With this notation, Lemma~\ref{lem:action} below tells us the system is Mittag--Leffler and we have an isomorphism
\[
\pi_{**}\Ops_{H\Z/2} \cong \lim_n \left(\underset{j \geq -n}{\bigoplus} H\Z/2_{*, *}\{ v^{(n)}_{j} \} \oplus \underset{k \geq -n}{\bigoplus} H\Z/2_{*, *}\{ u^{(n)}_{j} \} \right).
\]
It remains to prove that the map
\[
 \underset{n \in \Z}{\bigoplus} \Sigma^{2n,n}H\Z/2_{**}\{ e_{2n}\} \oplus \underset{n \in \Z}{\bigoplus} \Sigma^{2n+1,n+1} H\Z/2_{**}\{ e_{2n+1}\}  \to \lim_n \left(\underset{j \geq -n}{\bigoplus} H\Z/2_{*, *}\{ v^{(n)}_{j} \} \oplus \underset{j \geq -n}{\bigoplus} H\Z/2_{*, *}\{ u^{(n)}_{j} \} \right)
\]
where $e_{2n} \mapsto v^{(j)}_n$, $e_{2n+1} \mapsto u^{(j)}_n$ (defined as soon as $n \geq -j$) is an isomorphism. Note that the right hand side is canonically isomorphic to \[ \prod_n H\Z/2_{*, *}\{ v^{(n)}_{-n} \} \oplus \prod_n H\Z/2_{*, *}\{ u^{(n)}_{-n} \}.  \]
Our assumption on $S$ together with Lemma \ref{lemm:tate-sum-product} implies that we may replace the direct product by a direct sum. The result follows.
\end{proof}

\begin{lemma} \label{lem:diagonal-euler}
The diagonal map \[ \Th(\A^1) \to \Th(\A^\rho) \wequi \Th(\A^1) \wedge \Th(\A^\sigma) \in \Spc^{C_2}(S)_* \] is homotopic to $\id_T \wedge e(\sigma)$.
\end{lemma}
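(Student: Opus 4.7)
The plan is to exploit an explicit $C_2$-equivariant decomposition $\A^\rho \simeq \A^1 \oplus \A^\sigma$, which is available under the running assumption $1/2 \in \scr O_S$ (and is in any case used implicitly in the statement of the lemma via the equivalence $\Th(\A^\rho) \wequi \Th(\A^1) \wedge \Th(\A^\sigma)$). Concretely, the change of basis $\A^\rho = \A^2 \to \A^1 \oplus \A^\sigma$, $(a,b) \mapsto (\tfrac{a+b}{2}, \tfrac{a-b}{2})$, identifies the trivial summand with the diagonal $\{(u,u)\}$ and the sign summand with the anti-diagonal $\{(v,-v)\}$.

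With this identification in hand, the diagonal $\Delta: \A^1 \to \A^\rho$, $x \mapsto (x,x)$, becomes simply the inclusion of the trivial summand $\A^1 \hookrightarrow \A^1 \oplus \A^\sigma$, $u \mapsto (u,0)$. This is a direct algebraic check.

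The next step is to Thomify. The inclusion of the first summand $\A^1 \oplus 0 \hookrightarrow \A^1 \oplus \A^\sigma$ factors as $\id_{\A^1} \oplus (0 \hookrightarrow \A^\sigma)$, so under the natural equivalence $\Th(V \oplus W) \simeq \Th(V) \wedge \Th(W)$ for Whitney sums it becomes $\id_{\Th(\A^1)} \wedge (S^0 \to \Th(\A^\sigma))$, where $S^0 \to \Th(\A^\sigma)$ is the map on Thom spaces induced by the zero-section inclusion $\pt \hookrightarrow \A^\sigma$. By definition (the standard convention in motivic/equivariant homotopy theory), this last map is the Euler class $e(\sigma)$, and we identify $\Th(\A^1) = T$, finishing the argument.

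There is no real obstacle: the content is entirely the algebraic observation that, after the change of basis diagonalizing the $C_2$-action on $\A^\rho$, the diagonal inclusion of $\A^1$ lies entirely inside the trivial summand. The rest is bookkeeping about Thom spaces of Whitney sums and the definition of the Euler class.
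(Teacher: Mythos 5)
Your argument is correct and is essentially the paper's proof, just spelled out in more detail: the paper's proof is the one-line observation that under the decomposition $\A^\rho \wequi \A^1 \times \A^\sigma$ (available since $1/2 \in \scr O_S$), the diagonal corresponds to $\id \times 0$, and you have simply made the change-of-basis and the Thom-space/Euler-class bookkeeping explicit.
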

\begin{proof}
Since $1/2 \in \scr O_S$ we have $\A^\rho \wequi \A^1 \times \A^\sigma$, and under this decomposition $\Delta: \A^1 \to \A^\rho$ corresponds to the map $\id \times 0: \A^1 \times 0 \to \A^1 \times \A^\sigma$.
\end{proof}

\begin{lemma} \label{lem:action} In the notation of the proof of Theorem~\ref{thm:hpty-of-ops}, the map \[ \phi_{n}: H\Z/2_{**}C(-n,-n) \to H\Z/2_{**}C(-n+1,-n+1) \] is specified by \[ v^{(n)}_j \mapsto v^{(n-1)}_j \quad\text{and}\quad u^{(n)}_j \mapsto u^{(n-1)}_j. \]
\end{lemma}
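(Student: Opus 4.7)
My plan is to trace through the definition of the transition map $\phi_n$, apply Lemma~\ref{lem:diagonal-euler} to replace the abstract equivariant diagonal on $T$ by an explicit Euler class, and then read off the effect on the cellular basis of $H\Z/2 \wedge BC_{2+}$.

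First, I would unwind $\phi_n$: by Remark~\ref{rmk:Ops-explicit} and Construction~\ref{constr:enh-diag}, it is obtained (after smashing with $T^{n-1}$) from the conormed map $T \wedge D_2^\mot(T^{-n}) \to D_2^\mot(T^{-n+1})$. Under the equivalence $\DD_2 \simeq D_2^\mot$ of Corollary~\ref{cor:d-vs-d}, this conormed map is the $C_2$-geometric homotopy orbits of the smash of the identity on $(T^{-n})^{\wedge \ul 2}$ with the equivariant diagonal $\Delta_T \colon T \to T^{\wedge \ul 2}$.

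The key step is to invoke Lemma~\ref{lem:diagonal-euler}, which (using $1/2 \in \scr O_S$) identifies $\Delta_T$ with $\id_T \wedge e(\sigma) \colon T \to T \wedge \Th(\A^\sigma)$ under the $C_2$-equivariant isomorphism $T^{\wedge \ul 2} \simeq T \wedge \Th(\A^\sigma)$. Combined with the Thom/splitting identifications packaged in Proposition~\ref{prop:homology-of-dmot}, this realizes $H\Z/2 \wedge \phi_n$ as (a suitable suspension of) the map on $H\Z/2 \wedge BC_{2+}$ induced by the motivic Euler class $e(\sigma)$, which is a generator of $H^{*,*}(BC_2;\F_2)$.

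Finally, I would match the Euler class action against the explicit basis. Under the Tate decomposition $H\Z/2 \wedge BC_{2+} \simeq H\Z/2 \wedge \P^{\infty}_+ \oplus H\Z/2 \wedge \P^{\infty}_+(1)[1]$ and the overall $\Sigma^{-2n,-n}$ desuspension, the class $v^{(n)}_j$ corresponds to the $(j+n)$-th cellular generator of the even Tate summand, and $u^{(n)}_j$ to the $(j+n)$-th cellular generator of the odd summand. In homology, multiplication by $e(\sigma)$ is dual to multiplication by the cohomology generator and so shifts each cellular generator down by one cell, killing the bottom one; translating through the suspension bookkeeping produces exactly $v^{(n)}_j \mapsto v^{(n-1)}_j$ and $u^{(n)}_j \mapsto u^{(n-1)}_j$ (with the convention $v^{(n-1)}_{-n} = u^{(n-1)}_{-n} = 0$), which is the claim. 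The main obstacle will be this last step: carefully matching the Euler class action against the labeling for $v^{(n)}_j$ and $u^{(n)}_j$, while keeping track of the two Tate summands, the Thom suspensions, and the dependence of both source and target bases on $n$. The computation is essentially routine bookkeeping once Lemma~\ref{lem:diagonal-euler} has reduced the diagonal to the Euler class.
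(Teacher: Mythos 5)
Your proposal follows essentially the same route as the paper's proof: unwind the transition map via the conormed construction, invoke Lemma~\ref{lem:diagonal-euler} to replace the equivariant diagonal by $\id_T \wedge e(\sigma)$, pass through the Thom isomorphisms of Proposition~\ref{prop:homology-of-dmot}, and then dualize to read off the effect on the cellular basis. The only step the paper spells out more carefully than you do is the justification of the dualization itself — it cites Example~\ref{ex:dualization-dedekind} and Lemma~\ref{lemm:mot-hom-cohom-dual}(2) to guarantee the homology-is-the-dual-of-cohomology identification, which you should likewise make explicit rather than treating it as folklore.
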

\begin{proof} We note that $v^{(n)}_j$ (resp. $u^{(n)}_{j}$) are shifts of $v^{(0)}_{j}$ (resp. of $u^{(0)}_{j}$) under the isomorphims induced by the equivalence of Proposition~\ref{prop:homology-of-dmot}. In other words, we have an identification
\begin{equation} \label{eq:u's}
v^{(n)}_j \leftrightarrow \Sigma^{-4n,-2n}\Sigma^{2n, n}v^{(0)}_j \quad(\text{resp. } u^{(n)}_s \leftrightarrow \Sigma^{-4n,-2n}\Sigma^{2n,n}u^{(0)}_s).
\end{equation}
With this notation, we give the argument for $v_j^{(n)}$, the case of $u_j^{(n)}$ being identical.

After the identification of Corollary~\ref{cor:d-vs-d}, the transition map
\[
C(-n, -n) \rightarrow C(-n+1, -n+1),
\]
can be computed as the map
\[
\phi_n: T^{n} \wedge \DD_n(T^{-n}) \rightarrow T^{n-1} \wedge \DD_n(T^{-(n-1)}),
\]
described as in Construction~\ref{constr:enh-diag}.
We claim that the induced map on motivic cohomology of $BC_{2+}$ is given by
\[
H\Z/2^{**}(BC_{2}) \stackrel{\cup c_1(\scr L)}{\rightarrow}H\Z/2^{*+2,*+1}(BC_{2}).
\]
This follows from Lemma \ref{lem:diagonal-euler} (and Lemma \ref{lem:equiv-thom}).

We now dualize to obtain the effect on homology. The following diagram commutes
\[
\begin{tikzcd}
H\Z/2_{**}(B_{\et}C_2) \ar[swap, "\phi_{n}"]{d} \ar{r} &  \Hom( H\Z/2^{**}( B_{\et}C_2), H\Z/2_{**}) \ar{d}{\cup c_1(\mathcal{L}))^*}\\
H\Z/2_{*-2,*-1}(B_{\et}C_2) \ar{r} & \Hom( H\Z/2^{*-2,*-1}(B_{\et}C_2), H\Z/2_{**}).
\end{tikzcd}
\]
By Example \ref{ex:dualization-dedekind} and Lemma \ref{lemm:mot-hom-cohom-dual}(2), the horizontal maps are isomorphisms.
It follows that $\phi_{n**}$ sends the dual of $c_1(\mathcal{L})^i$ to the dual of $c_1(\mathcal{L})^{i-1}$. Under the identifications of \eqref{eq:u's}, we see that $\phi_{n**}$ corresponds to the claimed action.
\end{proof}
 
\subsection{The Steenrod comodule structure of $\Ops_{H\Z/2}$} Next our goal is to prove Theorem~\ref{thm:homotopy-of-ops} which gives us access to the completed co-action of the Steenrod algebra on $\Ops_{H\Z/2}$ (see Corollary~\ref{cor:psil} for the formulas) and thus the co-Nishida relations, which we discuss in~\S\ref{sect:co-nishida}. The key point is that we want to compute the left co-action on $\Ops_{H\Z/2}$ via the right co-action on the mapping spectrum $\imap( B_{\et}\Sigma_{2+} , H\Z/2)[v^{-1}]$ which is geometrically accessible by an explicit model for the classifying stack.  We require some generalities on comodule actions which can be found in \S\ref{sec:comodule-structures}.

\subsubsection{Coaction of $\scr A_{**}$ on the $\pi_{**} \Ops_{H\Z/2}$}\label{sec:coact} Note that the hypotheses of Lemma~\ref{lem:flat-ef} hold for $E = F = H\Z/2$ as reviewed in Theorem~\ref{thm:HZ-dual-steenrod}, so $H\Z/2 \wedge H\Z/2$ is a flat $H\Z/2$-module in the sense of Definition \ref{defn:t-flat}. Hence we may freely speak of left and right comodule actions.

In the language of \S\ref{sec:completed-homology-coaction}, $\pi_{**}\Ops_{H\Z/2}$ is an instance of a completed homology.
In the course of the proof of Theorem \ref{thm:hpty-of-ops} we established a $\limone$ vanishing which implies that Construction \ref{construct-completed-h-coaction} applies.
In other words we get coactions \[ \psi_R: \pi_{**}\Ops_{H\Z/2} \to \pi_{**}\Ops_{H\Z/2} \widehat\otimes_{H\Z/2_{**}} \scr A_{**} \quad\text{and}\quad \psi_L: \pi_{**}\Ops_{H\Z/2} \to \scr A_{**} \widehat\otimes_{H\Z/2_{**}} \pi_{**}\Ops_{H\Z/2}. \]

The two structures are related by the compatible equalities of Lemma~\ref{lem:antipode}, namely \[ \psi^i_L = \psi^i_R \circ \sw_{H\Z/2,X_i*} \circ \chi, \] giving us an equality
\begin{equation} \label{eq:left-to-right}
  \psi_L = \psi_R \circ \sw_{H\Z/2,X*} \circ \chi.
\end{equation}


\subsubsection{The Steenrod comodule structure on mapping spectra} In order to access the completed coaction of the dual Steenrod algebra on $\Ops_{H\Z/2}$ we will make use of an auxilliary comodule $\imap(B_\et C_{2+}, H\Z/2)$; the homotopy groups of this spectrum are simply the motivic cohomology of $B_{\et}C_2.$ This is an instance of a general construction, recalled in \S\ref{subsub:cohomology-coaction}: whenever $X \in \SH(S)$ is a filtered colimit of dualizable objects, $H\Z/2^{**} X$ acquires a canonical completed comodule structure.\footnote{The required $\limone$ vanishing is easily verified.}

In our situation, the motivic spectrum $\Sigma^{\infty} B_{\et}C_{2+}$ is a colimit of dualizable objects in $\SH(S)$. Indeed, this is true for $\Sigma^{\infty}B_{\et}\mu_{n+}$ for any $n \in \mathbb{N}$: it is a colimit of the (suspension spectra of) punctured vector bundles $\scr O_{\P^k}(-n) \setminus 0 \rightarrow \P^k,$ as $k \rightarrow \infty.$ In turn, the suspension spectrum of each punctured vector bundle sits in a cofiber sequence 
\[
\Sigma^{\infty}_+ \scr O_{\P^k}(-n) \rightarrow \Sigma^{\infty}_+ \P^k \rightarrow \Sigma^{\infty}\Th_{\P^k}(\scr O(-n))
\]
where the last two terms are dualizable (e.g. use \cite[Proposition 2.4.31]{cisinski-deglise} and \cite[Proposition 2.17]{A1-homotopy-theory}), whence $\Sigma^{\infty}_+ \scr O_{\P^k}(-n)$ itself is dualizable.

We shall prove the following.
\begin{theorem} \label{thm:homotopy-of-ops} \todo{Why not avoid the twist by using left actions on both sides?}
Let $S$ be smooth over a Dedekind domain (or field) of residue characteristics $\ne 2$ and denote by $v \in H^{2,1}(B_{\et}C_2; \Z/2)$ the Euler class of the line bundle $\scr L(\sigma)$ (as in Definition~\ref{def:char-classes}). There is a canonical map $t: \imap(\Sigma^\infty_+ B_\et \Sigma_2, H\Z/2) \to \Sigma \Ops_{H\Z/2} \in \SH(S)$, with the following properties:
\begin{enumerate}
\item It induces a canonical equivalence
\[t[v^{-1}]:\imap(\Sigma^\infty_+ B_\et \Sigma_2, H\Z/2)[v^{-1}] \to \Sigma \Ops_{H\Z/2}.
\]
of $\imap(B_\et \Sigma_2, H\Z/2)$-modules.
\item The map is compatible with the Steenrod comodule structures in the following sense: Under the equivalence of (1), we have the following commutative diagram 
\begin{equation*}
\begin{tikzcd}
\pi_{**}\imap( B_{\et}\Sigma_{2+} , H\Z/2)[v^{-1}] \ar{dr}{t[v^{-1}]_*} \ar{dd}{\psi_R}&  \\
 &  \pi_{**}\Sigma\Ops_{H\Z/2} \ar{dd}{\psi_L}\\
\pi_{**}\imap(B_{\et}\Sigma_{2+}, H\Z/2)[v^{-1}]  \widehat{\otimes}_{H\Z/2_{**}} \scr A_{**} \ar{dd}{\chi}  & \\
 & \scr A_{**} \widehat{\otimes}\pi_{**}\Sigma\Ops_{H\Z/2}\\
 \scr A_{**} \widehat{\otimes}_{H\Z/2_{**}} \pi_{**}\imap(B_{\et}\Sigma_{2+}, H\Z/2)[v^{-1}] \ar{ur}{\simeq,t[v^{-1}]_*} \ar{ur} &  \\
\end{tikzcd}
\end{equation*}
\item We have $t(v^i) = \Sigma e_{-2i-1}$ and $t(uv^i) = \Sigma e_{-2i-2} + \rho\Sigma e_{-2i-1}.$\todo{We are slightly unhappy about this formula (specifically the second term), but can't find anything wrong with it.}
%
\end{enumerate}
\end{theorem}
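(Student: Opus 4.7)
The plan is to realise both sides as explicit models and then exhibit the comparison. Using the identification $D_2^\mot(T^{-n}) \wedge H\Z/2 \simeq T^{-2n} \wedge \B_\et C_{2+} \wedge H\Z/2$ from the discussion preceding Theorem~\ref{thm:hpty-of-ops}, I would rewrite
\[
\Ops_{H\Z/2} \simeq \lim_n A_n, \qquad A_n := T^{-n} \wedge \B_\et C_{2+} \wedge H\Z/2,
\]
where, by Lemma~\ref{lem:action}, the transition $A_n \to A_{n-1}$ is the map induced by the Euler class $v = c_1(\scr L(\sigma))$. Consequently $\Sigma \Ops_{H\Z/2}$ is the cofibre of the shift-minus-identity endomorphism of $\prod_n A_n$. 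On the source side, $v$ acts on $X := \imap(\B_\et C_{2+}, H\Z/2)$ as an endomorphism $X \to T^{-1} \wedge X$, so $X[v^{-1}] \simeq \colim_n T^{-n} \wedge X$.

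To construct $t$, I would approximate $\B_\et C_{2}$ by its finite stages $\B_\et^{(N)} C_2 := (\A^{N\sigma} \setminus 0)/C_2$, each of which is $T$-dualizable by the projective bundle argument already used to show that $\Sigma^\infty \B_\et C_{2+}$ is a colimit of dualizable objects. Using the self-duality of these stunted-projective-space-type spectra (duality twisted by the tautological bundle), the finite-stage mapping spectra $\imap(\B_\et^{(N)}C_{2+}, H\Z/2)$ become accessible as twisted smash products, and the natural inclusions $\B_\et^{(N)}C_2 \hookrightarrow \B_\et C_2$ produce, for each $n$, a map $X \to A_n$ that is compatible with the tower transitions \emph{up to a $v$-multiplication}. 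These assemble, via the Milnor cofibre sequence for $\lim A_n$, into the required map $t: X \to \Sigma \Ops_{H\Z/2}$; the suspension arises precisely from the $\limone$ contribution in this cofibre sequence.

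For (1), after inverting $v$, both sides become free $H\Z/2_{**}$-modules with bases in bijection: the source $X[v^{-1}]$ has basis $\{v^i, uv^i\}_{i \in \Z}$ over $H\Z/2_{**}$, while $\pi_{**}\Sigma\Ops_{H\Z/2}$ has basis $\{\Sigma e_{2n}, \Sigma e_{2n+1}\}_{n \in \Z}$ by Theorem~\ref{thm:hpty-of-ops}. The formulas in (3) exhibit $t[v^{-1}]$ as the identity up to a lower-triangular change of basis, proving the equivalence, and the $\imap(\B_\et C_{2+}, H\Z/2)$-module structure is preserved tautologically by the construction. For (2), both coactions arise from the $H\Z/2 \wedge H\Z/2$-structure (cohomology coaction on the source as in \S\ref{subsub:cohomology-coaction}, completed homology coaction on the target as in Construction~\ref{construct-completed-h-coaction}), and since $t$ is built entirely from natural transformations (diagonals, evaluations, Euler-class maps) that are $H\Z/2 \wedge H\Z/2$-equivariant, the compatibility follows after inserting the antipode twist dictated by \eqref{eq:left-to-right}. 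The explicit formulas in (3) follow by chasing the construction on representatives, using Remark~\ref{rmk:Ops-HZ2-projection} to track the images under each projection $\Sigma \Ops_{H\Z/2} \to \Sigma A_n$; the $\rho$-correction in $t(uv^i)$ is forced by the relation $u^2 = \rho u + \tau v$ when identifying $u v^i$ with its unique representative in each finite stage.

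The main obstacle is producing $t$ as a genuine morphism in $\SH(S)$: the finite-stage duality maps are not strictly compatible with the tower transitions, only up to factors of $v$, and gathering them into a coherent map into $\Sigma\Ops_{H\Z/2}$ requires careful use of the cofibre description of the limit. A secondary difficulty is checking the comodule compatibility in (2), where the mixing of left and right coactions via the antipode (together with a swap of tensor factors) must be matched precisely with the twist that appears in the construction of $t$; this is where the $\rho$-coefficient in (3) that the authors flag in the margin must be pinned down.
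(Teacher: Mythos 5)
Your plan takes a genuinely different route from the paper's. The paper never manufactures $t$ by hand from finite-stage dualities; instead it works inside $\SH^{C_2}(S)$ and obtains $t$ for free from the isotropy separation sequence $\EE C_{2+}\to S^0\to\widetilde\EE C_2$. Concretely, the paper establishes two equivalences: $(\widetilde\EE C_2 \wedge \imap(\EE C_{2+}, H\Z/2^\triv))^{C_2}\wequi \imap(\BB C_{2+}, H\Z/2)[v^{-1}]$ (Lemma~\ref{lem:tate-of-orientable}, valid for any oriented ring spectrum) and $(\widetilde\EE C_2\wedge\imap(\EE C_{2+}, E))^{C_2}\wequi\Sigma\Ops_E$ (Lemma~\ref{lem:ops-alltheway}, via the motivic Adams isomorphism and the vanishing $\lim_n T^{-n\sigma}\wedge\imap(\EE C_{2+},E)\simeq 0$ of Lemma~\ref{lem:ops-halfway}). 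Then $t$ is just the composite through $[v^{-1}]$-localization, the suspension comes from the isotropy cofiber rather than any $\lim^1$ bookkeeping, and part~(1) is an immediate corollary. What the equivariant framework buys is exactly what you flag as your ``main obstacle'': you do not have to assemble finite-stage duality maps that only agree up to $v$-multiplications into a coherent morphism, because the isotropy sequence packages all of that coherence once and for all.

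The underlying geometry is, of course, the same: $\widetilde\EE C_2\simeq\colim_n T^{n\sigma}$ is exactly the equivariant stand-in for ``invert $v$,'' and the finite-stage fixed points $(T^{-n\sigma}\wedge\EE C_{2+}\wedge E)^{C_2}$ are your $A_n$'s. So your proposal is not wrong in spirit. But as written it has real gaps. First, the ``self-duality of stunted projective spaces twisted by the tautological bundle'' is a substantive motivic Atiyah-duality statement that you would need to prove (the tangent bundle of $B^{(N)}_\et C_2 = (\A^{N\sigma}\setminus 0)/C_2$ has to be expressed in terms of powers of $\scr L(\sigma)$, and the resulting equivalences checked to be compatible across $N$); the paper sidesteps this entirely. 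Second, for part~(2) ``since $t$ is built from natural transformations'' is too loose: the paper's argument is that the whole chain of identifications in Lemma~\ref{lem:tate-of-orientable} is natural in the oriented ring spectrum $E$, so one may apply it to the unit $H\Z/2\to H\Z/2\wedge H\Z/2$; you would have to re-establish that naturality in your setting, where the duality identifications are not manifestly functorial in $E$. Third, for part~(3) you assert the $\rho$-correction is ``forced by $u^2=\rho u+\tau v$,'' but the paper's actual derivation is a nontrivial pairing computation: it first pins down $t_n(v^i)=\Sigma u^{(n)}_{-i-1}$ by a degree argument using Lemma~\ref{lem:ptm}, then uses that $t_n$ is a module map to reduce $t_n(uv^i)$ to evaluating $u\cdot u^{(n)}_{-i-1}$ against the dual basis via $\langle x, u\cdot u^{(n)}_{-i-1}\rangle = \langle xu, u^{(n)}_{-i-1}\rangle$ and the multiplicative structure of Lemma~\ref{lemm:coh-of-BSigma2}. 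Your sketch does not contain this computation, and the $\rho$-term does not drop out of the cohomological relation for free.
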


\subsubsection{Some equivariant preparation} To proceed further, we will need to delve into some equivariant motivic homotopy theory. 
\begin{lemma} \label{free-e} For any $E \in \SH^{G}(S)$ the canonical map $E \rightarrow \imap(\EE G_{+}, E)$ induces an equivalence
\begin{equation*}
\EE G_+ \wedge E \xrightarrow{\wequi} \EE G_{+} \wedge \imap(\EE G_{+}, E). 
\end{equation*}
\end{lemma}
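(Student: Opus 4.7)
The plan hinges on the \emph{idempotency} of $\EE G_+$ in $\SH^G(S)$: the augmentation $\epsilon: \EE G_+ \to \1$ becomes an equivalence after smashing with $\EE G_+$, yielding $\epsilon \wedge \id: \EE G_+ \wedge \EE G_+ \xrightarrow{\simeq} \EE G_+$. Geometrically this holds because $\EE G \times \EE G$ with the diagonal $G$-action is again a free and contractible $G$-space, hence another model for $\EE G$.

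The canonical map $\eta_E: E \to \imap(\EE G_+, E)$ is $\imap(\epsilon, E)$, equivalently, under the closed-symmetric-monoidal adjunction $(- \wedge \EE G_+) \dashv \imap(\EE G_+, -)$, the adjoint of the projection $\id_E \wedge \epsilon: E \wedge \EE G_+ \to E$. Naturality of the counit $\mathrm{ev}$ in the ``parameter'' variable yields
\[
\mathrm{ev}_E \circ (\eta_E \wedge \id_{\EE G_+}) = \id_E \wedge \epsilon : E \wedge \EE G_+ \to E,
\]
i.e., the composite $E \wedge \EE G_+ \xrightarrow{\eta_E \wedge \id} \imap(\EE G_+, E) \wedge \EE G_+ \xrightarrow{\mathrm{ev}} E$ is the projection.

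To conclude, I would construct an explicit two-sided inverse to $\id_{\EE G_+} \wedge \eta_E$ by setting
\[
g : \EE G_+ \wedge \imap(\EE G_+, E) \xleftarrow[\simeq]{\epsilon \wedge \id \wedge \id} \EE G_+ \wedge \EE G_+ \wedge \imap(\EE G_+, E) \xrightarrow{\id \wedge \mathrm{ev}_E} \EE G_+ \wedge E,
\]
where the first arrow is the inverse of the idempotency equivalence on the first two $\EE G_+$ factors. Combining the displayed compatibility (smashed on the left with $\EE G_+$) with the triangle identities of the adjunction, both composites $g \circ (\id \wedge \eta_E)$ and $(\id \wedge \eta_E) \circ g$ reduce to the identity after invoking the idempotency equivalence.

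The main obstacle is the careful symmetric-monoidal bookkeeping: there are up to three $\EE G_+$ factors in play at intermediate steps, and one must track which idempotency equivalence and which instance of $\mathrm{ev}$ is being used at each step. Beyond this, there is no additional geometric input past the fundamental equivalence $\EE G \times \EE G \simeq \EE G$.
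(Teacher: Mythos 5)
Your approach is genuinely different from the paper's. There, one reduces to checking the claim on the compact generators $\Sigma^{-n\rho}\Sigma^\infty X_+$ of the colocal subcategory (for $X$ a smooth $G$-scheme with free action), where it follows from $\Sigma^\infty X_+ \wedge \EE G_+ \simeq \Sigma^\infty X_+$. You instead propose a purely formal argument exploiting the idempotency of $\EE G_+$, constructing an explicit candidate inverse $g$.

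There is, however, a gap in verifying that $g$ is a two-sided inverse. Unwinding either composite using naturality and your displayed compatibility $\mathrm{ev}_E \circ (\eta_E \wedge \id_{\EE G_+}) = \id_E \wedge \epsilon$, you are left needing the coherence fact that $\id_{\EE G_+} \wedge \epsilon$ and $\epsilon \wedge \id_{\EE G_+}$ are homotopic as maps $\EE G_+ \wedge \EE G_+ \to \EE G_+$ (equivalently, that the flip on $\EE G_+ \wedge \EE G_+$ is homotopic to the identity). For instance, $g \circ (\id \wedge \eta_E)$ computes to $((\id_{\EE G_+} \wedge \epsilon) \circ (\epsilon \wedge \id_{\EE G_+})^{-1}) \wedge \id_E$, which is the identity only if that coherence holds. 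The triangle identities of the adjunction $(-\wedge \EE G_+) \dashv \imap(\EE G_+, -)$ do not supply this: they govern the genuine adjunction unit $E \to \imap(\EE G_+, E \wedge \EE G_+)$, which is not the map $\eta_E = \imap(\epsilon, E)$ in play. The coherence you need is a true but separate lemma about coidempotent objects in a symmetric monoidal $\infty$-category (derivable by playing the idempotency equivalence off itself on $\EE G_+^{\wedge 3}$), and it needs to be established; it is not mere bookkeeping. As written, your plan at best shows that $\id \wedge \eta_E$ admits a retraction, not that it is an equivalence.
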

\begin{proof}
In the notation of Proposition \ref{prop:coloc-smash}, we must prove that $u^*(E \to \imap(\EE G_{+}, E))$ is an equivalence.
In other words for a smooth $G$-scheme $X$ with a free action and $n \in \Z$, we must show that 
\[
\Map(\Sigma^{-n \rho} \Sigma^{\infty}X_+, E) \rightarrow \Map(\Sigma^{-n \rho} \Sigma^{\infty}X_+, \imap(\EE G_{+}, E)) \simeq \Map(\Sigma^{-n \rho} \Sigma^{\infty}X_+ \wedge\EE G_{+} , E)
\]
is an equivalence.
This holds since $X$ is a free $G$-scheme and so $\Sigma^{\infty}X_+ \wedge\EE G_{+} \simeq \Sigma^{\infty}X_+$ in $\SH^{G}(S).$
\end{proof} 

We now specialize to $G=C_2$.
Recall the  \emph{motivic isotropy separation cofiber sequence}, see e.g. \cite[(2.7)]{Heller:2016aa},
\begin{equation*} \label{isotropy}
\EE C_{2+} \rightarrow S^0 \rightarrow \widetilde{\EE }C_2,
\end{equation*}
which is a cofiber sequence in $\SH^{C_2}(S).$
As explained in \emph{loc. cit}, we furthermore have that
\begin{equation} \label{geom-fix}
\widetilde{\EE }C_2 \simeq \colim_n T^{n \sigma},
\end{equation}
where the colimit diagram is given by tensoring with the Euler class of Definition~\ref{def:euler-class}:
\begin{equation*} \label{eq:euler-colim}
\cdots \to T^{-\sigma} \stackrel{e(\sigma) \otimes \id}{\rightarrow} S^0 \stackrel{e(\sigma)}{\rightarrow} T^{\sigma} \stackrel{e(\sigma) \otimes \id}{\rightarrow} T^{2\sigma} \rightarrow \cdots T^{n\sigma}\stackrel{e(\sigma) \otimes \id}{\rightarrow} T^{(n+1)\sigma} \to \cdots.
\end{equation*}

Note that from Lemma~\ref{free-e} we have for any $X\in \SH^{C_2}(S)$ the cofiber sequence
\begin{equation}\label{eq:norm-cofiber}
	 \EE C_{2+} \wedge X \to \imap(\EE C_{2+}, X) \to \widetilde\EE C_2 \wedge \imap(\EE C_{2+}, X).
\end{equation}

\begin{lemma} \label{lem:tate-of-orientable}
Let $E \in \CAlg(h\SH(S))$ be oriented.
Write $v \in E^{2,1} BC_2$ for the Euler class of the tautological bundle.
Then there is a canonical equivalence of homotopy ring spectra \[ (\widetilde\EE C_{2} \wedge \imap(\EE C_{2+}, E^\triv))^{C_2} \wequi \imap(\BB C_{2+}, E)[v^{-1}]. \]
\end{lemma}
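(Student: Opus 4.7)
The plan is to exploit the filtered-colimit description $\widetilde{\EE}C_2 \simeq \colim_n T^{n\sigma}$ from \eqref{geom-fix}, whose structure maps are multiplication by the Euler class $e(\sigma)\colon S^0 \to T^\sigma$, and commute this filtered colimit past the categorical $C_2$-fixed-points functor. Using that $(-)^{C_2}\colon \SH^{C_2}(S) \to \SH(S)$ preserves filtered colimits, the problem reduces to identifying each term
\[
(T^{n\sigma} \wedge \imap(\EE C_{2+}, E^\triv))^{C_2},
\]
together with its transition maps, and exhibiting the resulting diagram as a presentation of $\imap(\BB C_{2+}, E)[v^{-1}]$.

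Since $T^{n\sigma}$ is invertible in $\SH^{C_2}(S)$, I would first pull it inside the internal mapping object, obtaining $T^{n\sigma} \wedge \imap(\EE C_{2+}, E^\triv) \simeq \imap(\EE C_{2+}, T^{n\sigma} \wedge E^\triv)$. The functor $\imap(\EE C_{2+}, {-})^{C_2}$ is the Borel-construction functor: applied to the equivariantly twisted spectrum $T^{n\sigma} \wedge E^\triv$, it should yield the non-equivariant mapping spectrum into $E$ out of the Thom space of the associated Borel bundle. Concretely, using $\EE C_{2+}/C_2 \simeq \BB C_{2+}$ and the identification of $\EE C_2 \times_{C_2} \A^\sigma$ with $\lambda := \scr L(\sigma) \to \BB C_2$, one obtains a canonical equivalence
\[
\bigl(\imap(\EE C_{2+}, T^{n\sigma} \wedge E^\triv)\bigr)^{C_2} \simeq \imap\bigl(\Sigma^\infty \Th_{\BB C_2}(n\lambda), E\bigr).
\]

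Now the orientation on $E$ enters: the Thom isomorphism (as an equivalence of $\imap(\BB C_{2+}, E)$-modules) yields
\[
\imap\bigl(\Sigma^\infty \Th_{\BB C_2}(n\lambda), E\bigr) \simeq \Sigma^{-2n,-n}\imap(\BB C_{2+}, E),
\]
and under these identifications the transition map induced by $e(\sigma)$ corresponds, up to the shifts, to multiplication by $v = e(\lambda) \in E^{2,1}(\BB C_2)$. This is because the Borel construction carries the equivariant Euler class $e(\sigma)$ to $e(\lambda)$. Taking the filtered colimit along multiplication by $v$ then produces, by definition, the localization $\imap(\BB C_{2+}, E)[v^{-1}]$. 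The equivalence is one of homotopy ring spectra because the Thom isomorphism is multiplicative for orientations of commutative ring spectra in $h\SH$.

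The main obstacle is the identification of $\imap(\EE C_{2+}, T^{n\sigma} \wedge E^\triv)^{C_2}$ with the non-equivariant mapping spectrum out of $\Sigma^\infty \Th_{\BB C_2}(n\lambda)$, which requires carefully unwinding the equivariant adjunctions in the motivic setting---specifically the interaction between $(-)^{C_2}$, the trivial-action inclusion, and the free-quotient description of the Borel construction---and then verifying naturality to match the Euler-class transition maps with multiplication by $v$. Once this is done, the remaining ingredients (filtered-colimit commutation with categorical fixed points, the Thom isomorphism, and multiplicativity) are routine.
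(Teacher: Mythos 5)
Your overall strategy matches the paper's exactly: use $\widetilde{\EE}C_2 \simeq \colim_n T^{n\sigma}$ from \eqref{geom-fix}, commute $(-)^{C_2}$ past the filtered colimit, identify each term via the orientation of $E$, and match the Euler-class transition maps with multiplication by $v$. The paper packages the termwise identification into Lemma~\ref{lem:equiv-thom}, which you are effectively re-proving inline.

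However, there is a sign error in the key step that you yourself flag as the main obstacle. The correct identification is
\[
\imap(\EE C_{2+},\, T^{n\sigma}\wedge E^\triv)^{C_2} \;\simeq\; \imap\bigl(\Th_{\BB C_2}(-n\lambda),\, E\bigr) \;\simeq\; \Sigma^{2n,n}\,\imap(\BB C_{2+}, E),
\]
not $\Th_{\BB C_2}(n\lambda)$ with shift $\Sigma^{-2n,-n}$. The point is that, using invertibility of $T^{n\sigma}$, one moves it to the \emph{source} of the mapping spectrum with a sign flip: $\imap(\EE C_{2+}, T^{n\sigma}\wedge E^\triv) \simeq \imap(T^{-n\sigma}\wedge \EE C_{2+}, E^\triv)$, and then the categorical fixed points of the free quotient give $\imap\bigl((T^{-n\sigma}\wedge \EE C_{2+})/C_2, E\bigr) = \imap(\Th_{\BB C_2}(-n\lambda), E)$. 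This matters: with your version, the $n$-th term would be $\Sigma^{-2n,-n}\imap(\BB C_{2+}, E)$, but multiplication by $v\in E^{2,1}(\BB C_2)$ is a map $\imap(\BB C_{2+}, E)\to \Sigma^{2,1}\imap(\BB C_{2+}, E)$, which runs in the \emph{opposite} direction to what the transition maps would require, so the identification of the colimit with $\imap(\BB C_{2+}, E)[v^{-1}]$ would break down. With the corrected sign $\Sigma^{2n,n}$, the direction is consistent and the colimit is indeed the localization, exactly as in Lemma~\ref{lem:equiv-thom}.
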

\begin{proof}
Using \eqref{geom-fix} and cocontinuity of $(\ph)^{C_2}$ we get \[ (\widetilde\EE C_{2+} \wedge \imap(\EE C_{2+}, E^\triv))^{C_2} \wequi \colim_n \imap(\EE C_{2+}, \Sigma^{n\sigma} E^\triv). \]
The result now follows from Lemma \ref{lem:equiv-thom}, using that the equivariant Euler class $e(\sigma)$ corresponds to the Euler class $v$ of the tautological bundle.
\end{proof}

We shall now provide a different computation of the left hand side of the above result.
We begin with the following.
\begin{lemma} \label{tilde-versus-1-susp} For each $n \in \Z$, there is a canonical equivalence in $\SH^{C_2}(S)$
\begin{equation*} \label{tilde-1-susp}
\widetilde{\EE }C_2/T^{-n\sigma} \simeq T^{-n\sigma} \wedge \Sigma \EE C_{2+},
\end{equation*}
which is compatible with the Euler class in the sense that the following diagram commutes
\begin{equation*} \label{compat-eul}
\begin{tikzcd}
\widetilde{\EE }C_2/T^{-n\sigma} \ar{d} \ar{r}{\simeq} &  T^{-n\sigma} \wedge \Sigma \EE C_{2+} \ar{d}{e(\sigma) \otimes \id} \\
\widetilde{\EE }C_2/T^{-(n-1)\sigma}   \ar{r}{\simeq} & T^{-(n-1)\sigma} \wedge \Sigma \EE C_{2+}.
\end{tikzcd}
\end{equation*}
\end{lemma}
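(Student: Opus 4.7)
The plan is to deduce the equivalence directly from the motivic isotropy separation cofiber sequence $\EE C_{2+} \to S^0 \to \widetilde{\EE}C_2$. Smashing this sequence with the invertible $C_2$-spectrum $T^{-n\sigma}$ produces a cofiber sequence
\[
\EE C_{2+} \wedge T^{-n\sigma} \;\longrightarrow\; T^{-n\sigma} \;\longrightarrow\; \widetilde{\EE}C_2 \wedge T^{-n\sigma}.
\]
The key step is to identify the third term canonically with $\widetilde{\EE}C_2$. Using the colimit description \eqref{geom-fix} and cocontinuity of the smash product, I obtain $\widetilde{\EE}C_2 \wedge T^{-n\sigma} \simeq \colim_k T^{(k-n)\sigma} \simeq \widetilde{\EE}C_2$ via the reindexing $k \mapsto k - n$, under which the composite $T^{-n\sigma} \simeq T^{-n\sigma} \wedge S^0 \to T^{-n\sigma} \wedge \widetilde{\EE}C_2$ corresponds to the colimit inclusion $i_{-n}\colon T^{-n\sigma} \to \widetilde{\EE}C_2$ associated with the $(-n)$-th term of the diagram. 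By definition this is the map whose cofiber is $\widetilde{\EE}C_2 / T^{-n\sigma}$, so rotating the smashed cofiber sequence yields
\[
\widetilde{\EE}C_2 / T^{-n\sigma} \;\simeq\; \Sigma (\EE C_{2+} \wedge T^{-n\sigma}) \;\simeq\; T^{-n\sigma} \wedge \Sigma \EE C_{2+},
\]
as required.

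For the compatibility with Euler classes, I would apply the entire construction naturally along the transition map $e(\sigma) \wedge \id\colon T^{-n\sigma} \to T^{-(n-1)\sigma}$. Smashing the isotropy separation sequence with this map gives a morphism of cofiber sequences, and after rotating one produces exactly the square of the lemma: the right-hand vertical becomes $e(\sigma) \wedge \id$ acting on the $T^{-n\sigma}$-factor of $T^{-n\sigma} \wedge \Sigma \EE C_{2+}$, while the left-hand vertical is the induced map on cofibers coming from the transition $T^{-n\sigma} \to T^{-(n-1)\sigma}$ in the colimit system presenting $\widetilde{\EE}C_2$, which is precisely the map $\widetilde{\EE}C_2 / T^{-n\sigma} \to \widetilde{\EE}C_2/T^{-(n-1)\sigma}$ written in the diagram.

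The main point to verify carefully is that the reindexing equivalence $\widetilde{\EE}C_2 \wedge T^{-n\sigma} \simeq \widetilde{\EE}C_2$ genuinely sends the smashed inclusion $T^{-n\sigma} \wedge S^0 \to T^{-n\sigma} \wedge \widetilde{\EE}C_2$ to $i_{-n}$, and that this identification is functorial in $n$ through the Euler class maps in the colimit system. Once this bookkeeping is pinned down, both conclusions of the lemma follow by naturality of the smash-and-take-cofiber procedure applied to a single cofiber sequence, so no further geometric input is needed beyond \eqref{geom-fix} and the invertibility of $T^\sigma$.
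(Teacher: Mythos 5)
Your proposal is correct and follows essentially the same route as the paper: smash the isotropy separation cofiber sequence with $T^{-n\sigma}$, identify $\widetilde\EE C_2 \wedge T^{-n\sigma} \simeq \widetilde\EE C_2$ via the colimit presentation \eqref{geom-fix}, observe that the middle map becomes the structure inclusion $T^{-n\sigma} \to \widetilde\EE C_2$, and compare cofibers (the paper phrases the key equivalence as multiplication by $e(\sigma)^{\otimes n}$ rather than as a reindexing of the colimit, but these are the same map). The compatibility in $n$ is likewise obtained in both arguments by applying the construction naturally along $e(\sigma)$.
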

\begin{proof} For the first claim we note that the following diagram of cofiber sequences commutes
\begin{equation*} \label{eq:mult-e}
\begin{tikzcd}
T^{-n\sigma} \otimes S^0 \ar{r} \ar{d}{\simeq} & T^{-n\sigma} \otimes \widetilde{\EE }C_2 \ar{r} \ar{d}{e(\sigma)^{\otimes n}} & T^{-n\sigma} \otimes \Sigma \EE C_{2+} \ar{d}\\
T^{-n\sigma}  \ar{r}{e(\sigma)^{\otimes n}}& \widetilde{\EE }C_2 \ar{r} &  \widetilde{\EE }C_2/T^{-n\sigma},
\end{tikzcd}
\end{equation*}
where the top horizontal row is obtained by smashing the cofiber sequence $S^0 \rightarrow \widetilde{\EE }C_2 \rightarrow \Sigma \EE C_{2+}$ with $T^{-n\sigma}$. Now the middle vertical arrow is an equivalence (as follows from \eqref{geom-fix}), which gives us the first statement. Comparing the left square of the diagram  with the corresponding left square of the diagram for the $(n-1)$-case, we obtain the desired compatibility with multiplication by the Euler class.
\end{proof}

Now for $E \in \SH^{C_2}(S)$ we obtain maps \begin{gather*} \widetilde\EE C_{2+} \wedge \imap(\EE C_{2+}, E) \to \widetilde\EE C_{2+}/T^{-n\sigma} \wedge \imap(\EE C_{2+}, E) \\ \stackrel{L.\ref{tilde-versus-1-susp}}{\wequi} T^{-n\sigma} \wedge \Sigma \EE C_{2+} \wedge \imap(\EE C_{2+}, E) \stackrel{L.\ref{free-e}}{\wequi} \Sigma T^{-n\sigma} \wedge \EE C_{2+} \wedge E. \end{gather*}


By \eqref{eq:norm-cofiber},  we get a commutative diagram where the rows are exact and the vertical arrows are multiplication with the Euler class

\begin{equation}\label{eq:big-ops-diagram}
	\begin{tikzcd}[column sep=small]
		\EE C_{2+}\wedge E \ar[r] & \imap(\EE C_{2+}, E) \ar[r] & \widetilde\EE C_2\wedge \imap(\EE C_{2+}, E)  
		\\
		T^{-\sigma}\wedge\EE C_{2+}\wedge E \ar[r]\ar[u, "e(\sigma)"] & T^{-\sigma}\wedge\imap(\EE C_{2+}, E) \ar[r]\ar[u, "e(\sigma)"]& T^{-\sigma}\wedge\widetilde\EE C_2\wedge \imap(\EE C_{2+}, E)  \ar[u, "e(\sigma)", "\simeq"'] 
		\\
		\vdots \ar[u, "e(\sigma)"] & \vdots \ar[u, "e(\sigma)"]  & \vdots \ar[u, "e(\sigma)", "\simeq"'] 
		\\
		T^{-n\sigma}\wedge\EE C_{2+}\wedge E \ar[r]\ar[u, "e(\sigma)"] & T^{-n\sigma}\wedge\imap(\EE C_{2+}, E) \ar[r]\ar[u, "e(\sigma)"]& T^{-n\sigma}\wedge\widetilde\EE C_2\wedge \imap(\EE C_{2+}, E)  \ar[u, "e(\sigma)", "\simeq"'] 
		\\
		\vdots \ar[u, "e(\sigma)"] & \vdots \ar[u, "e(\sigma)"]  & \vdots \ar[u, "e(\sigma)", "\simeq"'] 
	\end{tikzcd}
\end{equation}

Taking the limit of the previous diagram yields the cofiber sequence
\begin{equation}\label{eq:lim-big-diagram}
	\lim_n \left( T^{-n\sigma} \wedge \EE C_{2+} \wedge E \right) \to
\lim_n \left( T^{-n\sigma} \wedge\imap(\EE C_{2+}, E)\right) \to	\widetilde\EE C_{2+} \wedge \imap(\EE C_{2+}, E)
\end{equation}

\begin{lemma} \label{lem:ops-halfway}
	For $E \in \SH^{C_2}(S)$, we have
	$\lim_n \left( T^{-n\sigma} \wedge\imap(\EE C_{2+}, E) \right)\simeq 0$. In particular,	
	the  map \[ \widetilde\EE C_{2+} \wedge \imap(\EE C_{2+}, E) \to \lim_n \left( \Sigma T^{-n\sigma} \wedge \EE C_{2+} \wedge E \right) \] is an equivalence.
\end{lemma}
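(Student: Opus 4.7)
The plan is to rewrite the limit as an internal mapping object out of $\widetilde\EE C_2$ and observe that this object vanishes. Since $T^{-n\sigma}$ is invertible in $\SH^{C_2}(S)$, there is a natural equivalence $T^{-n\sigma} \wedge X \simeq \imap(T^{n\sigma}, X)$ for any $X \in \SH^{C_2}(S)$, and under this identification the transition map $T^{-n\sigma} \wedge X \xrightarrow{e(\sigma) \wedge \id} T^{-(n-1)\sigma} \wedge X$ corresponds to precomposition with $e(\sigma): T^{(n-1)\sigma} \to T^{n\sigma}$. Using that $\imap(-,X)$ converts colimits to limits and that $\widetilde\EE C_2 \simeq \colim_n T^{n\sigma}$ by \eqref{geom-fix}, I obtain
\[
\lim_n \bigl(T^{-n\sigma} \wedge X\bigr) \;\simeq\; \imap\bigl(\colim_n T^{n\sigma},\, X\bigr) \;\simeq\; \imap(\widetilde\EE C_2, X).
\]

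Specializing to $X = \imap(\EE C_{2+}, E)$ and applying the smash--$\imap$ adjunction yields
\[
\imap\bigl(\widetilde\EE C_2,\, \imap(\EE C_{2+}, E)\bigr) \;\simeq\; \imap\bigl(\widetilde\EE C_2 \wedge \EE C_{2+},\, E\bigr),
\]
so the first claim reduces to the vanishing $\widetilde\EE C_2 \wedge \EE C_{2+} \simeq 0$. This I would prove by smashing the isotropy separation cofiber sequence $\EE C_{2+} \to S^0 \to \widetilde\EE C_2$ with $\EE C_{2+}$ and verifying that $\EE C_{2+} \wedge \EE C_{2+} \to \EE C_{2+}$ is an equivalence: the diagonal projection $\EE C_2 \times \EE C_2 \to \EE C_2$ is a map between free contractible $C_2$-spaces, hence a $C_2$-equivariant weak equivalence. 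The cofiber $\EE C_{2+} \wedge \widetilde\EE C_2$ therefore vanishes.

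For the ``in particular'' clause, the cofiber sequence \eqref{eq:lim-big-diagram} together with the vanishing of its middle term forces the boundary map to be an equivalence $\widetilde\EE C_2 \wedge \imap(\EE C_{2+}, E) \simeq \Sigma \lim_n \bigl(T^{-n\sigma} \wedge \EE C_{2+} \wedge E\bigr)$, and since $\Sigma$ commutes with limits in the stable $\infty$-category $\SH^{C_2}(S)$, we may move $\Sigma$ inside the limit to recover the stated formula. The main obstacle is really just bookkeeping: one must carefully check that the invertibility identification converts the Euler-class tower into precisely the precomposition tower whose limit is $\imap(\widetilde\EE C_2, -)$. The remaining geometric input --- that $\EE C_{2+} \wedge \widetilde\EE C_2 \simeq 0$ --- is classical and follows immediately from the definitions.
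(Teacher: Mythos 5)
Your proof is correct and follows essentially the same approach as the paper: use invertibility of $T^{n\sigma}$ to move it inside the internal mapping object, exchange $\lim$ and $\imap$, identify $\colim_n T^{n\sigma}$ with $\widetilde\EE C_2$ via \eqref{geom-fix}, and conclude from $\EE C_{2+} \wedge \widetilde\EE C_2 \simeq 0$. The only cosmetic difference is that you factor the identification through an intermediate $\imap(T^{n\sigma},-)$ and then apply smash--$\imap$ adjunction, whereas the paper does it in one step; you also supply the (correct, standard) justification for $\EE C_{2+} \wedge \widetilde\EE C_2 \simeq 0$, which the paper takes as known.
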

\begin{proof}
	We have
	\begin{eqnarray*}
		\lim_n (T^{-n\sigma} \wedge \imap(\EE C_{2+}, E)) & \simeq & \lim_n \imap(\EE C_{2+} \wedge T^{n\sigma} , E)\\
		& \simeq &  \imap(\colim_n \EE C_{2+} \wedge T^{n\sigma} , E)\\
		& \simeq & \imap(\EE C_{2+} \wedge \widetilde{\EE} C_2, E)\\
		& \simeq & \imap(0, E) \\
		& \simeq & 0.
	\end{eqnarray*}
	(In the middle we have used \eqref{geom-fix}.)
\end{proof}

Now let $E \in \SH(S)$.
We have \[ ( T^{-n\sigma} \wedge \EE C_{2+} \wedge E^\triv)^{C_2} \wequi ( T^{-n\sigma} \wedge E^\triv)_{\hh C_2} \wequi T^{n} \wedge \DD_2(T^{-n}) \wedge E. \]
Here the first equivalence is by the \emph{motivic Adams isomorphism} \cite[Proposition 5.11]{gepner-heller}, and the second is by the projection formula \ref{lemm:triv-quot-projection-formula}.
Note that these are exactly the spectra appearing in the definition of $\Ops_E$.
The following result says that the transition maps also agree.

\begin{lemma} \label{lem:ops-alltheway}
	For $E \in \SH(S)$ we have a canonical equivalence 
	\[	
	\lim_n \left( T^{-n\sigma} \wedge \EE C_{2+} \wedge E \right)^{C_2}\simeq \Ops_E. 
	\]
	In particular, there is a canonical equivalence 
	\[ 
	 (\widetilde\EE C_{2+} \wedge \imap(\EE C_{2+}, E))^{C_2} \xrightarrow{\wequi} \Sigma \Ops_E. 
	 \]
\end{lemma}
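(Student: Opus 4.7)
The plan is to assemble the two equivalences foreshadowed in the paragraph preceding the statement. First I would use the pointwise identification essentially already recorded there: by the motivic Adams isomorphism, Lemma~\ref{lemm:triv-quot-projection-formula}, and the equivalence $\DD_2 \simeq D_2^\mot$ of Corollary~\ref{cor:d-vs-d}, one has \[ (T^{-n\sigma} \wedge \EE C_{2+} \wedge E^\triv)^{C_2} \wequi T^n \wedge D_2^\mot(T^{-n}) \wedge E = C(-n,-n) \wedge E, \] which is precisely the $n$-th term of the tower computing $\Ops_E$ via Remark~\ref{rmk:Ops-explicit}.

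The main content will be checking that the transition maps match. On the left-hand tower, transitions are given by smashing with the Euler class, \[ T^{-n\sigma} \wedge \EE C_{2+} \wedge E^\triv \xrightarrow{e(\sigma) \wedge \id} T^{-(n-1)\sigma} \wedge \EE C_{2+} \wedge E^\triv, \] whereas the transitions in the $\Ops_E$-tower are the maps $\phi_n \colon C(-n,-n) \wedge E \to C(-n+1,-n+1) \wedge E$ built in Construction~\ref{constr:enh-diag} from the $2$-fold diagonal of $T$ followed by the conormed transformation. The hard part will be this coherence check. The key input is Lemma~\ref{lem:diagonal-euler}, which identifies the diagonal $T \to T^{\wedge \ul 2} \simeq \Th(\A^\rho)$ with $\id_T \wedge e(\sigma)$; propagating this identification through the naturality of the motivic Adams isomorphism and of the projection formula will identify $\phi_n$ with the map on fixed points induced by $e(\sigma) \wedge \id$.

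Once the terms and transitions are matched, I would conclude by noting that categorical $C_2$-fixed points, being a right adjoint, commute with the limit over $n$, yielding the first equivalence \[ \Ops_E \wequi \lim_n \left(T^{-n\sigma} \wedge \EE C_{2+} \wedge E^\triv\right)^{C_2}. \] For the \emph{in particular} clause, I would combine this with Lemma~\ref{lem:ops-halfway}: the cofiber sequence~\eqref{eq:lim-big-diagram} together with the vanishing of its middle term given by that lemma produces \[ \widetilde{\EE} C_2 \wedge \imap(\EE C_{2+}, E^\triv) \wequi \Sigma \lim_n \left( T^{-n\sigma} \wedge \EE C_{2+} \wedge E^\triv \right), \] and applying $(-)^{C_2}$ to both sides then delivers $\Sigma \Ops_E$ as required.
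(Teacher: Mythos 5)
Your proposal is correct and follows essentially the same route as the paper: identify each term $(T^{-n\sigma}\wedge\EE C_{2+}\wedge E^\triv)^{C_2}$ with $C(-n,-n)\wedge E$ via the motivic Adams isomorphism, the projection formula, and $\DD_2\simeq D_2^\mot$; check the transition maps agree via Lemma~\ref{lem:diagonal-euler}; and derive the ``in particular'' clause from Lemma~\ref{lem:ops-halfway}. One small remark: the commutation of $(-)^{C_2}$ with limits is not actually needed for the first equivalence (once the towers are identified termwise and compatibly with transitions, their limits agree automatically), but it \emph{is} the ingredient used tacitly in the final step where you apply $(-)^{C_2}$ to both sides of the equivalence coming from Lemma~\ref{lem:ops-halfway}; you mentioned the point in the right spirit, just slightly out of place.
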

\begin{proof}
	The second statement follows from Lemma \ref{lem:ops-halfway}. We need to verify that the induced maps $T^{n} \wedge \DD_2(T^{-n}) \to T^{n+1} \wedge \DD_2(T^{-(n+1)})$ are the ones appearing in 
	Definition \ref{def:spectrum-of-ops}.
	The maps being induced by $e(\sigma)$, this follows from Lemma \ref{lem:diagonal-euler}.
\end{proof}

\subsubsection{}
\begin{proof}[Proof of Theorem \ref{thm:homotopy-of-ops}.]
The map $t$ arises as 
\begin{align*} \imap(\Sigma^\infty_+ B_\et \Sigma_2, H\Z/2) &\to \imap(\Sigma^\infty_+ B_\et \Sigma_2, H\Z/2)[v^{-1}] \\ &\stackrel{L.\ref{lem:tate-of-orientable}}{\wequi} (\widetilde\EE C_{2+} \wedge \imap(\EE C_{2+}, H\Z/2^\triv))^{C_2} \stackrel{L.\ref{lem:ops-alltheway}}{\wequi} \Sigma \Ops_{H\Z/2}. 
\end{align*}
It is easily verified to be a map of modules.\todo{...}
This proves (1).
Moreover by construction we get such an equivalence for any oriented ring spectrum $E$, and the equivalence is natural.
Applying this to the unit map $H\Z \to H\Z \wedge H\Z$ one deduces that $t$ is compatible with left coactions.
Result (2) follows by invoking \eqref{eq:left-to-right}.\NB{Is this too easy?}

(3) Write $\mathcal{L}$ for the line bundle on $BC_2$ corresponding to $\sigma$. 
Consider the cofiber sequence
\[
T^{-n\sigma}\wedge \EE C_{2+}\wedge H\Z/2\to T^{-n\sigma} \wedge \imap(\EE C_{2+}, H\Z/2)\to T^{-n\sigma}\wedge \widetilde\EE C_{2} \wedge \imap(\EE C_{2+}, H\Z/2) .
\]
 Upon taking fixed points, we obtain the cofiber sequence
 \[
 \Th(-\mathcal{L}^{n})\wedge H\Z/2 \to \imap(\Th(\mathcal{L}^{n}),H\Z/2)\to \imap(\Th(\mathcal{L}^n),H\Z/2)[v^{-1}].
 \]
 Note that the $H\Z/2_**$-module map
 \[
 \pi_{**}(\Th(-\mathcal{L}^{n})\wedge H\Z/2) \to \pi_{**}(\imap(\Th(\mathcal{L}^{n}),H\Z/2) )
 \] 
 is $0$. 
We thus obtain short exact sequences of $H\Z/2_{**}$-modules
\[
0\to \pi_{**}\imap(\Th(\mathcal{L}^{n}),H\Z/2)\to  \pi_{**}\imap(\Th(\mathcal{L}^{n}),H\Z/2)[v^{-1}] \to \pi_{**}(\Sigma\Th(-\mathcal{L}^n)\wedge H\Z/2) \to 0.
\]
Applying Thom isomorphisms, we these are expressed as
\[
0\to \Sigma^{2n,n}\pi_{**}\imap(BC_{2+}, H\Z/2) \to \Sigma^{2n,n}\pi_{**}\imap(BC_{2+},H\Z/2)[v^{-1}] \to \pi_{**}(\Sigma\Th(-\mathcal{L}^n)\wedge H\Z/2)\to 0,
\] 
which in turn is identified with
\begin{align*}
0\to \bigoplus_{i\geq n} H\Z/2_{**}&\{v^i\}\oplus   \bigoplus_{i\geq n} H\Z/2_{**}\{uv^i\} 
\\& \to  \bigoplus_{i\in \Z} H\Z/2_{**}\{v^i\}\oplus  \bigoplus_{i\in \Z} H\Z/2_{**}\{uv^i\} 
 \xrightarrow{t_{n}} \pi_{**}(\Sigma\Th(-\mathcal{L}^n)\wedge H\Z/2)\to 0.
\end{align*}
From this we obtain 
\[
\pi_{**}(\Sigma\Th(-\mathcal{L}^n)\wedge H\Z/2) \cong  \bigoplus_{i<n} H\Z/2_{**}\{v^i\}\oplus  \bigoplus_{i<n} H\Z/2_{**}\{uv^i\}.
\]
On the other hand, since $C(-n,-n)\wedge H\Z/2\simeq \Th(-\mathcal{L}^n)\wedge H\Z/2$, (the proof of) Theorem~\ref{thm:hpty-of-ops} gives 
\[
\pi_{**}(\Sigma\Th(-\mathcal{L}^n)\wedge H\Z/2)\cong
\bigoplus_{j\geq -n} H\Z/2_{**}\{u^{(n)}_j\}\oplus  \bigoplus_{j\geq -n} H\Z/2_{**}\{v^{(n)}_j\}.
\]
The map $t$ is identified with the limit of the maps $t_n$. We need to see that 
$t_n(v^i) = \Sigma u_{-i-1}^{(n)}$ and $t_n(uv^i) = \Sigma v_{-i-1}^{(n)}+ \rho\Sigma u_{-i-1}^{(n)}$. 
To check this, we may assume that $S=\Spec(\Z[1/2])$.
We can write $t_n(v^i) = \sum (\lambda_j\cdot \Sigma u_{j-1}^{(n)} + \lambda'_j \cdot \Sigma v_{j-1}^{(n)}) $ where $\lambda_j,\lambda_j'\in H\Z/2_{**}$.
A straightforward analysis using Lemma \ref{lem:ptm} shows that in fact $t_n(v^i) = \lambda \Sigma u_{-i-1}$ for some $\lambda \in \F_2$ (i.e. all other coefficients vanish for degree reasons).
Since $t_n(v_i)$ cannot be zero (it is a basis element), we must have $\lambda_0=1$, i.e., $t_n(v^i) = \Sigma u_{-i-1}^{(n)}$.

Since $t_n$ is a map of $\pi_{**}\imap(BC_{2+}, H\Z/2)$-modules 
we have $t_{n}(uv^i) = u\cdot \Sigma u_{-i-1}^{(n)}= 
\Sigma (u\cdot u_{-i-1}^{(n)})$. 
We have an isomorphism, see Lemma~\ref{lemm:mot-hom-cohom-dual},
\[
H_{**}(\Th(-\mathcal{L}^n))\xrightarrow{\cong} \Hom_{H\Z/2_{**}}(H^{**}(\Th(-\mathcal{L}^n)), H\Z/2_{**}).
\]
The element $u_{-i-1}^{(n)}$ is dual to $uv^{-i-1}$. 
We have $\langle -,\,  u\cdot u_{-i-1}^{(n)} \rangle = \langle -\cdot u,\,   u_{-i-1}^{(n)} \rangle$, so (using Lemma \ref{lemm:coh-of-BSigma2} below)
\[
\langle u^{\epsilon}v^{j},  u\cdot u_{-i-1}^{(n)} \rangle
= \begin{cases}
	1 & \epsilon  = 0\textrm{ and } j=-i-1 \\
	\rho & \epsilon = 1\textrm{ and } j=-i-1\\
	0 & \textrm{else}.
\end{cases}
\]
It follows that $u\cdot u_{-i-1}^{(n)} = v_{-i-1}^{(n)} + \rho u_{-i-1}^{(n)}$, as needed.
\end{proof}

\subsection{Comodule structure} \label{subsec:comodule-structure}

In summary, we have constructed a completed right comodule structure 
\begin{equation} \label{eq:right}
\psi_R: \pi_{**}\imap( B_{\et}\Sigma_{2+} , H\Z/2) \rightarrow   \pi_{**}\imap(B_{\et}\Sigma_{2+}, H\Z/2)  \widehat{\otimes}_{H\Z/2_{**}} \scr A_{**},
\end{equation}
and a completed left comodule structure 
\begin{equation} \label{eq:left}
\psi_L: \pi_{**}\Ops_{H\Z/2} \rightarrow \scr A_{**} \widehat{\otimes}\pi_{**}\Ops_{H\Z/2},
\end{equation}
whose compatibility is expressed in Theorem \ref{thm:homotopy-of-ops}(2).

Our main goal is to compute the $\psi_L$, but we do so via $\psi_R$ which we now proceed to describe. To do so, we begin by choosing a presentation $H^{**}(B_{\et}\Sigma_2; \Z/2)$ by generalizing \cite[Theorem 6.10]{voevodsky2003reduced}. We first name some elements; we assume that $2$ is invertible in $S$.

\begin{enumerate}
\item Using \cite[Theorem 7.10]{spitzweck2012commutative}, we have an isomorphism $H^{0,1}(S; \Z/2) \simeq \{ \pm 1 \} \subset \scr O_S(S)^{\times}$. We call the element corresponding to to $-1$ the \emph{motivic Bott element} which we denote by $\tau$.
\item Using \emph{loc. cit}, we also have an isomorphism $H^{1,1}(S; \Z) \cong H^0(S; \scr O^\times)$. We denote by $\rho \in H^{1,1}(S; \Z)$ the class corresponding to $-1$. We denote its image in $H^{1,1}(S, \Z/2)$ also by $\rho$.
\item We have already discussed the element $v \in H^{2,1}(B_{\et}\Sigma_{2+}; \Z/2)$ which is the Euler class of the line bundle $\scr L(\sigma)$.
\item Lastly, there is an element $u \in H^{1,1}(B_{\et}\Sigma_{2+}; \Z/2)$ which is uniquely characterized by the fact that it Bocksteins to $v$ and restricts to $0$ to the base point of $B_{\et}\Sigma_{2}$ \cite[Lemma 11.6]{spitzweck2012commutative}.
\end{enumerate}

For $E \in \HNAlg_2(\SH(S))_{H\Z/2/}$ we will continue to denote the images of $u, v, \rho, \tau$  under the map $H\Z/2 \rightarrow E$ by the same name.

\begin{lemma} \label{lemm:coh-of-BSigma2}
Let $E \in \HNAlg_2(\SH(S))_{H\Z/2/}$ and $\scr X \in \Spc(S)_\pt$. Then we have an isomorphism of $E_{**}$-algebras \[E^{**}(\scr X \wedge (B_\et \Sigma_2)_+) \wequi E^{**}(\scr X)\fpsr{u, v}/(u^2 = \tau v + \rho u). \]
\end{lemma}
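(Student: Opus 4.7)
The plan is to produce the claimed comparison map and then verify it is an isomorphism by reducing to the known case $E = H\Z/2$, using the $H\Z/2$-module structure on $E$ supplied by the unit $H\Z/2 \to E$.

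First, I would construct the map $\phi$. The classes $u, v \in H\Z/2^{**}(B_\et \Sigma_{2+})$ push forward along $H\Z/2 \to E$ to classes of the same bidegree in $E^{**}(B_\et \Sigma_{2+})$, and the Voevodsky--Spitzweck relation $u^2 = \tau v + \rho u$---which holds in $H\Z/2$-cohomology---descends to the same relation in $E$-cohomology by naturality. Combined with the external cohomology product, this yields the required $E_{**}$-algebra map
\[
\phi : E^{**}(\scr X)\fpsr{u, v}/(u^2 - \tau v - \rho u) \longrightarrow E^{**}(\scr X \wedge (B_\et \Sigma_2)_+).
\]

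Next I would verify that $\phi$ is an isomorphism. The key input is the $H\Z/2$-module splitting
\[
H\Z/2 \wedge (B_\et \Sigma_2)_+ \simeq \bigoplus_{i \geq 0} \Sigma^{2i, i} H\Z/2 \,\oplus\, \bigoplus_{i \geq 0} \Sigma^{2i+1, i+1} H\Z/2
\]
already invoked in the proof of Theorem \ref{thm:hpty-of-ops}, whose summand generators correspond to the classes $v^i$ and $uv^i$ respectively. Since the unit map makes $E$ into an $H\Z/2$-module in the homotopy category $h\SH(S)$, smashing this splitting with $E$ yields an analogous decomposition of $E \wedge (B_\et \Sigma_2)_+$ into shifts of $E$. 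Applying $[\scr X, -]^{**}$ then converts the sum into a direct product, and one verifies by tracking generators that the resulting isomorphism of $E_{**}$-modules
\[
E^{**}(\scr X \wedge (B_\et \Sigma_2)_+) \cong \prod_{i \geq 0} E^{**}(\scr X)\{v^i\} \,\oplus\, \prod_{i \geq 0} E^{**}(\scr X)\{uv^i\}
\]
is precisely the comparison $\phi$.

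The main obstacle I anticipate will be the careful transport of the splitting through the homotopy-categorical (rather than strict) $H\Z/2$-action on $E$, together with justifying that cohomology of the infinite direct sum yields the expected product---the latter requires $\lim^1$-vanishing, which is automatic here because the relevant tower (obtained by truncating the wedge at finite stages) has surjective transition maps. Multiplicativity of $\phi$, as opposed to mere additivity, is immediate from its construction in terms of the cup product.
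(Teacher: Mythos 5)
Your additive argument is sound and matches the paper in spirit: the paper cites \cite[Theorem 11.14]{spitzweck2012commutative} for the $H\Z/2$-module statement, while you unwind the same $H\Z/2$-module splitting of $H\Z/2 \wedge (B_\et\Sigma_2)_+$ by hand and handle the $\lim^1$ issue; both are legitimate. The real gap is in the construction of $\phi$: you invoke ``the Voevodsky--Spitzweck relation $u^2 = \tau v + \rho u$ --- which holds in $H\Z/2$-cohomology'' and then transport it to $E$-cohomology by naturality along $H\Z/2 \to E$. But over a general base $S$ this relation in $H\Z/2^{**}(B_\et\Sigma_2)$ is precisely what needs to be proved; it is not available as a citation. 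Voevodsky establishes it over fields \cite[Lemmas 6.8, 6.9]{voevodsky2003reduced}, and Spitzweck's Theorem 11.14 supplies only the additive decomposition.

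The paper's proof spends essentially all of its effort on this multiplicative step. It first reduces (exactly as you do) to $E = H\Z/2$, $\scr X = S^0$, and then by base change to $S = \Spec(\Z)$. There it observes that the class $u^2$ is constrained by bidegree to a short list of candidates with coefficients in $H^{0,1}$, $H^{1,1}$, $H^{2,2}$, $H^{-1,0}$; the last two are killed by weight/degree considerations, and $H^{0,1}(\Spec(\Z),\Z/2)$ and $H^{1,1}(\Spec(\Z),\Z/2)$ are small groups (both $\Z/2$, via Proposition \ref{prop:HZ-dedekind}(2) and the fact that $\Z$ is a UFD). The remaining ambiguity is resolved by pulling back to a field in which $-1$ is not a square, where $\rho$ and $\tau$ are nonzero and Voevodsky's computation applies. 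You need to either reproduce this degree-plus-pullback argument or give an alternative way of pinning down $u^2$ over $\Spec(\Z)$; ``by naturality'' alone does not do it, since naturality only transfers the relation \emph{along ring maps out of $H\Z/2$}, not from fields back to $\Spec(\Z)$.
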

\begin{proof}
The additive structure is \cite[Theorem 11.14]{spitzweck2012commutative} after noting that $E$ is an $H\Z/2$-module. In order to determine the multiplicative structure, we may assume $E = H\Z/2$, $\scr X = S^0$ and $S = Spec(\Z)$. We need only determine $u^2$. Since motivic cohomology vanishes in negative weights, $u^2 = av + bu + c + duv$, where $a \in H^{0,1}(S, \Z/2), b \in H^{1,1}(S, \Z/2), c \in H^{0,1}(S, \Z/2)$ and $d \in H^{-1, 0}(S, \Z/2)$. Since $H^{*,0}$ is ordinary cohomology with $\Z/2$ coefficients, $d=0$. The inclusion $* \hookrightarrow B_\et \Sigma_2$ kills $u$ but detects $c$, so $c=0$. Since $\Z$ is a unique factorization domain, Proposition \ref{prop:HZ-dedekind}(2) implies that $H^{*,1}(Spec(\Z), \Z) = \{\pm 1\}$ and so $H^{*,1}(Spec(\Z), \Z/2) = \Z/2\{\tau, \rho\}$\todo{possibly make this a separate lemma}. In other words, either $a=0$ or $a=\tau$, and either $b=0$ or $b=\rho$. This can be checked after pullback to any field in which $-1$ is not a square (and so in particular $-1 \ne 1$, i.e. $\rho$ and $\tau$ both remain non-zero), whence the result follows from \cite[Lemma 6.8, Lemma 6.9]{voevodsky2003reduced}.
\end{proof}

\subsubsection{Formulas for $\psi_R, \psi_L$} \label{sect:formulas} We will now express the map $\psi_R$ in terms of the bases above. For notation and facts about the dual Steenrod algebra, see Theorem~\ref{thm:HZ-dual-steenrod}. We note that, by construction, the map $\psi_R$ is a map of algebras, so it suffices to determine what the map does on $u$ and $v$. This is given by
\begin{lemma} \label{lem:psir-easy} \cite[Corollary 11.23]{spitzweck2012commutative} The completed right comodule action $\psi_R$ on the additive basis of $\pi_{**}\Hom( B_{\et}\Sigma_{2+} , H\Z/2) \cong H^{**}(B_{\et}\Sigma_{2+}; \Z/2)$ is given by
\begin{enumerate}
\item $\psi_R(v) = v + \SSigma_{i \geq 1} \xi_i \otimes v^{2^i}$, and
\item $\psi_R(u) = u + \SSigma_{i \geq 0} \tau_i \otimes v^{2^i}$.
\end{enumerate}
Consequently, the completed left comodule structure is given by
\begin{enumerate}
\item $\psi_L(v) = v + \SSigma_{i \geq 1} v^{2^i} \otimes \overline{\xi_i}$, and
\item $\psi_L(u) = u + \SSigma_{i \geq 0}  v^{2^i} \otimes \overline{\tau_i}$.
\end{enumerate}
\end{lemma}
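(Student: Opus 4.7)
The plan is to first observe that $\psi_R$ is a map of $H\Z/2_{**}$-algebras. Indeed, it is induced by smashing the unit map $H\Z/2 \to H\Z/2 \wedge H\Z/2$ into the internal mapping spectrum, which is compatible with the ring structure coming from the $H\Z/2$-algebra structure on $H\Z/2 \wedge H\Z/2$. By Lemma~\ref{lemm:coh-of-BSigma2}, the algebra $\pi_{**}\imap(B_\et\Sigma_{2+}, H\Z/2)$ is generated over $H\Z/2_{**}$ by the two classes $u$ and $v$ (subject to the relation $u^2 = \tau v + \rho u$), so it suffices to determine $\psi_R(u)$ and $\psi_R(v)$.

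For $\psi_R(v)$, the key input is that $H\Z/2$ is oriented and $v = c_1(\mathcal{L}(\sigma))$ is the first Chern class of a line bundle. The coaction of $\mathcal{A}_{**}$ on Chern classes of line bundles is entirely determined by the formal group law of the orientation; for $H\Z/2$ with $1/2 \in \scr O_S$ this is, after reduction mod $2$, the additive formal group law, and dualizing yields precisely the stated formula $\psi_R(v) = v + \sum_{i \geq 1}\xi_i \otimes v^{2^i}$. For $\psi_R(u)$, we use the defining relation $\beta(u) = v$, where $\beta$ is the Bockstein, which is Kronecker-dual to $\tau_0$. Combining this with compatibility between $\psi_R$ and $\beta$ and with the formula already obtained for $\psi_R(v)$ forces $\psi_R(u) = u + \sum_{i \geq 0}\tau_i \otimes v^{2^i}$: the $\tau_i$'s are exactly the ``odd'' generators needed to lift each term $\xi_{i+1} \otimes v^{2^{i+1}}$ of $\psi_R(v)$ through the Bockstein.

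The left comodule formulas are then formal: by \eqref{eq:left-to-right} we have $\psi_L = \psi_R \circ \sw \circ \chi$, and by definition of the overline notation $\chi(\xi_i) = \overline{\xi_i}$ and $\chi(\tau_i) = \overline{\tau_i}$. Applying $\chi$ followed by the swap of tensor factors to the formulas for $\psi_R$ produces the stated formulas for $\psi_L$. The main obstacle is the computation of $\psi_R(v)$: singling out the exponents $2^i$ (as opposed to other candidates compatible with the bidegrees, involving $u$ or $\rho, \tau$) genuinely requires the orientation / formal-group-law input and cannot be obtained by bidegree bookkeeping alone; this is essentially the content of Voevodsky's analysis in \cite{voevodsky2003reduced}, extended to general bases by Spitzweck.
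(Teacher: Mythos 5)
The paper gives no argument here at all: it simply cites \cite[Corollary 11.23]{spitzweck2012commutative} for the right coaction formulas, and the passage to $\psi_L$ is immediate from \eqref{eq:left-to-right} (that is, Lemma~\ref{lem:antipode}). Your proposal attempts an independent derivation, so it is a genuinely different route; let me flag two issues with it.

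First, for $\psi_R(v)$: the formal-group-law mechanism you describe is the right idea, but there is a chicken-and-egg subtlety. In Milnor's approach (followed by Voevodsky and Spitzweck), the $\xi_i$ are \emph{defined} to be the coefficients appearing in the comparison of the two orientations on $H\Z/2 \wedge H\Z/2$, which is exactly the formula $\psi_R(v) = \sum_i \xi_i \otimes v^{2^i}$ with $\xi_0=1$. The content is then the structure theorem (Theorem~\ref{thm:HZ-dual-steenrod}(1)), not the coaction formula itself. If you are taking $\xi_i$ as ``known'' from Theorem~\ref{thm:HZ-dual-steenrod}, you would need to verify that those $\xi_i$ agree with the strict-isomorphism coefficients produced by the FGL argument, which your sketch does not do.

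Second, and more seriously, the Bockstein step for $\psi_R(u)$ does not ``force'' the stated formula. Knowing $\beta(u)=v$ and a Leibniz-type compatibility of $\psi_R$ with $\beta$ only determines $\psi_R(u)$ up to $\beta$-primitive corrections. In the motivic setting, with $\rho$, $\tau$, and $\tau_0$ around, there are a priori extra bidegree-$(1,1)$ terms one must exclude. Voevodsky's and Spitzweck's arguments handle this by direct computation (the element $u$ is constrained by restriction to the basepoint and by $u^2 = \tau v + \rho u$), rather than by the Bockstein compatibility alone. So your argument as written has a genuine gap at this point. The cleanest route is still the paper's: cite Spitzweck for $\psi_R$, then invoke Lemma~\ref{lem:antipode} for $\psi_L$, which your final paragraph handles correctly.
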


In order to present the answer for $v^k, v^ku$ for $k \in \Z$, we will introduce some notation. Consider the following power series in $\scr A_{**}\fpsr{t}$. Define
\[ \xi(t) := \underset{i \geq 0}{\mathlarger{\sum}} \xi_i t^{2^i} \quad\text{and}\quad \tau(t) := \underset{i \geq 0}{\SSigma} \tau_it^{2^i}. \]
We will also consider their conjugates $\overline{\xi}(t) := \overline{\xi(t)} := \underset{i \geq 0}{\SSigma} \overline{\xi_i} t^{2^i}$, $\overline{\tau}(t) := \overline{\tau(t)} := \underset{i \geq 0}{\SSigma} \overline{\tau_i}t^{2^i}$. If $f(t) \in \scr A_{**}\fpsr{t}$, we write $[f(t)]_{t^{k}}$ to denote the coefficient of $t^k$ in the power series $f(t)$.

A key result about these polynomials which we will use later is the following.
\begin{lemma} \label{lem:comp-inv} We have the following identities
\begin{enumerate}
\item $\xi( \overline{\xi} (t)) = \overline{\xi}(\xi(t)) = t$
\item $\tau(\overline{\xi}(t)) = \overline{\tau}(t)$
\item $\overline{\tau}(\xi(t)) = \tau(t)$. 
\end{enumerate}
\end{lemma}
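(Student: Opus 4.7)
The plan is to recognize each identity as a power-series encoding of an antipode axiom for the Hopf algebroid $\mathcal{A}_{**}$. The relevant coproduct formulas (from Appendix~\ref{app:steenrod}) have Milnor shape $\Delta(\xi_k) = \sum_{i+j=k} \xi_i^{2^j} \otimes \xi_j$ and $\Delta(\tau_k) = \tau_k \otimes 1 + \sum_{i+j=k} \xi_i^{2^j} \otimes \tau_j$ (with $\xi_0 = 1$), and these translate cleanly into power series because in characteristic $2$ the Frobenius $f(t) \mapsto f(t)^{2^i}$ is a ring map, so $(\sum a_j t^{2^j})^{2^i} = \sum a_j^{2^i} t^{2^{i+j}}$. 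Throughout I will use that $\mathcal{A}_{**}$ is commutative, so $\chi$ is a ring homomorphism and in particular $\chi(x^{2^j}) = \overline{x}^{2^j}$, and factors in products may be reordered freely.

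For (1), I would first expand
\[
\xi(\overline{\xi}(t)) = \sum_{k \geq 0} \Bigl(\sum_{i+j=k} \xi_i\, \overline{\xi_j}^{2^i}\Bigr) t^{2^k},
\]
and note that after swapping the indices $i \leftrightarrow j$ and using commutativity, the inner sum becomes $\sum_{i+j=k} \overline{\xi_i}^{2^j}\, \xi_j$. But this is precisely $\mu(\chi \otimes \id)\Delta(\xi_k)$, which by the antipode axiom equals $\eta\epsilon(\xi_k) = \delta_{k,0}$. Hence $\xi(\overline{\xi}(t)) = t$. The identity $\overline{\xi}(\xi(t)) = t$ is obtained symmetrically from the companion axiom $\mu(\id \otimes \chi)\Delta = \eta\epsilon$.

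For (2), the same template applies: the coefficient of $t^{2^k}$ in $\tau(\overline{\xi}(t))$ unpacks to $\sum_{i+j=k} \tau_i\, \overline{\xi_j}^{2^i}$, while applying $\mu(\chi \otimes \id)$ to $\Delta(\tau_k)$ (and using $\epsilon(\tau_k) = 0$) gives
\[
\overline{\tau_k} = \sum_{i+j=k} \overline{\xi_j}^{2^i}\, \tau_i,
\]
which matches after reordering. Identity (3) is then a formal consequence of (1) and (2): substituting $t \mapsto \xi(t)$ in (2) yields $\tau(\overline{\xi}(\xi(t))) = \overline{\tau}(\xi(t))$, and the left hand side collapses to $\tau(t)$ by (1).

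The only real obstacle is bookkeeping: one has to be sure that the $\tau_k$ coproduct in the motivic setting actually has the naive Milnor shape, with no correction terms in $\tau$ or $\rho$ from $H\Z/2_{**}$ — a point to be extracted from the Hopf algebroid description in Appendix~\ref{app:steenrod}. Once that is settled, the argument is pure symbolic manipulation with the Frobenius-respects-sums identity and one index swap.
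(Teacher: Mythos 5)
Your argument is correct and matches the paper's approach: expand the composed power series using the Frobenius identity, then recognize the coefficient of $t^{2^r}$ as an antipode relation that vanishes — the paper cites the explicit antipode formulas of Theorem~\ref{thm:HZ-dual-steenrod}(5), which are exactly the coefficientwise form of the axiom you invoke. One small caution: since $\mathcal{A}_{**}$ is a Hopf \emph{algebroid}, the antipode axiom comes in two versions, $m(\chi\otimes\id)\psi = \eta_R\epsilon$ and $m(\id\otimes\chi)\psi = \eta_L\epsilon$, and $\chi$ does not fix scalars (indeed $\chi(\tau)=\tau+\rho\tau_0$); this makes no difference here because $\epsilon$ annihilates every $\xi_i$ and $\tau_i$, but it is exactly the sort of correction you flagged as a potential obstacle, and it does appear elsewhere (e.g.\ in the relation for $\tau_i^2$).
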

\begin{proof}
The identities above are all immediate consequences of the formulas for antipodes as written in Theorem~\ref{thm:HZ-dual-steenrod}(5).
For example, for the first identity, we can write $\xi(\overline{\xi}(t))$ as a formal power series in $t$; clearly the only non-vanishing coefficients occur at $t^{2^i}$ for $i \ge 0$.
The coefficient at $t = t^{2^0}$ is immediately verified to be $1$; vanshing of the coefficient at $t^{2^r}$ means that \[ \sum_{i=0}^r \xi_{r-i}^{2^i} \overline{\xi}_i = 0, \] which is equivalent to the first formula in Theorem~\ref{thm:HZ-dual-steenrod}(5).
Conjugating both sides of $\xi(\overline{\xi}(t)) = t$ we deduce that $\overline{\xi}(\xi(t)) = t$ as well.
Formulas (2) and (3) are established similarly.
\end{proof}

\begin{lemma} \label{lem:psir} The completed right comodule action $\psi_R$ on $\pi_{**}\Hom( B_{\et}\Sigma_{2+} , H\Z/2)[v^{-1}]$ is given by
\begin{enumerate}
\item $\psi_R(v^k) = \underset{j \in \Z}{\SSigma} v^j \otimes [\xi(t)^k]_{t^j}$, and
\item $\psi_R(v^ku) = \underset{j \in \Z}{\SSigma} v^ju \otimes [\xi^k(t)]_{t^j} + \underset{j \in \Z}{\SSigma} v^j \otimes [\tau(t)\xi^k(t)]_{t^j}$.
\end{enumerate}
\end{lemma}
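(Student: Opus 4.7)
The plan is to leverage multiplicativity of $\psi_R$, noted at the start of \S\ref{sect:formulas}, to reduce everything to the values on $v$ and $u$ furnished by Lemma \ref{lem:psir-easy}. Rewriting those values in the convention where the coefficients in $\scr A_{**}$ sit on the right of the tensor, we have $\psi_R(v) = \sum_{i\ge 0} v^{2^i}\otimes \xi_i$ (with $\xi_0 = 1$) and $\psi_R(u) = u\otimes 1 + \sum_{i\ge 0} v^{2^i}\otimes \tau_i$. The first of these is literally the image of the formal series $\xi(t)$ under the substitution $t\mapsto v$, reading off tensor factors appropriately.

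For $\psi_R(v^k)$ with $k\ge 0$, I would simply expand $\psi_R(v)^k = (\sum_i v^{2^i}\otimes \xi_i)^k$ by the multinomial theorem. The coefficient of $v^j$ in the first factor is the sum over tuples $(i_1,\dots,i_k)$ with $\sum_l 2^{i_l} = j$ of $\xi_{i_1}\cdots\xi_{i_k}$, which is by definition $[\xi(t)^k]_{t^j}$. For $k<0$, one first checks that $\psi_R(v)$ is a unit in the completed tensor product: factoring $\psi_R(v) = (v\otimes 1)\cdot (1 + \sum_{i\ge 1} v^{2^i-1}\otimes \xi_i)$, the second factor lies in $1 + I$ where $I$ is the ideal of elements of strictly positive weight, which is topologically nilpotent in the topology witnessing the $\limone$-vanishing established in the proof of Theorem \ref{thm:hpty-of-ops}. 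Hence $\psi_R(v)^{-1}$ exists, and expanding the geometric series identifies it term-by-term with $\sum_j v^j\otimes [\xi(t)^{-1}]_{t^j}$, where $\xi(t)^{-1}$ is the formal Laurent inverse of $\xi(t)$. The formula for arbitrary negative $k$ then follows by taking further powers of $\psi_R(v)^{-1}$, again by the Cauchy product.

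For the second formula, I would write $\psi_R(v^k u) = \psi_R(v)^k\cdot \psi_R(u)$ and distribute over the two summands of $\psi_R(u)$. The $u\otimes 1$ piece yields $\sum_j v^j u \otimes [\xi(t)^k]_{t^j}$ directly from the first formula. The $\sum_i v^{2^i}\otimes \tau_i$ piece, when multiplied into $\sum_j v^j\otimes [\xi(t)^k]_{t^j}$, contributes the term of weight $v^m$ with coefficient $\sum_{i\ge 0}\tau_i\,[\xi(t)^k]_{t^{m-2^i}}$, which is exactly $[\tau(t)\xi(t)^k]_{t^m}$ since $\tau(t) = \sum_i \tau_i t^{2^i}$. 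Summing over $m$ yields the claimed second contribution.

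The main obstacle is the bookkeeping for negative $k$: one must be sure that all the Laurent series manipulations are legitimate in the completed tensor product $\pi_{**}\imap(B_\et\Sigma_{2+},H\Z/2)[v^{-1}]\widehat\otimes_{H\Z/2_{**}}\scr A_{**}$. Once the unit property of $\psi_R(v)$ is verified via the completion topology, the formal algebraic identities transport across verbatim, and no further geometric input is needed beyond Lemma \ref{lem:psir-easy}.
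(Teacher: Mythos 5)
Your proof is correct and follows essentially the same route as the paper's: both proofs rewrite the target formulas as $\psi_R(v^k) = \xi(v)^k$ and $\psi_R(v^k u) = \xi(v)^k(u + \tau(v))$, invoke the multiplicativity of $\psi_R$ to reduce to the values $\psi_R(v) = \xi(v)$ and $\psi_R(u) = u + \tau(v)$, and read those off from Lemma~\ref{lem:psir-easy}. The only material difference is that you spell out the negative-$k$ case — verifying that $\psi_R(v)$ is a unit in the completed tensor product by factoring off $v\otimes 1$ and applying a convergent geometric series — a point the paper leaves implicit in the phrase ``since $\psi_R$ is a ring homomorphism.'' Supplying that check is a reasonable gap-filling but does not change the overall argument.
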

\begin{proof} 
If $f$ is a formal power series in $t$, we have $f(v) = \sum_i v^i [f(t)]_{t^i}$.
Rewriting like this, the formulas take the form \[ \psi_R(v^k) = \xi(v)^k \quad\text{and}\quad \psi_R(v^k u) = \xi(v)^k(u + \tau(v)). \]
Since $\psi_R$ is a ring homomorphism, we are reduced to showing that $\psi_R(v) = \xi(v)$ and $\psi_R(u) = u + \tau(v)$.
Via Lemma \ref{lem:psir-easy}, this holds essentially by definition.
\end{proof}

\begin{lemma}[\cite{wilson2017power}, Lemma 3.23] \label{lemm:inversion-trick}
We have $[\xi(t)^r \tau(t)]_{t^s} = [\overline\tau(t)\overline\xi(t)^{-s-1}]_{t^{-r-1}}$ and $[\xi(t)^r]_{t^s} = [\overline{\xi}(t)^{-s-1}]_{t^{-r-1}}$.
\end{lemma}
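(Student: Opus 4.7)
The plan is to prove both identities by a single change-of-variable computation in the algebra of formal Laurent series with coefficients in $\scr A_{**}$. Since $\xi_0 = 1 = \overline{\xi}_0$, the series $\xi(t)$ and $\overline{\xi}(t)$ both have leading term $t$, so $\xi(t)^r, \overline{\xi}(t)^r$ make sense as formal Laurent series for every $r \in \Z$, with lowest term $t^r$. In particular both sides of each identity vanish unless $r \le s$, and are thus well-defined.

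The key tool is formal residue calculus. Writing $[f(t)]_{t^s} = \mathrm{Res}_t\bigl(f(t) t^{-s-1}\bigr)$, and using the standard substitution formula
\[
\mathrm{Res}_t\bigl(f(t)\bigr) = \mathrm{Res}_u\bigl(f(\phi(u))\, \phi'(u)\bigr)
\]
for a compositionally invertible $\phi$, I will substitute $t = \overline{\xi}(u)$, which gives $\xi(t) = \xi(\overline{\xi}(u)) = u$ by Lemma~\ref{lem:comp-inv}(1). Thus
\[
[\xi(t)^r]_{t^s} = \mathrm{Res}_t\bigl(\xi(t)^r t^{-s-1}\bigr) = \mathrm{Res}_u\bigl(u^r\, \overline{\xi}(u)^{-s-1}\, \overline{\xi}'(u)\bigr).
\]
The crucial simplification is that in characteristic $2$,
\[
\overline{\xi}'(u) = \sum_{i \geq 0} 2^i\, \overline{\xi}_i\, u^{2^i - 1} = 1,
\]
since all terms with $i \geq 1$ have coefficient divisible by $2$. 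Hence the residue equals $\mathrm{Res}_u\bigl(u^r \overline{\xi}(u)^{-s-1}\bigr) = [\overline{\xi}(u)^{-s-1}]_{u^{-r-1}}$, giving the second identity of the lemma. The first identity follows in exactly the same way, additionally using Lemma~\ref{lem:comp-inv}(2) to rewrite $\tau(\overline{\xi}(u)) = \overline{\tau}(u)$ after the substitution.

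The only potential obstacle is justifying that the change-of-variable formula applies at the level of formal Laurent series in a non-standard setting (coefficients in a graded Hopf algebroid, and possibly infinite sums). This is, however, purely formal: the change-of-variable identity follows from the chain rule together with the observation that $\mathrm{Res}_u(g'(u)) = 0$ for any formal Laurent series $g$, both of which are purely algebraic and hold with coefficients in any commutative ring. Given that the derivative simplification above is a feature of characteristic $2$, no further input is needed beyond Lemma~\ref{lem:comp-inv}.
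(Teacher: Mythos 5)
Your proof is correct and takes essentially the same approach as the paper's: substitute $t = \overline{\xi}(u)$ in the residue expression, use Lemma~\ref{lem:comp-inv} to simplify the composites, and observe that $\overline{\xi}'(u) = 1$ in characteristic~$2$. The only difference is cosmetic — you phrase the residue formula as a purely algebraic identity rather than invoking the complex-analytic contour integral, and you make the formal-Laurent-series justification a bit more explicit, but the argument is identical.
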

\begin{proof}
Recall the residue formula $(2\pi i)[F(t)]_{t^s} = \oint F(t)t^{-s-1}\mathrm{d}t$ from complex analysis.
Taking $F(t) = \xi(t)^r \tau(t)$ and substituting $t = \overline\xi(u)$ (whence $\mathrm{d}t = \overline{\xi'}(u) \mathrm{d}u$) and using Lemma \ref{lem:comp-inv} we deduce the formal power series identity $[\xi(t)^r \tau(t)]_{t^s} = [\overline\tau(t)\overline\xi(t)^{-s-1}\overline{\xi'}(t)]_{t^{-r-1}}$.
Since we are in characteristic $2$, $\overline{\xi'}(t) = 1$, whence the first formula.
The second one is proved similarly.
\end{proof}

\begin{corollary} \label{cor:psil} The completed left comodule action $\psi_L$ on $\Ops_{H\Z/2}$ is given by
\begin{enumerate}
\item $\psi_L(e_{2k}) = \underset{j \in \Z}{\SSigma} [\xi(t)^j]_{t^k} \otimes e_{2j} + \underset{j \in \Z}{\SSigma} [\tau(t)\xi(t)^j]_{t^k} \otimes e_{2j+1}$, and
\item $\psi_L(e_{2k+ 1}) = \underset{j \in \Z}{\SSigma} [\xi(t)^j]_{t^k} \otimes e_{2j+1}$.
\end{enumerate}
\end{corollary}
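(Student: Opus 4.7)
The strategy is to transport the known formulas for the right coaction $\psi_R$ (Lemma~\ref{lem:psir}) across the equivalence $t[v^{-1}]\colon \imap(\Sigma^\infty_+ B_\et\Sigma_2,H\Z/2)[v^{-1}]\xrightarrow{\simeq}\Sigma\Ops_{H\Z/2}$ of Theorem~\ref{thm:homotopy-of-ops}(1), using the commuting diagram of Theorem~\ref{thm:homotopy-of-ops}(2) that relates $\psi_R$ to $\psi_L$ via $\sw\circ\chi$. Schematically, $\psi_L = (t[v^{-1}]_*\otimes\id)\circ\chi\circ\sw\circ\psi_R\circ t[v^{-1}]_*^{-1}$.

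First I would use Theorem~\ref{thm:homotopy-of-ops}(3) to identify the preimages of (suspensions of) the basis generators. Setting $-2i-1=2k+1$ gives $\Sigma e_{2k+1}=t(v^{-k-1})$, while $-2i-2=2k$ gives $\Sigma e_{2k}=t(uv^{-k-1})-\rho\,\Sigma e_{2k+1}$. Then, by Lemma~\ref{lem:psir},
\[ \psi_R(v^{-k-1})=\sum_j v^j\otimes [\xi(t)^{-k-1}]_{t^j},\quad \psi_R(uv^{-k-1})=\sum_j v^ju\otimes[\xi(t)^{-k-1}]_{t^j}+\sum_j v^j\otimes[\tau(t)\xi(t)^{-k-1}]_{t^j}. \]
Because the exponent $-k-1$ may be negative, the coefficients are a priori only formal; this is where the inversion identity of Lemma~\ref{lemm:inversion-trick} becomes essential, rewriting them as $[\overline\xi(t)^{-j-1}]_{t^k}$ and $[\overline\tau(t)\overline\xi(t)^{-j-1}]_{t^k}$, which are honest finite elements of $\scr A_{**}$ since the exponent of $\overline\xi(t)$ becomes eventually non-negative.

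Next I would apply $\sw\circ\chi$. Since $\scr A_{**}$ is commutative at $p=2$, the antipode is a ring homomorphism with $\chi(\overline\xi_i)=\xi_i$ and $\chi(\overline\tau_i)=\tau_i$, so it converts $\overline\xi(t)\rightsquigarrow\xi(t)$ and $\overline\tau(t)\rightsquigarrow\tau(t)$ inside the coefficient brackets. Finally, I would apply $t_*$ term-by-term, using Theorem~\ref{thm:homotopy-of-ops}(3) in the reverse direction: reindexing $j=-j'-1$ sends $v^j\mapsto\Sigma e_{2j'+1}$ and $v^ju\mapsto\Sigma e_{2j'}+\rho\Sigma e_{2j'+1}$. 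Reading off formula (2) for $\psi_L(e_{2k+1})$ is then immediate after dividing out $\Sigma$. For (1), one computes $\psi_L(\Sigma e_{2k}) = \psi_L(t(uv^{-k-1}))-\rho\psi_L(\Sigma e_{2k+1})$; the $\rho$-terms arising from the $v^ju$-part of $t_*$ precisely cancel with the subtracted contribution, and formula (1) survives.

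The main bookkeeping obstacle is keeping the three simultaneous substitutions consistent: the index flip $k\leftrightarrow -k-1$ and $j\leftrightarrow -j-1$ coming from Theorem~\ref{thm:homotopy-of-ops}(3), the variable inversion of Lemma~\ref{lemm:inversion-trick}, and the antipode. Once these are aligned, the degree shifts introduced by $\Sigma$ and by $\rho$ are easily handled, and the claimed formulas fall out.
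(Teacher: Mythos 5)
Your proposal is correct and follows essentially the same route as the paper: transport the $\psi_R$ formulas of Lemma~\ref{lem:psir} through the equivalence $t[v^{-1}]$ of Theorem~\ref{thm:homotopy-of-ops}(1), using the commutativity in part~(2) to convert $\psi_R$ to $\psi_L$ via the antipode, the inversion identity of Lemma~\ref{lemm:inversion-trick}, and part~(3) to identify $\Sigma e_{2k+1} = t(v^{-k-1})$ and $\Sigma e_{2k} = t\bigl((u+\rho)v^{-k-1}\bigr)$. The only cosmetic difference is that the paper evaluates $\psi_R$ directly on $v^{-k-1}(u+\rho)$, whereas you treat $uv^{-k-1}$ first and then observe that the $\rho$-terms from the $v^ju$-part cancel against $\rho\psi_L(e_{2k+1})$ --- the same calculation organized slightly differently; your explicit check that the cancellation occurs is correct.
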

\begin{proof}
We prove the first equation, the second is established similarly.
By Theorem \ref{thm:homotopy-of-ops}(3) we have \[ t(v^{-k-1}(u + \rho)) = e_{2k}. \]
Hence using Theorem \ref{thm:homotopy-of-ops}(2) we get \[ \psi_L(e_{2k}) = \chi(t(\psi_R(v^{-k-1}(u + \rho)))). \]
This is evaluated using Lemma \ref{lem:psir} to be \[ t\left(\sum_{j \in \Z} [v^j(u+\rho+\overline{\tau}(t))\overline{\xi}(t)^{-k-1}]_{t^j}\right). \]
The result now follows from Lemma \ref{lemm:inversion-trick} (and Theorem \ref{thm:homotopy-of-ops}(3) again).
\end{proof}

\section{Power operations and the motivic Steinberger theorem} \label{sec:power-ops}
\subsection{Definition of the operations}\label{def-ops}
We can use homotopy classes in the spectra of operations to define power operations, as follows.

\begin{definition} \label{def:power-ops-general}
Let $R \in \HNAlg_2(\SH(S))$ and $E \in \HNAlg_2(\SH(S))_{R/}$ (more generally we are allowed to let $E \in \Mod_R^w(\HNAlg_2(\SH(S))$ by Remark \ref{rmk:mapping-action-conormed}). For $c \in \pi_{m,n}(\Ops_R)$ we define an operation 
\[Q^c: \pi_{**}(E) \to \pi_{*+m,*+n}(E)
\] as follows. Let $x \in \pi_{a,b}(E)$ be represented by a map 
\[x: S^{a,b} \to E
\] and $c$ by 
\[c: S^{m,n} \to \Ops_R.
\] Then $Q^c(x)$ is the following composite in $\SH(S)$
\begin{equation} \label{eq:power-map}
\begin{tikzcd}
S^{a+m, b+n} \ar[r, "\id \wedge c"] \ar[d, dashed, "Q^c(x)"]
  & S^{a,b} \wedge \Ops_R \ar[r, "\pi_{a,b}"] & S^{a,b} \wedge S^{-a, -b} \wedge D_2^{\mot}(S^{a, b}) \wedge R \ar[d, "D_2^{\mot}(x) \wedge \id"]   \\
E & E \wedge R \ar[l]                         & D_2^{\mot}(E) \wedge R \ar[l], \\
 \end{tikzcd}
\end{equation}
where the bottom horizontal map comes from the multiplication in $E$ and the $R$-module structure on $E$ (see Remark \ref{rmk:HNAlg2-modules}).
\end{definition}

\begin{remark} \label{rem:naturality} The power operations are natural in $\HNAlg_2(\SH(S))_{R/}$, i.e. if $E \rightarrow F \in \HNAlg_2(\SH(S))_{R/}$ then for any $x \in \pi_{a, b}(E)$ and $c \in \pi_{m,n}(\Ops_R)$ we have
\[
f_*Q^c(x) = Q^c(f_*(x)).
\]
\end{remark}

\begin{remark} \label{rem:functoriality}For a scheme $S$, let us write $\Ops_R(S) \in \SH(S)$ for the spectrum of operations from Definition \ref{def:spectrum-of-ops} when we need to make the dependence on $S$ clear. If $S$ is any scheme with residue characteristics $\ne 2$, then there is a unique map $f: S \to \Z[1/2]$. There is then an induced map \[ f^* \Ops_{H\Z/2}(\Z[1/2]) \wequi f^*(\lim C \wedge H\Z/2) \to \lim C \wedge H\Z/2 \wequi \Ops_{H\Z/2}(S)\]
and hence $\pi_{**} \Ops_{H\Z/2}(\Z[1/2]) \to \pi_{**} \Ops_{H\Z/2}(S)$.
\end{remark}

Theorem \ref{thm:hpty-of-ops} provides us with canonical classes $e_i \in \pi_{i, \ceil{i/2}}(\Ops_{H\Z/2}(\Z[1/2]))$, the images of which in $\pi_{**}(\Ops_{H\Z/2}(S))$ we still denote by $e_i$.
\begin{definition}
Let $S$ be a scheme on which $2$ is invertible, and $E \in \HNAlg_2(\SH(S))_{H\Z/2/}$ (or more generally $E \in \Mod_R^w(\HNAlg_2(\SH(S))$). For $i \in \Z$ denote by $Q^i: \pi_{**}(E) \to \pi_{*+i, *+\ceil{i/2}}(E)$ the power operation corresponding via Definition \ref{def:power-ops-general} to the class $e_i$.
\end{definition}

\subsection{Basic properties}\label{sub:bp}
In this section, we record the basic properties of the power operations $Q^i$. Throughout, all schemes will be assumed to have residue characteristics $\ne 2$.

\subsubsection{Naturality} The next proposition asserts that the operations are functorial for maps of homotopy normed spectra.
\begin{proposition}
Let $\alpha: E \to F \in \HNAlg_2(\SH(S))_{H\Z/2/}$ (or more generally in $\Mod_{H\Z/2}^w(\HNAlg_2(\SH(S))$). Then for all $m,n,i$, the following diagram commutes
\begin{equation*}
\begin{CD}
\pi_{m,n}(E) @>{\alpha_*}>> \pi_{m,n}(F) \\
@V{Q^i}VV                   @V{Q^i}VV    \\
\pi_{m+i,n+\ceil{i/2}}(E) @>{\alpha_*}>> \pi_{m+i,n+\ceil{i/2}}(F). \\
\end{CD}
\end{equation*}
\end{proposition}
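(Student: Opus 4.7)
The plan is to reduce this to Remark \ref{rem:naturality}, which already asserts the analogous naturality statement for the operation $Q^c$ associated to any class $c \in \pi_{m,n}(\Ops_R)$. Specializing to $R = H\Z/2$ and $c = e_i$ yields exactly the claim. The proof is then essentially a bookkeeping exercise, so below I sketch how I would verify Remark \ref{rem:naturality} itself in the present setting, since the proposition is an immediate instance.

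Given $x \colon S^{m,n} \to E$ representing a class in $\pi_{m,n}(E)$, the composite $\alpha \circ x$ represents $\alpha_* x$. Both $\alpha_* Q^i(x)$ and $Q^i(\alpha_* x)$ are computed from the diagram \eqref{eq:power-map}, with the top row (involving only $e_i$ and the projection $\pi_{m,n}$) identical in both cases. The idea is to show that post-composing the $E$-version of \eqref{eq:power-map} with $\alpha$ yields the $F$-version with input $\alpha \circ x$. This decomposes into two compatibilities of $\alpha$ with the structure.

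First, by functoriality of $D_2^\mot$, the vertical map $D_2^\mot(x) \wedge \id$ intertwines with $D_2^\mot(\alpha \circ x) \wedge \id = (D_2^\mot(\alpha) \circ D_2^\mot(x)) \wedge \id$. Second, one needs the square
\begin{equation*}
\begin{CD}
D_2^\mot(E) \wedge H\Z/2 @>{D_2^\mot(\alpha)\wedge\id}>> D_2^\mot(F) \wedge H\Z/2 \\
@VVV @VVV \\
E @>\alpha>> F
\end{CD}
\end{equation*}
to commute in $h\SH(S)$, where the vertical arrows are the composites $D_2^\mot(\ph) \wedge H\Z/2 \to (\ph) \wedge H\Z/2 \to (\ph)$ built from the quadratic norm and the $H\Z/2$-module structure. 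This factors as the commutativity of $\alpha \circ m_E = m_F \circ D_2^\mot(\alpha)$ (which is the defining condition of a morphism in $\HNAlg_2(\SH(S))$) together with the compatibility of $\alpha$ with the $H\Z/2$-module action (which is built into $\HNAlg_2(\SH(S))_{H\Z/2/}$, or provided by Remark \ref{rmk:mapping-action-conormed} in the weak-module variant $\Mod_{H\Z/2}^w(\HNAlg_2(\SH(S)))$).

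There is no genuine obstacle here: every transformation appearing in the definition of $Q^i$ is natural in $E$, and the only non-tautological input is that $\alpha$ respects the two pieces of structure (the quadratic norm and the module action), which is part of the datum of a morphism in $\Mod_{H\Z/2}^w(\HNAlg_2(\SH(S)))$. Assembling these compatibilities pastes the $E$-diagram to the $F$-diagram and yields the equality $\alpha_* Q^i(x) = Q^i(\alpha_* x)$ on the level of $\SH(S)$, hence on $\pi_{*+i,*+\lceil i/2\rceil}(F)$.
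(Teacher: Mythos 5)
Your reduction to Remark~\ref{rem:naturality} is exactly right, and your sketch of why that remark holds (functoriality of $D_2^\mot$ plus the defining compatibility of a morphism in $\HNAlg_2(\SH(S))$ with the norm and module structures) correctly identifies the only non-tautological inputs; this is the same approach the paper takes in its one-line proof, whose reference to Remark~\ref{rem:functoriality} (about base change in $S$) rather than Remark~\ref{rem:naturality} appears to be a labeling slip.
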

\begin{proof}
This follows from Remark \ref{rem:functoriality}.
\end{proof}

\subsubsection{Base change} We further have the following base change property.
\begin{proposition} \label{prop:powerop-base-change}
Let $f: S' \to S$ be any morphism and $E \in \HNAlg_2(\SH(S))_{H\Z/2/}$ (or more generally in $\Mod_{H\Z/2}^w(\HNAlg_2(\SH(S))$). Then for all $m,n,i$, the following diagram commutes
\begin{equation*}
\begin{CD}
\pi_{m,n}(E) @>{f^*}>> \pi_{m,n}(f^*F) \\
@V{Q^i}VV                   @V{Q^i}VV    \\
\pi_{m+i,n+\ceil{i/2}}(E) @>{f^*}>> \pi_{m+i,n+\ceil{i/2}}(f^*E). \\
\end{CD}
\end{equation*}
\end{proposition}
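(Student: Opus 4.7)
The plan is to split the argument into a general base change compatibility for arbitrary operation classes, followed by an identification of the canonical classes $e_i$ as pullbacks.

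\emph{Step 1: Base change for operations indexed by arbitrary classes.} I would first prove the following more general statement. Let $f\colon S'\to S$ and $R\in\HNAlg_2(\SH(S))$ with $f^*R\in\HNAlg_2(\SH(S'))$ as in Example \ref{ex:HNAlg2-base-change}. Since $f^*$ commutes with $D_2^{\mot}$ (Example \ref{ex:Dn-stable-base-change-specific}), smash products, and colimits, it takes the diagram $C\wedge R\colon\Z\times\Z\to\SH(S)$ defining $\Ops_R$ to the corresponding diagram for $\Ops_{f^*R}$ over $S'$. Passing to limits and using that $f^*$ is lax symmetric monoidal one obtains a canonical comparison map
\[
f^*\Ops_R \longrightarrow \Ops_{f^*R}
\]
compatible with the projections $\pi_{a,b}$. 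Given $c\in\pi_{m,n}(\Ops_R)$, let $f^*c\in\pi_{m,n}(\Ops_{f^*R})$ denote its image under this comparison. The claim is that for every $E\in\HNAlg_2(\SH(S))_{R/}$ and $x\in\pi_{a,b}(E)$,
\[
f^*(Q^c(x)) = Q^{f^*c}(f^*x).
\]
This is proved by applying $f^*$ to the defining square \eqref{eq:power-map} and checking that each of its constituents — the projection $\pi_{a,b}$, the map $D_2^{\mot}(x)\wedge\id$, and the multiplication $D_2^{\mot}(E)\wedge R\to E\wedge R\to E$ built from $m_E$ and the $R$-module structure on $E$ — is natural with respect to $f^*$; this naturality is built into Example \ref{ex:HNAlg2-base-change}.

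\emph{Step 2: Identification of the canonical classes under base change.} Next I would verify that the comparison map $f^*\Ops_{H\Z/2}(S)\to\Ops_{H\Z/2}(S')$ carries $e_i\in\pi_{**}\Ops_{H\Z/2}(S)$ to $e_i\in\pi_{**}\Ops_{H\Z/2}(S')$. By Remark \ref{rem:functoriality} the class $e_i$ over any scheme $T$ with residue characteristics $\neq 2$ is defined as the image, under the unique map $g_T\colon T\to\Spec(\Z[1/2])$, of the canonical class $e_i\in\pi_{**}\Ops_{H\Z/2}(\Z[1/2])$. Uniqueness of the map $S'\to\Spec(\Z[1/2])$ forces $g_{S'}=g_S\circ f$, and the base change comparison maps compose accordingly. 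It follows that $f^*(e_i^S)=e_i^{S'}$.

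\emph{Step 3: Assembly.} Specializing Step~1 to $R=H\Z/2$ and $c=e_i^S$, and substituting the identification $f^*(e_i^S)=e_i^{S'}$ from Step~2, yields $f^*Q^i=Q^i f^*$ on $\pi_{**}(E)$, which is the statement of the proposition. The same argument works verbatim for $E\in\Mod_{H\Z/2}^w(\HNAlg_2(\SH(S)))$, since the weak module structure of Remark \ref{rmk:mapping-action-conormed} is also preserved by $f^*$.

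\emph{Expected obstacle.} There is no substantive obstacle: the argument is a formal unwinding of naturality. The only point requiring mild care is the construction of the comparison map $f^*\Ops_R\to\Ops_{f^*R}$ and the verification that it sends $e_i$ to $e_i$; this amounts to the observation that the construction of $e_i$ in Theorem \ref{thm:hpty-of-ops} is manifestly natural in the base, being built from the canonical splittings of $H\Z/2\wedge D_2^{\mot}(T^{\wedge n})$ which themselves commute with base change.
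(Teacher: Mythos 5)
Your proof is correct and follows essentially the same route as the paper's: observe that every constituent of the defining square \eqref{eq:power-map} is stable under base change, and that the class $e_i$ over any base is by definition pulled back from $\Spec(\Z[1/2])$ (Remark \ref{rem:functoriality}), so the comparison maps compose. The paper compresses all of this into a single sentence, whereas you unpack it into the comparison map $f^*\Ops_R\to\Ops_{f^*R}$, the identification of $e_i$ under composition of comparison maps, and the final assembly — the same argument, just written out in full.
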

\begin{proof}
We need to establish that the diagram~\eqref{eq:power-map} defining $Q^c(x)$ is stable under base change. This is clear since all constituent spectra and morphisms are (see Example \ref{ex:Dn-stable-base-change} for $D_2^\mot(x)$).
\end{proof}

\subsubsection{Squaring} In certain degrees, the power operation acts by squaring a class. To prove this, we begin with the following lemma.
\begin{lemma} \label{lemm:squaring-class}
Let $m \ge 0$.
\begin{enumerate}
\item The composite \[ T^{2m} \xrightarrow{p} D_2^\mot(T^m) \to D_2^\mot(T^m) \wedge H\Z/2 \wequi T^{2m} \wedge BC_{2+} \wedge H\Z/2 \] is homotopic to the ``bottom cell'' $T^{2m} \wedge e_0$.
\item The following diagram commutes
\begin{equation*}
\begin{CD}
S^{2\cdot(2m+1,m+1)} @>p>> D_2^\mot(S^{2m+1,m+1}) \wedge H\Z/2 \\
@V{e_1}VV                                                     @A{\Delta}AA        \\
S^{4m,2m} \wedge S^{1,1} \wedge D_2^\mot(S^0) \wedge H\Z/2 @= S^{1,1} \wedge D_2^\mot(S^{2m,m}) \wedge H\Z/2. \\
\end{CD}
\end{equation*}
\end{enumerate}
Here the maps $p$ are the canonical ones.
\end{lemma}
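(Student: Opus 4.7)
The plan is to reduce both parts to very explicit identifications in base cases, using the conormed transformation of Construction~\ref{constr:enh-diag} together with the Thom-isomorphism equivalence $D_2^\mot(T^m) \wedge H\Z/2 \simeq T^{2m} \wedge BC_{2+} \wedge H\Z/2$ supplied by Proposition~\ref{prop:homology-of-dmot}.

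For part (1), after smashing with $H\Z/2$ and applying this equivalence, the map $p$ is classified by an element of
\[ \Hom_{\SH(S)}(T^{2m}, T^{2m} \wedge BC_{2+} \wedge H\Z/2) \cong H\Z/2_{0,0}(BC_{2+}) = \Z/2\{e_0\}, \]
since the other summands $H\Z/2_{**}\{e_i\}$ with $i > 0$ sit in bidegrees that cannot contribute. I would then distinguish $0$ from $e_0$ by invoking naturality of the Thom isomorphism and of $p$ to reduce to $m = 0$: in that base case, the map $S^0 \to D_2^\mot(S^0) \wedge H\Z/2 \simeq BC_{2+} \wedge H\Z/2$ is the basepoint inclusion $S^0 \to BC_{2+}$ smashed with the unit $\1 \to H\Z/2$, which represents $e_0$ by definition.

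For part (2), the same framework applies, but the crucial new input is Lemma~\ref{lem:diagonal-euler}. Writing $S^{2m+1, m+1} = T^m \wedge S^{1,1}$, I would decompose $p$ using the conormed transformation: the $T^m$-factor contributes an $e_0$ as in part (1), whereas the $S^{1,1}$-factor carries a nontrivial diagonal. By Lemma~\ref{lem:diagonal-euler} that diagonal equals $\id \wedge e(\sigma)$, and under the Thom isomorphism the Euler class $e(\sigma)$ corresponds to the class $e_1 \in H\Z/2_{1,1}(BC_{2+})$. Threading this through $\Delta_{S^{1,1}, S^{2m,m}}$, the composite along the bottom of the diagram coincides with $p$; the coherence of Lemma~\ref{lem:cohere-diag} ensures that the two reasonable ways of assembling the transformation agree.

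The main obstacle will be verifying that the ``$=$'' in the bottom row of the diagram, which encodes the Thom-isomorphism equivalence $S^{4m, 2m} \wedge D_2^\mot(S^0) \wedge H\Z/2 \simeq D_2^\mot(S^{2m, m}) \wedge H\Z/2$, is compatible both with $\Delta$ and with the $e_1$ factor produced above. Establishing this compatibility amounts to combining naturality of $\Delta_{F, E}$ with the explicit homological decomposition of Proposition~\ref{prop:homology-of-dmot}, and I expect the coherence diagram of Lemma~\ref{lem:cohere-diag} to be the main tool for making this precise.
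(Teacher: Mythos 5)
The paper's proof of part (2) is technically quite involved: after reducing to $m=0$, it invokes the $2$-excisive/reduced structure of $D_2^\mot$ (Corollaries~\ref{corr:D-mot-excisive}, \ref{corr:D-mot-reduced}, Proposition~\ref{prop:D2-mot-cr2}) to build the cofiber sequence $\Sigma D_2^\mot(S^{1,1}) \to D_2^\mot(S^{2,1}) \xrightarrow{\partial} S^{4,2}$, shows $\partial \wedge H\Z/2 = 0$ via pure-Tateness plus complex realization, deduces that $D_2^\mot(S^{1,1}) \wedge H\Z/2$ is split Tate, concludes $\Delta \circ e_1 = a\,p$ for some $a \in \F_2$ by the degree constraints of Lemma~\ref{lem:ptm}(1), and finally fixes $a = 1$ by a second appeal to complex realization. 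Your proposal does none of this; it is a genuinely different route, but it contains two concrete errors that break it.

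First, the bidegree identification is wrong. The Euler class $e(\sigma)$ corresponds under the Thom isomorphism to the cohomology class $v = c_1(\scr L) \in H^{2,1}(BC_{2})$ (see the proof of Lemma~\ref{lem:action}), \emph{not} to $e_1$. The class $e_1$ has homological bidegree $(1,1)$ and is dual to $u$, not $v$ (cf.\ the proof of Lemma~\ref{lemm:BSigma2-diagonal}). Moreover Lemma~\ref{lem:diagonal-euler} is about the diagonal of $T = \Th(\A^1)$, a Thom space, whereas you apply it to the diagonal of $S^{1,1} = \Gm$, which is not a $\Th(\A^n)$ and whose diagonal is not of the form $\id \wedge e(\sigma)$. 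So the step ``the $S^{1,1}$-factor carries a nontrivial diagonal which is $\id \wedge e(\sigma)$ which is $e_1$'' is incorrect on both counts.

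Second, even if the bookkeeping of characteristic classes were repaired, the final assertion ``threading this through $\Delta_{S^{1,1}, S^{2m,m}}$, the composite along the bottom of the diagram coincides with $p$'' is precisely the content of the lemma and is not established by what precedes it. Lemma~\ref{lem:cohere-diag} is a coherence for $\Delta$ against two \emph{smash} factors; it says nothing about the relationship between $\Delta$ and the canonical map $p: E \wedge E \to D_2^\mot(E)$. The coefficient ambiguity (is it $p$, or is it $0$?) is exactly what the paper needs the splitting of $D_2^\mot(S^{1,1}) \wedge H\Z/2$ and the complex realization argument to resolve; your proposal has no mechanism for detecting this coefficient. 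A smaller point: in part (1), $H\Z/2_{0,0}(BC_{2+})$ is not $\Z/2$ in general (over a base where $\rho \ne 0$, the $e_1$-summand contributes), so the reduction to $m=0$ and the direct identification with the unit map, as the paper does, is the safer argument; but your overall idea for (1) is fine.
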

\begin{proof}
Compatibility of $p$ with Thom isomorphisms reduces immediately to $m=0$.
The case (1) is clear.

For (2), we argue as follows.\todo{I would like a more direct proof... also a less sloppy one}
We may assume that $S = \Spec(\Z[1/2])$.
By Corollaries \ref{corr:D-mot-excisive} and \ref{corr:D-mot-reduced}, the functor $D_2^\mot$ is $2$-excisive and reduced.
By Proposition \ref{prop:D2-mot-cr2}, the cross effect is given by $cr_2(\Sigma E, \Sigma E) \wequi E \wedge E$.
Thus Proposition \ref{prop:fibn-sequence} yields a cofiber sequence \[ \Sigma S^{2,2} \to \Sigma D_2^\mot(S^{1,1}) \to D_2^\mot(S^{2,1}) \xrightarrow{\partial} S^{4,2}. \]
Since we know that $D_2^\mot(S^{2,1}) \wedge H\Z/2$ is pure Tate, with lowest weight term $S^{4,2}$, the map $\partial \wedge H\Z/2$ has at most one non-vanishing component.
This is excluded by considering complex realization.
We thus obtain \[ D_2^\mot(S^{1,1}) \wedge H\Z/2 \wequi (S^{2,2} \vee S^{3,2} \vee S^{4,3} \vee \dots) \wedge H\Z/2. \]
Using Lemma \ref{lem:ptm}(1) we see that $\Delta \circ e_1 = a p$, for some $a \in \F_2$.
Again by considering complex realization, we see that $a=1$ as needed.
\end{proof}

\begin{proposition} \label{prop:squaring}
For $E \in \HNAlg_2(\SH(S))_{H\Z/2/}$ (or more generally in $\Mod_{H\Z/2}^w(\HNAlg_2(\SH(S))$), $\epsilon \in \{0,1\}$ and $x \in \pi_{2m-\epsilon,m}(E)$ we have $Q^{2m-\epsilon}(x) = x^2.$
\end{proposition}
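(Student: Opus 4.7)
The plan is to identify the composite defining $Q^{2m-\epsilon}(x)$ with the honest multiplication on $E$ via two inputs: Lemma~\ref{lemm:squaring-class}, which identifies the relevant bottom cell of $D_2^\mot(S^{2m-\epsilon,m}) \wedge H\Z/2$ with a canonical squaring map $p$, and the fact built into the $\HNAlg_2$-structure that the composite $m_E \circ \alpha_E$ recovers the multiplication on $E$ (as noted in the remark immediately after the definition of $\HNAlg_2(\SH(S))$). First, using Proposition~\ref{prop:powerop-base-change}, I reduce to $S = \Spec \Z[1/2]$. Unpacking Definition~\ref{def:power-ops-general} with $c = e_{2m-\epsilon} \in \pi_{2m-\epsilon, m}(\Ops_{H\Z/2})$ and $x \in \pi_{2m-\epsilon, m}(E)$, the map $Q^{2m-\epsilon}(x) \colon S^{4m-2\epsilon, 2m} \to E$ factors as
\[
S^{4m-2\epsilon, 2m} \xrightarrow{\bar e_{2m-\epsilon}} D_2^\mot(S^{2m-\epsilon, m}) \wedge H\Z/2 \xrightarrow{D_2^\mot(x) \wedge \id} D_2^\mot(E) \wedge H\Z/2 \to E,
\]
where $\bar e_{2m-\epsilon}$ is (up to the appropriate desuspension) the bottom even (resp.\ odd) cell in the Thom-style decomposition of $D_2^\mot(S^{2m-\epsilon, m}) \wedge H\Z/2$ given by Proposition~\ref{prop:homology-of-dmot}.

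Second, I apply Lemma~\ref{lemm:squaring-class} to identify $\bar e_{2m-\epsilon}$ with the canonical squaring map $p$, which by construction coincides with the op-lax witness $\alpha_{S^{2m-\epsilon, m}} \colon S^{2m-\epsilon, m} \wedge S^{2m-\epsilon, m} \to D_2^\mot(S^{2m-\epsilon, m})$ (smashed with the unit of $H\Z/2$). Part (1) handles $\epsilon = 0$ directly, since the bottom even cell is tautologically the image of $p$. For $\epsilon = 1$, I relabel $m \mapsto m-1$ and invoke part (2), which exhibits $p$ as the composite $\Delta \circ (\id_{S^{1,1}} \wedge e_1)$, where $\Delta$ is the conormed transformation of Construction~\ref{constr:enh-diag}; either way, the composite agrees with (the image of) $\alpha_{S^{2m-\epsilon, m}}$.

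Third, I conclude using naturality of $\alpha$: the identity $D_2^\mot(x) \circ \alpha_{S^{2m-\epsilon, m}} = \alpha_E \circ (x \wedge x)$, together with the equality of $m_E \circ \alpha_E$ with the multiplication on $E$, gives $Q^{2m-\epsilon}(x) = x \cdot x = x^2$. The main obstacle I anticipate is carefully handling the conormed $\Delta$ in the odd case: one must verify that $\Delta$ is compatible with post-composition by $D_2^\mot(x)$ and, ultimately, with the multiplication on $E$ before the cancellation can take place. This compatibility is in spirit already the content of Lemma~\ref{lemm:ur-cartan-formula} (the ur-Cartan formula) together with the functoriality of the conormed transformation, so the remaining argument is essentially a diagram chase rather than a new geometric input.
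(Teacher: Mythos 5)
Your proof follows essentially the same route as the paper's: unpack Definition~\ref{def:power-ops-general}, use Remark~\ref{rmk:Ops-HZ2-projection} together with Lemma~\ref{lemm:squaring-class} to identify the class $e_{2m-\epsilon}$ with the ``bottom cell'' (i.e.\ the canonical map $p = \alpha_{S^{2m-\epsilon,m}}$), and then conclude by naturality of $\alpha$ together with the fact that $m_E\circ\alpha_E$ is the multiplication on $E$. Your commutative square and the identification via the op-lax witness are exactly what the paper's displayed diagram encodes.

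The only thing worth flagging is the opening base-change step. As written, ``using Proposition~\ref{prop:powerop-base-change}, I reduce to $S=\Spec\Z[1/2]$'' goes in the wrong direction: base change lets you transport a formula along $f^*\colon \SH(\Z[1/2])\to\SH(S)$, but $E\in\HNAlg_2(\SH(S))_{H\Z/2/}$ is not in general pulled back from $\Z[1/2]$, so you cannot reduce the full proposition this way. Fortunately this reduction is also unnecessary: Lemma~\ref{lemm:squaring-class} is already stated over an arbitrary base with $1/2\in\scr O_S$ (its own proof performs the reduction to $\Z[1/2]$), so you can just cite it directly. If you want to retain a reduction step, it should be phrased as a reduction for the identification of $e_{2m-\epsilon}$ with the bottom cell --- a statement about universal Tate spectra that are genuinely base-changed from $\Z[1/2]$ --- not for the proposition about $E$ and $x$ themselves.
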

\begin{proof}
By definition, $Q^{2m-\epsilon}(x)$ is the composite \[ S^{2m-\epsilon,m} \wedge S^{2m-\epsilon,m} \xrightarrow{\id \wedge e_{2m-\epsilon}} S^{2m-\epsilon, m} \wedge \Ops_{H\Z/2} \to D_2^\mot(S^{2m-\epsilon,m}) \wedge H\Z/2 \xrightarrow{D_2^\mot(x) \wedge \id} D_2^\mot(E) \wedge H\Z/2 \to E. \]
By Remark \ref{rmk:Ops-HZ2-projection} and Lemma \ref{lemm:squaring-class}, the map $S^{4m-2\epsilon,2m} \wequi S^{2m-\epsilon,m} \wedge S^{2m-\epsilon,m} \to D_2^\mot(S^{2m-\epsilon,m}) \wedge H\Z/2$ coincides with the bottom class $S^{2m-\epsilon,m} \wedge S^{2m-\epsilon,m} \to D_2^\mot(S^{2m-\epsilon})$.
It follows that $Q^{2m-\epsilon}(x)$ coincides with the composite via the top right corner in the following commutative diagram
\begin{equation*}
\begin{CD}
S^{2m-\epsilon,m} \wedge S^{2m-\epsilon,m} @>>> D_2^\mot(S^{2m-\epsilon,m}) \\
@V{x \wedge x}VV              @V{D_2^\mot(x)}VV  \\
E \wedge E               @>>> D_2^\mot(E)  @>>> E.
\end{CD}
\end{equation*}
The composite via the bottom left corner is $x^2$ by definition.
\end{proof}

\subsubsection{Stability}
For $\scr X \in \Spc(S)_*$, by Construction \ref{cons:map-HNAlg} and Remark \ref{rmk:mapping-action-conormed} we have 
\[
\imap(\scr X, E) \in \Mod_{H\Z/2}^w(\HNAlg_2(\SH(S)).
\]
Applying this to $\scr X = S^{p+q,q}$ (with $p, q \ge 0$) we obtain $\Omega^{p+q,q} E \in \Mod_{H\Z/2}^w(\HNAlg_2(\SH(S))$, and so this spectrum still has power operations.
\begin{proposition}
For $E \in \Mod_{H\Z/2}^w(\HNAlg_2(\SH(S))$, $p, q \ge 0$, $m, n, i \in \Z$ the following square commutes
\begin{equation*}
\begin{CD}
\pi_{m+p+q, n+q}(E) @= \pi_{m,n} \Omega^{p+q,q} E \\
@V{Q^i}VV              @V{Q^i}VV \\
\pi_{m+p+q+i, n+q+\ceil{i/2}}(E) @= \pi_{m+i,n+\ceil{i/2}} \Omega^{p+q,q} E. \\
\end{CD}
\end{equation*}
\end{proposition}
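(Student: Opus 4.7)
The suspension--looping isomorphism $\pi_{m+p+q, n+q}(E) \cong \pi_{m,n}(\Omega^{p+q,q} E)$ is induced by the $(F \wedge {-}) \dashv \imap(F, -)$ adjunction with $F = S^{p+q, q}$. Writing $\tilde x : S^{m+p+q, n+q} \to E$ for the adjoint of $x : S^{m, n} \to \imap(F, E)$, the assertion amounts to the equality $\widetilde{Q^i(x)} = Q^i(\tilde x)$ of maps $S^{m+p+q+i, n+q+\ceil{i/2}} \to E$. The plan is to unfold both composites and reduce the comparison to the universal property of $\Ops_{H\Z/2}$ as a $\Z \times \Z$-indexed limit.

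First I compute the adjoint $\tilde m : F \wedge D_2^\mot(\imap(F, E)) \to E$ of the $\HNAlg_2$-multiplication $m : D_2^\mot(\imap(F, E)) \to \imap(F, E)$ constructed in Construction \ref{cons:map-HNAlg}. A direct unwinding using the triangle identity for the adjunction yields
\[
\tilde m \;=\; m_E \circ D_2^\mot(\ev) \circ \Delta_{F, \imap(F, E)},
\]
where $\Delta_{F, -}$ is the conormed transformation of Construction \ref{constr:enh-diag}. Substituting this into Definition \ref{def:power-ops-general} for $Q^i(x)$, invoking naturality of $\Delta_{F, -}$ at $x$ together with the identity $\ev \circ (\id_F \wedge x) = \tilde x$, I can express $\widetilde{Q^i(x)}$ as the composite
\[
S^{m+p+q+i, n+q+\ceil{i/2}} \xrightarrow{\alpha} F \wedge D_2^\mot(S^{m, n}) \wedge H\Z/2 \xrightarrow{\Delta_{F, S^{m, n}} \wedge \id} D_2^\mot(S^{m+p+q, n+q}) \wedge H\Z/2 \xrightarrow{D_2^\mot(\tilde x) \wedge \id} D_2^\mot(E) \wedge H\Z/2 \to E,
\]
where $\alpha$ is assembled from the class $e_i \in \pi_{i, \ceil{i/2}}(\Ops_{H\Z/2})$ and the projection $\pi_{m, n}$ of $\Ops_{H\Z/2}$.

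By Definition \ref{def:power-ops-general}, the map $Q^i(\tilde x)$ factors identically except that the portion preceding $D_2^\mot(\tilde x) \wedge \id$ is built from $e_i$ and the single projection $\pi_{m+p+q, n+q}$. The proposition therefore reduces to showing that the two resulting maps $F \wedge \Ops_{H\Z/2} \to S^{-m, -n} \wedge D_2^\mot(S^{m+p+q, n+q}) \wedge H\Z/2$ agree. But this is precisely the statement that the transition maps in the $\Z \times \Z$-limit defining $\Ops_{H\Z/2}$ (see Section \ref{sec:def-spectra-of-ops}) are suspensions of the conormed transformations $\Delta_{S^{1, 0}, -}$ and $\Delta_{\Gm, -}$; this holds by Construction \ref{constr:enh-diag} together with the coherence supplied by Lemma \ref{lem:cohere-diag}. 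Factoring $F = (S^{1, 0})^{\wedge p} \wedge \Gm^{\wedge q}$ and inducting one coordinate at a time, the claim follows by composing $p$ horizontal and $q$ vertical transitions.

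The main obstacle is the bookkeeping involved: the adjunctions, Tate twists, symmetric-monoidal shuffles, and the re-indexing between Definition \ref{def:power-ops-general} (where $\pi_{a, b}$ lands in $S^{-a, -b} \wedge D_2^\mot(S^{a, b}) \wedge R$) and Definition \ref{def:spectrum-of-ops} must all be tracked in parallel. No individual step is deep, but the comparison only becomes evident after all the pieces are aligned. The conceptual content is simply that the construction of $\Ops_{H\Z/2}$ is rigged to make its projections compatible with the conormed transformation, which in turn governs the $\HNAlg_2$-structure on $\imap(F, E)$ via Construction \ref{cons:map-HNAlg}.
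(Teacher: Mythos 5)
Your argument is correct and takes essentially the same route as the paper's: both reduce the claim to (a) the adjoint of the $\HNAlg_2$-multiplication on $\imap(F,E)$ from Construction~\ref{cons:map-HNAlg} being $m_E \circ D_2^\mot(\ev) \circ \Delta_{F,\imap(F,E)}$, (b) naturality of the conormed transformation $\Delta_{F,-}$, and (c) the fact that the transition maps in the limit defining $\Ops_{H\Z/2}$ are exactly the conormed transformations for $S^{1,0}$ and $\Gm$ (Construction~\ref{constr:enh-diag} and Lemma~\ref{lem:cohere-diag}). The paper packages this as one commutative diagram rather than a chain of adjoint calculations, but the content and the lemmas invoked are identical.
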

\begin{proof}
Let $x: S^{m,n} \to \Omega^{p+q,q} E$ and consider the diagram
\begin{equation*}
\begin{tikzcd}
S^{p+q,q} \wedge S^{m,n} \wedge S^{i, \ceil{i/2}} \ar[d, "\id \wedge \id \wedge e_i"] \\
S^{p+q,q} \wedge S^{m,n} \wedge \Ops_{H\Z/2} \ar[d, "\id \wedge p"] \ar[rd, "p'"] \\
S^{p+q,q} \wedge D_2^\mot(S^{m,n}) \wedge H\Z/2 \ar[d, "\id \wedge D_2^\mot(x) \wedge \id"] \ar[r, "\Delta \wedge \id"] & D_2^\mot(S^{p+q,q} \wedge S^{m,n}) \wedge H\Z/2 \ar[d, "D_2^\mot(\id \wedge x) \wedge \id"] \ar[dd, bend left=100, "D_2^\mot(x') \wedge \id"] \\
S^{p+q,q} \wedge D_2^\mot(\Omega^{p+q,q} E) \wedge H\Z/2 \ar[d, "m'"] \ar[r, "\Delta \wedge \id"] & D_2^\mot(S^{p+q,q} \wedge \Omega^{p+q,q}E) \wedge H\Z/2 \ar[d, "D_2^\mot(\eta) \wedge \id"] \\
E & D_2^\mot(E) \wedge H\Z/2 \ar[l, "m"].
\end{tikzcd}
\end{equation*}
The maps $p$ and $p'$ are the canonical projections, $x'$ is adjoint to $x$, $\Delta$ is the diagonal transformation of \eqref{conorm-general}, $m'$ is adjoint to the multiplication in $\Omega^{p+q,q} E$ and $m$ is the multiplication in $E$. The composite from top to bottom along the left column is adjoint to $Q^i(x)$. The composite along the far right is $Q^i(x')$. It thus suffices to show that the diagram commutes. The top triangle commutes by definition of the transition maps in the inverse system defining $\Ops_{H\Z/2}$, the middle square commutes by naturality of $\Delta$, the right hand triangle commutes by definition of $x, x', \eta$ and the bottom square commutes by definition the multiplication in $\Omega^{p+q,q}E$. This concludes the proof.
\end{proof}

\begin{remark} \label{rmk:adjust-mn-stab}
Suppose we need to study a power operation $Q^i$ originating in $\pi_{m,n}$ of some normed spectrum $E$.
By the above result we can replace $E$ by $\Omega^{p+q,q} E$ (for $p,q \ge 0$), whence our operation now originates in $\pi_{m',n'}$ with $m'=m-p-q$ and $n'=n-q$.
In particular by choosing $q$ sufficiently large we can always ensure that $m' - 2n' = m-n-p+q \ge 0$.
Then by adjusting $p$ we may ensure that $(m',n')$ is of the form $(2w,w)$.
\end{remark}

\subsubsection{Vanishing} The next result gives us some guaranteed vanishing of operations.
\begin{proposition} \label{prop:power-ops-vanishing}
For $E \in \HNAlg_2(\SH(S))_{H\Z/2/}$ (or more generally in $\Mod_{H\Z/2}^w(\HNAlg_2(\SH(S))$), $i \in \Z$, $\epsilon \in \{0,1\}$ and $x \in \pi_{p,q}(E)$ with $i < p-q + \epsilon$ and $i \le q$ we have $Q^{2i-\epsilon}(x) = 0$.
\end{proposition}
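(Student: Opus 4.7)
The plan is to use the stability property to reduce $x$ to a Tate-sphere bidegree $(2w, w)$ where Remark \ref{rmk:Ops-HZ2-projection} forces the class $e_{2i-\epsilon} \in \pi_{**}\Ops_{H\Z/2}$ to project to zero in $C(2w, w) \wedge H\Z/2$, thereby annihilating $Q^{2i-\epsilon}$. A single boundary case then requires Proposition \ref{prop:squaring} together with the fact that the reduced diagonal of a topological sphere is stably null.

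More precisely, for any integer $w \le \min(q, p - q)$ the object $E' := \imap(S^{p - 2w, q - w}, E)$ lies in $\Mod_{H\Z/2}^w(\HNAlg_2(\SH(S)))$ by Construction \ref{cons:map-HNAlg} together with Remark \ref{rmk:mapping-action-conormed}, and the stability property furnishes an isomorphism $\pi_{p,q}(E) \cong \pi_{2w, w}(E')$ intertwining $Q^{2i-\epsilon}$. Writing $y$ for the image of $x$, the operation $Q^{2i-\epsilon}(y)$ factors through the projection $\pi_{2w,w}\colon \Ops_{H\Z/2} \to C(2w, w) \wedge H\Z/2 = T^{-w} \wedge D_2^\mot(T^w) \wedge H\Z/2$; applying Remark \ref{rmk:Ops-HZ2-projection} with $n = -w$, the class $e_{2i-\epsilon}$ maps to zero precisely when $i < w + \epsilon$. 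The hypotheses $i \le q$ and $i < p - q + \epsilon$ guarantee that an integer $w$ with $i - \epsilon + 1 \le w \le \min(q, p - q)$ exists in every case except the boundary $\epsilon = 0, i = q$ (where $p > 2q$ forces $i + 1 > q$), yielding $Q^{2i-\epsilon}(x) = 0$ in all non-boundary cases.

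For the remaining boundary case $\epsilon = 0, i = q, p > 2q$, I would take $w = q$, reducing to $y \in \pi_{2q, q}(\imap(S^{p-2q, 0}, E))$, where Proposition \ref{prop:squaring} gives $Q^{2q}(y) = y^2$. The non-unital product on $\imap(S^{p-2q, 0}, E)$ in the homotopy category factors through the reduced diagonal $\Delta \colon S^{p - 2q, 0} \to S^{p - 2q, 0} \wedge S^{p - 2q, 0}$; since $p - 2q \ge 1$, the group $[S^{p - 2q, 0}, S^{2(p - 2q), 0}]_{\SH(S)} \cong \pi_{-(p-2q), 0}(S^0)$ vanishes by motivic connectivity, so $\Delta = 0$ and therefore $y^2 = 0$. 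By stability we conclude $Q^{2q}(x) = 0$.

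The principal technical obstacle lies in this last step: one must identify the non-unital product on $\imap(F, E)$ arising from Construction \ref{cons:map-HNAlg} with the classical formula $(y_1, y_2) \mapsto \mu_E \circ (y_1 \wedge y_2) \circ \Delta_F$ in the homotopy category, so that the vanishing of $\Delta_F$ genuinely annihilates $y^2$. This should be a routine unwinding of the conormed transformation, but it is the crucial bridge that makes the diagonal-vanishing argument applicable in the boundary case.
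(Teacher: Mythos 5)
Your proof is essentially correct, but it is a more roundabout version of the paper's argument. The paper handles both parities of $\epsilon$ uniformly: using stability to reduce to $i=q$, applying Proposition~\ref{prop:squaring} to convert $Q^{2i-\epsilon}$ into a squaring, and then observing that $E' = \Omega^{p-2q+\epsilon,0}E$ factors through $\Omega^{1,0}$, whose multiplication is zero because the reduced diagonal $S^1 \to S^1\wedge S^1$ is null. Your proof handles the generic case via the projection formula of Remark~\ref{rmk:Ops-HZ2-projection}, which is a genuinely different (and valid) mechanism of vanishing, but you still fall back on the paper's squaring-plus-diagonal argument for the boundary case $\epsilon=0$, $i=q$. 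So the case split buys you nothing: you need the squaring argument anyway, and once you have it you could have applied it across the board. If there is anything the projection route buys, it is the conceptual point that the vanishing locus of the $Q^i$ can be detected directly at the level of $\pi_{**}\Ops_{H\Z/2}$ via the tower, but for this specific proposition it is not shorter.

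There is one genuine flaw in your boundary case. You justify $\Delta_{S^{p-2q,0}}=0$ by claiming $[S^{p-2q,0}, S^{2(p-2q),0}]_{\SH(S)} \cong \pi_{-(p-2q),0}(\1)$ vanishes ``by motivic connectivity.'' Stable $\A^1$-connectivity of the motivic sphere is a theorem of Morel over perfect fields and has been extended to Dedekind bases, but it is \emph{not} known over an arbitrary base scheme, and the proposition is stated for any $S$ with $1/2\in\scr O_S$. Fortunately the stronger group-level vanishing is not needed: the reduced diagonal $S^n \to S^n \wedge S^n \wequi S^{2n}$ for $n\geq 1$ is already null-homotopic in pointed simplicial sets (since $S^{2n}$ is simply connected), and the constant-presheaf and stabilization functors preserve this null-homotopy. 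You should replace the connectivity appeal with this elementary unstable observation. Your final paragraph correctly flags the remaining coherence point --- that the induced non-unital product on $\imap(F,E)$ from Construction~\ref{cons:map-HNAlg} factors through the reduced diagonal of $F$ --- but this is implicit in the paper's own argument as well and is a routine unwinding of the conormed transformation, not a defect specific to your proof.
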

\begin{proof}
By stability, we may replace $E$ by $\Omega^{q-i,q-i} E$ and hence assume $i=q$ (and so $p > 2q - \epsilon$). Let $E' = \Omega^{p-(2q - \epsilon),0} E$. Then $x \in \pi_{p,q}(E)$ corresponds to $x' \in \pi_{2q-\epsilon,q}(E')$. Hence by squaring and stability, $Q^{2i-\epsilon}(x)$ corresponds to $x'^2$. It thus suffices to show: for $E \in \HNAlg_2(\SH(S))$ the multiplication $\Omega^{1,0}E \wedge \Omega^{1,0}E \to \Omega^{1,0}E$ is zero.\NB{in particular $\Omega^{1,0}E$ is non-unital} This boils down to the diagonal map $S^1 \to S^1 \wedge S^1$ being null-homotopic.\NB{I guess this must be right. Still slightly disturbing that $\Omega^{1,0}E \wedge \Omega^{1,0}E \to \Omega^{1,0}E$ is zero while $\Omega^{1,0}E \wedge \Omega^{1,0}E \to D_2^\mot(\Omega^{1,0}E)$ and $D_2^\mot(\Omega^{1,0}E) \to \Omega^{1,0} E$ are not.}
\end{proof}

\subsubsection{Additivity} The operations are all additive maps.
\begin{proposition}
Let $E \in \HNAlg_2(\SH(S))_{H\Z/2/}$ (or more generally in $\Mod_{H\Z/2}^w(\HNAlg_2(\SH(S))$), $m,n,i \in \Z$ and $x, y \in \pi_{m,n}(E)$. Then \[Q^i(x+y) = Q^i(x) + Q^i(y).\] \NB{is more generally $Q^i$ compatible with transfers?}
\end{proposition}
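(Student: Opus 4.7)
The strategy is to reduce the additivity of $Q^i$ to a vanishing in mod $2$ coefficients, exploiting the $2$-excisive structure of $D_2^\mot$. Factor $x + y \colon S^{m,n} \to E$ as
\[ S^{m,n} \xrightarrow{\Delta} S^{m,n} \vee S^{m,n} \xrightarrow{x \vee y} E \vee E \xrightarrow{\nabla} E. \]
Since $D_2^\mot$ is reduced and $2$-excisive (Corollaries \ref{corr:D-mot-reduced} and \ref{corr:D-mot-excisive}) with second cross-effect $cr_2(D_2^\mot)(A,B) \wequi A \wedge B$ (Proposition \ref{prop:D2-mot-cr2}), there is a natural splitting
\[ D_2^\mot(A \vee B) \wequi D_2^\mot(A) \vee D_2^\mot(B) \vee (A \wedge B), \]
reflecting the $C_2$-equivariant decomposition of $(A \vee B)^{\wedge \ul 2}$ into the two fixed-point summands $A^{\wedge \ul 2}$, $B^{\wedge \ul 2}$ and the free $C_2$-orbit on the mixed monomials. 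Through this splitting, $D_2^\mot(x+y)$ decomposes as $D_2^\mot(x) + D_2^\mot(y) + \mathrm{cross}(x,y)$, with cross term of the form $D_2^\mot(S^{m,n}) \to S^{m,n} \wedge S^{m,n} \xrightarrow{x \wedge y} E \wedge E \xrightarrow{\nu} D_2^\mot(E)$, where $\nu$ is induced on cross-effect summands by the fold $\nabla$.

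Substituting this into the defining composite \eqref{eq:power-map} for $Q^i$, the first two summands contribute $Q^i(x) + Q^i(y)$. The cross term contributes $m_E \circ \nu \circ (x \wedge y)$, which one identifies as $m \circ (1 + \tau) \circ (x \wedge y)$ via the equivariant model $\DD_2$ of Corollary \ref{cor:d-vs-d}: the free-orbit summand $E \wedge E$ sits inside $\DD_2(E \vee E)$, and the fold $\nabla$ carries it to $\DD_2(E)$ as the map factoring through the trivial-action copy via $1 + \tau$. Since the induced multiplication $m = m_E \circ \alpha_E$ is commutative in $h\SH(S)$ (as noted in the remark following Definition 2.1, since $\alpha_E$ factors through the $\Sigma_2$-orbits), we have $m \circ (1+\tau) = 2m$. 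Hence the cross contribution is $2xy$, which vanishes because $E$ is an $H\Z/2$-module.

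The main obstacle is making the identification $m_E \circ \nu = m \circ (1 + \tau)$ rigorous, i.e. carefully matching the abstract categorical splitting from the excisive calculus to the geometric decomposition of the $C_2$-equivariant smash product $(A \vee B)^{\wedge \ul 2}$, and tracing how the fold acts on the free-orbit summand. Once this structural coherence is in hand, the vanishing is formal from $2=0$ in any $H\Z/2$-module, and additivity follows.
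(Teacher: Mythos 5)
Your strategy --- precompose $x + y$ with the fold, use the cross-effect splitting $D_2^\mot(A \vee B) \wequi D_2^\mot(A) \vee D_2^\mot(B) \vee (A \wedge B)$, and argue the mixed term vanishes --- is precisely the paper's approach. However, the identification of the cross-contribution contains an error, and the claimed vanishing ``for free from $2=0$'' does not hold.

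You assert that the free-orbit summand $E \wedge E$ of $\DD_2(E \vee E)$ maps to $\DD_2(E)$ under the fold ``via $1 + \tau$,'' so that the cross-term is $m \circ (1 + \tau) \circ (x \wedge y) = 2xy = 0$. This is not correct. The free-orbit part of $(E \vee E)^{\wedge \ul{2}}$ is $C_{2+} \wedge (E \wedge E)$, and $\nabla^{\wedge \ul{2}}$ restricted to it is the $C_2$-equivariant map \emph{adjoint to the identity} $E \wedge E \to E \wedge E$, not to $1+\tau$. Passing to geometric homotopy orbits, $(C_{2+} \wedge (E \wedge E))_{\hh C_2} \wequi E \wedge E$ is a \emph{single} copy, and the induced map $E \wedge E \to \DD_2(E)$ is the canonical projection $\alpha_E$. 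The cross-term is therefore $m_E \circ \alpha_E \circ (x \wedge y) = xy$, with coefficient $1$. The factor of $2$ is spurious: the two non-equivariant off-diagonal summands $E_1 \wedge E_2$ and $E_2 \wedge E_1$ are \emph{identified}, not added, when one takes homotopy orbits.

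Because the cross-term is $xy$ and not $2xy$, a further argument is needed to kill it. The paper supplies one via the stability property: after first reducing (Remark \ref{rmk:adjust-mn-stab}) to $(m,n) = (2p,p)$ with $p$ adjustable, one replaces $E$ by $\Omega^{1,0}E$, on which the induced non-unital multiplication is zero --- because the diagonal $S^1 \to S^1 \wedge S^1$ is null-homotopic, exactly as at the end of the proof of Proposition \ref{prop:power-ops-vanishing}. With $xy = 0$, the splitting argument you describe then yields $Q^i(x+y) = Q^i(x) + Q^i(y)$.
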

\begin{proof}
Using stability (as in Remark \ref{rmk:adjust-mn-stab}), we may assume that $(m,n) = (2p, p)$ for $p$ arbitrarily small.
We have a class $e_i \in H\Z/2_{**} D_2^\mot(T^p)$ such that for $z \in \pi_{2p,p}(E)$ the operation $Q^i(z)$ is given by the composite \[ S^{2(p+i), p + \ceil{i/2}} \xrightarrow{e_i} D_2^\mot(T^p) \wedge H\Z/2 \xrightarrow{D_2^\mot(z)} D_2^\mot(E) \wedge H\Z/2 \to E. \]
Consider the following commutative diagram
\begin{equation*}
\begin{tikzcd}
S^{2(p+i), p + \ceil{i/2}} \ar[r, "e_i"] & D_2^\mot(T^p) \wedge H\Z/2 \ar[r, "D_2^\mot(x+y)"] \ar[d, swap, "D_2^\mot(\nabla)"] & D_2^\mot(E) \wedge H\Z/2 \ar[r] & D_2^\mot(E) \wedge H\Z/2 \ar[r] & E \\
                                         & D_2^\mot(T^p \vee T^p) \wedge H\Z/2. \ar[ur, swap, "D_2^\mot(x \vee y)"]
\end{tikzcd}
\end{equation*}
Under the splitting $D_2^\mot(T^p \vee T^p) \wequi D_2^\mot(T^p) \vee D_2^\mot(T^p) \vee T^{2p}$ of Proposition \ref{prop:D2-mot-cr2}, the composite $S^{2(p+i), p + \ceil{i/2}} \to D_2^\mot(T^p \vee T^p) \wedge H\Z/2 \to D_2^\mot(E) \wedge H\Z/2 \to E$, splits as $\alpha_1 + \alpha_2 + \alpha_3$, where $\alpha_1 = Q^i(x)$ and $\alpha_2 = Q^i(y)$. Moreover $\alpha_3$ factors as $S^{2(p+i), p + \ceil{i/2}} \xrightarrow{\alpha_3'} T^{2p} \wedge H\Z/2 \xrightarrow{xy} E$.
Increasing $p$ by $1$ if necessary, we may assume that $xy=0$ (as at the end of the proof of Proposition \ref{prop:power-ops-vanishing}).
The result follows.
\end{proof}

\subsubsection{Cartan formula} While the operations are not multiplicative, the Cartan formula expresses what the operations do on products. 

\begin{lemma} \label{lemm:BSigma2-diagonal}
Under the op-lax witness transformation $\delta_*: H\Z/2_{**}D_2^\mot(T^{n+m}) \wequi H\Z/2_{**}D_2^\mot(T^n \wedge T^m) \to H\Z/2_{**} D_2^\mot(T^n) \wedge D_2^\mot(T^m)$ we have \[ \delta_*(e_{2i}) = \sum_{a+b=i} e_{2a} \otimes e_{2b} + \sum_{a+b=i+1} \tau e_{2a-1} \otimes e_{2b-1} \] and \[ \delta_*(e_{2i-1}) \sum_{a+b=i} (e_{2a-1} \otimes e_{2b} + e_{2a} \otimes e_{2b-1}) + \sum_{a+b=i+1} \rho e_{2a-1} \otimes e_{2b-1}. \]
\end{lemma}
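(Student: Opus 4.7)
The plan is to reduce the statement to the cup product structure on $H^{**}(BC_{2+})$ via duality.

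First, combining Proposition~\ref{prop:homology-of-dmot} with Proposition~\ref{prop:bmu}, I identify $D_2^\mot(T^k) \wedge H\Z/2 \simeq \Sigma^{4k, 2k} BC_{2+} \wedge H\Z/2$ for $k \in \{n, m, n+m\}$. Under the induced Thom isomorphisms, the classes $e_{2p}$ and $e_{2p+1}$ correspond to the $H\Z/2_{**}$-dual basis elements to $v^p$ and $v^p u$ in $H^{**}(BC_{2+}) \cong H\Z/2_{**}\fpsr{u,v}/(u^2 - \tau v - \rho u)$ from Lemma~\ref{lemm:coh-of-BSigma2}. This is exactly the compatibility already used in the proofs of Theorem~\ref{thm:hpty-of-ops} and Theorem~\ref{thm:homotopy-of-ops}(3), so I would quote it from there.

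Second, I identify $\beta_{T^n, T^m}$, after the above Thom isomorphisms, with the diagonal $\Delta \colon BC_{2+} \to BC_{2+} \wedge BC_{2+}$ smashed with $H\Z/2$. The construction of $\beta$ from Definition~\ref{def:op-lax-witness} uses the diagonal on $\EE C_{2+}$ together with the shuffle $(A \wedge B)^{\wedge \underline{2}} \simeq A^{\wedge \underline{2}} \wedge B^{\wedge \underline{2}}$; when $A = T^n$ and $B = T^m$ this shuffle is an equivalence of invertible $C_2$-spectra, and the Thom isomorphisms absorb all attendant signs (using $1/2 \in \scr O_S$ as in Lemma~\ref{lem:diagonal-euler}), leaving only the honest diagonal on $BC_{2+}$.

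Third, dualization turns $\delta_*$ into the cup product $H^{**}(BC_{2+}) \otimes_{H\Z/2_{**}} H^{**}(BC_{2+}) \to H^{**}(BC_{2+})$, which by Lemma~\ref{lemm:coh-of-BSigma2} satisfies $v^a \cdot v^b = v^{a+b}$, $v^a \cdot v^b u = v^{a+b} u$, and $v^a u \cdot v^b u = \tau v^{a+b+1} + \rho v^{a+b} u$. Invoking Example~\ref{ex:dualization-dedekind} and Lemma~\ref{lemm:mot-hom-cohom-dual} to obtain the perfect pairing between homology and cohomology, the coefficient of $e_{2a'} \otimes e_{2b'}$ in $\delta_*(e_{2i})$ equals $\langle v^{a'} \cdot v^{b'}, e_{2i}\rangle = \delta_{a'+b', i}$, the coefficient of $e_{2a'+1} \otimes e_{2b'+1}$ equals $\tau\, \delta_{a'+b'+1, i}$, and the mixed coefficients vanish for parity reasons. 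Re-indexing via $a = a'+1, b = b'+1$ yields the stated formula. The computation for $\delta_*(e_{2i-1})$ is completely analogous, with the $\rho$ term arising from pairing $u^2 = \tau v + \rho u$ against $v^{i-1}u$.

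The main obstacle is the second step: one must carefully track the shuffle map together with the Thom isomorphisms to verify that no extra Euler-class or sign factor appears in $\beta_{T^n, T^m}$ beyond the plain diagonal on $BC_{2+}$. Everything else is mechanical bookkeeping using the cup product structure of Lemma~\ref{lemm:coh-of-BSigma2}.
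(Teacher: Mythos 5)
Your proposal is correct and takes essentially the same approach as the paper: both reduce, via the Thom isomorphism $D_2^{\mot}(T^k) \wedge H\Z/2 \simeq \Sigma^{4k,2k}BC_{2+} \wedge H\Z/2$ and compatibility of the op-lax witness transformation with Thom isomorphisms, to the effect in homology of the diagonal $B\Sigma_2 \to B\Sigma_2 \times B\Sigma_2$, and then compute by dualizing the cup product structure of Lemma~\ref{lemm:coh-of-BSigma2}. The paper disposes of the compatibility point more tersely (citing inspection of the proof of Proposition~\ref{prop:Dmot-oplax-monoidal}), whereas you flag it as the main obstacle and sketch the shuffle-plus-sign bookkeeping; the substance and the coefficient computation are the same.
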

\begin{proof}
The problem is stable under base change, so we may assume that $S = Spec(\Z[1/2])$.
Compatibility of the op-lax witness transformation with Thom isomorphisms (which holds by inspection of the proof of Proposition \ref{prop:Dmot-oplax-monoidal}\NB{...}) implies that we may reduce to $m=n=0$.
We are now looking at the effect in homology of the diagonal map $\delta: B\Sigma_2 \to B\Sigma_2 \times B\Sigma_2$\NB{why?}, which is by definition dual to the multiplication map in cohomology.
By Lemma \ref{lemm:coh-of-BSigma2} we have $H\Z/2^{**}(B\Sigma_2) \wequi H\Z/2^{**}\fpsr{u, v}/(u^2 = \tau v + \rho u)$, where $e_{2i}$ (by definition) is dual to $v^i$ and $e_{2i+1}$ is dual to $uv^i$.
The result is obtained by a straightforward computation.
\end{proof}

Define an auxiliary spectrum of ``pairs of operations'' \[ \Ops^{(2)} := \lim_{(\Z \times \Z) \times (\Z \times \Z)} (C \wedge C \wedge H\Z/2). \]
The op-lax witness transformations $D_2^\mot(S^{m,n} \wedge S^{p,q}) \to D_2^\mot(S^{m,n}) \wedge D_2^\mot(S^{p,q})$ assemble into a codiagonal map\NB{details?} $\delta: \Ops_{H\Z/2} \to \Ops^{(2)}$ such that for every $(m,n),(p,q) \in (\Z \times \Z) \times (\Z \times \Z)$ the following diagram commutes
\begin{equation} \label{eq:ops2-delta}
\begin{CD}
\Ops_{H\Z/2} @>>> S^{-m-p,-n-q} \wedge D_2^\mot(S^{-m-p,-n-q}) \wedge H\Z/2 \\
@V{\delta}VV            @V{\beta}VV \\
\Ops^{(2)} @>>> D_2^\mot(S^{m,n}) \wedge S^{-p,-q} \wedge D_2^\mot(S^{p,q}) \wedge H\Z/2 \\
\end{CD}
\end{equation}
We also have a canonical map $m: \Ops_{H\Z/2} \wedge \Ops_{H\Z/2} \to \Ops^{(2)}$ induced by the multiplication $H\Z/2 \wedge H\Z/2 \to H\Z/2$. In other words, for every $(m,n),(p,q) \in (\Z \times \Z) \times (\Z \times \Z)$ the following diagram commutes
\begin{equation} \label{eq:ops2-m}
\begin{CD}
\Ops_{H\Z/2} \wedge \Ops_{H\Z/2} @>>> S^{-m,-n} \wedge D_2^\mot(S^{m,n}) \wedge H\Z/2 \wedge S^{-p,-q} \wedge D_2^\mot(S^{p,q}) \wedge H\Z/2 \\
@V{m}VV                                   @VVV \\
\Ops^{(2)} @>>> S^{-m,-n} \wedge D_2^\mot(S^{m,n}) \wedge S^{-p,-q} \wedge D_2^\mot(S^{p,q}) \wedge H\Z/2.
\end{CD}
\end{equation}

\begin{lemma} \label{lemm:Ops2-diagonal}
\begin{enumerate}
\item Suppose that $S$ is essentially smooth over a Dedekind domain. Then $\pi_{**} \Ops^{(2)}$ has a $H\Z/2_{**}$-module basis given by the classes $e_i \otimes e_j := m(e_i \wedge e_j)$ for $i, j \in \Z$.
\item We have \[ \delta_*(e_{2i}) = \sum_{a+b=i} e_{2a} \otimes e_{2b} + \sum_{a+b=i+1} \tau e_{2a-1} \otimes e_{2b-1} \] and  \[ \delta_*(e_{2i-1}) \sum_{a+b=i} (e_{2a-1} \otimes e_{2b} + e_{2a} \otimes e_{2b-1}) + \sum_{a+b=i+1} \rho e_{2a-1} \otimes e_{2b-1}. \]
\end{enumerate}
\end{lemma}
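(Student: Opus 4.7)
The plan is to reduce both parts to computations already in hand.

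For (1), we imitate the proof of Theorem \ref{thm:hpty-of-ops}. By passing to the cofinal subdiagram along the anti-diagonal (cf. Remark \ref{rmk:Ops-explicit}), we get
\[ \Ops^{(2)} \wequi \lim_n C(-n,-n) \wedge C(-n,-n) \wedge H\Z/2. \]
By Theorem \ref{thm:hpty-of-ops} (or rather its proof), $C(-n,-n)\wedge H\Z/2$ is a sum of Tate shifts of $H\Z/2$ with basis $\{v^{(n)}_j\}_{j \ge -n} \cup \{u^{(n)}_j\}_{j \ge -n}$. A Künneth argument (valid since each factor is pure Tate over $H\Z/2$) yields an $H\Z/2_{**}$-module basis on homotopy groups indexed by pairs of such symbols, and the transition map is just the tensor of the transition maps of Lemma \ref{lem:action}, hence Mittag--Leffler. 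Applying Milnor's exact sequence and Lemma \ref{lemm:tate-sum-product} to convert the resulting product into a direct sum gives the claimed basis. One then identifies the classes $e_i\otimes e_j$ as defined via $m: \Ops_{H\Z/2}\wedge \Ops_{H\Z/2}\to \Ops^{(2)}$ with these basis elements by inspecting diagram \eqref{eq:ops2-m} at $(m,n)=(i,\lceil i/2\rceil)$ and $(p,q)=(j,\lceil j/2\rceil)$ and invoking Remark \ref{rmk:Ops-HZ2-projection}.

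For (2), we project $\delta$ into a single summand via the commutative square \eqref{eq:ops2-delta}. Taking $(m,n)=(p,q)=(-n,-n)$ in that diagram identifies $\delta_*$, after projection, with the map induced by the op-lax witness transformation
\[ \beta_*: H\Z/2_{**} D_2^\mot(T^{-2n}) \to H\Z/2_{**}\bigl(D_2^\mot(T^{-n})\wedge D_2^\mot(T^{-n})\bigr). \]
Under the basis identifications of (1) (and of Theorem \ref{thm:hpty-of-ops}), the class $e_i$ projects to the corresponding class denoted the same way in Lemma \ref{lemm:BSigma2-diagonal} (with $m=n$ there), and $e_i\otimes e_j$ projects to the tensor of those classes. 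Lemma \ref{lemm:BSigma2-diagonal} then reads off the formulas directly. Since the basis classes are detected in the limit by these projections for $n$ sufficiently large (by the Mittag--Leffler argument in (1)), this determines $\delta_*(e_{2i})$ and $\delta_*(e_{2i-1})$ globally.

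The only non-routine point is verifying the basis identification in (1) and checking the compatibility of the projections $\Ops_{H\Z/2}\to C(-n,-n)\wedge H\Z/2$ and $\Ops^{(2)}\to C(-n,-n)\wedge C(-n,-n)\wedge H\Z/2$ with the op-lax witness transformation in \eqref{eq:ops2-delta}; once these are in place, both statements are a matter of rewriting Lemma \ref{lemm:BSigma2-diagonal} in the new basis.
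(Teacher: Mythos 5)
Your proof is correct and takes essentially the same approach the paper intends: the paper's own proof is a two-line sketch (``(1) is a straightforward adaptation of the techniques of \S\ref{subsec:homotopy-groups-spectra-ops}. (2) follows from Lemma~\ref{lemm:BSigma2-diagonal}.''), and you have fleshed out exactly those steps — passing to the anti-diagonal cofinal subdiagram, the Künneth/Mittag--Leffler/$\limone$ argument with Lemma~\ref{lemm:tate-sum-product}, and projecting $\delta$ through \eqref{eq:ops2-delta} to reduce to Lemma~\ref{lemm:BSigma2-diagonal}.
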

\begin{proof}
(1) is a straightforward adaptation of the techniques of \S\ref{subsec:homotopy-groups-spectra-ops}. (2) follows from Lemma \ref{lemm:BSigma2-diagonal}.\NB{details?}
\end{proof}

\begin{proposition} \label{prop:cartan}
Let $E \in \HNAlg_2(\SH(S))_{H\Z/2/}$ and $x, y \in \pi_{**}(E)$. We have \[ Q^{2i}(xy) = \sum_{j+k=i} Q^{2j}(x)Q^{2k}(y) + \tau\sum_{j+k=i+1} Q^{2j-1}(x)Q^{2k-1}(y) \] and \[ Q^{2i-1}(xy) = \sum_{j+k=i} (Q^{2j-1}(x)Q^{2k}(y) + Q^{2j}(x)Q^{2k-1}(y)) + \rho\sum_{j+k=i+1} Q^{2j-1}(x)Q^{2k-1}(y). \]
\end{proposition}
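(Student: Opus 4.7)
The plan is to reduce the computation of $Q^{2i}(xy)$ and $Q^{2i-1}(xy)$ to the diagonal formulas for $\delta_*(e_{2i})$ and $\delta_*(e_{2i-1})$ established in Lemma~\ref{lemm:Ops2-diagonal}(2), using the Ur-Cartan formula (Lemma~\ref{lemm:ur-cartan-formula}) as the bridge. The auxiliary spectrum $\Ops^{(2)}$ is precisely designed to mediate between the single-input operation $Q^i$ applied to a product and the two-input product of operations; our task is to verify that the two factorizations through $\Ops^{(2)}$ agree.

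Concretely, represent $x: S^{a,b} \to E$ and $y: S^{p,q} \to E$, so that $xy: S^{a+p, b+q} \to E \wedge E \to E$. Unwinding Definition~\ref{def:power-ops-general}, the operation $Q^{2i}(xy)$ is obtained by smashing with $e_{2i}$, projecting to $\Sigma^{-(a+p),-(b+q)} D_2^\mot(S^{a+p,b+q}) \wedge H\Z/2$, applying $D_2^\mot(xy)$, and multiplying. By Lemma~\ref{lemm:ur-cartan-formula}, $D_2^\mot(xy)$ factors through $\beta_{S^{a,b}, S^{p,q}}$ as $D_2^\mot(x) \wedge D_2^\mot(y)$ followed by the multiplication $E \wedge E \to E$. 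Under the compatibility diagrams \eqref{eq:ops2-delta} and \eqref{eq:ops2-m} defining $\delta$ and $m$, this exhibits $Q^{2i}(xy)$ as the operation associated to $\delta(e_{2i}) \in \pi_{**}\Ops^{(2)}$, where each generator $e_j \otimes e_k = m(e_j \wedge e_k)$ acts by $Q^j(x) \cdot Q^k(y)$ (the latter identification is essentially the definition of $m$, together with the $H\Z/2$-multiplication on $E$).

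Substituting the formula
\[
\delta_*(e_{2i}) = \sum_{a+b=i} e_{2a} \otimes e_{2b} + \tau \sum_{a+b=i+1} e_{2a-1} \otimes e_{2b-1}
\]
from Lemma~\ref{lemm:Ops2-diagonal}(2) then yields the even Cartan formula, and the analogous substitution for $\delta_*(e_{2i-1})$ yields the odd one.

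The main obstacles are two coherence checks. First, we must verify that the multiplication $m$ in $E$ (and its compatibility with the $H\Z/2$-module structure) makes the action of $e_j \otimes e_k \in \pi_{**}\Ops^{(2)}$ genuinely equal to $Q^j(x) \cdot Q^k(y)$ on the nose; this is a diagram chase combining Definition~\ref{def:power-ops-general} with \eqref{eq:ops2-m}. Second, we must check that passing from $Q^{2i}(xy)$ to an operation factoring through $\Ops^{(2)}$ via $\delta$ is compatible with the Ur-Cartan factorization — i.e. that \eqref{eq:ops2-delta} encodes exactly the op-lax witness $\beta$ appearing in Lemma~\ref{lemm:ur-cartan-formula}. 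Both amount to naturality of the op-lax symmetric monoidal structure on $D_2^\mot$ (Proposition~\ref{prop:Dmot-oplax-monoidal}). If degree positivity is needed to make sense of any intermediate terms, we may first shift $x$ and $y$ by stability (Remark~\ref{rmk:adjust-mn-stab}) to arrange that $(a,b),(p,q)$ lie in a convenient range, since both sides of the desired equation are natural under looping by $\Omega^{p+q,q}$.
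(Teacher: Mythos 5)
Your proposal is correct and follows essentially the same route as the paper's proof: it uses the Ur-Cartan formula (Lemma~\ref{lemm:ur-cartan-formula}) to factor $D_2^\mot(xy)$, appeals to the compatibility squares \eqref{eq:ops2-delta} and \eqref{eq:ops2-m} to transport the computation into $\Ops^{(2)}$, and finishes by substituting the formula for $\delta_*(e_{2i\pm\epsilon})$ from Lemma~\ref{lemm:Ops2-diagonal}. The stability/degree-shift remark at the end is superfluous—the classes $e_i$ and the operations exist in all degrees—but it does no harm.
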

\begin{proof}
Let $x \in \pi_{m,n} E$ and $y \in \pi_{p,q} E$. Consider the diagram
\begin{equation*}
\begin{CD}
S^{2i-\epsilon,i} \wedge S^{m,n} \wedge S^{p,q} @>{e_{2i-\epsilon}}>> S^{m,n} \wedge S^{p,q} \wedge \Ops_{H\Z/2} @>{\delta}>> S^{m,n} \wedge S^{p,q} \wedge \Ops^{(2)} \\
 @.                                                       @VVV                                                    @VVV                               \\
                                @.                  D_2^\mot(S^{m+p,n+q}) \wedge H\Z/2              @>{\beta}>>  D_2^\mot(S^{m,n}) \wedge D_2^\mot(S^{p,q}) \wedge H\Z/2 \\
 @.                                                       @V{D_2^\mot(xy)}VV                                           @V{D_2^\mot(x)D_2^\mot(y)}VV                 \\
                                @.                  E @= E.
\end{CD}
\end{equation*}
The top square commutes by construction (i.e. the commutativity of \eqref{eq:ops2-delta}), and the bottom square commutes by Lemma \ref{lemm:ur-cartan-formula}. The shortest composite from top left to bottom middle is $Q^{2i-\epsilon}(xy)$. The result now follows from Lemma \ref{lemm:Ops2-diagonal} and commutativity of \eqref{eq:ops2-m}, which determine the composite via the top right hand corner.
\end{proof}

\subsubsection{Cohomological power operations}
Let $E \in \HNAlg_2(\SH(S))_{H\Z/2/}$ and $\scr X \in \Spc(S)_*$. By Construction \ref{cons:map-HNAlg} and Remark \ref{rmk:mapping-action-conormed}, we have $\imap(\scr X, E) \in \Mod_{H\Z/2}^w(\HNAlg_2(\SH(S))$.
\begin{definition} \label{def:cohomological-power-ops}
For $E \in \HNAlg_2(\SH(S))_{H\Z/2/}$ and $\scr X \in \Spc(S)_*$ we define the operation 
\[
\Sq^i: E^{**}(\scr X) \to E^{*+i,*+\floor{i/2}}(\scr X)
\] via $Q^{-i}$ applied to $\pi_{-*,-*} \imap(\scr X, E)$.
\end{definition}

\begin{example} \label{ex:voevodsky-ops}
If $S$ is the spectrum of a field and $E = H\Z/2$, then $\Sq^i$ as constructed above coincides with Voevodsky's motivic Steenrod operation from \cite{voevodsky2003reduced}.
This follows from \cite[Example 7.25]{norms} together with the bistability of our construction.
\end{example}

The properties of the $Q^i$ established so far translate immediately into properties of the $\Sq^i$.
\begin{proposition} \label{prop:cohomological-power-ops-properties}
Let $E \in \HNAlg_2(\SH(S))_{H\Z/2/}$ and $\scr X \in \Spc(S)_*$.
\begin{description}
\item[(1) naturality (I)] If $f: \scr X \to \scr Y \in \Spc(S)_*$ is any map, then the following diagram commutes.
\begin{equation*}
\begin{CD}
E^{**}(\scr Y)              @>{\Sq^i}>> E^{*+i,*+\floor{i/2}}(\scr Y)                   \\
@V{\imap(f, E)_*}VV                      @V{\imap(f, E)_*}VV                             \\
E^{**}(\scr X)              @>{\Sq^i}>> E^{*+i,*+\floor{i/2}}(\scr X)                   \\
\end{CD}
\end{equation*}

\item[(2) naturality (II)] If $\alpha: E \to F \in \HNAlg_2(\SH(S))_{H\Z/2/}$, then the following diagram commutes.
\begin{equation*}
\begin{CD}
E^{**}(\scr X)              @>{\Sq^i}>> E^{*+i,*+\floor{i/2}}(\scr X)                   \\
@V{\imap(\scr X, f)_*}VV                 @V{\imap(\scr X, f)_*}VV                        \\
F^{**}(\scr X)              @>{\Sq^i}>> F^{*+i,*+\floor{i/2}}(\scr X)                   \\
\end{CD}
\end{equation*}

\item[(3) base change] Let $f: S' \to S$ be any morphism of schemes. Then $f^*E \in \HNAlg_2(\SH(S'))_{H\Z/2/}$ (Example \ref{ex:HNAlg2-base-change}) and so $(f^*E)$-cohomology also has operations. Moreover there is a commutative diagram as follows.
\begin{equation*}
\begin{CD}
E^{**}(\scr X)          @>{\Sq^i}>> E^{*+i,*+\floor{i/2}}(\scr X)          \\
@V{f^*}VV                            @V{f^*}VV                              \\
(f^*E)^{**}(f^* \scr X) @>{\Sq^i}>> (f^*E)^{*+i,*+\floor{i/2}}(f^*\scr X)  \\
\end{CD}
\end{equation*}

\item[(4) additivity] For $x, y \in E^{**}(\scr X)$ we have $\Sq^i(x+y) = \Sq^i(x) + \Sq^i(y)$.
\item[(5) squaring] For $x \in E^{2n+\epsilon,n}(\scr X)$ we have $\Sq^{2i+\epsilon}(x) = x^2$.
\item[(6) vanishing] For $x \in E^{p,q}(\scr X)$ with $i > p-q-\epsilon$ and $i \ge q$ we have $\Sq^{2i+\epsilon}(x) = 0$.
\item[(7) unstability] Suppose $E=H\Z/2$ and $S$ is essentially smooth over a Dedekind domain\NB{what is the correct generality?}. Then $\Sq^0=\id$ and $\Sq^i = 0$ for $i<0$.

\item[(8) bistability] The following diagram commutes
\begin{equation} \label{eq:cohomology-operations-bistable}
\begin{CD}
E^{*+1,*}(S^1 \wedge \scr X) @>{\Sq^i}>> E^{*+1+i,*+\floor{i/2}}(S^1 \wedge \scr X)      \\
@|                                            @|                                         \\
E^{**}(\scr X)              @>{\Sq^i}>> E^{*+i,*+\floor{i/2}}(\scr X)                   \\
@|                                            @|                                         \\
E^{*+1,*+1}(\Gm \wedge \scr X) @>{\Sq^i}>> E^{*+1+i,*+1+\floor{i/2}}(\Gm \wedge \scr X),  \\
\end{CD}
\end{equation}
  where the vertical isomorphisms are the canonical ones.

\item[(9) Cartan formula] For $x, y \in E^{**}(\scr X)$, we have \[ \Sq^{2i}(xy) = \sum_{j+k=i} \Sq^{2j}(x)\Sq^{2k}(y) + \tau\sum_{j+k=i-1} \Sq^{2j+1}(x)\Sq^{2k+1}(y) \] and \[ \Sq^{2i+1}(xy) = \sum_{j+k=i} (\Sq^{2j+1}(x)\Sq^{2k}(y) + \Sq^{2j}(x)\Sq^{2k+1}(y)) + \rho\sum_{j+k=i-1} \Sq^{2j+1}(x)\Sq^{2k+1}(y). \]

\item[(10) Bockstein] Suppose $E=H\Z/2$ and $S$ is essentially smooth over a Dedekind domain\NB{what is the correct generality?}. Then $\Sq^1$ is the Bockstein operation, which is a derivation.\NB{$\Sq^{2n+1} = \beta\Sq^{2n}$ then follows from Adem...}
\end{description}
\end{proposition}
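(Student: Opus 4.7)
The strategy is to derive each property by unwinding Definition \ref{def:cohomological-power-ops}, which writes $\Sq^i$ on $E^{**}(\scr X)$ as $Q^{-i}$ acting on $\pi_{-*,-*}\imap(\scr X, E)$, and then invoking the corresponding property already established for the homological operations. The bidegrees match up because $-q - \lceil -i/2\rceil = -q + \lfloor i/2\rfloor$, and the parity conventions translate via $-(2i+\epsilon) = 2(-i) - \epsilon$ for $\epsilon \in \{0,1\}$. The essential input is that $\imap(\scr X, E)$ and its variants $\imap(f, E)$, $\imap(\scr X, \alpha)$, $f^*\imap(\scr X, E) \simeq \imap(f^*\scr X, f^*E)$, and $\imap(S^{a,b}\wedge \scr X, E) \simeq \Omega^{a,b}\imap(\scr X, E)$ all live in $\Mod^w_{H\Z/2}(\HNAlg_2(\SH(S)))$, with the morphisms between them being morphisms in this category; this is provided by Construction \ref{cons:map-HNAlg} and Remark \ref{rmk:mapping-action-conormed} together with their functoriality in $\scr X$, $E$, and $S$.

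With these translations in hand, items (1)--(6), (8), and (9) follow directly. Namely, (1) and (2) are naturality of $Q^{-i}$ applied to $\imap(f, E)$ and $\imap(\scr X, \alpha)$ respectively; (3) combines Proposition \ref{prop:powerop-base-change} with Example \ref{ex:HNAlg2-base-change} applied to Construction \ref{cons:map-HNAlg}; (4) is additivity of $Q^{-i}$; (5) and (6) follow from Propositions \ref{prop:squaring} and \ref{prop:power-ops-vanishing} after the above index translation; (8) is exactly the stability proposition for $Q^i$, applied to the identifications $\imap(S^1\wedge \scr X, E) \simeq \Omega^{1,0}\imap(\scr X, E)$ and $\imap(\Gm \wedge \scr X, E) \simeq \Omega^{1,1}\imap(\scr X, E)$; and (9) follows from Proposition \ref{prop:cartan}, using that the cup product on $E^{**}(\scr X)$ is induced by the $\HNAlg_2$-structure on $\imap(\scr X, E)$ coming from a diagonal on $\scr X$ (e.g.\ when $\scr X$ is the suspension spectrum of a space, so that the hypotheses of Construction \ref{cons:map-HNAlg} apply).

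The items that do not reduce formally are unstability (7) and the Bockstein identification (10). For these the plan is to invoke Example \ref{ex:voevodsky-ops}: over a perfect field our $\Sq^i$ coincides with Voevodsky's motivic Steenrod operation, for which $\Sq^0 = \id$, $\Sq^i = 0$ for $i < 0$, and $\Sq^1 = \beta$ are classical. To propagate from fields to the general $S$ of the statement, the plan is to observe that the classes $e_i \in \pi_{**}\Ops_{H\Z/2}$ defining $Q^i$ are pulled back from $\Spec(\Z[1/2])$ by Remark \ref{rem:functoriality}, and to apply base change (3) along a chosen map $S \to \Spec(\Z[1/2])$ followed by further base change to residue fields. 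The derivation property of $\Sq^1$ in (10) is then formal from the Cartan formula (9) together with $\Sq^0 = \id$.

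The main obstacle will be the residue-field descent in (7) and (10): one needs to pin down the value of $\Sq^0$ (resp.\ $\Sq^1$) on a distinguished class, and then verify that this identification is preserved under pullback to a field where Voevodsky's results apply. The rest is bookkeeping on top of the corresponding statements for the homological $Q^i$.
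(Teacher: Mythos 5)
Your reduction strategy for items (1)--(6), (8), (9), and the explicit adjoint construction for (3), match the paper's proof, which simply invokes the corresponding homological properties of $Q^{-i}$ applied to $\imap(\scr X,E)$, together with the identifications you list. The bidegree bookkeeping $-\lceil -i/2\rceil = \lfloor i/2\rfloor$ is also what makes this work.

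However, for (7) and (10) the paper does \emph{not} argue by base-changing the equality $\Sq^0 = \id$ (resp.\ $\Sq^1 = \beta$) from residue fields and hoping it lifts. Instead it exploits Theorem~\ref{thm:HZ-steenrod-dedekind}: over $S$ essentially smooth over a Dedekind domain, the algebra $\scr B^{**}$ of bistable cohomology operations is a \emph{free} left $H\Z/2^{**}$-module on the admissible monomials $\Sq^I$. One then writes $\Sq^0 = \sum_I a_I \Sq^I$ and observes that $\Sq^0$ has weight zero while every admissible $\Sq^I$ has non-negative weight, so any coefficient $a_I$ not in $\{0,1\} \subset H\Z/2^{0,0}$ would have positive weight and cannot occur. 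The only admissible monomial in bidegree $(0,0)$ is $\Sq^\emptyset = \id$, and nonvanishing of $\Sq^0$ is checked on a field (the only place Voevodsky's result enters). This structural argument sidesteps the issue you flag as the ``main obstacle'': you would need a conservativity-after-restriction-to-residue-fields statement for bistable operations, which is precisely what the freeness theorem (together with Riou's detection lemma used in its proof) encodes. Without Theorem~\ref{thm:HZ-steenrod-dedekind} your plan for (7) and (10) does not close. Note also that the paper adds a remark that the proof of Theorem~\ref{thm:HZ-steenrod-dedekind} itself references this proposition, so one must check (as the authors do) that the logical dependence is not circular; your proposal doesn't engage with this subtlety.

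Finally, a minor caution on (9): Proposition~\ref{prop:cartan} is stated for objects of $\HNAlg_2(\SH(S))_{H\Z/2/}$, whereas $\imap(\scr X,E)$ generally only lands in $\Mod^w_{H\Z/2}(\HNAlg_2(\SH(S)))$ by Remark~\ref{rmk:mapping-action-conormed} (for a general pointed $\scr X$ there is no $0$-fold diagonal $\scr X\to S^0$). The paper glosses over this as well, but if you spell it out you should either restrict to $\scr X = \scr X'_+$ or explain why the Cartan formula extends to the weak-module setting.
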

\begin{proof}
Properties (1), (2), (4), (5), (6), (8), and (9) follow immediately from the corresponding statements in the previous subsections.

For (3), we first need to explain the map $f^*: E^{**}(\scr X) \to (f^*E)^{**}(f^* \scr X)$. It is the composite \[ E^{**}(\scr X) = \pi_{-*-*}\imap(\scr X, E) \xrightarrow{f^*} \pi_{-*-*} f^*\imap(\scr X, E) \to \pi_{-*-*} \imap(f^* \scr X, f^*E),\] where the last map is obtained by applying $\pi_{-*-*}$ to the map $f^*\imap(\scr X, E) \to \imap(f^* \scr X, f^*E)$ obtained by passing to adjoints and using that $f^*$ is symmetric monoidal. Now (3) is an immediate consequence of Proposition \ref{prop:powerop-base-change}.

(7, 10) We may assume that $S$ is connected. By Theorem \ref{thm:HZ-steenrod-dedekind}, we know that the algebra of bistable cohomology operations is free on the admissible monomials in Steenrod operations. We deduce that $\Sq^0 = \sum_I a_I \Sq^I$, where the sum is over admissible sequences and $a_I \in H\Z/2^{**}$. Note that unless $a_I \in \{0,1\}$, it must have positive weight. Since all $\Sq^I$ (with $I$ admissible) have non-negative weight, and $\Sq^0$ has weight zero, we find that all $a_I \in \{0,1\}$. The only $\Sq^I$ of bidegree $(0,0)$ is $\Sq^\emptyset = \id$ and $\Sq^0 \ne 0$ (e.g. because we know this result over fields), so the first claim follows. The other claims are proved similarly.
\end{proof}

\begin{remark}
The proof of the above result references Theorem \ref{thm:HZ-steenrod-dedekind}, the proof of which in turn references the above result.
Nevertheless it is easy to verify that the argument is not circular.
\end{remark}

\subsection{Co-Nishida relations} \label{sect:co-nishida}
Let $A, X \in \HNAlg_2(\SH(S))_{\1/}$.
Then $A \wedge X \in \HNAlg_2(\SH(S))_{A/}$ and hence carries power operations parametrized essentially by $e \in A_{**} D_2^\mot(S^{p,q})$; we denote them by $\Theta^e: A_{**} X \to A_{**} X$.
We can make $A \wedge A \wedge X$ into an $A$-module in two ways; we call them the right and left $A$-module structures.
We have the unit maps \[ u_A: A \wedge X \wequi \1 \wedge A \wedge X \to A \wedge A \wedge X \] (which is a right $A$-module map) and \[ u_X: A \wedge A \to A \wedge A \wedge X \] (which is a bimodule map).
These induce \[ A_{**} A \otimes_{A_**} A_{**} X \to \pi_{**} (A \wedge A \wedge X) \] which is an isomorphism under suitable flatness assumptions.
If we want $u_A$ to commute with power operations, then we have to use the \emph{right} $A$-module structure on $A \wedge A \wedge X$, and if we want $u_X$ to preserve power operations, then we have to use the right $A$-module structure on $A \wedge A$.
\emph{For this reason, whenever forming power operations on $A \wedge A$ or $A \wedge A \wedge X$ we will always use the right $A$-module structures.}
We further have the coaction map \[ \psi: A \wedge X \wequi A \wedge \1 \wedge X \to A \wedge A \wedge X; \] this is a left $A$-module map.
In particular, it does not commute with the power operations.
The Nishida relations account for this fact.
We proceed to prove them trough a series of lemmas.

\begin{lemma} \label{lemm:nishida-1}
Let $A \in \HNAlg_2(\SH(S))_{\1/}$ and $E \in \HNAlg_2(\SH(S))_{A/}$.
For $e, e' \in A_{**}D_2^\mot(S^{p,q})$, $a \in A_{**}$ and $x \in \pi_{p,q}E$, we have \[ \Theta^{ae}(x) = a \Theta^e(x) \] and \[ \Theta^{e + e'}(x) = \Theta^e(x) + \Theta^{e'}(x). \]
\end{lemma}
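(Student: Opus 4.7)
The plan is to observe that, for a fixed class $x:S^{p,q}\to E$, the operation $\Theta^e(x)$ is obtained by post-composing $e$ with a single fixed map
\[
\mu_x: A\wedge D_2^\mot(S^{p,q}) \xrightarrow{\id_A\wedge D_2^\mot(x)} A\wedge D_2^\mot(E) \xrightarrow{\id_A\wedge m_E} A\wedge E \xrightarrow{\alpha_E} E,
\]
where $\alpha_E$ denotes the $A$-action on $E$. This is just a repackaging of Definition~\ref{def:power-ops-general} in the weak-module setting of Remark~\ref{rmk:mapping-action-conormed}, emphasizing that $\Theta^e(x)$ depends on $e$ only through precomposition: it is the class represented by $\mu_x\circ e$ for any $e:S^{m,n}\to A\wedge D_2^\mot(S^{p,q})$.

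Once this reformulation is in hand, additivity is formal. The sum $e+e'$ is, by definition of $\pi_{**}$, represented by the composite $(e\vee e')\circ\nabla$; since composition in $h\SH(S)$ is biadditive we obtain $\mu_x\circ(e+e') = \mu_x\circ e + \mu_x\circ e' = \Theta^e(x)+\Theta^{e'}(x)$.

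For $A_{**}$-linearity I would unfold the product $ae\in A_{**}D_2^\mot(S^{p,q})$ as the composite
\[
S^{|a|+|e|} \xrightarrow{a\wedge e} A\wedge A\wedge D_2^\mot(S^{p,q}) \xrightarrow{m_A\wedge\id} A\wedge D_2^\mot(S^{p,q}),
\]
so that $\Theta^{ae}(x)$ is represented by $\mu_x\circ (m_A\wedge\id)\circ(a\wedge e)$. The key diagrammatic identity is then $\mu_x\circ(m_A\wedge\id) = \alpha_E\circ(\id_A\wedge \mu_x)$, which is precisely the associativity of the $A$-action on $E$: both sides agree because $\alpha_E\circ(m_A\wedge\id_E) = \alpha_E\circ(\id_A\wedge\alpha_E)$ as maps in $h\SH(S)$. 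Inserting this yields $\Theta^{ae}(x) = \alpha_E\circ(a\wedge \Theta^e(x)) = a\cdot\Theta^e(x)$.

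I expect no genuine obstacle: once the reformulation $\Theta^e(x) = \mu_x\circ e$ is established, both claims reduce to biadditivity of composition and associativity of the $A$-action, both of which hold at the level of the triangulated homotopy category. The only care needed is that $E$ is merely a homotopy $A$-module in the sense of Remark~\ref{rmk:mapping-action-conormed}, but since all identities invoked are identities between maps in $h\SH(S)$, this is exactly the setting where the required formal properties are available.
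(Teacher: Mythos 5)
Your reformulation $\Theta^e(x) = \mu_x\circ e$ for a fixed map $\mu_x\colon A\wedge D_2^\mot(S^{p,q})\to E$ is exactly what the paper means by ``immediate from the definition,'' and your reduction of additivity to biadditivity of composition and of $A_{**}$-linearity to associativity of the homotopy $A$-module structure is the intended unfolding. One small caution: the paper already uses $\alpha_E$ for the op-lax witness $E\wedge E\to D_2^\mot(E)$, so you should pick a different symbol for the $A$-action $A\wedge E\to E$; also, the associativity you invoke is implicitly assumed by the paper (it holds in all applications, where $A$ is an actual $\EE_\infty$-ring) rather than a formal consequence of $A\in\HNAlg_2(\SH(S))_{\1/}$ alone.
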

\begin{proof}
Immediate from the definition (i.e. Diagram \eqref{eq:power-map}).
\end{proof}

\begin{lemma} \label{lemm:nishida-2}
Let $\psi: A \to B \in \HNAlg_2(\SH(S))_{\1/}$, $X \in \HNAlg_2(\SH(S))_{\1/}$, $e \in A_{**} D_2^\mot(S^{p,q})$ and $x \in A_{p,q} X$.
Then \[ \psi \Theta^e(x) = \Theta^{\psi(e)}(x). \]
\end{lemma}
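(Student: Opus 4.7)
The lemma is a naturality statement with respect to change of the coefficient ring $A \to B$, and the plan is to prove it by a direct diagram chase unpacking Definition~\ref{def:power-ops-general}. Representing $x$ by a map $x: S^{p,q} \to A \wedge X$ and choosing a representative (after projection from $\Ops_A$) $\tilde e: S^{m,n} \to A \wedge D_2^\mot(S^{p,q})$ for $e$, the class $\Theta^e(x)$ is given, up to the factors from $\Ops$, by the composite
$$S^{p+m,q+n} \xrightarrow{\id \wedge \tilde e} S^{p,q} \wedge A \wedge D_2^\mot(S^{p,q}) \xrightarrow{D_2^\mot(x) \wedge \id_A} D_2^\mot(A \wedge X) \wedge A \to (A \wedge X) \wedge A \to A \wedge X,$$
where the last two maps come from the normed structure $m: D_2^\mot(A \wedge X) \to A \wedge X$ and the $A$-action on $A \wedge X$.

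The plan is to post-compose this with $\psi \wedge \id_X: A \wedge X \to B \wedge X$ and rewrite the result as the analogous composite defining $\Theta^{\psi(e)}$ on the image of $x$ in $B_{p,q}X$. Three compatibilities suffice: (i) functoriality of $D_2^\mot$, which gives $D_2^\mot(\psi \wedge \id_X) \circ D_2^\mot(x) = D_2^\mot\bigl((\psi \wedge \id_X) \circ x\bigr)$; (ii) the fact that $\psi$ is a morphism in $\HNAlg_2(\SH(S))_{\1/}$, so it intertwines the normed structures and multiplications of $A \wedge X$ and $B \wedge X$; and (iii) the definition of $\psi(e)$, which is (represented by) $(\psi \wedge \id) \circ \tilde e$. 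Using (ii) on the last two maps, naturality together with (i) on the $D_2^\mot(x) \wedge \id$ step, and then absorbing the remaining $\psi$ into $\tilde e$ via (iii) transforms the diagram for $\psi \Theta^e(x)$ into the one for $\Theta^{\psi(e)}(\psi x)$, yielding the claimed equality.

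The principal bookkeeping burden is to confirm that the projection $\Ops_A \to A \wedge S^{-p,-q} \wedge D_2^\mot(S^{p,q})$ used to represent $e$ is compatible, via the induced map $\Ops_A \to \Ops_B$, with the corresponding projection for $B$; this is immediate from the construction of $\Ops_R = \lim_{\Z \times \Z}(C \wedge R)$ as a functor in $R \in \SH(S)$. No new input beyond Definition~\ref{def:power-ops-general} and naturality of $D_2^\mot$ is needed, so the proof amounts to a routine compatibility check.
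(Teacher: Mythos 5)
Your proposal is correct and takes essentially the same route as the paper: the paper proves the lemma by exhibiting a single commutative diagram whose top row implements $\Theta^e(x)$, whose bottom row implements $\Theta^{\psi(e)}$ on the image of $x$ in $B_{**}X$, and whose vertical maps are $\psi$; the squares commute precisely because $\psi$ is a morphism in $\HNAlg_2(\SH(S))$, which is the content of your items (i)–(iii). Your reading of $\Theta^{\psi(e)}(x)$ as $\Theta^{\psi(e)}(\psi_*x)$ matches what the paper's diagram actually tracks, so the verbal diagram chase you give is the same argument.
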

\begin{proof}
Consider the following diagram
\begin{equation*}
\begin{CD}
A \wedge D_2^\mot(S^{p,q}) @>x>> A \wedge D_2^\mot(A \wedge E) @>m>> A \wedge A \wedge E @>m>> A \wedge E \\
@V{\psi}VV                    @V{\psi \wedge \id}VV          @V{\psi \wedge \id}VV   @V{\psi}VV \\
B \wedge D_2^\mot(S^{p,q}) @>x>> B \wedge D_2^\mot(A \wedge E) @>m>> B \wedge A \wedge E   @.   B \wedge E \\
    @.                        @V{\id \wedge \psi}VV          @V{\id \wedge \psi}VV   @| \\
                        @.  B \wedge D_2^\mot(B \wedge E) @>m>> B \wedge B \wedge E @>m>> B \wedge E.
\end{CD}
\end{equation*}
The left hand square commutes trivially, and all other squares commute because $\psi$ is a map in $\HNAlg_2(\SH(S))$.
The result follows by chasing a homotopy class from the top left hand corner to bottom right, either via top right corner or along the bottom-most edges.
\end{proof}

\begin{corollary}[Co-Nishida relations, hands-on form] \label{cor:conishida}
Let $A, X \in \HNAlg_2(\SH(S))_{1/}$, $e \in A_{**}D_2^\mot(S^{p,q})$.
Assume that $A$ satisfies suitable flatness hypotheses and write \[ \psi(e) = \sum_i \theta_i \otimes e_i \in A_{**}A \otimes_{A_{**}} A_{**} D_2^\mot(S^{p,q}). \]
Let $x \in A_{p,q} X$.
Then \[ \psi \Theta^e(x) = \sum_i (\theta_i \otimes 1) \Theta^{e_i}(\psi(x)) \in A_{**}A \otimes_{A_**} A_{**} X. \]
Here $\psi$ denotes the coaction map(s) and $\Theta^{e_i}$ refers to the power operation on $A \wedge A \wedge X$ (coming from the right $A$-module structure).

A similar result holds for $x \in A^{p,q} X$ where $X$ is a space, and using the cohomology coaction (see \S\ref{subsub:cohomology-coaction}).
\end{corollary}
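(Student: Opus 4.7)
The plan is to deduce the identity by first applying the naturality statement of Lemma \ref{lemm:nishida-2} to the coaction map $\psi \colon A \to A \wedge A$, and then expanding the right-hand side using the linearity of $\Theta^{(-)}$ in its parameter recorded in Lemma \ref{lemm:nishida-1}. Following the convention laid out before the statement, $A \wedge A$ (and correspondingly $A \wedge A \wedge X$) is always equipped with its \emph{right} $A$-module structure; this is the source of the asymmetry in the formula.

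First I would apply Lemma \ref{lemm:nishida-2} to the morphism $\psi \colon A \to A \wedge A$ obtained by smashing $A$ with the unit $\1 \to A$ on the second factor. When $A \wedge A$ is viewed as an $\HNAlg_2$-algebra via its right $A$-algebra structure, $\psi$ lies in $\HNAlg_2(\SH(S))_{\1/}$, so the lemma yields
\[
\psi\Theta^e(x) \;=\; \Theta^{\psi(e)}(\psi(x)) \;\in\; \pi_{**}(A \wedge A \wedge X),
\]
where $\psi(e) \in (A \wedge A)_{**} D_2^\mot(S^{p,q})$ is the image of $e$ under the coaction, and $\Theta^{\psi(e)}$ is the power operation on $A \wedge A \wedge X$ parameterized over $A \wedge A$.

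Next, the flatness hypothesis on $A$ identifies
\[
(A \wedge A)_{**} D_2^\mot(S^{p,q}) \;\cong\; (A \wedge A)_{**} \otimes_{A_{**}} A_{**} D_2^\mot(S^{p,q}),
\]
where the tensor product uses the right $A_{**}$-action on $(A \wedge A)_{**}$. Writing $\psi(e) = \sum_i \theta_i \otimes e_i$ and invoking the linearity and additivity clauses of Lemma \ref{lemm:nishida-1} (applied with the ring $A \wedge A$ in place of $A$) gives
\[
\Theta^{\psi(e)}(\psi(x)) \;=\; \sum_i \theta_i \cdot \Theta^{e_i}(\psi(x)).
\]
Under the flatness identification $\pi_{**}(A \wedge A \wedge X) \cong A_{**}A \otimes_{A_{**}} A_{**} X$, multiplication by $\theta_i \in (A \wedge A)_{**}$ corresponds to multiplication by $\theta_i \otimes 1$, which is exactly the claim. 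The cohomological analogue follows along the same lines after substituting the completed cohomology coaction of \S\ref{subsub:cohomology-coaction} for the homology coaction, since the naturality and linearity statements of Lemmas \ref{lemm:nishida-1}--\ref{lemm:nishida-2} go through verbatim for $x \in A^{p,q} X$.

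The main obstacle will not be any computation but rather careful bookkeeping around the two $A$-module structures on $A \wedge A$: one must verify that $\psi$ really is a morphism in $\HNAlg_2(\SH(S))_{\1/}$ when the target carries the right $A$-algebra structure (so that Lemma \ref{lemm:nishida-2} applies as desired), and that the two flatness identifications used above are compatible with the left/right conventions. Once these identifications are set up correctly, the proof is just a two-line combination of the preceding lemmas.
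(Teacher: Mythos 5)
Your overall strategy matches the paper's: apply Lemma~\ref{lemm:nishida-2} to the ring map $\psi \colon A \to A \wedge A$ to obtain $\psi\Theta^e(x) = \Theta^{\psi(e)}(\psi(x))$, and then expand $\Theta^{\psi(e)}$ using Lemma~\ref{lemm:nishida-1} applied over the ring $A \wedge A$. Up to that point the argument is sound.

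However, there is a genuine gap at the last step. Applying Lemma~\ref{lemm:nishida-1} over $A \wedge A$ to the decomposition $\psi(e) = \sum_i \theta_i \cdot (1 \otimes e_i)$ yields
\[
\Theta^{\psi(e)}(\psi(x)) = \sum_i \theta_i \, \Theta^{1 \otimes e_i}(\psi(x)),
\]
where $\Theta^{1 \otimes e_i}$ is the power operation on $A \wedge A \wedge X$ \emph{parametrized by the $A \wedge A$-algebra structure}. The corollary, however, asserts a formula in terms of $\Theta^{e_i}$, the power operation coming from the \emph{right $A$-module} structure on $A \wedge A \wedge X$. These are a priori different operations, and identifying them is not a bookkeeping triviality: one must show that restricting the parametrizing ring along the unit $u_A \colon A \to A \wedge A$ (inserted into the second factor) produces the same operation. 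This is precisely the ``claim'' in the paper's proof, established there by a diagram chase whose essential content is that $u_A$ is a ring homomorphism and hence the multiplication $m_{13,24}$ on $A \wedge A \wedge A \wedge A \wedge X$ restricts along $u$ to the multiplication $m_{13}$. Your proposal writes $\Theta^{e_i}$ in the expansion as if this identification were automatic (your closing remark about multiplication by $\theta_i$ corresponding to $\theta_i \otimes 1$ under flatness addresses the scalar, not the operation), and the items you flag at the end — that $\psi$ lies in $\HNAlg_2(\SH(S))_{\1/}$, and that flatness is compatible with left/right conventions — do not cover this comparison. You should add the argument that $\Theta^{1 \otimes e_i} = \Theta^{e_i}$ as operations on $A \wedge A \wedge X$; this is the real mathematical content of the proof beyond the two cited lemmas.
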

\begin{proof}
We prove the result for homology; essentially the same argument works for cohomology.\todo{does it?}
By Lemma \ref{lemm:nishida-2}, we have $\psi \Theta^e(x) = \Theta^{\psi(e)}(\psi(x))$, where $\Theta^{\psi(e)}$ refers to the power operation arising from the structure as an $A \wedge A$-algebra.
We claim that for any $\alpha \in \pi_{p,q}(A \wedge A \wedge X)$ we have $\Theta^{1 \otimes e_i}(\alpha) = \Theta^{e_i}(\alpha)$, where the right hand $\Theta$ refers to the power operations arising from the right $A$-module structure.
This will establish the desired result, by Lemma \ref{lemm:nishida-1}.

To establish the claim, consider the diagram
\begin{equation*}
\begin{CD}
A \wedge D_2^\mot(S^{p,q}) @>{\alpha}>> A \wedge D_2^\mot(A \wedge A \wedge X) @>m>> A \wedge A \wedge A \wedge X @>{m_{13}}>> A \wedge A \wedge E \\
@|                                     @|                                     @|                                          @| \\
\1 \wedge A \wedge D_2^\mot(S^{p,q}) @>{\alpha}>> \1 \wedge A \wedge D_2^\mot(A \wedge A \wedge X) @>m>> \1 \wedge A \wedge A \wedge A \wedge X @>{m_{13}}>> A \wedge A \wedge E \\
@VuVV                                         @VuVV                                            @VuVV                                     @| \\
A \wedge A \wedge D_2^\mot(S^{p,q}) @>{\alpha}>> A \wedge A \wedge D_2^\mot(A \wedge A \wedge X) @>m>> A \wedge A \wedge A \wedge A \wedge X @>{m_{13,24}}>> A \wedge A \wedge E \\
\end{CD}
\end{equation*}
Here $m_{13}$ multiplies the first $A$ from the left into the third, whereas $m_{13,24}$ multiplies the first into the third and the second into the fourth.
The bottom right hand square commutes because $u$ is the unit in $A$; all other squares commute for trivial reasons.
The top row implements the power operation in $A \wedge A \wedge X$ considered as a right $A$-module and the bottom row implements the power operation in $A \wedge A \wedge X$ considered as a $A \wedge A$-module.
The left hand map induces in homotopy groups the map $A_{**} D_2^\mot(S^{p,q}) \to A_{**}A \otimes_{A_{**}} A_{**} D_2^\mot(S^{p,q})$ given by $e \mapsto (1 \otimes e)$.
The result follows.
\end{proof}

\begin{corollary}[Co-Nishida relations] \label{cor:cohere} Let $1/2 \in S$.
Consider the right comodule structure \[\psi_R: \pi_{**}(\Ops_{\H\Z/2}) \rightarrow \pi_{**}(\Ops_{\H\Z/2}) \widehat{\otimes}_{\H\Z/2_{**}} \H\Z/2_{**}\H\Z/2 \] of \eqref{eq:right}, which we write as $\psi_R(e) = \SSigma e_i \otimes r_i$.

Let $E \in \HNAlg_2(\SH(S))_{\1/}$. Then we have the following equality for any $e \in \pi_{**}(\Ops_{\H\Z/2})$ and $x \in \H\Z/2_{**} E$\NB{Should there be antipodes somewhere?}
\begin{equation} \label{eq:psi-v-q}
\psi_R(Q^e(x)) = \underset{i}{\SSigma} (1 \otimes r_i)(Q^{e_i}(\psi_R(x)) \in \H\Z/2_{**} E \otimes_{\H\Z/2_{**}} \H\Z/2_{**} \H\Z/2. 
\end{equation}
Similarly for cohomology.
\end{corollary}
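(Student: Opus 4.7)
The plan is to deduce equation \eqref{eq:psi-v-q} from the hands-on form of the Co-Nishida relations established in Corollary~\ref{cor:conishida}, applied with $A = H\Z/2$. The main translation is that a class $e \in \pi_{m,n}\Ops_{H\Z/2}$ is, by the limit definition in Definition~\ref{def:spectrum-of-ops}, a coherent system of classes
\[
  e^{(a,b)} := (\pi_{a,b})_*(e) \in H\Z/2_{m+a,\,n+b}\bigl(S^{-a,-b} \wedge D_2^\mot(S^{a,b})\bigr),
\]
and a direct inspection of Diagram~\eqref{eq:power-map} shows that for $x \in H\Z/2_{a,b}(E)$ the operation $Q^e(x)$ is literally $\Theta^{e^{(a,b)}}(x)$ in the notation of Corollary~\ref{cor:conishida}, once we view $H\Z/2 \wedge E$ as the ambient object in $\HNAlg_2(\SH(S))_{H\Z/2/}$ on which $\Theta^{e^{(a,b)}}$ acts.

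Next I would relate the completed right coaction $\psi_R$ on $\pi_{**}\Ops_{H\Z/2}$ to the ordinary coactions on each $H\Z/2_{**}D_2^\mot(S^{a,b})$ (noting that each $D_2^\mot(S^{a,b})$ is dualizable, so no completion is needed level-wise). By the construction of completed coactions recalled in \S\ref{sec:completed-homology-coaction}, the coaction $\psi_R$ on $\pi_{**}\Ops_{H\Z/2}$ is assembled from the level-wise coactions on each $H\Z/2_{**}C(-n,-n)$, using the $\limone$-vanishing already established in the proof of Theorem~\ref{thm:hpty-of-ops}. Consequently, if $\psi_R(e) = \SSigma_i e_i \otimes r_i$ in the completed tensor product, then applying the projections $(\pi_{a,b})_*$ termwise yields, for every $(a,b)$, an ordinary equality
\[
  \psi_R\bigl(e^{(a,b)}\bigr) \;=\; \SSigma_i (e_i)^{(a,b)} \otimes r_i.
\]

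With both ingredients in hand, the argument is direct: fix $x \in H\Z/2_{p,q}E$ and apply Corollary~\ref{cor:conishida} to the class $e^{(p,q)} \in H\Z/2_{**}D_2^\mot(S^{p,q})$ with this $x$. The resulting identity
\[
  \psi_R\,\Theta^{e^{(p,q)}}(x) \;=\; \SSigma_i (1 \otimes r_i)\,\Theta^{(e_i)^{(p,q)}}\bigl(\psi_R(x)\bigr)
\]
is, after re-identifying each $\Theta^{(\cdot)^{(p,q)}}$ with the corresponding $Q^{(\cdot)}$ via the first step, exactly \eqref{eq:psi-v-q}. The cohomological variant is handled identically, substituting the cohomology coaction of \S\ref{subsub:cohomology-coaction} and invoking the cohomological half of Corollary~\ref{cor:conishida}.

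The main obstacle I anticipate is purely bookkeeping: keeping the bidegree $(m,n)$ of the operation straight from the bidegree $(p,q)$ of the incoming class, and confirming that the family of level-wise equalities supplied by Corollary~\ref{cor:conishida} genuinely assembles into a single equality living in the completed tensor product $H\Z/2_{**}E \widehat\otimes_{H\Z/2_{**}} \scr A_{**}$. The $\limone$-vanishing already recorded in the proof of Theorem~\ref{thm:hpty-of-ops} is precisely what is needed for this assembly, so no new geometric input is required beyond what has already been proved.
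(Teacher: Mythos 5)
Your proof is correct and takes essentially the same approach as the paper: the paper's one-line justification is ``Immediate from Corollary \ref{cor:conishida} via Remark \ref{rmk:ops-comodule-compatibility},'' and what you have written is precisely the content of that line unpacked --- Remark \ref{rmk:ops-comodule-compatibility} supplies the level-wise comodule compatibility of the projections $\pi_{a,b}$, and the identification $Q^e(x) = \Theta^{e^{(a,b)}}(x)$ plus the $\limone$-vanishing from Theorem \ref{thm:hpty-of-ops} does the assembly into the completed statement, exactly as you describe.
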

\begin{proof}
Immediate from Corollary \ref{cor:conishida} via Remark \ref{rmk:ops-comodule-compatibility}.
\end{proof}

Recall the generating functions $\xi(t), \tau(t)$ from \S\ref{sect:formulas}.
\begin{theorem} \label{thm:conishida} Let $2$ be invertible in $S$. Then we have the following equality
\begin{equation*}
\SSigma_{n \in \Z, \epsilon \in \{0,1\}} \psi_R(Q^{2n+\epsilon}x)t^ns^\epsilon = \SSigma_{n \in \Z} \overline{\xi}(t)^n \left[ Q^{2n}(\psi_R(x)) + (\overline{\tau}(t) + s)Q^{2n+1}(\psi_R(x)) \right] 
\end{equation*}
\end{theorem}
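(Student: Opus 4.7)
The plan is to reduce the identity to a generating-function repackaging of two ingredients already in hand: the hands-on co-Nishida relations (Corollary \ref{cor:cohere}) and explicit formulas for the right coaction $\psi_R$ on the basis classes $e_k \in \pi_{**}\Ops_{H\Z/2}$.

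First I will translate the formulas for the left coaction $\psi_L$ given in Corollary \ref{cor:psil} into formulas for $\psi_R$. The relation \eqref{eq:left-to-right} between the two coactions amounts to applying the antipode on the Steenrod factor and swapping the two tensor factors. Since $\chi$ is a ring homomorphism sending $\xi_i \mapsto \overline{\xi}_i$ and $\tau_i \mapsto \overline{\tau}_i$, each coefficient $[\xi(t)^j]_{t^k}$ appearing in $\psi_L$ becomes $[\overline{\xi}(t)^j]_{t^k}$ in $\psi_R$, and similarly $[\tau(t)\xi(t)^j]_{t^k}$ becomes $[\overline{\tau}(t)\overline{\xi}(t)^j]_{t^k}$. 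This produces closed expressions $\psi_R(e_{2k}) = \sum_j e_{2j} \otimes [\overline{\xi}(t)^j]_{t^k} + \sum_j e_{2j+1} \otimes [\overline{\tau}(t)\overline{\xi}(t)^j]_{t^k}$ and $\psi_R(e_{2k+1}) = \sum_j e_{2j+1} \otimes [\overline{\xi}(t)^j]_{t^k}$.

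Second, I will substitute these into the co-Nishida identity of Corollary \ref{cor:cohere} applied to $e = e_{2n+\epsilon}$, using that the operation parametrized by $e_i$ is exactly $Q^i$. This yields explicit formulas for $\psi_R(Q^{2n}x)$ and $\psi_R(Q^{2n+1}x)$, each an infinite sum indexed by $j$ whose coefficients are elements of the form $(1 \otimes [\overline{\xi}(t)^j]_{t^n})$ or $(1 \otimes [\overline{\tau}(t)\overline{\xi}(t)^j]_{t^n})$ acting on $Q^{2j}(\psi_R x)$ or $Q^{2j+1}(\psi_R x)$.

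Third, I will multiply each identity by $t^n s^\epsilon$ and sum over $n \in \Z$ and $\epsilon \in \{0,1\}$. The elementary identity $\sum_n [F(u)]_{u^n}\,t^n = F(t)$, valid in the relevant completed tensor product, collapses each coefficient sum into the power series $\overline{\xi}(t)^j$ or $\overline{\tau}(t)\overline{\xi}(t)^j$. After factoring out the common $\overline{\xi}(t)^j$ and combining the two $\epsilon$-cases into the factor $Q^{2j}(\psi_R x) + (\overline{\tau}(t)+s)Q^{2j+1}(\psi_R x)$, the result is exactly the claimed right-hand side.

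The main obstacle is the bookkeeping around the antipode when passing from $\psi_L$ to $\psi_R$, and ensuring that all infinite sums are interpreted within the appropriate completed tensor products so that the power-series manipulations are legitimate. No further homotopy-theoretic input is required beyond what has already been established in the preceding sections.
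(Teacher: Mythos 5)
Your proposal is correct and takes essentially the same approach as the paper's proof, which is simply the one-liner ``Immediate by combining Corollary~\ref{cor:cohere} and Corollary~\ref{cor:psil} (recalling that switching left and right coactions involves an antipode).'' Your three-step unpacking — convert $\psi_L$ of Corollary~\ref{cor:psil} into $\psi_R$ via the antipode and swap, substitute into the hands-on co-Nishida relation, then resum the coefficients into power series in $t$ — is exactly what that one line compresses.
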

\begin{proof}
Immediate by combining Corollary~\ref{cor:cohere} and Corollary~\ref{cor:psil} (recalling that switching left and right coactions involves an antipode).
\end{proof}

\subsection{Action on the dual Steenrod algebra}
In this section we describe the action of the power operations on the dual Steenrod algebra. We make use of the generating functions $\xi(t), \tau(t)$ from \S\ref{sect:formulas}.

We begin with some pedestrian observations.
\begin{lemma} \label{lemm:action-on-Bsigma2}
In the notation of Lemma \ref{lemm:coh-of-BSigma2}, the action of power operations on $H^{**}(B\Sigma_2, \Z/2)$ is given by
\begin{equation*}
  Q^{2n+\epsilon} u = \begin{cases} u & n = \epsilon = 0 \\ v & n = -1, \epsilon = 1 \\ 0 & \text{else} \end{cases}
\end{equation*}
and
\begin{equation*}
  Q^{2n+\epsilon} v^{2^i} = \begin{cases} v^{2^i} & n = \epsilon = 0 \\ v^{2^{i+1}} & n = -2^i, \epsilon = 0 \\ 0 & \text{else.} \end{cases}
\end{equation*}
\end{lemma}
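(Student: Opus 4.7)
The plan is to translate the statement into one about cohomological Steenrod squares and then invoke the properties established in Proposition~\ref{prop:cohomological-power-ops-properties}. By Definition~\ref{def:cohomological-power-ops}, the operation $Q^{2n+\epsilon}$ acting on $\pi_{-p,-q}\imap(B\Sigma_{2+}, H\Z/2)$ is, after reindexing, the cohomological Steenrod square $\Sq^{-(2n+\epsilon)}$ acting on $H^{p,q}(B\Sigma_{2+}; \F_2)$. It therefore suffices to compute $\Sq^j u$ and $\Sq^j v^{2^i}$ for every $j \in \Z$. Using base change (Proposition~\ref{prop:cohomological-power-ops-properties}(3)) I reduce to the case $S = \Spec(\Z[1/2])$, which is essentially smooth over a Dedekind domain so that parts (7) and (10) of Proposition~\ref{prop:cohomological-power-ops-properties} are available.

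For $u \in H^{1,1}$, unstability (part (7)) gives $\Sq^0 u = u$ and $\Sq^j u = 0$ for $j < 0$, while the vanishing axiom (part (6)) applied with $(p,q) = (1,1)$ forces $\Sq^j u = 0$ for all $j \ge 2$. The remaining case is $\Sq^1 u = \beta u$ by part (10), and $\beta u = v$ by the defining characterization of $u$ recalled just before Lemma~\ref{lemm:coh-of-BSigma2}. Translated back through $Q^{2n+\epsilon} = \Sq^{-(2n+\epsilon)}$, this is exactly the first formula of the lemma.

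For $v^{2^i}$ the argument proceeds by induction on $i \ge 0$. In all cases, unstability and vanishing (parts (7) and (6)) give $\Sq^j v^{2^i} = 0$ for $j < 0$ and $j \ge 2^{i+1} + 1$, while the squaring axiom (part (5)) applied to $v^{2^i} \in E^{2 \cdot 2^i, 2^i}$ yields $\Sq^{2^{i+1}} v^{2^i} = v^{2^{i+1}}$. The base case $i=0$ is then completed by $\Sq^1 v = \beta v = \beta^2 u = 0$. For the inductive step, write $v^{2^i} = v^{2^{i-1}} \cdot v^{2^{i-1}}$ and apply the Cartan formula of part (9). By the inductive hypothesis, the only non-trivial Steenrod squares on $v^{2^{i-1}}$ are $\Sq^0$ and $\Sq^{2^i}$, both of even index, so the $\tau$ and $\rho$ correction terms in the Cartan formula (which are products of odd Steenrod squares) vanish; this forces all odd Steenrod squares on $v^{2^i}$ to vanish as well. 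The remaining even sum $\sum_{j+k=n} \Sq^{2j}(v^{2^{i-1}}) \Sq^{2k}(v^{2^{i-1}})$ has at most four nontrivial summands, indexed by $(j, k) \in \{0, 2^{i-1}\}^2$: they contribute $v^{2^i}$ when $n = 0$, the crossed term $2 \cdot v^{3 \cdot 2^{i-1}} = 0$ when $n = 2^{i-1}$, and $v^{2^{i+1}}$ when $n = 2^i$.

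The main obstacle is the bookkeeping for the Cartan induction: the characteristic-$2$ cancellation of the crossed term at $n = 2^{i-1}$ is what rules out the only surviving intermediate Steenrod square, and the compatibility of the degrees in the squaring axiom with the particular sequence $2^i$ is what allows the induction to close neatly.
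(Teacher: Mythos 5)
Your proof is correct and follows essentially the same route as the paper: unstability, vanishing, squaring, and Bockstein pin down the operations on $u$ and $v$, and the Cartan formula propagates the answer to $v^{2^i}$. You spell out two details the paper elides: the preliminary base change to $\Spec(\Z[1/2])$ (which is genuinely needed since Proposition~\ref{prop:cohomological-power-ops-properties}(7) and (10) are only asserted over bases essentially smooth over a Dedekind domain), and the full Cartan-formula induction for $v^{2^i}$, whereas the paper simply says these follow from the Cartan formula. Both of these are welcome additions and your bookkeeping — in particular the characteristic-$2$ cancellation of the crossed term at $n = 2^{i-1}$ — is accurate.
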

\begin{proof}
By definition $v = \beta u$, which is the same as $Q^{-1} u$ by Proposition \ref{prop:cohomological-power-ops-properties}(10). The other power operations on $u$ vanish by Proposition \ref{prop:cohomological-power-ops-properties}(6, 7).
We have $0 = \beta v = \beta^2 u$, so $Q^{-1} v = 0$. Also $Q^{-2} v = v^2$ by Proposition \ref{prop:squaring}. The remaining operations on $v$ are determined by Proposition \ref{prop:cohomological-power-ops-properties}(6, 7), and the operations on $v^{2^i}$ follow by the Cartan formulas (Proposition \ref{prop:cohomological-power-ops-properties}(9)).
\end{proof}

\begin{theorem} \label{thm:bockstein-generates}
The smallest $H\Z/2_{**}$-subalgebra of $H\Z/2_{**}H\Z/2$ containing $\tau_0$ and stable under the genuine motivic power operations $Q^i, i \in \Z$ is $H\Z/2_{**}H\Z/2$ itself.
\end{theorem}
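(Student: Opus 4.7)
Since $\mathcal{A}_{**}$ is generated as an $H\Z/2_{**}$-algebra by $\{\tau_j\}_{j \geq 0} \cup \{\xi_j\}_{j \geq 1}$, it suffices to prove that all of these generators lie in the smallest $H\Z/2_{**}$-subalgebra $B \subseteq \mathcal{A}_{**}$ containing $\tau_0$ and stable under the operations $Q^i$. The plan is to produce, by induction on $j$, elements of $B$ which agree with $\xi_{j+1}$ and $\tau_{j+1}$ modulo decomposables in previously-constructed generators; the base case $j=0$ follows once $\xi_1, \tau_1 \in B$, and the inductive step is closed by the subalgebra axiom.

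The needed identities will be extracted from the co-Nishida relations (Theorem \ref{thm:conishida}) combined with the formulas of Lemma \ref{lem:psir-easy} and the computation of the $Q^n$-action on $H^{**}(B_\et\Sigma_2)$ in Lemma \ref{lemm:action-on-Bsigma2}. Concretely, apply $Q^n$ to both sides of the identity $\psi_R(u) = u + \sum_{j \geq 0} \tau_j \otimes v^{2^j}$: the left-hand side becomes $\psi_R(Q^n(u))$, which vanishes for all but a few small $n$ by Lemma \ref{lemm:action-on-Bsigma2}, while the right-hand side expands via the Cartan formula into an expression involving the unknown $Q^m(\tau_j)$ tensored against the known operations on $v^{2^j}$. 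Matching coefficients of the basis $v^{k}$ and $uv^{k}$ yields explicit formulas for $Q^n(\tau_j)$ in terms of the $\tau_k$ and $\xi_k$ (and analogously for $Q^n(\xi_j)$, after using $\xi_{j+1} = \beta\tau_j$-type relations implicit in the Hopf algebroid structure).

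Bidegree bookkeeping (using $|\tau_j| = (2^{j+1}-1, 2^j-1)$, $|\xi_j| = (2(2^j-1), 2^j-1)$, and that $Q^n$ raises bidegree by $(n, \lceil n/2 \rceil)$) then pins down which operations can contribute the generator of interest; one expects the leading-term identities $Q^{1}(\tau_0) \equiv \xi_1$, $Q^{2}(\tau_0) \equiv \tau_1$, $Q^{2^{j+1}}(\xi_j) \equiv \xi_{j+1}$, and $Q^{2^{j+1}}(\tau_j) \equiv \tau_{j+1}$, each modulo decomposable terms involving only $\tau_k, \xi_k$ with $k \leq j$ (together with coefficients from $H\Z/2_{**}$). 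Granting these, a straightforward induction on $j$ shows $\tau_j, \xi_j \in B$ for all $j$, concluding the proof.

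The main obstacle is the bookkeeping in the co-Nishida expansion in the presence of the weight-shifting classes $\tau, \rho \in H\Z/2_{**}$; in particular, the defining relation $\tau_i^2 = \tau\xi_{i+1} + \rho\tau_{i+1} + \rho\tau_0\xi_{i+1}$ forces one to be careful about what counts as a "decomposable" at each inductive stage (because, e.g., $\tau_0^2$ already involves $\tau_1$ and $\xi_1$), so that the inductive hypothesis genuinely provides access to the algebra elements needed to close the argument. The purely formal generation step is immediate once the leading-term formulas above have been nailed down.
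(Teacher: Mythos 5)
Your overall strategy — extract formulas for $Q^n(\tau_j)$ and $Q^n(\xi_j)$ from the co-Nishida relations applied to the coaction on $H^{**}(B_\et\Sigma_2)$, then run an induction — is the right idea, and it is what the paper does. However, as written the proposal has two substantive problems.

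First, the description of the mechanism is incorrect. You write that applying $Q^n$ to both sides of $\psi_R(u) = u + \sum_j \tau_j \otimes v^{2^j}$ turns the left-hand side into $\psi_R(Q^n(u))$. But $\psi_R$ is a \emph{left} comodule/module map and does \emph{not} commute with the genuine power operations; that failure to commute is precisely what the co-Nishida relations (Theorem~\ref{thm:conishida}) quantify. The correct procedure, as in the paper's proof, is to start from the co-Nishida identity
\[
\SSigma_{n,\epsilon} \psi_R(Q^{2n+\epsilon}u)\,t^n s^\epsilon = \SSigma_{n} \overline{\xi}(t)^n \bigl[ Q^{2n}(\psi_R(u)) + (\overline{\tau}(t)+s)\,Q^{2n+1}(\psi_R(u)) \bigr],
\]
evaluate the left side using Lemma~\ref{lemm:action-on-Bsigma2}, expand the right side with the Cartan formula, and compare coefficients in the free module with basis $\{t^n v^m s^\epsilon u^\delta\}$. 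Naively "applying $Q^n$ to the coaction formula" is not a step one can take; citing the co-Nishida relations does not license it.

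Second, the proposal is incomplete exactly where the content of the theorem lies. The leading-term identities $Q^1(\tau_0)\equiv\xi_1$, $Q^2(\tau_0)\equiv\tau_1$, $Q^{2^{j+1}}(\xi_j)\equiv\xi_{j+1}$, $Q^{2^{j+1}}(\tau_j)\equiv\tau_{j+1}$ modulo decomposables are stated as "one expects" and the argument proceeds with "granting these." These identities are in fact correct (they follow from Theorem~\ref{thm:power-ops-on-dual-steenrod}), but establishing them is the whole work; the degree bookkeeping alone does not rule out other contributions, and one really must carry out the coefficient comparison in the co-Nishida expansion. Note also that your inductive scheme (express $\xi_{j+1},\tau_{j+1}$ via operations on $\xi_j,\tau_j$) is different from, and more expensive than, the paper's: the paper only ever applies power operations to $\tau_0$, obtaining the closed identity
\[
Q^{2(2^i-1)}(\tau_0) + \tau_i + \sum_{k=0}^{i-1} \tau_k\, Q^{2(2^i-2^k-1)+1}(\tau_0) = 0,
\]
after which the $\xi_i$ are produced from the $\tau$'s by comparing the coefficient of $t^{-1}s$. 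Either inductive scheme can be made to work, but without actually deriving the formulas your argument does not yet constitute a proof.
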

\begin{proof}
Apply Theorem \ref{thm:conishida} to the right coaction on $x := u \in H\Z/2^{**}(B\Sigma_2)$, substitute $t = \xi(t)$ and use Lemmas \ref{lem:comp-inv} and \ref{lemm:action-on-Bsigma2} to get

\[ 
\psi_R(u) + \psi_R(v)\xi(t)^{-1}s = \SSigma_{n \in \Z} t^n \left[ Q^{2n}(\psi_R(u)) + (\tau(t) + s)Q^{2n+1}(\psi_R(u)) \right]. 
\]

This is an equation in a free module with basis $\{t^nv^ms^\epsilon u^\delta\}$.
We shall compare the coefficients of $t^{2^i-1}v$ for $i \ge 1$; on the left hand side we get zero.
On the right hand side we get 
\begin{equation} \label{eq:generation} 
	\left[Q^{2(2^i - 1)}(\psi_R(u))\right]_{v} + \left[\sum_{n \in \Z} \sum_{k \ge 0} \tau_k t^{2^k+n} Q^{2n+1}(\psi_R(u))\right]_{t^{2^i-1}v}. 
\end{equation}
Note that in order to obtain the coefficient $t^{2^i-1}$, $n$ is determined by $k$ as $n=2^i-2^k-1$.
We now plug in $\psi_R(u)$ (from Lemma \ref{lem:psir-easy}) and simplify the power operations.
For this we use two observations.
Firstly, if $r$ is odd, then $Q^r(u) = 0$ except in the special case $r=-1$, when $Q^{-1}(u) = v$.
Secondly, since for odd $r$ we have $Q^r(v^{2^m}) = 0$, and for even $r$ there are only two non-zero power operations (namely $Q^0(v^{2^m}) = v^{2^m}$ and $Q^{-2^{m+1}}(v^{2^m}) = v^{2^{m+1}}$, the Cartan formula (Proposition \ref{prop:cartan}) shows that
\[ 
Q^r(y v^{2^m}) = Q^r(y) v^{2^m} + Q^{r+2^{m+1}}(y) v^{2^{m+1}}. 
\]
Combining these shows that
\[ 
[Q^{2(2^i-1)}(\psi_R(u))]_v = Q^{2(2^i-1)}(\tau_0)
\]
and 
\[ \left[\sum_{n \in \Z} \sum_{k \ge 0} \tau_k t^{2^k+n} Q^{2n+1}(\psi_R(u))\right]_{t^{2^i-1}v} = \sum_{k \ge 0} \tau_k [Q^{2(2^i-2^k-1)+1}(\psi_R(u))]_v = \tau_i + \sum_{k \ge 0}\tau_k Q^{2(2^i-2^k-1)+1}(\tau_0). \]
By the vanishing of Proposition \ref{prop:power-ops-vanishing}, the second sum is zero for $k \ge i$.
Putting everything together we thus find that 
\[ 
Q^{2(2^i-1)}(\tau_0) + \tau_i + \sum_{k=0}^{i-1} \tau_k Q^{2(2^i-2^k-1)+1}(\tau_0) = 0. 
\]
This allows us to express $\tau_i$ in terms of power operations on $\tau_0$ and the $\tau_k$ for $k<i$.
Consequently the subalgebra of $H\Z/2_{**}H\Z/2$ generated by $\tau_0$ and power operations contains all the $\tau_i$.

Now we again consider equation \eqref{eq:generation}, and this time we compare the coefficient of $t^{-1}s$.
Since the leading coefficient of $\xi(t)$ is $t$, the coefficient of $t^{-1}s$ on the left hand side is just $\psi_R(v)$.
On the other hand on the right hand side we just obtain $Q^{-1}\psi_R(u)$.
Thus we find \[ \sum_{i \ge 0} \xi_i v^{2^i} = Q^{-1}\left(u + \sum_{i \ge 0} \tau_i v^{2^i}\right) = v + \sum_{i \ge 0}\left(Q^{-1}(\tau_i) v^{2^i} + Q^{2^{i+1}-1}(\tau_i) v^{2^{i+1}}\right). \]
This shows that the $\xi_i$ can be obtained via power operations from the $\tau_i$, concluding the proof.
\end{proof}

Using similar arguments, it is in fact possible to determine the entire action of power operations on the dual Steenrod algebra.

\begin{theorem} \label{thm:power-ops-on-dual-steenrod}
The power operations act on $H\Z/2_{**}H\Z/2$ as follows:
\begin{align*}
	t^{2^n} \sum_r Q^{2r}(\tau_n) t^r &= \sum_{i \leq n} \tau_i t^{2^i} + \tau(t) \xi(t)^{-1} \sum_{i\leq n} \xi_i t^{2^i} \\
	t^{2^n}\sum_r Q^{2r+1}(\tau_n) t^{r} &= \xi(t)^{-1} \sum_{i\leq n} \xi_i t^{2^i}  + 1\\
	t^{2^n} \sum_r Q^{2r}(\xi_n) t^r &= \sum_{i \leq n} \xi_i t^{2^i} + \xi(t)^{-1} \sum_{i < n}\xi_i^2 t^{2^{i+1}} \\
	\sum_r Q^{2r+1}(\xi_n)t^{r} &= 0
\end{align*}
\end{theorem}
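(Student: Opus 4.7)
The plan is to apply the co-Nishida identity from Theorem~\ref{thm:conishida} to the two classes $u, v \in H^{**}(B_{\et}\Sigma_2)$ and, mimicking the proof of Theorem~\ref{thm:bockstein-generates}, extract information by comparing coefficients. First I would substitute $t \mapsto \xi(t)$ throughout Theorem~\ref{thm:conishida} so that the (by now familiar) identity
\[
\psi_R(u) + \psi_R(v)\xi(t)^{-1} s = \sum_{n \in \Z} t^n\bigl[Q^{2n}(\psi_R(u)) + (\tau(t)+s)Q^{2n+1}(\psi_R(u))\bigr]
\]
is available, together with its analogue for $x = v$. Using Lemma~\ref{lem:psir-easy} we have $\psi_R(u) = u + \sum_{j\ge 0} \tau_j v^{2^j}$ and $\psi_R(v) = \sum_{j\ge 0} \xi_j v^{2^j}$ (with $\xi_0 = 1$).

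Next I would expand each $Q^{2n+\epsilon}(\psi_R(u))$ using the Cartan formula (Proposition~\ref{prop:cartan}) together with the fact, noted in the proof of Theorem~\ref{thm:bockstein-generates}, that the vanishing of $Q^{\mathrm{odd}}(v^{2^m})$ collapses the Cartan expansion to
\[
Q^r(y v^{2^m}) = Q^r(y) v^{2^m} + Q^{r + 2^{m+1}}(y) v^{2^{m+1}}.
\]
Writing $P_n(t) := \sum_r Q^{2r}(\tau_n) t^r$ and $R_n(t) := \sum_r Q^{2r+1}(\tau_n) t^r$ and gathering the corresponding contributions, I expect the $s^0$ and $s^1$ parts of the identity, read off coefficient by coefficient at $v^{2^n}$, to yield
\[
\tau_n = P_n(t) + t^{-2^{n-1}} P_{n-1}(t) + \tau(t)R_n(t) + \tau(t) t^{-2^{n-1}} R_{n-1}(t) \qquad (n \ge 1),
\]
\[
\xi_n \xi(t)^{-1} = R_n(t) + t^{-2^{n-1}} R_{n-1}(t) \qquad (n \ge 1),
\]
together with the base cases $\tau_0 = P_0(t) + \tau(t)t^{-1} + \tau(t) R_0(t)$ and $\xi(t)^{-1} = t^{-1} + R_0(t)$ (so $R_0(t) = \xi(t)^{-1}+t^{-1}$).

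The rest is formal: a straightforward induction on $n$ (multiplying through by $t^{2^{n-1}}$ and then by $t^{2^{n-1}}$ again, so as to obtain a recursion for $t^{2^n}P_n(t)$ and $t^{2^n} R_n(t)$) identifies these generating functions with the closed forms
\[
t^{2^n} P_n(t) = \sum_{i\le n}\tau_i t^{2^i} + \tau(t)\xi(t)^{-1}\!\!\sum_{i\le n}\xi_i t^{2^i}, \qquad t^{2^n} R_n(t) = \xi(t)^{-1}\!\!\sum_{i\le n}\xi_i t^{2^i} + 1,
\]
where the ``$+1$'' in the second formula appears because the $1$-terms in the recursion cancel modulo $2$ except at the base case. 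Running exactly the same argument starting from the co-Nishida identity applied to $x = v$ (where now $\psi_R(v) = \xi(v)$, the LHS equals $\xi(v) + \xi(v)^2\xi(t)^{-1}$, and all odd operations $Q^{\text{odd}}(v^{2^j})$ vanish) produces the analogous recursions for $Q^{2r}(\xi_n)$ and $Q^{2r+1}(\xi_n)$. The vanishing of all odd operations on the right hand side forces $R'_n(t) := \sum_r Q^{2r+1}(\xi_n)t^r = 0$ by induction starting from $R'_1(t) = 0$, giving the fourth identity; the remaining even recursion is solved as above, yielding the third.

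The main obstacle is not conceptual but bookkeeping: tracking the index shifts $j \leftrightarrow j+1$ coming from the two pieces of $Q^r(yv^{2^m})$, handling $\xi_0 = 1$ as a boundary case, and being careful that cancellations in characteristic $2$ (especially $1+1 = 0$) are what make the induction close. Once the recursions are written down, the induction step is a one-line computation in $\scr A_{**}\fpsr{t,t^{-1}}$.
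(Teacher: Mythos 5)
Your proposal follows the paper's proof essentially verbatim: apply the co-Nishida relation (Theorem~\ref{thm:conishida}) to $u$ and $v$, substitute $t \mapsto \xi(t)$ using Lemma~\ref{lem:comp-inv}, expand $\psi_R(u), \psi_R(v)$ via Lemma~\ref{lem:psir-easy} and the collapsed Cartan formula, compare coefficients on $v^{2^i}s^\epsilon$ to extract recursions, and solve by induction. Your recursions (after solving for $P_n, R_n$ in characteristic $2$) coincide with the paper's, and the inductive solution you describe is exactly the one the paper invokes; this is the same argument.
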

\begin{proof}
Applying Theorem \ref{thm:conishida} to the right coaction on $H^{**}(B\Sigma_2, \Z/2)$ we obtain the formula 
\[ 
\sum_{n, \epsilon} \psi_R(Q^{2n+\epsilon}(u))t^ns^\epsilon = \sum_n \overline{\xi}(t)^n \left[ Q^{2n}(\psi_R(u)) + (\overline{\tau}(t) + s)Q^{2n+1}(\psi_R(u)) \right]. 
\]
This equation takes place in a free module with basis $\{u^{\delta}v^mt^ns^\epsilon \}$ and we will compare coefficients on various basis elements.
Using Lemma \ref{lemm:action-on-Bsigma2} and  Lemma \ref{lem:psir-easy} the left hand side becomes 
\[ 
\psi_R(u) + \psi_R(v) t^{-1}s = (u +\sum_{i\geq 0}\tau_iv^{2^i}) + (\sum_{i\geq 0}\xi_iv^{2^i})t^{-1}s. 
\]
To expand the right hand side we use Lemma \ref{lem:psir-easy} and
 the Cartan formula (Proposition \ref{prop:cartan}) which shows that
\[ 
Q^r(y v^{2^m}) = Q^r(y) v^{2^m} + Q^{r+2^{m+1}}(y) v^{2^{m+1}}. 
\]
Plugging in $t = \xi(t)$ and applying Lemmas \ref{lem:comp-inv} and \ref{lemm:action-on-Bsigma2} we expand the right hand side as
\begin{align*}
\sum_{r\in \Z}t^r\Bigg[ Q^{2r}(u) + \sum_{i\geq 0} \Big( Q^{2r}(\tau_i)v^{2^i} &+ Q^{2r+2^{i+1}}(\tau_i)v^{2^{i+1}} \Big)
	\\
	&+(\tau(t) + s) \Big( Q^{2r+1}(u) + \sum_{i\geq 0} \big( Q^{2r+1}(\tau_i)v^{2^i} + Q^{2r+2^{i+1}+1}(\tau_i)v^{2^{i+1}}  \big)  \Big)\Bigg].
\end{align*}
Note that the only nonzero value of $Q^{2r}(u)$ is $u$ when $r=0$ and the only nonzero value of $Q^{2r+1}(u)$ is $v$ when $r=-1$. 
Now comparing the coefficients on $v^{2^i}$, on the left and right hand sides, we obtain for  the equality for $i=0$
\[
\tau_0 + \xi(t)^{-1}s = \sum_{r\in \Z}t^r\left[ Q^{2r}(\tau_0) + (\tau(t)+s) \left( Q^{2r+1}(\tau_i)\right)\right] + (\tau(t) + s)t^{-1}
\]
and for $i\geq 1$ we obtain
\[
\tau_i + \xi_i\xi(t)^{-1}s = \sum_{r\in \Z}t^r\left[ Q^{2r}(\tau_i) +t^{-2^{i-1}}Q^{2r}(\tau_{i-1}) + (\tau(t)+s) \left(Q^{2r+1}(\tau_i) + t^{-2^{i-1}} Q^{2r+1}(\tau_{i-1})\right)\right].
\]
Finally, comparing the terms on $s^{\epsilon}$ for $\epsilon\in \{0,1\}$ we find the 
 the following recurrence relations determine the power operations on the $\tau_i$:\tombubble{I simplified using $\xi_0=1$}
\begin{align*}
	\sum_{r\in\Z}Q^{2r+1}(\tau_n)t^r 
	&= t^{-2^{n-1}}\sum_{r\in \Z}Q^{2r+1}(\tau_{n-1})t^{r} +\xi_{n}\xi(t)^{-1}, \hspace{3mm} n\geq 1
	\\
	\sum_{r\in\Z} Q^{2r}(\tau_n)t^r
	& = \tau_n + t^{-2^{n-1}}\sum_{r\in \Z}Q^{2r}(\tau_{n-1}) t^r \\
	&+ \tau(t)(\sum_{r\in\Z}Q^{2r+1}(\tau_n)t^{r} + t^{-2^{n-1}}\sum_{r\in\Z}Q^{2r+1}(\tau_{n-1})t^r ), \hspace{3mm} n\geq 1
	\\
	\sum_{r\in\Z}Q^{2r+1}(\tau_0)t^{r} &= \xi(t)^{-1} +t^{-1}, 
	\\
	\sum_{r\in\Z}Q^{2r}(\tau_0)t^{r} & = \tau_0+\tau(t)\xi(t)^{-1}.
\end{align*}
Repeating the same procedure with $v$ in place of $u$ in the co-Nishida relations yields the following recurrence relations determining the power operations on the $\xi_i$: 
\begin{align*}
	\sum_{r\in\Z}Q^{2r+1}(\xi_n)t^r 
	&= t^{-2^{n-1}}\sum_{r\in\Z}Q^{2r+1}(\xi_{n-1})t^{r}, \hspace{3mm} n\geq 1
	\\
	\sum_{r\in\Z}Q^{2r}(\xi_n)t^r 
	&= \xi_n + [\xi(t)^{2}]_{t^{2^n}}\xi(t)^{-1} + t^{-2^{n-1}}\sum_{r\in\Z}Q^{2r}(\xi_{n-1})t^r \\ 
	+&\tau(t)( \sum_{r\in\Z}Q^{2r+1}(\xi_n)t^r  +t^{-2^{n-1}}\sum_{r\in\Z}Q^{2r+1}(\xi_{n-1})t^r), \hspace{3mm} n\geq 1 \\
	\sum_{r\in\Z}Q^{2r+1}(\xi_0)t^r &= 0,  \\
	\sum_{r\in\Z}Q^{2r}(\xi_0)t^r &= \xi_0.
\end{align*}
It is straightforward to verify by induction that these recurrence relations are solved (uniquely) by our stated expressions.
\end{proof}

\begin{corollary}\label{cor:power-ops-on-dual-steenrod}
For $k>0$ we have $Q^{2^{k+1} - 2} \tau_0 = \overline\tau_k$ and $Q^{2^{k+1} - 3} \tau_0 = \overline \xi_k$.
\end{corollary}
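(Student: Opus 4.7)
The plan is to extract the stated values directly from the generating function identities of Theorem \ref{thm:power-ops-on-dual-steenrod} by specializing to $n=0$. That theorem gives
\[
\sum_{r \in \Z} Q^{2r}(\tau_0)\, t^r = \tau_0 + \tau(t)\xi(t)^{-1}
\quad\text{and}\quad
\sum_{r \in \Z} Q^{2r+1}(\tau_0)\, t^r = \xi(t)^{-1} + t^{-1}.
\]
Writing $2^{k+1}-2 = 2(2^k-1)$ and $2^{k+1}-3 = 2(2^k-2)+1$, I would read off
\[
Q^{2^{k+1}-2}(\tau_0) = [\tau(t)\xi(t)^{-1}]_{t^{2^k-1}},
\qquad
Q^{2^{k+1}-3}(\tau_0) = [\xi(t)^{-1}]_{t^{2^k-2}},
\]
where the $\tau_0$ and $t^{-1}$ contributions vanish since $k > 0$.

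Next I would convert these into coefficients in $\overline{\xi}, \overline{\tau}$ via Lemma \ref{lemm:inversion-trick}. With $r = -1$ we obtain
\[
[\tau(t)\xi(t)^{-1}]_{t^{2^k-1}} = [\overline{\tau}(t)\,\overline{\xi}(t)^{-2^k}]_{t^0}
\quad\text{and}\quad
[\xi(t)^{-1}]_{t^{2^k-2}} = [\overline{\xi}(t)^{-(2^k-1)}]_{t^0}.
\]
The key simplification is that we are in characteristic $2$: since the Frobenius is additive,
\[
\overline{\xi}(t)^{2^k} = t^{2^k} + \overline{\xi}_1^{2^k} t^{2^{k+1}} + \overline{\xi}_2^{2^k} t^{3\cdot 2^k} + \cdots = t^{2^k}\bigl(1 + h(t)\bigr),
\]
where $h(t) \in t^{2^k}\F_2\fpsr{t}$. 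Hence $\overline{\xi}(t)^{-2^k} = t^{-2^k}(1 + h(t))^{-1}$ and $(1 + h(t))^{-1} = 1 + O(t^{2^k})$.

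From this the computation is immediate. For the first expression,
\[
[\overline{\tau}(t)\,\overline{\xi}(t)^{-2^k}]_{t^0} = [\overline{\tau}(t)(1 + O(t^{2^k}))]_{t^{2^k}} = [\overline{\tau}(t)]_{t^{2^k}} = \overline{\tau}_k,
\]
because the correction term $\overline{\tau}(t)\cdot O(t^{2^k})$ would require a nonzero constant term in $\overline{\tau}(t)$, which does not exist. For the second,
\[
[\overline{\xi}(t)^{-(2^k-1)}]_{t^0} = [\overline{\xi}(t)(1 + O(t^{2^k}))]_{t^{2^k}} = [\overline{\xi}(t)]_{t^{2^k}} = \overline{\xi}_k,
\]
by the same reasoning. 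This gives the two desired identities.

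The main obstacle is simply the bookkeeping with the inversion trick and the Frobenius identity; once these are in place the extraction of coefficients is essentially automatic. No new geometric or homotopical input is needed beyond Theorem \ref{thm:power-ops-on-dual-steenrod} and Lemma \ref{lemm:inversion-trick}.
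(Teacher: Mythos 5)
Your proof is correct, and for the first identity $Q^{2^{k+1}-2}\tau_0 = \overline\tau_k$ it is essentially the paper's argument made fully explicit: the paper also specializes Theorem \ref{thm:power-ops-on-dual-steenrod} at $n=0$, reads off $Q^{2^{k+1}-2}\tau_0 = [\tau(t)\xi(t)^{-1}]_{t^{2^k-1}}$, and applies Lemma \ref{lemm:inversion-trick} with $r=-1$ to get $[\overline\tau(t)\overline\xi(t)^{-2^k}]_{t^0}$, but then asserts the conclusion without spelling out why this coefficient is $\overline\tau_k$. Your Frobenius/leading-term argument ($\overline\xi(t)^{2^k} = t^{2^k}(1+O(t^{2^k}))$ in characteristic $2$, and $\overline\tau(t)$ has no constant term) fills in exactly that missing step.

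For the second identity $Q^{2^{k+1}-3}\tau_0 = \overline\xi_k$ you take a genuinely different route. The paper disposes of it with ``follows by applying the Bockstein,'' implicitly using the interaction of $\beta$ with the odd-index operations together with $\beta\overline\tau_k = \overline\xi_k$, a step that is not fully elaborated elsewhere in the text. You instead rerun the same mechanism as for the first identity: extract $[\xi(t)^{-1}]_{t^{2^k-2}}$ from the $Q^{2r+1}$ generating function at $n=0$ (noting the $t^{-1}$ correction term is invisible since $2^k-2\ge 0$), convert via the second half of Lemma \ref{lemm:inversion-trick} to $[\overline\xi(t)^{-(2^k-1)}]_{t^0}$, and finish with the same Frobenius observation. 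This is self-contained, uniform with the first half, and avoids appealing to the Bockstein relation, which is a mild advantage in rigor; the paper's route is shorter if one is willing to cite the Bockstein compatibility as known.
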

\begin{proof}
Applying the first formula of Theorem \ref{thm:power-ops-on-dual-steenrod} with $n=0$ yields \[ \sum_r Q^{2r}(\tau_0) t^r = \tau_0 + \tau(t) \xi(t)^{-1}, \] whence \[ Q^{2^{k+1} - 2} \tau_0 = [\tau(t) \xi(t)^{-1}]_{t^{2^k - 1}}. \]
By Lemma \ref{lemm:inversion-trick}, this is the same as $[\overline\tau(t)\overline\xi(t)^{-2^k}]_{t^0}$, which implies the first desired formula. The second one follows by applying the Bockstein.
\end{proof}

\begin{proof}[Alternative proof of Theorem \ref{thm:bockstein-generates}.]
Since $H\Z/2_{**}H\Z/2$ is generated by the $\tau_i$ and $\xi_i$, and the antipode is an idempotent ring homomorphism, $H\Z/2_{**}H\Z/2$ is also generated by the $\overline{\tau_i}$ and $\overline{\xi_i}$. By Corollary \ref{cor:power-ops-on-dual-steenrod}, $H\Z/2_{**}H\Z/2$ is thus generated as a ring with power operations by $\tau_0$. This was to be shown.
\end{proof}

\begin{subappendices}
\section{Dualization of split Tate motives}
\label{app:dualization}

\begin{assumption} \label{ass:A}
Throughout we fix $A \in \CAlg(\SH(S))$ such that, for every $X \in \Sm_S$, we
have $[X_+, A
\wedge \Gmp{n} \wedge S^m] = 0$ for $m$ sufficiently large or $n$ sufficiently
small (depending on $X$).
\end{assumption}
\begin{example} \label{ex:dualization-dedekind}
This applies in
particular if $A = \H\Z$ or $A = \H\Z/p$ and $S$ is essentially smooth over a Dedekind domain or field (in particular, finite dimensional); see Proposition \ref{prop:HZ-dedekind}. We will work throughout in the category $\Mod_A$.
\end{example}

\begin{lemma} \label{lemm:tate-sum-product}
Let $a_\bullet, b_\bullet: \Z \to \Z \cup \{\pm \infty\}$ be two sequences such that
$\lim_{n \to\infty} a_n = \infty$ and $\lim_{n \to -\infty} b_n = -\infty$.
Then the canonical map
\[ \bigoplus_{n \in \Z} A \wedge S^{a_n} \wedge \Gmp{b_n} \to
      \prod_{n \in \Z} A \wedge S^{a_n} \wedge \Gmp{b_n} \]
is an equivalence. Here $S^\infty := 0$ and $\Gmp{-\infty} := 0$.
\end{lemma}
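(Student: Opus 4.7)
The plan is to reduce to showing that, after mapping in from compact generators, only finitely many factors in the direct sum (equivalently, direct product) contribute nontrivially. Since $\Mod_A$ is compactly generated by $A \wedge X_+ \wedge S^p \wedge \Gmp{q}$ for $X \in \Sm_S$ and $p,q \in \Z$, testing the map on these generators suffices to verify it is an equivalence. Compactness of $X_+ \wedge S^p \wedge \Gmp{q}$ in $\SH(S)$ gives
\[
[X_+ \wedge S^p \wedge \Gmp{q}, \bigoplus_n A \wedge S^{a_n} \wedge \Gmp{b_n}] \cong \bigoplus_n [X_+, A \wedge S^{a_n - p} \wedge \Gmp{b_n - q}],
\]
while the corresponding formula for the product is automatic. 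So the map under study becomes the canonical inclusion of a direct sum into a direct product of abelian groups, and it suffices to show that all but finitely many of the groups $[X_+, A \wedge S^{a_n - p} \wedge \Gmp{b_n - q}]$ vanish.

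For this I would invoke Assumption \ref{ass:A}: given $X$, there exist $M, N$ (depending on $X$, $p$, $q$) such that $[X_+, A \wedge S^{m} \wedge \Gmp{b}] = 0$ whenever $m \geq M$ or $b \leq N$. The hypothesis $a_n \to \infty$ as $n \to \infty$ produces $N_1$ with $a_n - p \geq M$ for $n \geq N_1$, and the hypothesis $b_n \to -\infty$ as $n \to -\infty$ produces $N_2$ with $b_n - q \leq N$ for $n \leq N_2$. Hence the mapping group vanishes for $n \geq N_1$ and for $n \leq N_2$, leaving only the finite range $N_2 < n < N_1$ to contribute, at which point the sum-to-product inclusion is visibly an isomorphism.

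There is essentially no main obstacle: the argument is a straightforward combination of compactness of the Tate generators (to commute mapping groups past the coproduct) with the vanishing regime in Assumption \ref{ass:A} (to bound the range of nonzero factors). The only small point to be careful about is ensuring that the ``or'' in the assumption interacts correctly with the two one-sided hypotheses on $a_n$ and $b_n$, which it does because large positive $n$ is handled by the $a_n$-condition and large negative $n$ by the $b_n$-condition independently.
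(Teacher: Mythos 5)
Your argument is correct and is essentially the paper's proof: both reduce to mapping in from compact generators $X_+ \wedge S^p \wedge \Gmp{q}$, commute the mapping group past the direct sum by compactness, and then invoke Assumption~\ref{ass:A} together with the limit hypotheses on $a_\bullet, b_\bullet$ to show that all but finitely many factors vanish. The only (cosmetic) difference is that the paper first twists by $S^{-p,-q}$ to reduce to $p=q=0$, whereas you absorb the shift directly into the index of the sequences; both are equally valid.
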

\begin{proof}
Since $\Mod_A$ is generated by $A \wedge X_+ \wedge S^{m,n}$ for $X \in
\Sm_S$, it suffices to show that
\[ [S^{m,n} \wedge X_+, \bigoplus_{n} A \wedge S^{a_n} \wedge \Gmp{b_n}] \wequi
   [S^{m,n} \wedge X_+, \prod_{n} A \wedge S^{a_n} \wedge \Gmp{b_n}]. \]
Since adding a constant to a sequence does not change the infinite limit, and tensoring
with the invertible object $S^{-m,-n}$ preserves arbitrary sums and products
(being an equivalence), we may assume that $m=n=0$. Since $X_+$ is compact, we
need to show that
\[ \bigoplus_n [X_+, A \wedge S^{a_n} \wedge \Gmp{b_n}] \wequi
   \prod_n [X_+, A \wedge S^{a_n} \wedge \Gmp{b_n}]. \]
In other words, we need to show that only finitely many of the groups $[X_+,
A \wedge S^{a_n} \wedge \Gmp{b_n}]$ are non-zero. But this group is zero as
soon as $a_n >> 0$ or $b_n << 0$, which by assumption
holds for $n$ sufficiently large or small, respectively.
\end{proof}

\begin{remark}
For a related result, see \cite[Lemma 8.7]{totaro2016motive}.
\end{remark}

\begin{definition}
We call an object $M \in \Mod_A$ \emph{degreewise pseudofinite split
Tate\NB{possibly not the best terminology...}} if $M \wequi \bigoplus_{n \in
\Z} A \wedge S^{a_n} \wedge \Gmp{b_n}$, where $a_\bullet, b_\bullet: \Z \to
\Z \cup \{\pm \infty\}$ are two sequences with $\lim_{n \to \pm \infty} a_n =
\pm \infty = \lim_{n \to \pm \infty} b_n$. Here we put $S^{\pm \infty} = 0 =
\Gmp{\pm \infty}$.
\end{definition}

\begin{example}
Let $X_n \in \Mod_A$ be a sequence of split Tate motives, where each $X_n$
is a \emph{finite} sum of $S^{a,b} \wedge A$-s. Then $\bigoplus_{n=0}^\infty X_n$ is
is degreewise pseudofinite if both the connectivity and the
slice-connectivity (weight) of $X_n$ tend to infinity as $n \to \infty$.
\end{example}

\begin{corollary} \label{corr:tate-reflexive}
If $M \in \Mod_A$ is degreewise pseudofinite split Tate, then $\imap_{A}(M,
A)$ is also degreewise pseudofinite split Tate, and $M$ is
reflexive: the canonical map $M \to \imap_{A}(\imap_{A}(M,
A), A)$ is an equivalence.
\end{corollary}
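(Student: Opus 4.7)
The plan is to combine Lemma \ref{lemm:tate-sum-product} with the observation that each summand $A \wedge S^{a} \wedge \Gmp{b}$ is dualizable in $\Mod_A$, with dual $A \wedge S^{-a} \wedge \Gmp{-b}$, and then to check reflexivity summand-by-summand.

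First I would write $M \simeq \bigoplus_n M_n$ with $M_n = A \wedge S^{a_n} \wedge \Gmp{b_n}$ and use that $\imap_A(-, A)$ sends colimits in the first variable to limits, giving $\imap_A(M, A) \simeq \prod_n A \wedge S^{-a_n} \wedge \Gmp{-b_n}$. The key bookkeeping step is to reindex by $n \mapsto -n$: setting $a'_{n} := -a_{-n}$ and $b'_{n} := -b_{-n}$, the pseudofinite condition $a_n, b_n \to \pm\infty$ as $n \to \pm\infty$ translates into $a'_n, b'_n \to \pm\infty$ as $n \to \pm\infty$ as well. In particular the hypotheses of Lemma \ref{lemm:tate-sum-product} hold, so the product becomes a direct sum. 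Hence $\imap_A(M, A) \simeq \bigoplus_n A \wedge S^{a'_n} \wedge \Gmp{b'_n}$, which is again degreewise pseudofinite split Tate.

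Next I would apply the same argument a second time to $\imap_A(M, A)$. This identifies $\imap_A(\imap_A(M, A), A)$ with $\prod_n \imap_A(M_n^\vee, A) \simeq \prod_n M_n^{\vee\vee}$, the two reindexings by $n \mapsto -n$ compose to the identity, and a final application of Lemma \ref{lemm:tate-sum-product} converts this product into the direct sum $\bigoplus_n M_n^{\vee\vee}$. For reflexivity, I would then observe that the canonical evaluation map $M \to \imap_A(\imap_A(M, A), A)$ is the coproduct of its restrictions to the summands $M_n$, each of which is the biduality map $M_n \to M_n^{\vee\vee}$; these are equivalences by dualizability of $M_n$.

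The main obstacle is purely bookkeeping: making sure the reindexing lines up so that Lemma \ref{lemm:tate-sum-product} applies to the (iterated) dual, and verifying that the canonical biduality map for an infinite direct sum of dualizables genuinely splits as the direct sum of the individual biduality maps. Both points reduce to formal manipulations with the adjunction $(\wedge, \imap_A)$ and the compatibility of Lemma \ref{lemm:tate-sum-product} with this adjunction, so no new geometric or homotopical input is needed beyond what has already been established.
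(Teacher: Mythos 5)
Your proposal is correct and follows essentially the same route as the paper: decompose $M$ into Tate summands, note $\imap_A(M,A)$ is the corresponding product of inverses, reindex by $n \mapsto -n$, invoke Lemma~\ref{lemm:tate-sum-product} to turn the product into a sum (giving degreewise pseudofiniteness of the dual), and then run the same argument a second time for reflexivity. The only difference is that you spell out the summand-by-summand biduality check that the paper leaves implicit in ``applying the same argument once more,'' which is a welcome elaboration rather than a departure.
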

\begin{proof}
Let $M \wequi \bigoplus_{n} A \wedge S^{a_n} \wedge \Gmp{b_n}$. Then
\[ \imap_{A}(M, A) \wequi \prod_{n} A \wedge S^{-a_n} \wedge \Gmp{-b_n}
     \wequi \prod_{n} A \wedge S^{-a_{-n}} \wedge \Gmp{-b_{-n}} . \]
By Lemma \ref{lemm:tate-sum-product}, this infinite product is equivalent to the
corresponding infinite sum; so $\imap_{A}(M, A)$ is split Tate, and
degreewise pseudofinite. The refexivity follows by applying the same argument
once more.
\end{proof}

\begin{remark}
In a closed symmetric monoidal category, it need \emph{not} be the case that every
reflexive object is strongly dualizable. Indeed if the category is
compact-rigidly generated, then dualizable objects are compact. But split Tate
motives are compact only if the corresponding sum is finite, which is not
assumed in Corollary \ref{corr:tate-reflexive}.
\end{remark}

If $M_{**}, N_{**}$ are bigraded $A_{**}$-modules, then $\Hom_{**}(M_{**}, N_{**})$
is the subset of those $A_{**}$-linear homomorphisms which are finite sums
of homogeneous homomorphisms (see also \cite[first paragraph of Section
12]{voevodsky2003reduced}). This is itself a bigraded $A_{**}$-module.
Write $\Mod_{A_{**}}$ for the category of bigraded $A_{**}$-modules. This
has an obvious tensor product, with right adjoint given by $\Hom_{**}$.

If $M \in \Mod_A$, then we obtain $M_{**}, M^{**} \in \Mod_{A_{**}}$, where
$M_{ab} = [A \wedge S^{a,b}, M]$ and $M^{ab} = [M, A \wedge S^{-a,-b}]$.
Composition defines a pairing $M_{**} \times M^{**} \to A_{**}$ which is
bihomogenous and bilinear, so induces $M_{**} \otimes_{A_{**}} M^{**} \to
A_{**}$.

\begin{lemma} \label{lemm:mot-hom-cohom-dual}
Let $M \in \Mod_A$.
\begin{enumerate}
\item If $M$ is split Tate, then the canonical map $M^{**} \to \Hom_{**}(M_{**},
  A_{**})$ is an equivalence.
\item If $M$ is degreewise pseudofinite split Tate, then the canonical map
  $M_{**} \to \Hom_{**}(M^{**}, A_{**})$ is an equivalence.
\end{enumerate}
\end{lemma}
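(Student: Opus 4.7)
In both parts, I would reduce to the case when $M = A \wedge S^{a} \wedge \Gmp{b}$ is a single shift of $A$, for which the statement is immediate from the definition of the pairing $M_{**} \otimes_{A_{**}} M^{**} \to A_{**}$.

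For (1), write $M \wequi \bigoplus_n M_n$ with $M_n = A \wedge S^{a_n} \wedge \Gmp{b_n}$. Mapping out of a coproduct in $\Mod_A$ yields a product, so $M^{**} \cong \prod_n (M_n)^{**}$ as bigraded $A_{**}$-modules. On the other side, compactness of the motivic spheres in $\Mod_A$ gives $M_{**} \cong \bigoplus_n (M_n)_{**}$, and the universal property of the direct sum then yields $\Hom_{**}(M_{**}, A_{**}) \cong \prod_n \Hom_{**}((M_n)_{**}, A_{**})$. The canonical map thereby decomposes as a product of single-summand maps, each of which is an isomorphism by the reduction.

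For (2), the new subtlety is that $M^{**}$ is \emph{a priori} an infinite product of shifts of $A^{**}$, and $\Hom_{**}$ out of a product is not formally tractable. The plan is to show, using Assumption \ref{ass:A} together with pseudofiniteness, that in each fixed bidegree only finitely many of the shifted summands $(M_n)^{**}$ are non-zero. Indeed, Assumption \ref{ass:A} forces $(M_n)^{**}$ to vanish in a given bidegree whenever $a_n$ is bounded above or $b_n$ is bounded below, and pseudofiniteness confines such $n$ to a finite range. Consequently $M^{**}$ coincides, as a bigraded $A_{**}$-module, with the direct sum $\bigoplus_n (M_n)^{**}$. Applying $\Hom_{**}(-, A_{**})$ converts this sum into a product of shifts of $A_{**}$, which in turn collapses to $\bigoplus_n (M_n)_{**} \wequi M_{**}$ by a symmetric degreewise vanishing argument---this time of the type already covered by the asymmetric hypothesis of Lemma \ref{lemm:tate-sum-product}. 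The canonical map is thereby identified with a product of the single-summand maps from (1). The principal obstacle is this twofold collapse of products to direct sums in $\Mod_{A_{**}}$; the two directions use complementary halves of pseudofiniteness, and it is the half \emph{not} already covered by Lemma \ref{lemm:tate-sum-product} that is essential for controlling $M^{**}$.
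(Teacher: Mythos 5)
Your proposal is correct, and it takes a somewhat more elementary route than the paper. The paper factors both parts through the functor $\mathcal{M}: \Mod_A \to \Mod_{A_{**}}$, $M \mapsto M_{**}$: it records that $\mathcal{M}$ preserves sums, products, and weak duals of single Tate twists, deduces formally that $\mathcal{M}$ preserves weak duals of split Tate objects (part (1)), and obtains part (2) by applying $\mathcal{M}$ to the spectrum-level reflexivity $M \simeq DDM$ supplied by Corollary~\ref{corr:tate-reflexive} (which is where Lemma~\ref{lemm:tate-sum-product} enters). Your argument instead stays entirely in $\Mod_{A_{**}}$: you collapse $M^{**}$ from a degreewise product to a direct sum, apply $\Hom_{**}(-,A_{**})$ to get another degreewise product, and collapse again. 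This is in effect a graded-module-level re-derivation of Corollary~\ref{corr:tate-reflexive}, avoiding the spectrum-level detour. What the paper gains is reuse of a result it already has; what you gain is transparency, and in particular you surface an observation that the paper's packaging somewhat obscures --- that the two collapses rely on \emph{complementary} halves of the pseudofiniteness hypothesis, and that it is precisely the half \emph{not} already used in Lemma~\ref{lemm:tate-sum-product} that controls $M^{**}$.

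One phrasing quibble: ``$(M_n)^{**}$ vanishes in a given bidegree whenever $a_n$ is bounded above or $b_n$ is bounded below'' is confusing as written, since boundedness is a property of a sequence, not a term. The precise statement, following from $(M_n)^{a,b} = A_{-a-a_n-b_n,\,-b-b_n}$ and Assumption~\ref{ass:A}, is that for a fixed bidegree $(a,b)$ the group $(M_n)^{a,b}$ vanishes for all $n$ with $a_n$ sufficiently small (negative) or $b_n$ sufficiently large (positive); pseudofiniteness then guarantees this holds for all but finitely many $n$. With that clarification, the proof goes through.
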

\begin{proof}
We write $D$ for the weak dual functor.

Consider the functor $\mathcal{M}: \Mod_A \to \Mod_{A_{**}}, M \mapsto
M_{**}.$ This functor preserves sums and products. By direct computation, it
preserves tensor products of split Tate motives, and duals of $A \wedge
S^{m,n}$. It follows formally that $\mathcal{M}$ preserves weak duals of split Tate
motives. Noting that $M^{**} = \mathcal{M}(DM)$, we get (1). For (2), we note
that
\[ M_{**} = \mathcal{M}(M) \wequi \mathcal{M}(DDM) \wequi D(\mathcal{M}(DM)) =
      D(M^{**}), \]
where the two equivalences hold because $DM$ is split Tate and $DDM \wequi M$,
both by Corollary \ref{corr:tate-reflexive}. This concludes the proof.
\end{proof}

\begin{example}
Taking $A = \H\Z/2$, $M = \H\Z/2 \wedge \H\Z/2$ we obtain $M^{**} = \scr A^{**}$, the motivic
Steenrod algebra, and $M_{**} = \scr A_{**}$, the dual motivic Steenrod algebra. By
Lemma \ref{lemm:mot-hom-cohom-dual} we get $\scr A_{**} \wequi \Hom_{**}(\scr A^{**},
\H\Z/2_{**})$ and $\scr A^{**} \wequi \Hom_{**}(\scr A_{**}, \H\Z/2_{**})$, as was already
proved by Voevodsky in the case of a field \cite[paragraphs before Proposition 12.1]{voevodsky2003reduced}.
\end{example}

\section{Review of the eightfold way} \label{sec:8fold}

\subsection{Generalities on comodule structures} \label{sec:comodule-structures}
We follow the discussion in \cite[\S 2]{baker2013power} in a suitable generality. Suppose that we have an additive, symmetric monoidal 1-category $\scr C^{\otimes}$ with unit object $\1$. We obtain a \emph{switch transformation}
\[
\sw_{A,B}:  A \otimes B \Rightarrow B \otimes A,
\]
natural in $A$ and $B$.

Let $E, F \in \CAlg(\scr C)$ and let $X \in \scr C$. Then we have the following commutative diagram
\begin{equation} \label{eq:coact-c}
\begin{tikzcd}
&   & E \otimes X \ar[swap, "\id \otimes \eta_F \otimes \id"]{dl} \ar{dr}{\sw_{E,X}} & \\
& E \otimes F \otimes X \ar["\sw_{E, F \otimes X}"]{dr} \ar[swap, "\sw_{E,F}"]{dl}&  & X \otimes E \ar{dl}{\eta_F \otimes \id \otimes \id}\\
F\otimes E \otimes X \ar{rr}{\sw_{E, X}} & & F \otimes X \otimes E.& \\
\end{tikzcd}
\end{equation}

Now, suppose that $\scr T \subset \scr C^\wequi$ is the full subcategory on a set of $\otimes$-invertible objects in $\scr C$ which is closed under $\otimes$ and taking $\otimes$-inverses. Then $\scr T$ is a grouplike symmetric monoidal groupoid, also known as a \emph{Picard groupoid}. For any $X \in \scr C$, we have the functor \[ \pi_\scr{T}X: \scr T \to Ab, T \mapsto [T, X]. \] This is an example of a Picard-graded object \cite[\S 2.1]{GepnerLawson}.
If $F \in \CAlg(\scr C)$, then $F_{\scr T}:=\pi_{\scr T}(F)$ acquires the structure of $\scr T$-graded ring and if $X$ is an $F$-module then $\pi_{\scr T}X$ is naturally a $\scr T$-graded $F_{\scr T}$-module. For any $X \in \scr C$, we put $F_{\scr T}X:= \pi_{\scr T}(F \otimes X)$, which is an $F_\scr{T}$-module by what we just said (since $F \otimes X$ is an $F$-module).

\begin{definition} \label{defn:t-flat}
We say that an $F$-module $X$ is \emph{$\scr T$-flat} if for every $Y \in \scr C$, the canonical map $\pi_\scr{T}(X) \otimes_{F_\scr{T}} F_\scr{T}(Y) \to \pi_\scr{T}(X \otimes Y)$ (coming from the action of $F$ on $X$) is an isomorphism.
\end{definition}

In the situation of~\eqref{eq:coact-c}, assume that $E \otimes F$ is $\scr T$-flat as an $F$-module. Then we get isomorphisms of $F_{\scr T}$-modules
\begin{enumerate}
\item $\pi_{\scr T}(F \otimes X \otimes E) \cong \pi_{\scr T}(F \otimes X) \otimes_{\pi_{\scr T}(F)} \pi_{\scr T}(F \otimes E) = F_{\scr T}X \otimes_{F_{\scr T}} F_{\scr T}E$, and
\item $\pi_{\scr T}(E \otimes F \otimes X) \cong \pi_{\scr T}(E \otimes F) \otimes_{\pi_{\scr T}(F)} \pi_{\scr T}(F \otimes X) = E_{\scr T}F \otimes_{F_{\scr T}} F_{\scr T}X$.
\end{enumerate}
Furthermore, the map $\sw: E \otimes F \otimes X \rightarrow F \otimes X \otimes E$ induces a further isomorphism
\[
E_{\scr T}(F \otimes X) \stackrel{\cong}{\rightarrow} F_{\scr T}X \otimes_{F_{\scr T}} F_{\scr T}E.
\]
Whence the diagram~\eqref{eq:coact-c} gives rise to a diagram of $\scr T$-graded rings
\begin{equation} \label{eq:coact-t}
\begin{tikzcd}
  &  \ar{dl} E_{\scr T}X \ar{dr} & \\
E_{\scr T}F \otimes_{F_{\scr T}} F_{\scr T}X  &  &  F_{\scr T}X \otimes_{F_{\scr T}} F_{\scr T}E \\
& \ar[swap, "\cong"]{ul} E_{\scr T}(F \otimes X) \ar{ur}{\cong} & .
\end{tikzcd}
\end{equation}

In the case that $E = F$, the switch map $\sw: E \rightarrow E$ induces an involution
\[
\chi_E: E_{\scr T}E \rightarrow E_{\scr T}E,
\]
which is called the \emph{antipode}. We have the diagram
\begin{equation} \label{eq:coact-t-e-f}
\begin{tikzcd}
  &  \ar[swap, "\psi_L"]{dl}{\psi_L} E_{\scr T}X \ar{dr}{\psi_R} & \\
E_{\scr T}E \otimes_{E_{\scr T}} E_{\scr T}X  &  &  E_{\scr T}X \otimes_{E_{\scr T}} E_{\scr T}E \\
& \ar[swap, "\cong"]{ul} E_{\scr T}(E \otimes X) \ar{ur}{\cong} & .
\end{tikzcd}
\end{equation}
We call $\psi_L$ (resp. $\psi_R$) the \emph{left (resp. right) $E_{\scr T}E$-comodule structure} structure on $E_{\scr T}X$.

\begin{lemma} \label{lem:antipode} In the notation of~\eqref{eq:coact-t-e-f}, $\psi_L = \psi_R \circ \sw_{E,X*} \circ \chi$. Consequently, if $\psi_R(x) = \SSigma x_i \otimes e_i$, then 
\[
\psi_L(x) = \SSigma e_i \otimes \chi(x_i).
\]
Dually, if $\psi_L(x) = \SSigma e_i \otimes x_i$ then
\[
\psi_R(x) = \SSigma \chi(x_i) \otimes e_i. 
\]
\end{lemma}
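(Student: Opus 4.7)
My plan is to deduce this lemma directly from the commutativity of diagram~\eqref{eq:coact-c} specialized to $E = F$. That diagram commutes by coherence of the symmetric monoidal structure (any two natural maps built from swap and unit morphisms with matching source and target agree), so applying $\pi_{\scr T}$ and invoking the $\scr T$-flatness of $E \otimes E$ yields a commutative diagram of $\scr T$-graded $E_{\scr T}$-modules. The main task is then to locate $\psi_L$ and $\psi_R$ inside this diagram and to express the discrepancy between the two using the antipode.

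Concretely, under the identifications of \eqref{eq:coact-t-e-f} the map $\psi_L$ is induced by $E \otimes X \xrightarrow{\id_E \otimes \eta_E \otimes \id_X} E \otimes E \otimes X$ followed by the flatness isomorphism $\pi_{\scr T}(E \otimes E \otimes X) \cong E_{\scr T}E \otimes_{E_{\scr T}} E_{\scr T}X$, while $\psi_R$ comes from composing with $\sw_{E, E\otimes X}$ to land in $\pi_{\scr T}(E \otimes X \otimes E) \cong E_{\scr T}X \otimes_{E_{\scr T}} E_{\scr T}E$. The coherence observation is that the two compositions differ exactly by the path $E \otimes E \otimes X \xrightarrow{\sw_{E,E} \otimes \id} E \otimes E \otimes X \xrightarrow{\id \otimes \sw_{E,X}} E \otimes X \otimes E$, that is, by an extra swap of the two $E$-factors followed by a swap of the last two tensorands. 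By definition, $\sw_{E,E}$ is precisely what induces the antipode $\chi$ on $E_{\scr T}E$, so this translates after applying $\pi_{\scr T}$ into the identity $\psi_L = (\sw_{E_{\scr T}X,\, E_{\scr T}E}) \circ (\id \otimes \chi) \circ \psi_R$, which is the asserted formula.

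The elementwise formulas then fall out by Sweedler-style bookkeeping: if $\psi_R(x) = \SSigma\, x_i \otimes e_i$, pushing through $\id \otimes \chi$ and the swap gives the claimed expression for $\psi_L(x)$, and the dual identity is obtained by the same argument with the roles of $\psi_L$ and $\psi_R$ exchanged, using that $\chi$ is an involution (which itself follows from the fact that $\sw_{E,E}$ squares to the identity). I expect no real obstacle beyond careful bookkeeping: the only delicate point is keeping track of which $E$-module structure on $E \otimes E$ is being used for each flatness isomorphism, so that the swap $\sw_{E,E}$ is correctly identified with the antipode on the correct factor of $E_{\scr T}E$.
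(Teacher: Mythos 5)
Your proposal is correct and follows essentially the same route as the paper, whose entire proof is the one-line appeal to commutativity of the bottom-left triangle of \eqref{eq:coact-c}; you have simply unwound that triangle as the hexagon identity $\sw_{E,E\otimes X} = (\id\otimes\sw_{E,X})\circ(\sw_{E,E}\otimes\id)$, identified $\sw_{E,E}$ with the source of $\chi$ on $E_{\scr T}E$, and applied $\pi_{\scr T}$. One small remark: the elementwise formula you derive, $\psi_L(x) = \SSigma\,\chi(e_i)\otimes x_i$, is the well-typed version (with $x_i\in E_{\scr T}X$, $e_i\in E_{\scr T}E$) of the lemma's displayed $\SSigma\,e_i\otimes\chi(x_i)$, which as written applies $\chi$ to an element of $E_{\scr T}X$; your expression is the intended one.
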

\begin{proof}
Follows from commutativity of the bottom left triangle in~\eqref{eq:coact-c}.
\end{proof}

The above constructions furnish a Hopf algebroid structure on $(E_\scr{T}, E_\scr{T} E)$. If $A \in \CAlg(\scr C)$, then $E_\scr{T} A$ is a comodule algebra, functorially in $A$.

Lastly, the following guarantees flatness.
\begin{lemma} \label{lem:flat-ef} Let $F \in \CAlg(\scr C)$ and let $\scr T \subset Pic(\scr C)$ be a Picard groupoid consisting of compact objects. Let $X \in \scr C$ be an $F$-module which is isomorphic as an $F$-module to a small sum of $\scr T$-twists of $E$, i.e. \[ X \simeq \underset{T_{\beta} \in T}{\mathlarger{\bigoplus}} T_{\beta} \otimes F.
\]
Then $X$ is a $\scr T$-flat $F$-module.
\end{lemma}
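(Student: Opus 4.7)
The plan is to reduce the general case to the case of a single summand $X = T \otimes F$ by a compactness argument, and then verify the statement more or less by unwinding definitions.

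First I would observe that both sides of the comparison map are additive in $X$, so that the statement is compatible with direct sums in $X$. On the left, tensor product over $F_{\scr T}$ with the module $F_{\scr T}(Y)$ commutes with direct sums, and $\pi_{\scr T}(-) = \bigoplus_{T \in \scr T}[T, -]$ commutes with direct sums of objects of $\scr C$ because each $T \in \scr T$ is by hypothesis compact. On the right, $(-) \otimes Y$ is additive (being a left adjoint in the appropriate sense — here we only need distributivity of $\otimes$ over direct sums in the hom-set sense, which holds formally), and again $\pi_{\scr T}$ commutes with direct sums by compactness of $\scr T$. Applying these observations to the decomposition $X \simeq \bigoplus_\beta T_\beta \otimes F$ reduces the claim to the case $X = T \otimes F$ for a single $T \in \scr T$.

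Next I would handle the singleton case. Here $\pi_{\scr T}(X) = \pi_{\scr T}(T \otimes F)$ is, as an $F_{\scr T}$-module, a shift of $F_{\scr T}$ by $T$: the $\scr T$-degree $T' \in \scr T$ piece is $[T', T \otimes F] \cong [T^{-1} \otimes T', F] = F_{\scr T}[T^{-1} \otimes T']$, and the action of $F_{\scr T}$ comes from the $F$-module structure on $T \otimes F$. Consequently $\pi_{\scr T}(X) \otimes_{F_{\scr T}} F_{\scr T}(Y)$ is the same shift of $F_{\scr T}(Y)$. On the other hand, $X \otimes Y = T \otimes F \otimes Y$, so $\pi_{\scr T}(X \otimes Y)$ is again a $T$-shift of $F_{\scr T}(Y)$. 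I would then check that the canonical comparison map is exactly this shift identification, which amounts to saying that the generator $T \to T \otimes F$ induced by the unit of $F$ maps under the comparison to the evident generator of the corresponding component.

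I do not anticipate a substantial obstacle here; the only mild subtlety is being careful that the compactness hypothesis on $\scr T$ is precisely what is needed to pass $\pi_{\scr T}$ through direct sums, and that the $F$-module decomposition $X \simeq \bigoplus_\beta T_\beta \otimes F$ is used (rather than a mere decomposition in $\scr C$) so that the $F_{\scr T}$-module structure on $\pi_{\scr T}(X)$ splits correctly along the summands.
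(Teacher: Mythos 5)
Your proposal is correct and follows essentially the same route as the paper's proof: reduce to the case $X = T \otimes F$ using compactness of objects in $\scr T$ to pass $\pi_{\scr T}$ through (possibly infinite) direct sums, then treat the singleton case directly. The paper's proof is terser (it simply says the singleton case is clear and that flatness is closed under sums), but the underlying argument is the same.
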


\begin{proof}
The claim is clear when $X = T \otimes F$ for some $T \in \scr C$. Finite sums of flat $F$-modules are flat. Since $\scr T$ consists of compact objects, the same is true for infinite sums. The result follows.
\end{proof}

%

\subsection{Specialisation to motivic stable homotopy theory}
We apply the discussion of the previous section with $\scr C = h\SH(S)$ and $\scr T = \{\Sigma^{m,n} \1 \mid m, n \in \Z \}$. In this case a $\scr T$-graded ring is essentially the same as an $\epsilon$-commutative ring\NB{what does this really mean?}. We identify $\pi_{**} X = \pi_\scr{T} X$, and so on.

From now on we will assume fixed $E \in \CAlg(h\SH(S))$ such that $E \wedge E$ is a $\scr T$-flat $E$-module.

\subsection{Completed homology coaction} \label{sec:completed-homology-coaction}
Suppose that \[ I \rightarrow \SH(S), i \mapsto X_i \] is a cofiltered diagram.
We put $X = \lim_i X_i$ and $X \widehat\wedge E = \lim_i (X_i \wedge E)$.
We call $\pi_{**}(X \widehat\wedge E)$ the \emph{completed $E$-homology of $X$}.
\begin{construction} \label{construct-completed-h-coaction}
Now assume that
\begin{itemize}
\item $0 = \limone_i \pi_{**}(X_i \wedge E)$.
\end{itemize}
The compatible family of maps \[ X_i \wedge E \to X_i \wedge E \wedge E \] induces \[ X \widehat\wedge E \to X \widehat (E \wedge E). \]
Taking homotopy groups we obtain (using flatness of $E$) \begin{gather*} \pi_{**}(X \widehat\wedge E) \to \pi_{**}(X \widehat\wedge (E \wedge E)) \to \lim_i \pi_{**}(X_i \wedge E \wedge E) \\ \wequi \lim_i \left[ \pi_{**}(X_i \wedge E) \otimes_{E_{**}} E_{**}E \right] =: \pi_{**}(X \widehat\wedge E) \widehat\otimes_{E_{**}} E_{**}E. \end{gather*}

We denote this composite by $\psi^R$.
\end{construction}

\begin{remark}
We did not actually use the assumption that $\limone = 0$ in the construction.
However, the definition of $\pi_{**}(X \widehat\wedge E) \widehat\otimes_{E_{**}} E_{**}E$ seems only reasonable if $\pi_{**}(X \widehat\wedge E) = \lim_i \pi_{**}(X_i \wedge E)$, which is precisely what is ensured by the vanishing.
\end{remark}

\begin{remark} \label{rmk:ops-comodule-compatibility}
By construction, the map $\pi_{**}(X \widehat\wedge E) \to E_{**} X_i$ is a morphism of comodules for each $i \in I$.
\end{remark}

Analogously, we also have the left comodule structure $\psi_L$, related by the equality~\eqref{eq:left-to-right}. Both $\psi_R$ and $\psi_L$ are ring maps, by construction.

\subsection{Cohomology coaction}\label{subsub:cohomology-coaction}
\begin{construction} \label{construct-completed-b}  Suppose that $i \mapsto X_i: I \rightarrow \SH(S)$ is a filtered diagram of dualizable objects\NB{Does this construction depend on the presentation of $X$ as a filtered colimit? One might guess that the category of dualizable objects with a map to $X$ is itself filtered, and so the presentation should be essentially canonical?} and put $X := \colim_I X_i$.
Passing to dual objects we obtain a cofiltered diagram $I^\op \to \SH(S), i \mapsto X_i^\vee$ whose limit we denote by $\widehat{X^\vee}$.
Note that \[ \imap(X, E) \wequi \lim_i (X_i^\vee \wedge E) \wequi \widehat{X^\vee} \widehat\wedge E \] and hence $E^{**}(X)$ is the completed $E$-homology of $\widehat{X^\vee}$.
Assuming a suitable $\limone$ vanishing, Construction \ref{construct-completed-h-coaction} thus supplies us with a coaction \[ \psi^R: E^{**}(X) \to E^{**}(X) \widehat\otimes E_{**}E \] and similarly for $\psi^L$.
\end{construction}

\subsection{Composition algebra}\label{subsub:composition-algebra}
$E^{**}E$ is an associative $E^{**}$-algebra under composition. If \S\ref{app:dualization} applies to $E$ and $E \wedge E$ is degreewise pseudofinite split Tate, so that Lemma \ref{lemm:mot-hom-cohom-dual}(2) applies (e.g. $E = H\Z/2$), then $E_{**}E \wequi (E^{**}E)^\vee$. Moreover, this is an isomorphism of bimodules.\NB{ref?}

\section{Oriented motivic spectra} \label{sec:oriented-spectra}
\subsection{Non-equivariant situation}
We will adopt the definitions and conventions \cite{deglise-orientation} regarding Thom classes and Chern classes which we quickly review. Let $R \in \SH(S)$ be an oriented ring spectrum with an orientation $c \in [\Sigma^{\infty}\P^{\infty}_S, \Sigma^{2,1}R]_{\SH(S)}$. We have the \emph{first Chern class} as in \cite[Definition 2.1.8]{deglise-orientation}
\[
c_1: \Pic(S) \rightarrow R^{2,1}(S).
\] 
Suppose that $\scr E \rightarrow X$ is a vector bundle of rank $d$ and $X$ a smooth $S$-scheme. The projective bundle formula (which we review later in Corollary\ref{prop:cor-global}) gives rise to higher Chern classes $\{ c_i(\scr E) \in E^{2i,i}(X)\}_{0 \leq i \leq d}$; this is a standard maneuver due to Grothendieck and is discussed in \cite[Definition 2.1.16]{deglise-orientation}.

From this we have three more characteristic classes (see \cite[Definition 2.2.2]{deglise-orientation} and \cite[Page 12]{voevodsky2003reduced}):
\begin{definition}\label{def:char-classes} Suppose that $\scr E \rightarrow X$ is a vector bundle of rank $d$ and $X$ a smooth $S$-scheme. We have the following characteristic classes:
\begin{itemize}
\item $th(\scr E)' $ is the \emph{Thom class}, it lives in $R^{2d,d}(\P(\scr E \oplus \scr O))$ and is defined by the formula
\[
th(\scr E)' =  {\SSigma} p^*(c_i(\scr E)) (-c_1(\scr O(-1)))^{d-i}.
\]
\item $th(\scr E)$ is the \emph{refined Thom class}, it lives in $R^{2d, d}(Th_X(\scr E))$ and is characterized as the unique class in $R^{2d,d}(\Th_X(\scr E))$ which maps to the Thom class under pullback along $q: \P(\scr E \oplus \scr O) \rightarrow \Th_X(\scr E)$:
\[
R^{2d,d}(Th_X(\scr E)) \rightarrow R^{2d,d}(\P(\scr E \oplus \scr O)).
\]
\item $e(\scr E)$ is the \emph{Euler class}, it lives in $R^{2d,d}(X)$ and is defined to be the pullback of the refined Thom class along the zero section map $z:X_+ \rightarrow \scr E_+ \rightarrow \Th_X(\scr E))$, i.e., $e(\scr E):=z^*(th(\scr E))$.
\end{itemize}
\end{definition} 

\begin{proposition} \label{prop:pbf} Let $R \in \SH(S)$ be an oriented ring spectrum. Then, for any $n \geq 0$ there is a canonical equivalence
\begin{equation} \label{eq:pbf}
\bigoplus_{0 \leq p \leq n} R \wedge S^{2p,p} \simeq R \wedge \P^n.
\end{equation}
Furthermore, the inclusion $i_n:\P_S^{n} \rightarrow \P_S^{n+1}; [x_0: \cdots, x_n] \mapsto [x_0: \cdots: x_n:0]$ is compatible with the equivalence of~\eqref{eq:pbf} in the sense that the following diagram commutes
\[
\begin{tikzcd}
\bigoplus_{0 \leq p \leq n} R \wedge S^{2p,p} \ar{r} \ar{d} &  R \wedge \P^n \ar{d}{i_n}\\
\bigoplus_{0 \leq p \leq n+1} R \wedge S^{2p,p} \ar{r} &  R \wedge \P^{n+1},
\end{tikzcd}
\]
where the left vertical map is inclusion into the first $n+1$-summands.
\end{proposition}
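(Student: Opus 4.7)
The plan is to proceed by induction on $n$, using the cofiber sequence
\[ \Sigma^\infty \P^{n-1}_+ \to \Sigma^\infty \P^n_+ \to \P^n/\P^{n-1} \simeq S^{2n,n} \]
coming from the standard cellular filtration and motivic homotopy purity. The base case $n=0$ is immediate since $\P^0_S \simeq \Spec S$ and the direct sum reduces to a single copy of $R$.

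For the inductive step, I would construct an explicit map
\[ \phi_n : \bigoplus_{p=0}^n R \wedge S^{2p,p} \to R \wedge \Sigma^\infty \P^n_+ \]
whose $p$-th component is adjoint to the class $c_1(\scr O(-1))^p \in R^{2p,p}(\P^n)$ coming from the orientation via the multiplication on $R$; concretely, one uses that $\Sigma^\infty \P^n_+$ is dualizable (by induction) together with the $R$-module structure to turn the cohomology class into a map in the correct direction. The naturality of Chern classes along $i_{n-1}: \P^{n-1} \hookrightarrow \P^n$ then places $\phi_n$ into a morphism of cofiber sequences whose left column is $\phi_{n-1}$ (an equivalence by induction) and whose right column is a map $R \wedge S^{2n,n} \to R \wedge S^{2n,n}$. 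By the five lemma it suffices to show this top-cell map is an equivalence.

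The main obstacle is precisely this last step: verifying that $c_1(\scr O(-1))^n$ restricts to the canonical generator of $R^{2n,n}(\P^n/\P^{n-1}) \simeq \pi_{0,0} R$ under the purity identification $\P^n/\P^{n-1} \simeq S^{2n,n}$. I would handle this by identifying the top cell with the Thom space of the restriction of $\scr O(-1)^{\oplus n}$ to a point (which is a trivial rank-$n$ bundle), applying the compatibility between Chern classes and Thom classes inherent in the definition of the refined Thom class $th(\scr E)$ in Definition~\ref{def:char-classes}, and finally invoking the normalization of the Thom class on a trivial bundle. The compatibility with the inclusion $i_n$ asserted in the second part of the statement is then automatic from the construction: since $i_n^* c_1(\scr O(-1))^p = c_1(\scr O(-1))^p$, the maps $\phi_n$ assemble into a coherent diagram, and the stated square commutes.
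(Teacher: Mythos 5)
The paper's ``proof'' is a citation to \cite[\S 2.1.12]{deglise-orientation}, where the same cellular-induction argument you outline is carried out in detail, so your approach matches the intended one.

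One point worth flagging is the direction in which you build the comparison map. The orientation class $c_1(\scr O(-1))^p \in R^{2p,p}(\P^n) = [\Sigma^\infty_+ \P^n, \Sigma^{2p,p} R]$ yields, after smashing with $R$ and using the multiplication, a \emph{forward} map $R \wedge \Sigma^\infty_+\P^n \to R \wedge S^{2p,p}$; this is precisely the map $\sigma_p$ the paper uses in the immediately following Corollary~\ref{prop:cor-global}. Your appeal to dualizability to reverse the direction does not quite do what you want: Atiyah/Spanier--Whitehead duality converts $R^{**}(\P^n)$ into $R_{**}(D\Sigma^\infty_+\P^n)$, not into $R_{**}(\P^n)$, so it does not directly produce a class in homology adjoint to a map $R \wedge S^{2p,p} \to R \wedge \Sigma^\infty_+\P^n$. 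The fix is trivial: run your induction with $\bigoplus_p\sigma_p : R \wedge \Sigma^\infty_+\P^n \to \bigoplus_{p\le n} R \wedge S^{2p,p}$; the cofiber-sequence comparison and five lemma apply verbatim, and compatibility with $i_n$ still follows from $i_n^*\scr O_{\P^{n+1}}(-1)=\scr O_{\P^n}(-1)$. Your identification of the top-cell map via the Thom class (equivalently, the normalization axiom $c|_{\P^1}=\mathrm{id}_T$ built into the definition of an orientation, applied inductively) is the right ingredient and closes the argument.
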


\begin{proof}
See \cite[\S2.1.12]{deglise-orientation}.
\end{proof}

\begin{corollary} \label{prop:cor-global} Let $R \in \SH(S)$ be an oriented ring spectrum and $X \in \Sm_S$ and $\scr E \rightarrow X$ a vector bundle, then there is a canonical equivalence
\begin{equation} \label{eq:pbf-global}
\bigoplus_{0 \leq p \leq n} R \otimes \Sigma^{\infty}_+X \otimes S^{2p,p} \simeq R \otimes \P(\scr E).
\end{equation}
\end{corollary}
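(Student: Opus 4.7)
The plan is to mimic the proof of Proposition~\ref{prop:pbf} but with parameters, reducing to the trivial-bundle case via Zariski descent. Let $n$ denote the rank of $\scr E$ and write $\pi: \P(\scr E) \to X$ for the structure map and $\scr O(1)$ for the tautological line bundle on $\P(\scr E)$, with first Chern class $c := c_1(\scr O(1)) \in R^{2,1}(\P(\scr E))$.

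First I would construct the comparison map. Using the ring structure on $R$, cupping with powers of $c$ combined with the projection $\pi$ yields, for each $0 \le p \le n-1$, a map
\[
\phi_p: R \otimes \Sigma^\infty_+ \P(\scr E) \longrightarrow \Sigma^{2p,p} R \otimes \Sigma^\infty_+ X.
\]
Taking the product over $0 \le p \le n-1$ and then passing to the wedge sum (which agrees with the product here since we are only taking finitely many summands), I obtain a candidate map
\[
\Phi: R \otimes \Sigma^\infty_+ \P(\scr E) \longrightarrow \bigoplus_{0 \le p \le n-1} R \otimes \Sigma^\infty_+ X \otimes S^{2p,p}.
\]
(Note: I suspect the index range in the statement should be $0 \le p \le n-1$ rather than $0 \le p \le n$, matching the rank of $\scr E$; otherwise this should be shifted by one throughout.)

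Next I would verify that $\Phi$ is an equivalence. Since $\SH(\ph)$ satisfies Nisnevich (hence Zariski) descent and the formation of $\Phi$ is compatible with base change along open immersions $U \hookrightarrow X$ (because Chern classes, the projection $\pi$, and the ring structure on $R$ are), it suffices to check this after restriction to a Zariski cover on which $\scr E$ becomes trivial. When $\scr E \cong \scr O_X^{n}$ is trivial we have $\P(\scr E) \cong X \times \P^{n-1}$, the line bundle $\scr O(1)$ is pulled back from $\P^{n-1}$, and $\Phi$ is identified with the map obtained by smashing the equivalence of Proposition~\ref{prop:pbf} with $\Sigma^\infty_+ X$; hence it is an equivalence. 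Assembling via descent yields the claim.

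The main obstacle, and essentially the only non-formal point, is to make the base-change statement for $\Phi$ precise enough to apply descent. Concretely one needs to check that the map $\Phi$ is natural in $X$ along open immersions (so that it sheafifies) and that for a trivial bundle the map $\Phi$ really coincides with the one provided by Proposition~\ref{prop:pbf} smashed with $\Sigma^\infty_+ X$. Both are straightforward but slightly tedious diagram chases using the naturality properties of the first Chern class and of the projective bundle formula listed in Proposition~\ref{prop:pbf}. Once these compatibilities are in place the conclusion is immediate from the trivial case and Nisnevich descent.
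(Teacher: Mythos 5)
Your proof is correct and essentially identical to the paper's: both build the comparison map from $R\otimes\P(\scr E)$ to the direct sum by pushing forward powers of the first Chern class of the tautological bundle, then use naturality in the base and Zariski descent/excision to reduce to the split case, which is Proposition~\ref{prop:pbf}. The only cosmetic point is the indexing: the paper's $n$ is the relative dimension of $\P(\scr E)\to X$ (so rank $n+1$), which makes $0\le p\le n$ the intended range; your parenthetical caveat assumes $n$ is the rank, but this does not affect the argument.
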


\begin{proof}
We may assume that $X=S$. We define maps \[ \sigma_p: R \otimes \P(\scr E) \xrightarrow{\Delta} R \otimes \P(\scr E)^{\otimes p} \xrightarrow{c_1} R \otimes (S^{2,1})^{\otimes p} \wequi R \otimes S^{2p,p}. \] We claim that $\bigoplus_{0 \le p \le n} \sigma_p$ is an equivalence. Our construction is natural in $S$, so by Zariski excision we may assume that $\scr E$ is trivial. The result now follows from Proposition \ref{prop:pbf}.
\end{proof}

The proof also gives the following formula on the level of $R$-cohomology \cite[Theorem 2.1.13]{deglise-orientation}.

\begin{proposition} \label{prop:prop-pbf-e} Let $R \in \SH(S)$ be an oriented ring spectrum with Chern class $c_1: \Pic(S) \rightarrow  R^{2,1}(S)$. For any $n \geq 0$, let $\scr E \rightarrow S$ be a vector bundle and $p_n: \P(\scr E) \rightarrow S$ be the projection map from the projectivization of $\scr E$. Then the map
\[
\bigoplus_{0 \leq p \leq n} R^{*, *}(S) \rightarrow R^{**}(\P(\scr E)),
\]
\[
(x_0, \cdots, x_n) \mapsto \sum_{p} p_n^*(x_p) c_1(\scr O(-1))^p
\]
is an isomorphism.
\end{proposition}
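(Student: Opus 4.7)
The plan is to deduce this cohomological statement directly from Corollary~\ref{prop:cor-global} by dualizing (taking $R$-linear hom into $R$). That corollary produced an $R$-module equivalence
\[
\Phi \;=\; \bigoplus_p \sigma_p \,:\, R \otimes \Sigma^\infty_+ \P(\scr E) \xrightarrow{\;\simeq\;} \bigoplus_{0 \le p \le n} R \otimes S^{2p,p},
\]
whose $p$-th component $\sigma_p$ is, by its very construction, the composite of the iterated diagonal of $\P(\scr E)$ with the $p$-fold smash product of $c_1(\scr O(-1))$, tensored over the ring structure of $R$.

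First I would apply the bigraded $R$-linear mapping functor $[-,R]^{**}_{R\text{-mod}}$ to $\Phi$. Using the standard identification $R^{**}(X) \wequi [R \otimes X, R]^{**}_{R\text{-mod}}$ (free–forgetful adjunction) and the fact that a finite coproduct of $R$-modules maps into $R$ as a product (equivalently, a direct sum, since the indexing set is finite), this yields an isomorphism
\[
\Phi^* \,:\, \bigoplus_{0 \le p \le n} R^{*-2p,\,*-p}(S) \;\xrightarrow{\;\simeq\;}\; R^{**}(\P(\scr E)).
\]

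Next I would verify that $\Phi^*$ agrees with the formula in the statement. Tracing through the construction: the $p$-th summand is included as $x_p \mapsto \widetilde{x_p} \circ \sigma_p$, where $\widetilde{x_p} : R \otimes S^{2p,p} \to R$ is the $R$-linear extension of $x_p$. Unwinding the definition of $\sigma_p$ as iterated diagonal followed by $c_1(\scr O(-1))^{\wedge p}$, and using that pre-composition with the structure map $\P(\scr E) \to S$ realizes $p_n^*$, this composite is precisely the cup product $p_n^*(x_p) \cdot c_1(\scr O(-1))^p \in R^{**}(\P(\scr E))$. Summing over $p$ gives the claimed formula.

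The main obstacle is the bookkeeping between the $R$-module decomposition of Corollary~\ref{prop:cor-global} and the explicit cup-product presentation on cohomology; but once $\sigma_p$ is spelled out, this is essentially formal, and no new geometric input beyond what produced the stable splitting is required.
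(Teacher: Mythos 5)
Your argument is correct, but the route is different from the paper's: the paper simply cites \cite[Theorem 2.1.13]{deglise-orientation} for this cohomological projective bundle formula (the preceding sentence in the text remarks that "the proof also gives" this, but no details are given). Your approach instead formally deduces it from the $R$-module splitting of Corollary~\ref{prop:cor-global} by applying $[-,\Sigma^{*,*}R]_{\Mod_R}$, using the free--forgetful adjunction $[R\otimes X, \Sigma^{a,b}R]_{\Mod_R}\cong R^{a,b}(X)$ and the finiteness of the indexing set to turn the coproduct into a sum. The identification of the $p$-th component with $x_p \mapsto p_n^*(x_p)\cdot c_1(\scr O(-1))^p$ is the only point deserving a slightly sharper formulation: the class $p_n^*(x_p)$ appears not by "pre-composition with the structure map" but rather because post-composition with the $R$-linear extension $\widetilde{x_p}\colon \Sigma^{2p,p}R \to \Sigma^{a,b}R$ is, on homotopy classes out of $\P(\scr E)$, multiplication by the image of $x_p$ under the $R^{**}(S)$-module structure on $R^{**}(\P(\scr E))$, which is exactly $p_n^*$. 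This is a standard fact but worth stating in those terms. The trade-off: your derivation is more self-contained, but since Corollary~\ff{prop:cor-global} itself relies on Proposition~\ref{prop:pbf} (which cites the same Déglise reference for projective spaces), the underlying input is ultimately the same; what you buy is making the passage from projective spaces to arbitrary projective bundles and from homology to cohomology explicit rather than leaving it as a citation.

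Wait --- one LaTeX caveat: I wrote \texttt{\textbackslash ff} above by mistake; read that as \texttt{\textbackslash ref}. The reference intended is Corollary~\ref{prop:cor-global}.
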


\begin{proof}
In this form, this is exactly \cite[Theorem 2.1.13]{deglise-orientation} 
\end{proof}

\subsection{Interaction with equivariant spectra}

\begin{definition} \label{def:euler-class} Let $G \rightarrow S$ be a group scheme and $\phi: G \rightarrow GL(V)$ a representation. The \emph{Euler class of $\phi$} is the map in $\SH^{G}(S)$
\[
e(\phi): S^0 \rightarrow \A(\phi)_+ \rightarrow T^{\phi},
\]
where the first map is the equivalence induced by the zero section $z: S \rightarrow \A(\phi)$ and the second map is the canonical projection.
\end{definition}

\begin{lemma} \label{lem:equiv-thom}
Let $E \in \CAlg(h\SH(S))$ be an oriented ring spectrum and $V$ a $G$-equivariant vector bundle on $S$ of rank $n$.
Then there is a
\emph{Thom isomorphism} \[ \imap(\EE G_+, \Sigma^V E^\triv)^G \wequi \Sigma^{2n,n} \imap(\BB G_+, E). \]
The map \[  \imap(\BB G_+, E) \wequi \imap(\EE G_+, E^\triv)^G \to \imap(\EE G_+, \Sigma^V E^\triv)^G \wequi \Sigma^{2n,n} \imap(\BB G_+, E) \] induced by the zero section of $V$ (i.e. the Euler class of $V$) is given by multiplication by $c_n(V)$.
\end{lemma}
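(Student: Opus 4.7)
The plan is to reduce to the non-equivariant Thom isomorphism via finite-dimensional approximations. Write $\EE G \simeq \colim_k \EE_k G$ where each $\EE_k G$ is a smooth $S$-scheme on which $G$ acts freely, and set $\BB_k G := \EE_k G / G$, so that $\BB G \simeq \colim_k \BB_k G$. The $G$-equivariant bundle $V$ pulls back to $\EE_k G$ and descends along the quotient $q_k$ to a bundle $V_k$ on $\BB_k G$ of the same rank $n$, and the $G$-equivariant Thom spectrum $\Th(V)|_{\EE_k G}$ identifies with $q_k^\ast \Th_{\BB_k G}(V_k)$.

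First I would establish the underlying equivalence level by level. Using the invertibility of $\Th(V)$ in $\SH^G(S)$, rewrite
\[
\imap(\EE_k G_+, \Sigma^V E^\triv)^G \simeq \imap(\Sigma^{-V}\EE_k G_+, E^\triv)^G.
\]
Since $\EE_k G$ carries a free $G$-action, the motivic Adams isomorphism (as invoked in the proof of Lemma~\ref{lem:ops-alltheway}) together with the projection formula of Lemma~\ref{lemm:triv-quot-projection-formula} identifies this with $\imap(\Th_{\BB_k G}(-V_k), E)$. Because $E$ is oriented, the refined Thom class of Definition~\ref{def:char-classes} furnishes an equivalence of $E$-modules $E \wedge \Th_{\BB_k G}(V_k) \simeq \Sigma^{2n,n} E \wedge (\BB_k G)_+$ (a Thom-isomorphism consequence of the projective bundle formula, Proposition~\ref{prop:cor-global}), whence
\[
\imap(\Th_{\BB_k G}(-V_k), E) \simeq \Sigma^{2n,n}\imap((\BB_k G)_+, E).
\]
Taking the homotopy limit in $k$ produces the claimed Thom isomorphism.

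For the Euler class statement, the map induced by the zero section of $V$ corresponds level-wise, under the identifications above, to pullback along the zero section $(\BB_k G)_+ \to \Th_{\BB_k G}(V_k)$. By the very definition of the Euler class in Definition~\ref{def:char-classes}, this pullback sends the refined Thom class to $e(V_k)$, which for an oriented theory coincides with the top Chern class $c_n(V_k)$. Hence the resulting map is cup product with $c_n(V_k)$ at each finite stage, and passage to the limit yields cup product with $c_n(V)$.

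The main obstacle will be making precise the Adams-isomorphism step and its compatibility with the virtual Thom spectrum $\Sigma^{-V}$, i.e., verifying that the equivalence $\imap(\Sigma^{-V}\EE_k G_+, E^\triv)^G \simeq \imap(\Th_{\BB_k G}(-V_k), E)$ is natural in $k$ so that the tower of approximations assembles correctly. Given the standard form of the motivic Adams isomorphism and the projection formula this is bookkeeping rather than a genuine difficulty, but it must be carried out carefully.
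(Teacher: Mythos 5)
Your proof takes essentially the same approach as the paper's: reduce to finite-dimensional approximations of $\BB G$ via an ind-smooth geometric model, invoke the invertibility of the equivariant Thom spectrum and the free/Adams identification to pass from equivariant fixed points to non-equivariant mapping spectra out of $\Th(-V_k)$, apply the non-equivariant Thom isomorphism coming from the orientation, and finish the Euler class statement by observing that the zero section detects the top Chern class. The only cosmetic difference is that you pass to finite-level approximations \emph{before} rewriting the equivariant mapping spectrum, whereas the paper first writes $\imap(\EE G_+, \Sigma^V E^\triv)^G \wequi \imap((\Sigma^{-V}\EE G_+)/G, E)$ in one step and then models the target as a colimit of Thom spaces; since $\imap$ turns the colimit in the source into a limit, the two orderings compute the same thing.
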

\begin{proof}
We have $\imap(\EE G_+, \Sigma^V E^\triv)^G \wequi \imap((\Sigma^{-V} \EE G_+)/G, E)$.
Modelling $\EE G/G = \BB G$ by an ind-smooth $S$-scheme $B_{gm} G = \colim_i B_i$, $V$ corresponds to a vector bundle $V'$ on $B_{gm}G$ and then $(\Sigma^{-V} \EE G_+)/G \wequi \colim_i Th(-V_i')$, where $V_i'$ is the restriction of $V'$ to $B_i$.
The Thom classes for the $V_i$ yield the desired
equivalence.
The relationship between Euler and Chern classes is well-known.
\end{proof}

\begin{proposition} \label{prop:homology-of-dmot} Let $R \in \SH(S)$ be an oriented spectrum. For any $n \in \Z$ there is a
Thom isomorphism
\[
R \wedge D_2^{\mot}(T^{\wedge n}) \simeq \Sigma^{4n,2n}R \wedge BC_{2+}.
\]
\end{proposition}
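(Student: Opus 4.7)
The plan is to reduce the computation to the Thom isomorphism for an oriented spectrum applied to an equivariant vector bundle. I would proceed as follows.

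First, by Corollary~\ref{cor:d-vs-d} (referenced in the paper) we have $D_2^\mot(T^n)\simeq \DD_2(T^n) = (\EE C_{2+}\wedge (T^n)^{\wedge \underline 2})_{\hh C_2}$ in $\SH(S)$. Since $T = S^{2,1}$ is a suspension spectrum and the parametrized smash $(-)^{\wedge \underline 2}$ on suspension spectra is the ordinary smash square with the swap $C_2$-action, we can identify $(T^n)^{\wedge \underline 2}$ with $T^{n\rho}$, where $\rho$ denotes the regular representation of $C_2$ and $T^{n\rho}$ the corresponding equivariant representation sphere in $\SH^{C_2}(S)$. (For negative $n$, this is a formal desuspension; the identification is functorial in $n\in\Z$.)

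Next, I would write $T^{n\rho}\simeq T^n \wedge T^{n\sigma}$ in $\SH^{C_2}(S)$, where $\sigma$ is the sign representation and $T^n$ carries the trivial action. Applying the projection formula (Lemma~\ref{lemm:triv-quot-projection-formula}) to pull the trivial factor out of the geometric homotopy orbits gives
\[
\DD_2(T^n)\;\simeq\; T^n\wedge (\EE C_{2+}\wedge T^{n\sigma})_{\hh C_2}.
\]
Modelling $BC_2$ by an ind-smooth $S$-scheme $\colim_i B_i$ (as in the proof of Lemma~\ref{lem:equiv-thom}) and writing $\scr L=\scr L(\sigma)$ for the tautological line bundle, the right-hand factor is identified with $\colim_i \Th(\scr L^n|_{B_i})$; in particular, smashing with $R$ gives $R\wedge \Th(\scr L^n)$ over $BC_2$ in the usual sense.

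Finally, since $R$ is oriented, the Thom isomorphism (essentially the statement of Lemma~\ref{lem:equiv-thom}, applied homologically, or directly via the orientation of $R$ on the rank-$n$ bundle $\scr L^n$ over $BC_2$) yields
\[
R\wedge \Th(\scr L^n)\;\simeq\;\Sigma^{2n,n}R\wedge BC_{2+}.
\]
Combining this with the previous step and the identification $T^n\simeq S^{2n,n}$ gives
\[
R\wedge D_2^\mot(T^n)\;\simeq\;R\wedge T^n\wedge \Sigma^{2n,n}BC_{2+}\;\simeq\;\Sigma^{4n,2n}R\wedge BC_{2+},
\]
which is the claim. The main obstacle to verify carefully is not conceptual but bookkeeping: one must check that the $C_2$-equivariant splitting $\rho\simeq \triv\oplus\sigma$ used in step two is available (which requires $1/2\in\scr O_S$, or else one works directly with the rank-$2n$ bundle $\scr L^n\oplus\scr O^n$ and invokes the Thom isomorphism for that rank-$2n$ bundle over the ind-scheme model of $BC_2$), and that the identification of $(\EE C_{2+}\wedge T^{n\sigma})_{\hh C_2}$ with a colimit of ordinary Thom spaces of $\scr L^n$ is compatible with the orientation on $R$ in a manner valid uniformly in $n\in\Z$.
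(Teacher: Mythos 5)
Your argument is correct and is essentially the paper's own proof: identify $D_2^{\mot}(T^{\wedge n})$ with the geometric quotient of $\EE C_{2+}\wedge \Th(n\rho)$, present it as a filtered colimit of Thom spaces of vector bundles on an ind-smooth model of $BC_2$, and apply the Thom isomorphism supplied by the orientation of $R$. The only cosmetic difference is that you first split off the trivial summand of $\rho$ (reducing to the rank-one bundle $\scr L^{n}$), whereas the paper works with the virtual rank-$2n$ bundle $n\rho$ directly and thus needs no hypothesis on $2$; you correctly flag the $1/2$ assumption required for the splitting and the alternative that avoids it.
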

\begin{proof} If $R$ has a normed orientation, this follows from Proposition~\ref{prop:Dn-thom-iso}. In general we can argue via equivariant motivic homotopy theory.
Denote by $\rho$ the regular representation of $C_2$.
Then $n\rho$ defines a (virtual) $C_2$-vector bundle on $S$.
We have \[ (T^{\wedge n})^{\wedge \underline{2}}  \wequi Th(n\rho) \in \SH^{C_2}(S) \] and hence \[ D_2(T^{\wedge n}) \wequi  (\EE C_2 \wedge Th(n\rho) \in \SH^{C_2}(S))/C_2. \]
As in the proof of Lemma \ref{lem:equiv-thom} we can write this as a filtered colimit of Thom spaces of vector bundles on smooth schemes, the orientation of $R$ supplies (incoherent) Thom isomorphisms, which fit together to yield the desired equivalence.
\end{proof}

\begin{remark}
The equivalences we construct do not seem canonical, but they are: this follows from \cite[Theorem 3.2]{khan2022equivariant} which asserts that certain pro-objects are pro-constant, and in particular all relevant $\limone$ obstructions vanish.
An alternative way to obtain canonical equivalences is to assume that the orientation $\MGL \to R$ upgrades to an $\scr E_1$-map, and this will always be the case in our applications.
\end{remark}

\section{Motivic cohomology} \label{sec:mot-coh}
\label{app:motivic-cohomology}

Throughout, we make a lot of use of Spitzweck's motivic cohomology spectrum \cite{spitzweck2012commutative}. We denote it by $\H\Z$, or possibly $\H\Z_S \in \SH(S)$ to emphasize the base. There are also the finite coefficient variants $\H\Z/n = cofib(\H\Z \xrightarrow{n} \H\Z)$.
To compress notation, sometimes we write $H\Z/0 = H(\Z/0) := H\Z$; note that this is \emph{not} the cofiber of multiplication by $0$ on $H\Z$.

\subsection{Main properties}
\begin{proposition}[General properties of $\H\Z$] \label{prop:HZ-general}
Let $m \ge 0$ and $S$ any scheme.
\begin{enumerate}
\item If $f: S' \to S$ is any morphism of schemes, then there is a canonical equivalence $f^* \H\Z_S/m \wequi \H\Z_{S'}/m$.
\item There is a canonical orientation $\MGL \to \H\Z/m$.
\item We have $\H\Z/m \in \SH(S)^\veff$.
\end{enumerate}
\end{proposition}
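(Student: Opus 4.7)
The plan is to deduce each statement from results of Spitzweck \cite{spitzweck2012commutative} about the integral motivic cohomology spectrum $\H\Z$, and then propagate them to $\H\Z/m$ using the defining cofiber sequence
\[
\H\Z \xrightarrow{m} \H\Z \to \H\Z/m.
\]

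For (1), I would first invoke Spitzweck's base change theorem for $\H\Z$ itself, which provides a canonical equivalence $f^* \H\Z_S \wequi \H\Z_{S'}$ natural in $f$. Since $f^* \colon \SH(S) \to \SH(S')$ is a symmetric monoidal left adjoint, it commutes with cofibers and with the multiplication-by-$m$ map (which is definable on any object in a stable $\infty$-category). Applying $f^*$ to the cofiber sequence above therefore yields the desired equivalence after identifying $f^*\H\Z_S$ with $\H\Z_{S'}$. The case $m=0$ is the pure $\H\Z$ statement.

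For (2), I would start with Spitzweck's canonical orientation $\MGL \to \H\Z$ (established in loc.\ cit., using that $\H\Z$ carries a compatible system of Thom/Chern classes). Composing with the unit-quotient map $\H\Z \to \H\Z/m$ produces the required map $\MGL \to \H\Z/m$. This is automatically an orientation, since the Thom class of the tautological bundle over $BGL_n$ in $\H\Z$-cohomology pulls back to its image in $\H\Z/m$-cohomology, and orientations are detected by a single class over $B\Gm$ (or equivalently $\P^\infty$).

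For (3), I would appeal to Spitzweck's construction of $\H\Z$ as a very effective motivic spectrum, together with the observation that $\SH(S)^\veff$ is closed under extensions (in particular under cofibers of maps between very effective spectra), as it is the full subcategory generated under colimits and extensions by suspension spectra of smooth schemes. Since both $\H\Z$ and the map $m\colon \H\Z \to \H\Z$ live in $\SH(S)^\veff$, so does the cofiber $\H\Z/m$.

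The main (and essentially only) obstacle is to verify that each of these facts — base change for $\H\Z$, the canonical orientation, and very effectivity — is in fact established in \cite{spitzweck2012commutative} in the generality claimed (arbitrary $S$), and that closure of $\SH(S)^\veff$ under cofibers holds unconditionally; I expect all three to be citable directly.
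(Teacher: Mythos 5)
Your approach is essentially the same as the paper's, which simply gives three citations: \cite[Theorem 9.19]{spitzweck2012commutative} for base change, \cite[Proposition 11.1]{spitzweck2012commutative} for the orientation, and \cite[Lemma 13.6]{norms} for very effectivity. Your reasoning for (1) and (2) — propagating base change and the orientation from $\H\Z$ to $\H\Z/m$ via the cofiber sequence and exactness of $f^*$ — is exactly what underlies those first two citations.

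Two small corrections to (3). First, you attribute very effectivity of $\H\Z$ to Spitzweck, but the paper cites Bachmann--Hoyois \cite[Lemma 13.6]{norms} rather than \cite{spitzweck2012commutative}; very effectivity over a general base is not directly stated in Spitzweck's paper (over Dedekind bases it would follow from $\H\Z \wequi s_0(\1)$, and the general case then needs base change plus the fact that the very effective cover commutes with $f^*$, which is the content of the Bachmann--Hoyois lemma). Second, the parenthetical ``closed under extensions (in particular under cofibers\ldots)'' misattributes the reason: cofibers of maps between very effective spectra are colimits of such spectra, so that closure comes from the colimit-closure clause in the definition of $\SH(S)^\veff$, not from extension-closure. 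Neither point affects the correctness of the strategy — the argument is sound once the references are fixed.
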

\begin{proof}
For (1), see \cite[Theorem 9.19]{spitzweck2012commutative}. For (2), see \cite[Proposition 11.1]{spitzweck2012commutative}. For (3), see \cite[Lemma 13.6]{norms}.
\end{proof}

\begin{proposition}[Properties of $\H\Z$ over Dedekind domains] \label{prop:HZ-dedekind}
Let $m \ge 0$ and suppose that $S$ is essentially smooth over a Dedekind domain.
\begin{enumerate}
\item We have $f_1 \H\Z/m \wequi 0$. In fact $\H\Z \wequi s_0(\1)$.
\item $\H\Z$ represents motivic cohomology in the sense of Levine: we have \[ \imap(\Sigma^\infty X_+, \Sigma^{0,q} \H\Z/m) \wequi \scr M^X(q)/m. \]
\item We have $\ul{\pi}_{p,q} \H\Z/m = 0$ if $q > 0$ or $p < q$.
\item We have $\pi_{p,q} \H\Z/m = 0$ for $q > 0$. We also have $\imap(\Sigma^\infty X_+, \H\Z/m) = \Z_\Zar^X/m$. In particular $\pi_{p,0}(\H\Z/m) = 0$ for $p \ne 0$ and $\pi_{0,0}(\H\Z/m) = \Z^X/m$ (the set of continuous functions from $X$ to $\Z/m$).
\end{enumerate}
\end{proposition}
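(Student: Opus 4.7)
The plan is to deduce each part from Spitzweck's foundational results on $H\Z$ \cite{spitzweck2012commutative}, with no substantially new input required; the work is in citing and assembling.

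First, for part (1), the identification $H\Z \simeq s_0(\1)$ over Dedekind bases is the main slice-zero theorem of Spitzweck, generalising Voevodsky's and Levine's computation over fields; I would cite it directly. The vanishing $f_1 H\Z/m \simeq 0$ then follows formally: $s_0(\1)$ lies in the $0$-th slice by construction, so $f_1 s_0(\1) \simeq 0$ (using $f_1 f_0 \simeq f_1$ and $f_1 f_1 \simeq f_1$ to compute $f_1(f_0 \1 / f_1 \1) \simeq f_1 \1 / f_1 \1 \simeq 0$), and the exact functor $f_1$ commutes with the cofibre sequence $H\Z \xrightarrow{m} H\Z \to H\Z/m$.

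Part (2) is Spitzweck's representability theorem: his construction of $H\Z$ via symmetric powers represents Levine's Bloch-cycle motivic cohomology complexes $\scr M^X(q)$ over essentially smooth schemes over a Dedekind base. I would cite this directly; the finite-coefficient version follows by forming cofibres.

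For part (3), (2) identifies $\ul\pi_{p,q}(H\Z/m)$ with the Nisnevich sheafification of the functor $U \mapsto H^{-p,-q}_L(U, \Z/m)$. Vanishing for $q>0$ is immediate since Levine's complex $\Z(q)$ is zero for $q<0$. Vanishing for $p<q$ corresponds to the bound $a>b$ on Levine's cohomology, which after Nisnevich sheafification reduces to vanishing at Henselian local rings on smooth $S$-schemes; over a Dedekind base this is Geisser's refinement of the Beilinson--Lichtenbaum theorem. Finally, part (4) follows from (3) by Nisnevich descent: the descent spectral sequence computing $\pi_{p,q}\imap(\Sigma^\infty X_+, H\Z/m)$ degenerates because (3) constrains the $\ul\pi$-sheaves to the region $q\le 0$, $p\ge q$, and the weight-zero piece of Levine's complex is just the locally constant sheaf $\Z/m$, yielding $\imap(\Sigma^\infty X_+, H\Z/m) \simeq \Z_\Zar^X/m$. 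The main subtlety I anticipate is bookkeeping between Spitzweck--Levine's cohomological conventions and the $(p,q)$-indexing of homotopy sheaves used here; the hardest genuine input is the Beilinson--Lichtenbaum vanishing at Henselian local $S$-schemes via Geisser.
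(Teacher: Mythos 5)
Your proposal is correct and follows essentially the same strategy as the paper's proof: assemble the statements from Spitzweck's foundational results on $\H\Z$ over Dedekind bases. The only notable differences are in logical ordering and choice of citation. The paper proves (2) first (Spitzweck's representability theorem), and then derives the first half of (1), namely $f_1 \H\Z/m \wequi 0$, directly from (2) and the fact that $\scr M^X(q) = 0$ for $q<0$; the second half $\H\Z \wequi s_0(\1)$ is cited from \cite[Theorem B.4]{norms} rather than from Spitzweck directly. You instead establish (1) first via the slice identification, which is also fine, and your formal slice computation $f_1 s_0(\1) \wequi 0$ is correct. For (3) the paper invokes \cite[Theorem 3.3]{spitzweck2012commutative} for the vanishing $\scr H^{-p}\scr M(-q)=0$ in the indicated range, which is parallel to your Geisser citation (indeed Spitzweck's result rests on Geisser's); both routes work. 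For (4) your descent spectral sequence argument is an unnecessary detour: the identification $\imap(\Sigma^\infty X_+,\H\Z/m)\wequi \Z^X_\Zar/m$ is literally the $q=0$ case of (2), which is the route the paper takes, and the remaining claims in (4) then fall out immediately without invoking a spectral sequence. Also note that the spectral sequence as stated only computes homotopy groups, so to conclude the spectrum-level identification you would anyway want to appeal to (2) directly.
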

\begin{proof}
For (2) see \cite[Theorem 7.18]{spitzweck2012commutative}. This implies the first half of (1) by definition of $\scr M^X(q)$ for $q<0$. For the second half of (1), see \cite[Theorem B.4]{norms}. (3) follows from (1) and (2), using \cite[Theorem 3.3]{spitzweck2012commutative} (note that $\ul{\pi}_{p,q} \H\Z/m = \scr H^{-p} \scr M(-q)$). (4) also follows from (2).
\end{proof}

\begin{remark}
If $S$ is essentially smooth over a field, then Proposition \ref{prop:HZ-general}(3) implies Proposition \ref{prop:HZ-dedekind}(3). Over more general bases this argument does not work, because of the failure of stable $\A^1$-connectivity.
\end{remark}

Thanks to the description of motivic cohomology a over Dedekind domain in terms of Levine's higher Chow groups, we have the next lemma which we will be useful. We define $\PTM(S, \Z/m) \subset \Mod_{H\Z/m}(S)$ to be the full subcategory of $H\Z/m$-modules over $S$ spanned by the pure Tate motives, i.e. the smallest subcategory closed under arbitrary sums and containing $\{ \Sigma^{2n,n}H\Z/m \}_{n \in \Z}$.

\begin{lemma} \label{lem:ptm}
Let $D$ be a Dedekind domain with $Pic(D) = 0$.
\begin{enumerate}
\item We have $H^p(\Spec(D), \Z/m(q)) = 0$ for $q \le 1$ and $p > q$, or $q$ arbitrary and $p>q+1$, or $q$ negative.
\item If $D \subset \C$ then the Betti realization functor $\PTM(\Spec(D), R) \rightarrow \Mod_{H\Z/m} = D(\Z/m)$ is faithful. In particular, it is conservative.
\end{enumerate}
\end{lemma}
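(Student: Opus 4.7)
My approach is to derive (1) by unwinding Spitzweck's description of motivic cohomology over Dedekind bases (Proposition \ref{prop:HZ-dedekind}), argued weight by weight, and then to deduce (2) by reducing via compactness to Hom computations on the standard generators of $\PTM$.

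For (1), I plan to split into four cases. The case $q<0$ is immediate from Proposition \ref{prop:HZ-dedekind}(3), which gives vanishing of $H\Z/m$-cohomology in negative weight. The case $q=0$ is Proposition \ref{prop:HZ-dedekind}(4): $\imap(\Sigma^\infty\Spec(D)_+, H\Z/m) \simeq \Z^D/m$ is concentrated in degree zero, yielding $H^{p,0}(\Spec(D),\Z/m)=0$ for $p>0$. For $q=1$, Proposition \ref{prop:HZ-dedekind}(2) combined with Spitzweck's identification $\scr M^{\Spec(D)}(1) \simeq \mathbb{G}_m[-1]$ identifies weight-$1$ cohomology with (shifted) Zariski cohomology of $\mathbb{G}_m$ on a one-dimensional scheme; the only potentially nonzero group for $p>1$ is $H^{2,1}(\Spec(D),\Z)=\Pic(D)$, which vanishes by hypothesis, and the universal coefficient sequence $0\to H^{p,q}(\Z)/m\to H^{p,q}(\Z/m)\to H^{p+1,q}(\Z)[m]\to 0$ propagates this vanishing to $\Z/m$-coefficients. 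For the remaining case $q\ge 2$, $p>q+1$, I would invoke the Gersten/Cousin fiber sequence
\[ \scr M^{\Spec(D)}(q) \to \scr M^{\eta}(q) \to \bigoplus_v \scr M^{k(v)}(q-1)[-1] \]
(where $\eta$ is the generic point and $v$ ranges over closed points of $\Spec(D)$), combined with the classical Nesterenko--Suslin--Totaro vanishing $H^{p,q}(F,\Z)=0$ for $p>q$ and any field $F$; one more application of the universal coefficient sequence gives the $\Z/m$ statement.

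For (2), the generators $\Sigma^{2n,n}H\Z/m$ of $\PTM$ are compact in $\Mod_{H\Z/m}$ (invertible shifts of the compact unit). Hence for pure Tate $X=\bigoplus_\alpha \Sigma^{2n_\alpha,n_\alpha}H\Z/m$ and $Y=\bigoplus_\beta \Sigma^{2n'_\beta,n'_\beta}H\Z/m$ the Hom group splits as $\prod_\alpha \bigoplus_\beta H^{2(n'_\beta-n_\alpha),\, n'_\beta-n_\alpha}(\Spec(D),\Z/m)$. By (1) each summand vanishes unless $n_\alpha=n'_\beta$: when $n'_\beta>n_\alpha$ one has $(p,q)=(2k,k)$ with $k\ge 1$, so $p>q$ and (for $k\ge 2$) $p>q+1$, covered by cases (a) and (b); when $n'_\beta<n_\alpha$ one is in negative weight. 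When $n_\alpha=n'_\beta$ the summand is $H^{0,0}(\Spec(D),\Z/m)=\Z/m$ by connectedness of $\Spec(D)$. The Betti realization sends $\Sigma^{2n,n}H\Z/m$ to $(\Z/m)[2n]\in D(\Z/m)$, and the analogous Hom calculation in $D(\Z/m)$ yields the same $\Z/m$ in the matching components; since realization sends the unit's identity to the identity, each nonzero component map $\Z/m\to\Z/m$ is the identity, hence injective. Faithfulness follows from the product-of-sums decomposition. For conservativity, note that a faithful exact functor between stable $\infty$-categories cannot kill a nonzero object: if $F(X)=0$ then $F(\id_X)=0=F(0_X)$, so faithfulness forces $\id_X=0_X$, hence $X\simeq 0$; applying this to the cofiber of a morphism $f$ with $F(f)$ an equivalence shows $f$ itself is an equivalence.

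The main obstacle is the general-weight vanishing for $q\ge 2$, which genuinely requires the Gersten structure of Spitzweck's motivic complex over a Dedekind domain; everything else is bookkeeping. I would invoke this directly from \cite{spitzweck2012commutative} rather than reproving it, and all other steps are routine reductions using the already-cited properties of $H\Z/m$.
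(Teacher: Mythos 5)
Your proof is correct, and for part (2) it is essentially the paper's argument (the paper just says "by direct calculation" where you write out the $\prod_\alpha\bigoplus_\beta$ decomposition, compactness of the generators, and the monoidal-unit comparison — this is exactly the intended computation). For part (1) you take a genuinely different route at one key point. The paper establishes the general vanishing $H^{p,q}(\Spec(D),\Z/m)=0$ for $p>q+1$ by combining Proposition~\ref{prop:HZ-dedekind}(3) — vanishing of the homotopy sheaves $\ul\pi_{p,q}H\Z/m$, equivalently $\scr H^a\scr M(r)=0$ for $a>r$ or $r<0$ — with the Nisnevich/Zariski descent spectral sequence $H^i(\Spec(D),\scr H^j\scr M(q)/m)\Rightarrow H^{i+j,q}$, where $i\le 1$ by dimension and $j\le q$ by the sheaf vanishing. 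You instead use a Gersten/localization fiber sequence relating $\Spec(D)$, its generic point, and its closed points, together with Nesterenko--Suslin--Totaro vanishing over fields, then pass to $\Z/m$-coefficients via universal coefficients. Both approaches are valid and rely on serious inputs from the Dedekind-base theory: the paper's on Spitzweck's local vanishing theorem, yours on the localization/Gersten sequence for motivic cohomology over a Dedekind base (Geisser, Levine), which you should cite explicitly since it is not among the properties recorded in the paper's Appendix~\ref{sec:mot-coh}. Your low-weight cases ($q\le 1$) match the paper's in substance — in particular both ultimately use $\Pic(D)=0$ to kill $H^{2,1}$, though the paper phrases the $q=1$ case via the identification $\scr M(1)/m\simeq(\Gm\otimes^L_\Z\Z/m)[-1]$ rather than via universal coefficients. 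One small bookkeeping note: your Gersten argument alone would not suffice at $(p,q)=(2,1)$, since the residue-field term $H^{0,0}(k(v))$ does not vanish there; your case split correctly defers $q=1$ to the $\Pic(D)=0$ argument, but it's worth being explicit that this is why $q\le 1$ must be treated separately.
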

\begin{proof}
(1) The claim for general $q$ follows from Proposition \ref{prop:HZ-dedekind} and the descent spectra sequence.
For $q=0$ one has $H^p(\Spec(D), \Z/m(0)) = H^p_\Zar(\Spec(D), \Z/m)$ which vanishes for $p>0$ because constant sheaves are flasque.
For $q=1$ we have $H^{p+1}(\Spec(D), \Z/m(1)) = H^p_\Zar(\Spec(D), \Gm \otimes_\Z^{L} \Z/m)$\NB{slightly sloppy notation}, which implies the desired vanishing.

(2)
It follows from (1) that $[\Sigma^{2t,t} H\Z/m, \Sigma^{2n,n}H\Z/m]_{H\Z/m} = \Z/m$ if $t = n$ and $=0$ else. We see by direct calculation that Betti realization is faithful on arbitrary sums of motives of the form $\Sigma^{2t,t}HR$, as was to be shown.
\end{proof}

\begin{proposition}[Higher structures on $\H\Z$] \label{prop:HZ-higher}
Let $m \ge 0$.
\begin{enumerate}
\item For any scheme $S$, there exists a canonical lifting $\H\Z/m \in \CAlg(\SH(S))$.
  If $S$ is smooth ever a Dedekind scheme, there is a lifting $\H\Z/m \in \NAlg(\SH(S))$.
\item The comparison maps of Proposition \ref{prop:HZ-general}(1) are $E_\infty$ (and normed if $f$ is essentially smooth).
\end{enumerate}
\end{proposition}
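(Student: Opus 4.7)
The plan is to import these higher structures from existing literature and transport them to arbitrary bases via pullback. For the $E_\infty$ lift in (1), Spitzweck's construction in \cite{spitzweck2012commutative} directly produces $\H\Z/m \in \CAlg(\SH(\Spec(\Z)))$ for each $m \ge 0$. Given an arbitrary scheme $S$ with structure morphism $f: S \to \Spec(\Z)$, the symmetric monoidality of $f^*: \SH(\Spec(\Z)) \to \SH(S)$ induces a functor $\CAlg(\SH(\Spec(\Z))) \to \CAlg(\SH(S))$; combined with the identification $f^* \H\Z/m \wequi \H\Z_S/m$ from Proposition \ref{prop:HZ-general}(1), this produces the required $E_\infty$ structure on $\H\Z_S/m$.

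For the normed lift over smooth-over-Dedekind bases, we invoke the results of \cite{norms}. Their construction uses the transfer structure on Voevodsky's (and Spitzweck's) model of motivic cohomology to build the required coherent system of norm maps $p_\otimes p^* \H\Z/m \to \H\Z/m$ for finite \'etale $p$. The restriction to smooth-over-Dedekind bases is essential here, reflecting the geometric resources (finite \'etale transfers, a good Thom class formalism, projective bundle formula with integral coefficients) needed to establish the higher coherences.

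For part (2), the $E_\infty$ compatibility follows from the construction: the comparison map of Proposition \ref{prop:HZ-general}(1) is pulled back from $\Spec(\Z)$ by a symmetric monoidal functor, so lifts automatically to $\CAlg$. For the normed compatibility under essentially smooth base change, this is built into the normed structure itself---the data of a normed spectrum includes base change compatibilities for essentially smooth morphisms---and is verified in the course of the construction in \cite{norms}.

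The main obstacle is the normed structure, which encodes substantially more than the $E_\infty$ one: coherent norm maps for every finite \'etale morphism, not merely coherent binary multiplications. Extracting this from motivic cohomology is the technical heart of \cite{norms} and cannot be reduced to formal consequences of the $E_\infty$ structure; it requires genuine geometric input, specifically the fact that Spitzweck's model is defined in terms of presheaves with transfers, which is what makes the construction of finite \'etale norms go through over Dedekind bases.
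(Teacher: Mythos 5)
Your argument is correct and takes essentially the same approach as the paper: establish the $E_\infty$ structure over a Dedekind base via Spitzweck's construction (the paper cites \cite[Theorem 13.9]{norms}, which itself builds on Spitzweck), pull back the $E_\infty$ structure along arbitrary $f$, and pull back the normed structure only along smooth $f$ (the paper cites \cite[Proposition 7.6(7)]{norms} for this). One small conceptual slip: you attribute the smooth-over-Dedekind hypothesis to geometric resources needed "to establish the higher coherences," but the normed structure is constructed once over $\Spec\Z[1/\ell]$; the restriction to $S$ smooth over a Dedekind scheme is purely a limitation of the \emph{base-change} theory for normed spectra available at the time (normed spectra only pull back along smooth maps), not of the coherence data's construction---as the paper's own subsequent remark makes clear, this restriction is artificial and removable.
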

\begin{proof}
The case where $S$ is a Dedekind scheme follows from \cite[Theorem 13.9]{norms}.
Since commutative monoids are stable under base change, and normed spectra are stable under smooth base change \cite[Proposition 7.6(7)]{norms}, the case of general $S$ follows.
\end{proof}

\begin{remark}
We will show in the next paper in this series that normed spectra are stable under arbitrary base change.
Consequently in Proposition \ref{prop:HZ-higher}, all assumptions on $S$ and $f$ can be dropped.
\end{remark}

\subsection{Cohomology of $B\mu_\ell$}
\begin{lemma} \label{lem:bmu} For any prime $\ell$ and scheme $S$, we have an equivalence in $\SH(S)$
\begin{equation*}
H\Z/\ell \wedge \mathcal{O}_{\mathbb{P}^\infty}(-\ell) \setminus z(\mathbb{P}^\infty) \simeq H\Z/\ell \wedge \mathbb{P}^{\infty} \oplus \Sigma^{1,1} H\Z/\ell \wedge \mathbb{P}^{\infty}.
\end{equation*}
\end{lemma}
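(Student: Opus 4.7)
The plan is to exploit the standard open–closed decomposition of a line bundle and then to observe that the relevant Euler class vanishes modulo $\ell$. Concretely, starting from the open inclusion of the punctured total space into $\mathcal{O}_{\P^\infty}(-\ell)$ whose closed complement is the zero section, one obtains the cofiber sequence
\[
\Sigma^\infty_+ (\mathcal{O}_{\P^\infty}(-\ell)\setminus z(\P^\infty)) \to \Sigma^\infty_+ \mathcal{O}_{\P^\infty}(-\ell) \to \Sigma^\infty \Th_{\P^\infty}(\mathcal{O}(-\ell))
\]
in $\SH(S)$, obtained by passing to the colimit of the analogous sequences for $\P^k$ (as recalled in the proof of Theorem~\ref{thm:homotopy-of-ops}). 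The zero section is an $\A^1$-equivalence $\P^\infty \xrightarrow{\simeq} \mathcal{O}_{\P^\infty}(-\ell)$, so after smashing with $H\Z/\ell$ the middle term is $H\Z/\ell \wedge \P^\infty_+$.

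Next, the canonical orientation $\MGL \to H\Z/\ell$ from Proposition~\ref{prop:HZ-general}(2) yields a Thom isomorphism $H\Z/\ell \wedge \Th_{\P^\infty}(\mathcal{O}(-\ell)) \simeq \Sigma^{2,1} H\Z/\ell \wedge \P^\infty_+$ (by taking colimits of the $\P^k$ version, as in the proof of Proposition~\ref{prop:homology-of-dmot}, and using the reflexivity statements of Appendix~\ref{app:dualization} to check coherence if needed). Under these identifications the cofiber sequence becomes
\[
H\Z/\ell \wedge (\mathcal{O}_{\P^\infty}(-\ell) \setminus z(\P^\infty))_+ \to H\Z/\ell \wedge \P^\infty_+ \xrightarrow{\,e\,} \Sigma^{2,1}H\Z/\ell \wedge \P^\infty_+,
\]
where the connecting map $e$ is multiplication by the Euler class of $\mathcal{O}_{\P^\infty}(-\ell)$ pulled back along the zero section, i.e.\ by $c_1(\mathcal{O}(-\ell)) \in H\Z/\ell^{2,1}(\P^\infty)$.

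The key input is then that $H\Z/\ell$ has additive formal group law, so that $c_1$ is additive under tensor products of line bundles; hence $c_1(\mathcal{O}(-\ell)) = \ell \cdot c_1(\mathcal{O}(-1)) = 0$ in $H\Z/\ell$-cohomology. Consequently the map $e$ is null and the cofiber sequence splits (as a sequence of $H\Z/\ell$-modules, by choosing any null-homotopy), giving the asserted decomposition
\[
H\Z/\ell \wedge (\mathcal{O}_{\P^\infty}(-\ell)\setminus z(\P^\infty)) \simeq H\Z/\ell \wedge \P^\infty \oplus \Sigma^{1,1} H\Z/\ell \wedge \P^\infty.
\]

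The main technical obstacle is the Euler class computation: additivity of $c_1$ for the orientation of Proposition~\ref{prop:HZ-general}(2) has to be available over an arbitrary base $S$. This should reduce by base change (Proposition~\ref{prop:HZ-general}(1)) to a computation over $\Spec \Z$, where it is part of the standard package for Spitzweck's motivic cohomology spectrum; the splitting itself then requires no further canonicity.
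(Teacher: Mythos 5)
Your proposal is correct, and it follows the same open–closed decomposition and Thom isomorphism setup as the paper but differs in how it shows the boundary map $H\Z/\ell \wedge \P^\infty_+ \to \Sigma^{2,1}H\Z/\ell \wedge \P^\infty_+$ is null. You compute directly: the map is multiplication by the Euler class $c_1(\mathcal{O}(-\ell))$, and since $H\Z/\ell$ has the additive formal group law, $c_1(\mathcal{O}(-\ell)) = \ell\, c_1(\mathcal{O}(-1)) = 0$. The paper instead invokes Proposition~\ref{prop:pbf} to see the map is between pure Tate motives and then uses Lemma~\ref{lem:ptm}(2) (faithfulness of Betti realization on $\PTM$ over $\Spec\Z$) to import the vanishing from the corresponding classical fact. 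The two arguments rest on the same underlying phenomenon — they are essentially the Betti-realization-shadow and the algebraic-geometry-side of the same Euler class computation — but yours is more self-contained and avoids the detour through realization, at the cost of needing to know that Spitzweck's orientation has an additive formal group law over the base (which, as you note, reduces by base change to $\Spec\Z$ where it is part of the standard package). One small thing to keep in mind: the paper goes on, after the splitting, to produce a \emph{canonical} choice via \cite[Theorem~10.16]{spitzweck2012commutative}, because the downstream Proposition~\ref{prop:bmu} asserts a canonical equivalence; your "choosing any null-homotopy" is fine for the lemma as stated, but if you later need naturality of this identification you would have to add that extra step.
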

\begin{proof}
We may assume that $S=\Spec(\Z)$. For each $n  \geq 0$, the purity isomorphism furnishes a cofiber sequence in $\Spc(\Z)$
\[
\mathcal{O}_{\mathbb{P}^n}(-\ell) \setminus z(\mathbb{P}^n) \rightarrow \mathcal{O}_{\mathbb{P}^n}(-\ell) \rightarrow \Th\mathcal{O}_{\mathbb{P}^n}(-\ell).
\]
These cofiber sequences are compatible for different choices of $n$ (with the canonical inclusion $\P^n \hookrightarrow \P^{n+k}$) and in the colimit we obtain a cofiber sequence
\[ \mathcal{O}_{\mathbb{P}^\infty}(-\ell) \setminus z(\mathbb{P}^\infty) \rightarrow \mathcal{O}_{\mathbb{P}^\infty}(-\ell) \wequi \P^\infty \rightarrow \Th(\mathcal{O}_{\mathbb{P}^\infty}(-\ell)). \]
Since the motivic spectrum $H\Z/\ell$ is oriented, the cofiber sequence above induces a cofiber sequence in $\SH(S)$:
\[ H\Z/\ell\wedge (\mathcal{O}_{\mathbb{P}^\infty}(-\ell) \setminus z(\mathbb{P}^\infty))_+ \rightarrow H\Z/\ell\wedge \P^\infty \rightarrow \Sigma^{2,1} H\Z/\ell \wedge \P^\infty. \]
We claim that the right-most map is zero. By Proposition~\ref{prop:pbf}, the map in question is between pure Tate motives, i.e. sums of terms of the form $\Sigma^{2n,n} H\Z/\ell$. Thus the claim follows from Lemma~\ref{lem:ptm} and the corresponding classical fact.
It follows that we have a non-canonical equivalence as stated.
We obtain a canonical basis by appeal to \cite[Theorem 10.16]{spitzweck2012commutative}.
\end{proof}

\begin{proposition} \label{prop:bmu}We have a canonical equivalence
\[
H\Z/\ell \wedge (B_{\et}\mu_{\ell})_+  \simeq  H\Z/\ell \wedge \mathbb{P}^{\infty} \oplus \Sigma^{1,1} H\Z/\ell \wedge \mathbb{P}^{\infty} \in \SH(S).
\] 
\end{proposition}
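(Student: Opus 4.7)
The plan is to reduce this proposition to Lemma~\ref{lem:bmu} by identifying $B_\et\mu_\ell$ with the punctured line bundle $\mathcal{O}_{\P^\infty}(-\ell)\setminus z(\P^\infty)$ in $\SH(S)$. This identification is already invoked in the paragraph preceding Theorem~\ref{thm:homotopy-of-ops}, where it is noted that $\Sigma^\infty_+ B_\et\mu_\ell$ is a colimit of the suspension spectra of the punctured vector bundles $\mathcal{O}_{\P^k}(-\ell)\setminus z(\P^k)\to \P^k$ as $k\to\infty$. The underlying geometric fact is that the natural map $\mathcal{O}_{\P^k}(-1)\setminus z(\P^k) \to \mathcal{O}_{\P^k}(-\ell)\setminus z(\P^k)$ (taking a vector to its $\ell$-th tensor power) is a $\mu_\ell$-torsor, and the total spaces $\mathcal{O}_{\P^k}(-1)\setminus z(\P^k) \simeq \A^{k+1}\setminus 0$ assemble into an $\A^1$-contractible ind-scheme as $k\to\infty$.

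First I would record the canonical equivalence
\[
\Sigma^\infty_+ B_\et\mu_\ell \wequi \Sigma^\infty_+(\mathcal{O}_{\P^\infty}(-\ell)\setminus z(\P^\infty)) \in \SH(S),
\]
either by citing the construction above or by the standard argument via the $\mu_\ell$-torsor. Smashing with $H\Z/\ell$ we obtain
\[
H\Z/\ell \wedge (B_\et\mu_\ell)_+ \wequi H\Z/\ell \wedge (\mathcal{O}_{\P^\infty}(-\ell)\setminus z(\P^\infty))_+,
\]
and Lemma~\ref{lem:bmu} then identifies the right-hand side with $H\Z/\ell\wedge \P^\infty \oplus \Sigma^{1,1}H\Z/\ell\wedge\P^\infty$.

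For the canonicity of the splitting (not just its existence), I would follow the strategy of Lemma~\ref{lem:bmu}: the two summands correspond to the pure Tate weight-$n$ components of $H\Z/\ell\wedge (B_\et\mu_\ell)_+$, and choosing canonical generators via Spitzweck's motivic cohomology classes (\cite[Theorem~10.16]{spitzweck2012commutative}, or equivalently via the classes $u\in H^{1,1}(B_\et\mu_\ell;\Z/\ell)$ and $v \in H^{2,1}(B_\et\mu_\ell;\Z/\ell)$ from \S\ref{subsec:comodule-structure}) produces a canonical basis. The main obstacle, such as it is, is checking that the identification of $B_\et\mu_\ell$ with the colimit of punctured line bundles is natural enough to pass through the smash product with $H\Z/\ell$ and yield the \emph{canonical} splitting; but once one phrases the statement in terms of the natural Tate-weight decomposition provided by Spitzweck's generators, the compatibility is automatic because Lemma~\ref{lem:ptm}(1) forces any map between the relevant pure Tate summands to be uniquely determined by its value on the bottom cell.
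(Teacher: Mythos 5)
Your proposal is correct and follows essentially the same route as the paper: identify $B_{\et}\mu_\ell$ with the colimit of punctured line bundles $\mathcal{O}_{\P^\infty}(-\ell)\setminus z(\P^\infty)$ (which the paper asserts with a one-line citation and you flesh out via the $\mu_\ell$-torsor $\A^{k+1}\setminus 0 \to \mathcal{O}_{\P^k}(-\ell)\setminus z(\P^k)$), then invoke Lemma~\ref{lem:bmu}. Your additional remarks on canonicity via Spitzweck's generators match the closing appeal in the proof of Lemma~\ref{lem:bmu}, so they are consistent with the paper's argument rather than an alternative to it.
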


\begin{proof}
Since $B\mu_\ell \wequi \mathcal{O}_{\mathbb{P}^n}(-\ell) \setminus 0$\NB{ref?}, the claim follows from Lemma~\ref{lem:bmu}.

\end{proof}

\subsection{The motivic Steenrod algebra and its dual}\label{app:steenrod}

\begin{theorem}[The motivic dual Steenrod algebra] \label{thm:HZ-dual-steenrod}
Let $\ell$ be a prime invertible on the scheme $S$.
\begin{enumerate}
\item There exist canonical elements $\tau_i \in \pi_{2\ell^i - 1, \ell^i - 1}(\H\Z/\ell \wedge \H\Z/\ell), i = 0,1,2 \dots$ and $\xi_i \in \pi_{2(\ell^i - 1), \ell^i - 1}(\H\Z/\ell \wedge \H\Z/\ell), i = 1, 2, \dots$\sjeremiah{$i=0$ needed? (for $\xi_0$?)}\tombubble{$\xi_0=1$?} such that $\H\Z/\ell_{**} \H\Z/\ell$ is the free $\H\Z/\ell_{**}$-module on generators of the form $\prod'_{i \ge 0} \tau_i^{\epsilon_i} \times \prod'_{i > 0} \xi_i^{r_i}$. Here $\prod'$ denotes a product where all but finitely many exponents are 0, $\epsilon_i \in \{0,1\}$ and $r_i \ge 0$. Moreover $\H\Z/\ell \wedge \H\Z/\ell$ splits into a sum of shifts and twists of $\H\Z/\ell$, corresponding to these generators, i.e., we have an equivalence of the form
\[
\underset{B}{\mathlarger{\bigoplus}} \Sigma^{p(I), q(I)}H\Z/\ell \stackrel{\simeq}{\rightarrow} H\Z/\ell \wedge H\Z/\ell,
\]
where the indexing set $B$ are generators of $\H\Z/\ell_{**} \H\Z/\ell.$ The elements $\tau_i, \xi_i$ are stable under base change.
\end{enumerate}
From now on we put $\ell=2$.
\begin{enumerate}
\setcounter{enumi}{1}
\item We have $\tau_i^2 = \tau \xi_{i+1} + \rho \tau_{i+1} + \rho \tau_0 \xi_{i+1}$.
\item The comultiplication $\psi: H\Z/2_{**}H\Z/2 \to H\Z/2_{**}H\Z/2 \otimes_{H\Z/2_{**}} H\Z/2_{**}H\Z/2$ satisfies
\begin{align*}
\psi(\rho) &= \rho \\
\psi(\tau) &= \tau \\
\psi(\tau_r) &= \tau_r \otimes 1 + 1 \otimes \tau_r + \sum_{i=0}^{r-1} \xi_{r-i}^{2^i} \otimes \tau_i \\
\psi(\xi_r) &= \xi_r \otimes 1 + 1 \otimes \xi_r + \sum_{i=0}^{r-1} \xi_{r-i}^{2^i} \otimes \xi_i. \\
\end{align*}
\item The right unit $H\Z/2_{**} \to H\Z/2_{**}H\Z/2$ sends $\rho$ to $\rho$ and $\tau$ to $\tau + \rho \tau_0$.
\item The antipode $\chi: H\Z/2_{**}H\Z/2 \to H\Z/2_{**}H\Z/2$ satisfies
\begin{align*}
\chi(\xi_r) &= \xi_r + \stackrel{r-1}{\underset{i=1}{\SSigma}}\xi^{\ell^i}_{r-i}\chi(\xi_i) \\
\chi(\tau_r) &= \tau_r + \stackrel{r-1}{\underset{i=0}{\SSigma}}\xi^{\ell^i}_{r-i}\chi(\tau_i).
\end{align*}
\end{enumerate}
\end{theorem}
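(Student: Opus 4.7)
The plan is to reduce to known computations by base change and then extract all the structure formulas from the Hopf algebroid axioms together with the geometric origin of the generators. By Proposition \ref{prop:HZ-general}(1), the motivic cohomology spectrum with mod-$\ell$ coefficients base-changes strictly, so it suffices to construct the classes $\tau_i,\xi_i$ and verify every formula over $S=\Spec(\Z[1/\ell])$; stability under further base change is then automatic.

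For part (1), the strategy is to realize the generators via the motivic cohomology of $B\mu_\ell$. Using Proposition \ref{prop:bmu} together with Lemma \ref{lemm:mot-hom-cohom-dual}, the cohomology and homology of $B\mu_\ell$ split as explicit pure Tate modules, and the coaction of $\mathcal{A}_{**}$ on $H\Z/\ell^{**}(B\mu_\ell)$ (cf. Lemma \ref{lem:psir-easy}) forces the existence of classes $\tau_i$, $\xi_i$ in the claimed bidegrees detecting powers of $u$ and $v$. Once these classes are constructed, one builds a map
\[
\bigoplus_{I\in B}\Sigma^{p(I),q(I)}H\Z/\ell\longrightarrow H\Z/\ell\wedge H\Z/\ell
\]
using the monomials in the $\tau_i,\xi_i$. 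Over a Dedekind base this map is shown to be an equivalence: the hypothesis of Lemma \ref{lem:ptm} applies, so the map can be tested on generators, where it is tautologically an equivalence of pure Tate motives. This is the extension by Hoyois--Kelly--Østvær / Spitzweck of Voevodsky's field-theoretic computation.

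Parts (3), (4), (5) for the primitives then follow from the fact that $B\mu_\ell$ is a commutative group scheme and that the $\tau_i,\xi_i$ are dual to the monomial generators of $H\Z/\ell^{**}(B\mu_\ell)$: the comultiplication and right unit formulas are computed by applying the coaction to $u$ and $v$, substituting the formal power series identities of \S\ref{sect:formulas}, and reading off coefficients. The antipode formulas are then forced by the Hopf algebroid axiom $\mu\circ(\chi\otimes\id)\circ\psi=\eta\circ\epsilon$, solved recursively in $r$ thanks to the triangular form of $\psi$.

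The main obstacle is part (2), the explicit quadratic relation $\tau_i^2 = \tau\xi_{i+1}+\rho\tau_{i+1}+\rho\tau_0\xi_{i+1}$, which is the only place where the characteristic classes $\rho$ and $\tau$ mix nontrivially and where working over $\Spec(\Z[1/2])$ (rather than a field of characteristic not $2$) matters. The cleanest route is to compute $\tau_i^2$ by applying $\psi$ to both sides, using parts (3) and (4) to determine the primitives in the Hopf algebroid, and matching coefficients against the known presentation of $H\Z/2^{**}(B\Sigma_2)$ from Lemma \ref{lemm:coh-of-BSigma2} (where the defining relation $u^2=\tau v+\rho u$ already encodes the $\rho,\tau$-twisting). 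The residual ambiguity is pinned down by evaluating at a real closed field (where $-1$ is not a square), reducing to the known field case and concluding via Lemma \ref{lem:ptm}.
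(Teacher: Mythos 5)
Your overall plan---reduce to $S=\Spec(\Z[1/\ell])$ by base change, invoke Spitzweck's computation for the module structure, and extract the Hopf algebroid structure from the coaction on $H^{**}(B\Sigma_2)$ using the quadratic relation $u^2 = \tau v + \rho u$---matches the paper's strategy fairly closely. The paper proves part (1) simply by citing \cite[Theorem 11.24]{spitzweck2012commutative}, and proves part (2) exactly as you suggest: apply $\psi^R$ to both sides of $u^2 = \tau v + \rho u$ and compare coefficients of $v^{2^{i+1}}$. However, there are two real gaps.

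First, your route to part (4), the right unit formula $\eta_R(\tau) = \tau + \rho\tau_0$, does not go through. Applying the coaction to $u$ and $v$ only yields the coproduct formulas (3); it says nothing directly about what $\eta_R$ does to the scalar $\tau \in H\Z/2^{**}$. The paper instead expands $\eta_R(x) = \sum_I a_I S_I$ in the basis $S_I$ dual to the admissible $\Sq^I$, where $a_I = \Sq^I(x)$, so that $\eta_R(\tau)$ is controlled by the values $\Sq^i(\tau)$; the crucial input is $\Sq^1\tau = \rho$ and $\Sq^i\tau=0$ for $i>1$, i.e.\ Theorem \ref{thm:HZ-steenrod-dedekind}(5), which is itself established using the power operations constructed in the body of the paper. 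Without this ingredient, (4) cannot be read off the $B\Sigma_2$ picture. And since the derivation of (2) needs the antipode $\chi(\tau) = \tau + \rho\tau_0$ to move scalars past the tensor sign in $\psi^R(\tau v) = \tau\,\psi^R(v)$, the gap in (4) propagates into (2); your proposed ``residual ambiguity resolved over a real closed field'' is a symptom of this missing piece rather than a valid workaround---with (4) in hand the coefficient matching is exact and no such fallback is needed.

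Second, your sketch for part (1) is circular: Lemma \ref{lem:psir-easy} (the coaction formulas $\psi_R(v)=v+\sum\xi_i\otimes v^{2^i}$, etc.) presupposes the existence of $\tau_i,\xi_i$, so it cannot be used to ``force'' their existence. The existence and freeness statement in (1) is genuinely Spitzweck's theorem over Dedekind domains; the paper cites it rather than reconstructing it from the coaction. Your step of verifying the map $\bigoplus_I \Sigma^{p(I),q(I)}H\Z/\ell \to H\Z/\ell\wedge H\Z/\ell$ is an equivalence via Lemma \ref{lem:ptm} is in the right spirit, but it presumes the classes and the target's Tate-type splitting have already been constructed.
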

\begin{remark}
This theorem implies that the Hopf algebroid $H\Z/2_{**}H\Z/2$ can be described exactly as in \cite[Theorem 5.6]{hoyois2013motivic}.
\end{remark}
\begin{proof}[Beginning of proof of Theorem \ref{thm:HZ-dual-steenrod}.]
Since $H\Z/\ell$ is stable under base change, and all statements of the theorem also are, we see that we may immediately reduce to the case $S = Spec(\Z[1/\ell])$. Moreover part (1) is \cite[Theorem 11.24]{spitzweck2012commutative} and part (5) follows via the identity \cite[Definition A1.1.1]{ravenel1986complex} \[ m \circ (\id \otimes \chi ) \circ \psi = \eta_L \circ \epsilon \] from (3).
\end{proof}

In order to finish the proof, we need some preparations. Denote by $\scr B_S$ the set of \emph{bistable cohomology operations}. In other words $\scr B_S^{**}$ is the bigraded set with $\scr B_S^{p,q}$ the set of natural transformations $\H^{**}(\bullet, \Z/\ell) \Rightarrow \H^{*+p,*+q}(\bullet, \Z/\ell)$ of functors from $\Spc(S)_\pt$ to bigraded abelian groups, which are \emph{bistable} (i.e. the diagram analogous to \eqref{eq:cohomology-operations-bistable} commutes). Then $\scr B_S^{**}$ is an associative algebra over $\H\Z/\ell^{**}$ (with the operation given by composition). Note that there is an evident algebra homomorphism $\alpha: \H\Z/\ell^{**} \H\Z/\ell \to \scr B$. Now let $\ell=2$. The construction of the cohomological power operations in Definition \ref{def:cohomological-power-ops} together with their stability and naturality, as established in Proposition \ref{prop:cohomological-power-ops-properties}(1,3), furnishes us with elements $\Sq^i \in \scr B_S$.

For a sequence $I = (i_1, i_2, \dots, i_n)$, we denote by $\Sq^I = \Sq^{i_1} \circ \dots \circ \Sq^{I_n}$ the composite operation. Recall that a sequence is called \emph{admissible} if $i_j \ge 2i_{j+1} > 0$ for all $j$. The empty sequence $\emptyset$ is considered admissible, and $\Sq^\emptyset := \id$.

\begin{theorem}[The motivic Steenrod algebra] \label{thm:HZ-steenrod-dedekind}
Let $\ell$ be a prime invertible on the scheme $S$.
\begin{enumerate}
\item The morphism $\alpha: \H\Z/\ell^{**} \H\Z/\ell \to \scr B$ is surjective.
\end{enumerate}
From now on let $\ell=2$, and $S$ essentially smooth over a Dedekind domain. Then moreover we have the following.
\begin{enumerate}
\setcounter{enumi}{1}
\item $\alpha$ is an isomorphism.
\item $\scr B^{**}$ is free (as a left $\H\Z/2^{**}$-module) on the admissible monomials $\Sq^I$.
\item $\alpha$ is natural in the following sense: if $f: S' \to S$ is a morphism of schemes essentially smooth over Dedekind domains, then $f^* \Sq^I = \Sq^I$, where we view $\Sq^I \in \H\Z/2^{**}\H\Z/2$ (over $S$ and $S'$, respectively).
\item The elements $\tau_0 \in \H\Z/2_{**}\H\Z/2$ and $\Sq^1 \in \H\Z/2^{**}\H\Z/2$ are dual.
\item $\Sq^1 \tau = \rho$, $\Sq^i \tau = 0$ for $i>1$ and $\Sq^i \rho = 0$ for $i>0$.
\end{enumerate}
\end{theorem}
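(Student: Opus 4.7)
The plan is to treat the six items in an order that leverages the representability of motivic cohomology by $\H\Z/\ell$ and the duality of Lemma~\ref{lemm:mot-hom-cohom-dual}. For (1), given a bistable cohomology operation $\phi \in \scr B^{p,q}$, one recovers a representing class by evaluating $\phi$ on the universal class: writing $\H\Z/\ell$ as a sequential colimit of motivic Eilenberg--MacLane spaces and using bistability to reconcile bidegrees, the compatible system $\phi(\iota_n)$ assembles into a map $\H\Z/\ell \to \Sigma^{p,q}\H\Z/\ell$ whose class in $\H\Z/\ell^{p,q}\H\Z/\ell$ is a preimage of $\phi$ under $\alpha$. For (2), the same construction yields injectivity---a class $c \in \H\Z/2^{**}\H\Z/2$ vanishing as a natural transformation kills the identity map $\iota \colon \H\Z/2 \to \H\Z/2$, hence vanishes---which can alternatively be seen from Lemma~\ref{lemm:mot-hom-cohom-dual}(1) identifying $\H\Z/2^{**}\H\Z/2$ with the $\H\Z/2_{**}$-dual of the free module $\H\Z/2_{**}\H\Z/2$ provided by Theorem~\ref{thm:HZ-dual-steenrod}(1).

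For (3), the plan is to verify that the admissible monomials $\Sq^I$ form a left $\H\Z/2^{**}$-basis of $\scr B^{**}$. Linear independence is established by computing the action of admissibles on the tautological classes in $H^{**}\bigl((B_\et\mu_2)^{\times n};\Z/2\bigr)$, using the squaring property of Proposition~\ref{prop:cohomological-power-ops-properties}(5), the Cartan formula of Proposition~\ref{prop:cohomological-power-ops-properties}(9), and the presentation of Lemma~\ref{lemm:coh-of-BSigma2}, following the Milnor--Serre template. Spanning is most cleanly obtained by base-changing the motivic Adem relations from the field case \cite{voevodsky2003reduced,hoyois2013motivic} via Proposition~\ref{prop:HZ-general}(1) and Proposition~\ref{prop:powerop-base-change}, or equivalently by matching basis sizes against the free basis of Theorem~\ref{thm:HZ-dual-steenrod}(1) through the duality of Lemma~\ref{lemm:mot-hom-cohom-dual}(2). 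Part (4) is then immediate from Proposition~\ref{prop:powerop-base-change} applied degreewise on the basis.

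For (5), evaluating $\Sq^1$ on $u \in H^{1,1}(B_\et\mu_2;\Z/2)$ yields $\Sq^1 u = u^2 = \tau v + \rho u$ by the squaring property and Lemma~\ref{lemm:coh-of-BSigma2}, from which the identification of $\Sq^1$ with the class dual to $\tau_0$ (in the sense of Spitzweck's conventions; see \cite[\S 11]{spitzweck2012commutative}) follows via the duality of Lemma~\ref{lemm:mot-hom-cohom-dual}. Part (6) then combines this Bockstein identification with the vanishing of Proposition~\ref{prop:cohomological-power-ops-properties}(6): for $\tau$ of bidegree $(0,1)$ and $\rho$ of bidegree $(1,1)$, all $\Sq^i$ with $i>1$ vanish by bidegree considerations, $\Sq^1\tau = \rho$ is the standard Bockstein of the motivic Bott element, and $\Sq^1\rho = 0$ because $\rho$ lifts integrally. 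The main obstacle is the potential circularity with Proposition~\ref{prop:cohomological-power-ops-properties}(10), which asserts that $\Sq^1$ is the Bockstein; the resolution is to carry out the identification of $\Sq^1$ with the dual of $\tau_0$ in (5) directly from the squaring property and the structure of $H^{**}(B_\et\mu_2)$, rather than invoking (10), after which the remaining statements of (6) and all downstream uses become available without circularity.
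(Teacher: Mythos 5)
Your proposal mirrors the paper's general architecture (Milnor sequence for surjectivity, duality with $\H\Z/2_{**}\H\Z/2$ via Lemma~\ref{lemm:mot-hom-cohom-dual}, evaluation on powers of a classifying space for linear independence, base change for naturality, degree considerations for (5), and vigilance about circularity with Proposition~\ref{prop:cohomological-power-ops-properties}(10)), but there are two genuine errors that break specific steps.

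The most serious is in (5). You write that ``$\Sq^1 u = u^2 = \tau v + \rho u$ by the squaring property.'' The squaring property (Proposition~\ref{prop:cohomological-power-ops-properties}(5)) applies only to classes in bidegree $(2n+\epsilon,n)$ with $n\ge 0$, $\epsilon\in\{0,1\}$; the class $u$ lives in bidegree $(1,1)$, which is not of this form (it would force $n=1$, $\epsilon=-1$). The correct value, recorded in Lemma~\ref{lemm:action-on-Bsigma2}, is $\Sq^1 u = v$ (the Bockstein). Note that $v$ differs from $u^2 = \tau v + \rho u$ unless $\tau=1$ and $\rho=0$. Worse, the pairing of $\tau_0^\vee$ with $\psi_R(u) = u + \sum_i \tau_i v^{2^i}$ gives $\tau_0^\vee(u) = v$, so the (incorrect) computation $\Sq^1 u = u^2$ would actually \emph{contradict} the duality $\Sq^1 = \tau_0^\vee$ you are trying to prove. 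This also undermines the proposed fix for the circularity, which rests on this computation. The paper avoids the issue entirely in (5) by a weight argument: any weight-zero operation must be a linear combination of $\id$ and $\tau_0^\vee$, the coefficient of $\id$ lands in $H\Z/2^{1,0}(\Z[1/2])=0$, and $\Sq^1\neq 0$, hence $\Sq^1 = \tau_0^\vee$.

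The second error is in (2). The claim that a class vanishing as a natural transformation ``kills the identity map $\iota\colon \H\Z/2\to\H\Z/2$, hence vanishes'' does not hold: the identity of the spectrum is not an element of $\H\Z/2^{**}(\scr X)$ for any motivic space $\scr X$, and the Milnor exact sequence you used for (1) explicitly has a $\limone$ kernel, so injectivity of $\alpha$ is not formal. The paper deduces (2) and (3) together: it shows the lift $\beta\colon\scr D^{**}\to \H\Z/2^{**}\H\Z/2$ of the admissible monomials is an isomorphism of free $\H\Z/2^{**}$-modules (by reduction to residue fields via \cite[Corollary 14]{bachmann-real-etale} and base change of the operations), and separately that $\alpha\beta$ is an isomorphism by a split-injectivity argument using $\scr X_n=(\A^n\setminus 0/\Sigma_2)^n_+$ and Riou's lemmas \cite[Lemmes 5.1.5, 5.1.6]{riou2012ops}; these two facts together force $\alpha$ to be an isomorphism. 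Your alternative suggestion of base-changing the Adem relations from the field case would be substantial extra work that the paper's argument does not require, while your ``basis-size matching'' version of (3) conflates linear independence in $\H\Z/2^{**}\H\Z/2$ with linear independence in $\scr B^{**}$ -- the latter requires the evaluation argument.
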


\begin{proof}[End of proof of Theorem \ref{thm:HZ-dual-steenrod}, assuming Theorem \ref{thm:HZ-steenrod-dedekind}.]
Recall that we have reduced to $S = Spec(\Z[1/2])$, which is a Dedekind domain, so Theorem \ref{thm:HZ-steenrod-dedekind} applies.

(2) Consider the right cohomology coaction map \[ \psi^R: H^{**}(\R\P^\infty, \Z/2) \to H^{**}(\R\P^\infty, \Z/2) \otimes_{H^{**}} H\Z/2_{-*-*}H\Z/2. \] See \S\ref{subsub:cohomology-coaction} for a reminder of its construction, and the fact that this is a ring map. We have $H^{**}(\R\P^\infty, \Z/2) \wequi H^{**}[u, v]/u^2 + \tau v + \rho u$ by Lemma \ref{lemm:coh-of-BSigma2}. The coaction sends \begin{gather*} v \mapsto v \otimes 1 + \SSigma_{i \geq 0} v^{2^i} \otimes \xi_i \\ u \mapsto u \otimes 1 + \SSigma_{i \geq 0} v^{2^i} \otimes \tau_i, \end{gather*} by Lemma \ref{lem:psir-easy}. Computing $\psi^R(u^2) = \psi^R(u)^2 = \psi^R(\tau v + \rho u)$ and comparing the coefficient of $v^{2^{i+1}}$ gives the desired result. What is important to observe here is that $\psi^R$ is a morphism of \emph{left} $\H\Z/2^{**}$-algebras, and so for example \[ \psi^R(\tau v) = \tau \psi^R(v) = \tau \left[ v \otimes 1 + \SSigma_{i \geq 0} v^{2^i} \otimes \xi_i \right]. \] Here on the right hand side, $\tau(v \otimes 1) = (\tau v) \otimes 1$, and so on. However we want to isolate the powers of $v$, which we do by using $\tau(m \otimes n) = (\tau m) \otimes n = m \otimes (\chi(\tau) n)$. \todo{This is very subtle. Can we find a good reference?} This is how the term $\chi(\tau) = \tau + \rho \tau_0$ appears in the formula for $\tau_i^2$.

(3) Voevodsky's proof \cite[\S12]{voevodsky2003reduced} essentially works as before. We have $\psi(\xi_r) = \sum_i a_i \otimes b_i$ if and only if, for all $\alpha, \beta \in H\Z/2^{**}H\Z/2$ we have \[ \lra{\xi_r, \alpha\beta} = \sum_i \lra{a_i, \alpha \lra{b_i, \beta}}. \] It suffices to check this on a $H\Z/2_{**}$-basis of $H\Z/2^{**}H\Z/2$. Taking the dual basis of the monomial basis, we can ensure that $\lra{\xi_i, \beta} \in \Z/2 \subset H\Z/2_{**}$, which is in particular central in $H\Z/2^{**}H\Z/2$. It is thus enough to show that \[ \lra{\xi_k, \alpha \beta} = \sum_i \lra{\xi_{k-i}^{2^i}, \alpha} \lra{\xi_i, \beta} \] for $\alpha, \beta \in H\Z/2^{**}H\Z/2$ such that $\lra{\xi_i, \beta} \in \Z/2$. To begin with, note that \[ \alpha(v^{2^i}) = \sum_j \lra{\xi_j^{2^i}, \alpha} v^{2^{i+j}}. \] Indeed this claim is equivalent to \[ \psi(v^{2^i}) = 1 \otimes v^{2^i} + \SSigma_{j \geq 0} \xi_j^{2^i} \otimes v^{2^{i+j}}, \] which holds for $i=0$ by definition and follows for $i>0$ since $\psi$ is a ring map and we are working in characteristic $2$. In particular we have \[ (\alpha\beta)(v) = \sum_i \lra{\xi_i, \alpha \beta} v^{2^i} \] and similarly \[(\alpha\beta)(v) = \alpha(\beta(v)) = \alpha \left( \sum_i \lra{\xi_i, \beta} v^{2^i} \right) = \sum_{i,j} \lra{\xi_j^{2^i}, \alpha} \lra{\xi_i, \beta} v^{2^{i+j}}, \] using the assumption that $\lra{\xi_i, \beta} \in \Z/2$. Comparing the coefficients of $v^{2^k}$ gives the desired result. The claim about $\tau_i$ is handled similarly. Since $\psi$ is a morphism of $H\Z/2_{**}$-algebras\NB{really?}, the claims about $\rho$ and $\tau$ are clear.

(4) By construction, $\eta_R(x) = \sum_I a_I S_I$, where $S^I$ is dual to $\Sq^I$ in the Serre-Cartan basis and $\Sq^I(x) = a_I$. This determines the value of $\eta_R$ on $\tau$ and $\rho$ from the action of the $\Sq^i$ on them. In other words the result follows from Theorem \ref{thm:HZ-steenrod-dedekind}(5).
\end{proof}

\begin{proof}[Proof of Theorem \ref{thm:HZ-steenrod-dedekind}.]
(1) Present $\H\Z/\ell$ as the $\Omega_T$-spectrum $(K_0, K_1, K_2, \dots)$. By standard arguments (e.g. \cite[Proposition 2.7]{voevodsky2003reduced}), we have $\scr B^{**} \wequi \lim_n \H^{*+2n,*+n}(K_n, \Z/\ell)$. There is the Milnor exact sequence \[ 0 \to \limone_n \H^{*+2n-1,*+n}(K_n, \Z/\ell) \to \H\Z/\ell^{**} \H\Z/\ell \to \lim_n \H^{*+2n,*+n}(K_n, \Z/\ell) \to 0; \] consequently $\alpha$ is surjective.

From now on let $\ell=2$.

(2, 3) Let $\scr D^{**}$ denote the free (bigraded) $\H\Z/2^{**}$-module on the admissible monomials in Steenrod operations. Choosing a lift of each such operation along $\alpha$, we may form $\beta: \scr D^{**} \to \H\Z/2^{**} \H\Z/2$. We shall show first that $\beta$ is an isomorphism. We have $\H\Z/2^{**}\H\Z/2 = (\H\Z/2_{**}\H\Z/2)^\vee$, which is itself a free $\H\Z/2^{**}$-module by Theorem \ref{thm:HZ-dual-steenrod} (1) and Lemma \ref{lemm:mot-hom-cohom-dual}(2)\NB{This is where we use the assumption on $S$. In general we do not know that $\H\Z/2^{**} \H\Z/2$ is free, even though $\H\Z/2_{**} \H\Z/2$ is.}. It follows from \cite[Corollary 14]{bachmann-real-etale} that a morphism of free $\H\Z/2^{**}$-modules is an isomorphism if and only if it is so after ``pulling back to the residue fields'' (i.e. after tensoring with $\H\Z/2^{**}(S) \to \H\Z/2^{**}(k)$ for the various residue fields $k$ of $S$). Since $\H\Z/2^{**}\H\Z/2$ is free on a canonical basis, we have $\H\Z/2^{**}\H\Z/2 \otimes_{\H\Z/2^{**}} \H\Z/2^{**}(k) \wequi \H\Z/2^{**}\H\Z/2_k$ and the map $[\H\Z/2, \H\Z/2]^{**}_S \wequi \H\Z/2^{**}\H\Z/2 \to \H\Z/2^{**}\H\Z/2_k \wequi [\H\Z/2, \H\Z/2]^{**}_k$ is the usual pullback. Let $i: Spec(k) \to S$ denote a residue field inclusion. We claim that $i^*(\beta(\Sq^I)) = \Sq^I$. Compatibility of our power operations with base change (Proposition \ref{prop:cohomological-power-ops-properties}) together with Example \ref{ex:voevodsky-ops} implies that if $\scr X \in \Spc(S)_*$ and $a \in \H^{**}(\scr X, \Z/2)$ then $i^*(\beta(\Sq^I))(i^*(a)) = i^*(\Sq^I(a)) = \Sq^I(i^*(a))$. Consider $\scr X = \bigvee_{n \ge 0} (\R\P^\infty)^n_+$ and $a = ((uv)^{\otimes n})_n$\NB{explain better}. Then the map $\H\Z/2^{**}\H\Z/2_k \to \H\Z/2^{**} i^*(\scr X), P \mapsto P(i^*(a))$ is injective \cite[Lemme 5.1.5]{riou2012ops}. Since $i^*(\beta(\Sq^I))(i^*(a)) = \Sq^I(i^*(a))$ we conclude that $i^*(\beta(\Sq^I)) = \Sq^I$, as claimed. It follows that the map $i^*(\beta)$ is an isomorphism \cite[Theorem 1.1]{hoyois2013motivic}, and hence $\beta$ is an isomorphism.

It remains to show that $\beta \alpha$ is an isomorphism, or in other words that the operations $\{\Sq^I\}_I \in \scr B^{**}$ (indexed on admissible monomials) are linearly independent over $\H\Z/2^{**}$. Let $I = \H\Z/2^{**>0}$. This is an ideal and $\H\Z/2^{**}/I = \Z/2$ by Proposition \ref{prop:HZ-dedekind}(4). Let $\scr D^{**\le w}$ denote the free submodule of $\scr D^{**}$ with basis the monomials of weight $\le w$. Let $\scr X_n = (\A^n \setminus 0/\Sigma_2)^n_+$. We have as before the class $a \in \H\Z/2^{**} \scr X_n$. We claim that for $n$ sufficiently large, the canonical map $\gamma: \scr D^{**\le w} \to \H\Z/2^{**} \scr X_n, P \mapsto P(a)$ is a split injection. This will conclude the proof. To see that $\gamma$ is a split injection, we note that it is a map between free finite rank modules over $\H\Z/2^{**}$. It follow from \cite[Lemme 5.1.6]{riou2012ops} that $\gamma$ is a split injection if and only if $\gamma/I$ is injective. But note that $\gamma/I$ is independent of $S$ (since $\H\Z/2^{**}/I = \Z/2$ is, and our Steenrod operations are natural). Hence in order to show hat $\gamma/I$ is injective it suffices to show that $\gamma$ is split injective if $S$ is the spectrum of a field, which is proved in \cite[Lemme 5.1.5]{riou2012ops}.

(4) We know that $f^* \Sq^I$ can be written as a linear combination of finitely many $\Sq^J$, and that these operations can be distinguished by their effect on $a \in \H\Z/2^{**} \scr X_n$ (for $n$ sufficiently large). By naturality of the power operations, we compute that \[ (f^* \Sq^I)(f^*a) = f^*(\Sq^I a) = \Sq^I(f^*a), \] and the result follows.

(5) For sequences $E = (\epsilon_0, \epsilon_1, \dots), R = (r_1, r_2, \dots)$ of integers, almost all of which are zero, $\epsilon_i \in \{0,1\}$, $r_i \ge 0$, we put \[ \tau^E \xi^r = \prod_i \tau_i^{\epsilon_i} \prod_i \xi_i^{r_i}; \] this is a basis of $\H\Z/2_{**} \H\Z/2$. Write $\rho(E,R)$ for the corresponding dual basis elements. We need to check that $\lra{\Sq^1, \tau^E \xi^R} = 0$ for $\tau^E \xi^R \ne \tau_0$ and $=1$ else. By (4), this property is stable under base change, and so we may assume that $S = Spec(\Z[1/2])$. Since the $\rho(E,R)$ form a basis, we may uniquely write $\Sq^1 = \sum_{E,R} a_{E,R} \rho(E,R)$. Since $\Sq^1$ has weight zero and all non-zero $a_{E,R}, \rho(E,R)$ have weight $\ge 0$, we find that only the terms where both $a_{E,R}$ and $\rho(E,R)$ have weight zero can contribute, i.e. $\Sq^1 = a_0 \id + a_1 \tau_0^\vee$. But we must have $a_0 \in \H\Z/2^{1,0}(\Z[1/2]) = 0$ and $a_1 \in \H\Z/2^{0,0}(\Z[1/2]) = \Z/2$, whence either $\Sq^1 = \tau_0^\vee$ or $\Sq^1 = 0$. Since the latter is clearly not the case, we are done.

(6) $\Sq^1 \tau = \rho$ essentially by construction (and since $\Sq^1$ is the Bockstein). The other vanishing results follow from Proposition \ref{prop:cohomological-power-ops-properties}(6,7).
\end{proof}

\begin{example} \label{ex:composition-cartan}
Let $i \ge 0$ and $x \in H\Z/2^{**}$. It follows from Theorem \ref{thm:HZ-dual-steenrod} that $\Sq^i x$ (the endomorphism of $H\Z/2$ obtained as $\Sq^i$ following multiplication by $x$) can be written as a linear combination $\Sigma_j a_j \Sq^j$ with some coefficients $a_j \in H\Z/2^{**}$. To do so, for simplicity let $i = 2n$ be even. We note that for any $t \in H\Z/2^{**} \scr X$ for any $\scr X \in \Spc(S)_*$ we have (using the Cartan formula from Proposition \ref{prop:cohomological-power-ops-properties}) \[ (\Sq^{2n} x)(t) = \Sq^{2n}(x t) = \Sigma_{j+k=n} \Sq^{2j}(x) \Sq^{2k}(t) + \tau \Sigma_{j+k=n-1} \Sq^{2j+1}(x) \Sq^{2k+1}(t) \] and hence \begin{equation}\label{eq:Sq-x} \Sq^{2n} x = \Sigma_{j+k=n} \Sq^{2j}(x) \Sq^{2k} + \Sigma_{j+k=n-1} \tau \Sq^{2j+1}(x)\Sq^{2k+1} \in H\Z/2^{**}H\Z/2. \end{equation}

In a similar fashion, since $\Sq^1 = \beta$ is a derivation, one finds that \begin{equation}\label{eq:beta-x} \Sq^1 x = x\Sq^1 + \Sq^1(x). \end{equation}
\end{example}

\section{2-excisive functors and suspension} \label{app:excisive-functors}

Recall that a functor $F: \scr C \to \scr D$ is called reduced if $F(*) \wequi *$. If $X \in \scr C$ then we denote by \[ C_n(X): P(\{1, 2, \dots, n\}) \to \scr C \] the strongly coCartesian cube generated by $X$. Here as usual $P(A)$ denotes the power set of $A$. In other words $C_n(X)(\emptyset) = X, C_n(X)(\{i\}) = *$ for all $i$, and the rest is determined by the strongly coCartesian condition.  Similarly we denote by \[ D_n(X_1, \dots, X_n): P(\{1, 2, \dots, n\}) \to \scr C \] the strongly cocartesian cube with $D_n(X_1, \dots, X_n)(\emptyset) = *, D_n(X_1, \dots, X_n)(\{i\}) = X_i$. We put \[ cr_n F: \scr C^n \to \scr D, (X_1, \dots, X_n) \mapsto \tfib(F \circ D_n). \] Here $\tfib$ denotes the total homotopy fiber of a cube.  

The main result of this appendix is the following.
\begin{proposition} \label{prop:fibn-sequence}
Let $\scr C$ be a pointed $\infty$-category with finite coproducts,
and $\scr D$ a pointed $\infty$-category with finite limits.
Let $F: \scr C \to \scr D$ be 2-excisive and reduced. Then there is a canonical
fibration sequence
\[ cr_2 F(\Sigma X, \Sigma X) \to F(X) \to \Omega F(\Sigma X). \]
\end{proposition}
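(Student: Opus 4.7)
The plan is to realize the claimed fibration sequence as the cartesian condition that $2$-excisiveness imposes on $F$ applied to a carefully chosen strongly cocartesian $3$-cube, viewed as a map between its two parallel $2$-face sub-cubes. Concretely, I would let $C = C(X)$ be the strongly cocartesian $3$-cube determined by $C(\emptyset) = X$ and $C(\{i\}) = *$ for $i = 1, 2, 3$. Iterated pushout computations give $C(\{i,j\}) = \Sigma X$ for each pair and $C(\{1,2,3\}) = \Sigma X \vee \Sigma X$ (the latter arising as the pushout of $\Sigma X \leftarrow * \to \Sigma X$). I would then view $C$ as a map of $2$-cubes $C_{\mathrm{bot}} \to C_{\mathrm{top}}$, where $C_{\mathrm{bot}}$ is the $2$-face of subsets not containing $3$ and $C_{\mathrm{top}}$ of those containing $3$. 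Then $C_{\mathrm{bot}}$ is the usual suspension pushout square presenting $\Sigma X$, while $C_{\mathrm{top}}$ is the wedge pushout square
\[
\begin{tikzcd}
* \arrow[r] \arrow[d] & \Sigma X \arrow[d] \\
\Sigma X \arrow[r] & \Sigma X \vee \Sigma X.
\end{tikzcd}
\]

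Next I would compute the total fibers of $F(C_{\mathrm{bot}})$ and $F(C_{\mathrm{top}})$ separately. Using that $F$ is reduced, i.e.\ $F(*) \simeq *$, the total fiber of the square $F(C_{\mathrm{bot}})$ is $\fib(F(X) \to \Omega F(\Sigma X))$. The square $F(C_{\mathrm{top}})$ is, up to reindexing, exactly the defining diagram of $cr_2 F$ evaluated at $(\Sigma X, \Sigma X)$, so its total fiber is $cr_2 F(\Sigma X, \Sigma X)$ on the nose.

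Finally I would invoke the hypothesis that $F$ is $2$-excisive: $F(C)$ is a cartesian $3$-cube. By the standard fact that the total fiber of a $3$-cube, when viewed as a map between two $2$-cubes, agrees with the fiber of the induced map of their total fibers, cartesianness of $F(C)$ is equivalent to the assertion that $\tfib F(C_{\mathrm{bot}}) \to \tfib F(C_{\mathrm{top}})$ is an equivalence. Combining with the previous paragraph yields the desired equivalence $cr_2 F(\Sigma X, \Sigma X) \simeq \fib(F(X) \to \Omega F(\Sigma X))$, which then extends to the fibration sequence in the statement; canonicity is automatic since every step of the construction is functorial in $X$. The only mildly delicate point is the cube bookkeeping — verifying the iterated pushouts computing the values of $C$ and the identification of $\tfib F(C_{\mathrm{top}})$ with $cr_2 F(\Sigma X, \Sigma X)$ — but there is no substantive obstacle beyond this, the whole proof being a one-step application of the $2$-excisive hypothesis.
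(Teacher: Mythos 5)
Your proposal is correct and is essentially the paper's proof: you use the same strongly cocartesian $3$-cube $C_3(X)$, the same decomposition into opposite $2$-faces via the top/bottom face lemma (Corollary \ref{corr:tfib-top-bottom}), and the same identification of the face total fibers with $cr_2F(\Sigma X, \Sigma X)$ and $\fib(F(X)\to\Omega F(\Sigma X))$, concluding by $2$-excisiveness. The bookkeeping of cube vertices and total fibers is the only content, and you have carried it out correctly.
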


\begin{example} \label{ex:2-exc-cr}
Suppose that $\scr C$ is additive and has finite colimits, $\scr D$ has colimits and is stable, and that $F$ preserves geometric realizations. We have for $X, Y \in \scr C$ canonical maps $X \to X \oplus Y \to X$ inducing $F(X \oplus Y) \wequi F(X) \oplus F(Y) \oplus c(X, Y)$.  One checks that $cr_2 F(X, Y) = \Omega^2 c(X, Y)$. Suppose that $c$ is exact in each variable. Then $cr_2 F(\Sigma X, \Sigma Y) = c(X, Y)$. Moreover, $F$ is weakly polynomial of degree $\le 2$ in the sense of \cite[Definition 5.21]{norms} and hence $2$-excisive \cite[Remark 5.22]{norms}. We deduce that there is an exact triangle \[ \Sigma c(X, X) \to \Sigma F X \to F \Sigma X. \] This applies in particular if $\scr C = Spc^{tr}(k)$, $\scr D = \DM^\eff(k, \Z/2)$ and $F = S^2_{tr}$ in the sense of \cite{voevodsky2010motivic}. Then $c(X, Y) = X \otimes Y$ \cite[Proposition 2.37]{voevodsky2010motivic} and hence the above cofiber sequence recovers parts of \cite[Corollary 2.46]{voevodsky2010motivic}.
\end{example}

In the proof of Proposition \ref{prop:fibn-sequence}, we make use of the following standard properties of total fibers, for which we unfortunately could not find a reference.\NB{seriously?}

\begin{lemma} \label{lemm:tfib-technical}
Let $S$ be a finite set, $\scr C$ a pointed $\infty$-category with finite limits and $\phi_S: \scr C \to \Fun(P(S), \scr C)$ the functor sending $X$ to the diagram $D$ with $D(\emptyset) = X$ and $D(U) = 0$ for $U \ne \emptyset$.

Then $\phi_S$ has a right adjoint given by the total fiber.
\end{lemma}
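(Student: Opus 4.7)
The plan is to exhibit a natural equivalence
\[
\Map_{\Fun(P(S), \scr C)}(\phi_S(X), D) \;\simeq\; \Map_{\scr C}(X, \tfib(D)),
\]
functorial in both $X \in \scr C$ and $D \in \Fun(P(S), \scr C)$.

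First I would unpack the left-hand side. A natural transformation $\alpha \colon \phi_S(X) \to D$ has components $\alpha_U \colon \phi_S(X)(U) \to D(U)$, compatible with all inclusions $U \subseteq V$. Since $\phi_S(X)(U) = 0$ is a zero object for every non-empty $U$, and $\Map_{\scr C}(0, Y)$ is contractible for all $Y$, the components $\alpha_U$ for $U \neq \emptyset$ carry no information beyond the constraints they impose on the remaining datum $\alpha_\emptyset \colon X \to D(\emptyset)$. Those constraints, coming from the squares
\[
\begin{tikzcd}
X \ar[r, "\alpha_\emptyset"] \ar[d] & D(\emptyset) \ar[d] \\
0 \ar[r, "\alpha_U"'] & D(U),
\end{tikzcd}
\]
assert that each composite $X \xrightarrow{\alpha_\emptyset} D(\emptyset) \to D(U)$ is equipped with a null-homotopy (as it factors through $0$), and higher cells of the natural transformation encode coherences among these null-homotopies as $U$ varies through $P(S)^{\neq \emptyset}$.

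Second, I would match this with the universal property of the total fiber. Writing $L := \lim_{U \in P(S)^{\neq \emptyset}} D(U)$, by definition $\tfib(D)$ sits in a fiber sequence $\tfib(D) \to D(\emptyset) \to L$, so a map $X \to \tfib(D)$ is the datum of a map $X \to D(\emptyset)$ together with a null-homotopy of the composite $X \to D(\emptyset) \to L$. By the universal property of $L$, such a null-homotopy unpacks to a coherent system of null-homotopies of the composites $X \to D(\emptyset) \to D(U)$ for all $U \neq \emptyset$, exactly matching the data identified in the first step.

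The main obstacle is that the informal matching of coherences must be carried out rigorously at the $\infty$-categorical level. I would handle this by computing both mapping spaces as limits over a common indexing shape. Concretely, adjoin a terminal object $\star$ to $P(S)$ to form a poset $P(S)^+$, and consider the full subcategory $\scr E \subset \Fun(P(S)^+, \scr C)$ of diagrams $\widetilde D$ with $\widetilde D(\star) = 0$. Restriction along $P(S) \hookrightarrow P(S)^+$ is an equivalence onto $\Fun(P(S),\scr C)$, since the extension by $0$ at $\star$ is essentially unique. Under this equivalence, both the mapping space $\Map(\phi_S(X), D)$ and the representing space $\Map(X, \tfib(D))$ are identified with the space of maps $X \to D(\emptyset)$ equipped with the structure of a cone over $\widetilde D|_{P(S)^+ \setminus \{\emptyset\}}$ factoring through $0$, yielding the desired equivalence. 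Naturality in $X$ and $D$, and hence the claim that $\tfib$ is right adjoint to $\phi_S$, then follows from the naturality of each step.
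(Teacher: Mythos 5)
Your first two paragraphs correctly identify the mathematical content: a map $\phi_S(X)\to D$ is a map $\alpha_\emptyset\colon X\to D(\emptyset)$ together with coherent null-homotopies of the composites $X\to D(\emptyset)\to D(U)$ for $U\neq\emptyset$, and a map $X\to\tfib(D)$ is a map $X\to D(\emptyset)$ together with a null-homotopy of the composite into $\lim_{U\neq\emptyset}D(U)$, which decodes to exactly the same coherent family of null-homotopies. This is the heart of the matter and you have it right.

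The gap is in the proposed formalization. Your auxiliary poset $P(S)^+$ adjoins a \emph{terminal} object $\star$ with $\widetilde D(\star)=0$, and then you claim both mapping spaces are identified with cones over $\widetilde D|_{P(S)^+\setminus\{\emptyset\}}$ ``factoring through $0$.'' But because $0$ is a terminal object of $\scr C$, the value $\widetilde D(\star)=0$ is invisible to limits and to cone structures: a cone over $\widetilde D|_{P(S)^+\setminus\{\emptyset\}}$ is canonically the same as a cone over $D|_{P(S)\setminus\{\emptyset\}}$, since the leg to $0$ and the compatibility conditions at $\star$ are all contractible choices. In particular $\lim_{P(S)^+\setminus\{\emptyset\}}\widetilde D\wequi\lim_{U\neq\emptyset}D(U)$, not $\tfib(D)$, and no ``factoring through $0$'' is forced by the cone condition. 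To encode the null-homotopy one would instead need to adjoin an object \emph{parallel} to $\emptyset$ (an additional source mapping to every nonempty $U$ but with no arrow to or from $\emptyset$), so that the limit of the extended diagram becomes the pullback $D(\emptyset)\times_{\lim_{U\neq\emptyset}D(U)}0\wequi\tfib(D)$; even then, identifying $\Map(\phi_S(X),D)$ with cones over this auxiliary diagram requires a further argument. The paper sidesteps all of this: it observes $\phi_S = i_*$ (right Kan extension along the inclusion $i$ of $\emptyset$), shows $\Map(i_*X, j_*j^*D)\wequi 0$ for $j$ the inclusion of $P(S)\setminus\{\emptyset\}$, and deduces from the fiber sequence $i_*\tfib(D)\to D\to j_*j^*D$ that $\Map(i_*X,D)\wequi\Map(i_*X,i_*\tfib(D))\wequi\Map(X,\tfib(D))$ by full faithfulness of $i_*$; the Kan-extension calculus handles the coherences automatically.
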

\begin{proof}
Denote by $j: P(S) \setminus \emptyset \to P(S)$ the canonical map, and by $i: * \to P(S)$ the inclusion at the empty set.
We note that $\phi_S$ is right Kan extension along $i$.
For $X \in \scr C$ we have $j^*i_* X \wequi 0$, and hence $\Map(i_* X, j_* j^* D) = 0$ for all $D \in \Fun(P(S), \scr C)$.
Let $t: \Fun(P(S), \scr C) \to \Fun(P(S), \scr C)$ be the functor $D \mapsto fib(D \to j_*j^* D)$.
We deduce that $\Map(i_* X, tD) \wequi \Map(i_* X, D)$.
But $tD \wequi i_* \tfib(D)$ and $i_*$ is fully faithful, so $\Map(i_*X, D) \wequi \Map(X, \tfib(D))$.
The result follows.\NB{This is slightly sloppy.}
\end{proof}

\begin{corollary} \label{corr:tfib-top-bottom}
Let $S$, $\scr C$ as above. Pick $s_0 \in S$ and write $S' = S \setminus \{s_0\}$. We have a map $i_1: P(S') \to P(S), U \mapsto U \coprod \{s_0\}$, and also the canonical inclusion $i_0: P(S') \to P(S)$. There is a (unique) natural transformation $\epsilon: i_0 \Rightarrow i_1$.

For $D: P(S) \to \scr C$, there is a canonical equivalence \[ \tfib(D) \wequi \fib(\epsilon: \tfib(D \circ i_1) \to \tfib(D \circ i_0)). \]
\end{corollary}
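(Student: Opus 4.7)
The plan is to identify $P(S)$ with the product $P(S') \times [1]$, where $[1] = \{0 \to 1\}$ is the walking arrow, canonically identified with $P(\{s_0\})$. The obvious bijection records both $V \cap S'$ and whether $s_0$ belongs to $V$. Under this identification, the functors $i_0$ and $i_1$ correspond respectively to the inclusions $P(S') \times \{0\} \hookrightarrow P(S') \times [1]$ and $P(S') \times \{1\} \hookrightarrow P(S') \times [1]$, and the natural transformation $\epsilon$ is the one induced by the unique arrow $0 \to 1$ in $[1]$.

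With this identification in place, the corollary reduces to a ``Fubini principle'' for total fibers: for any diagram $D: P(T_1 \sqcup T_2) \to \scr C$, viewing the source as $P(T_1) \times P(T_2)$ yields a canonical equivalence
\[ \tfib_{P(T_1 \sqcup T_2)}(D) \wequi \tfib_{P(T_1)}\bigl( U \mapsto \tfib_{P(T_2)}(D(U,-)) \bigr). \]
I would deduce this from Lemma \ref{lemm:tfib-technical} by observing that the left adjoint $\phi_{T_1 \sqcup T_2}$ factors, up to natural equivalence, as the composite of $\phi_{T_2}$ followed by the pointwise application of $\phi_{T_1}$ to functors $P(T_2) \to \scr C$. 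The right adjoints then compose in the opposite order, yielding the displayed formula.

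Applying this with $T_1 = S'$ and $T_2 = \{s_0\}$, the inner total fiber $\tfib_{[1]}$ is just the ordinary fiber of an arrow, giving
\[ \tfib(D) \wequi \tfib_{P(S')}\bigl( U \mapsto \fib\bigl(D(i_0 U) \to D(i_1 U)\bigr) \bigr), \]
where the displayed arrow is induced by $D\epsilon$. Since $\tfib_{P(S')}$ is a limit (and hence a right adjoint), it commutes with ordinary fibers; pulling $\fib$ outside then identifies the right-hand side with $\fib(\tfib(D \circ i_0) \to \tfib(D \circ i_1))$, with the map induced by $\epsilon$, which is the claimed formula.

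The principal obstacle is making the Fubini-style assertion precise in the $\infty$-categorical setting, since the factorization of $\phi_{T_1 \sqcup T_2}$ as an iterated construction is easy to believe but requires some bookkeeping. The cleanest route is to check that both sides of the desired equivalence represent the same functor via the adjunction of Lemma \ref{lemm:tfib-technical}, used twice.
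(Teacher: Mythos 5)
Your argument is correct and takes essentially the same approach as the paper: both identify $P(S)$ with $P(S') \times P(\{s_0\})$, factor the left adjoint $\phi_S$ of Lemma~\ref{lemm:tfib-technical} through this product, and then pass to right adjoints to obtain the total fiber as an iterated construction. The only organizational difference is that the paper sets up the factorization so that the $\{s_0\}$-direction is taken last on the adjoint side, yielding $\fib$ of a pointwise $\tfib_{P(S')}$ directly, whereas your Fubini statement as written places $P(\{s_0\})$ innermost and therefore requires the extra (harmless) step of commuting $\fib$ past $\tfib_{P(S')}$; note also that the factorization you describe for $\phi_{T_1\sqcup T_2}$ actually establishes the Fubini identity with $T_1$ and $T_2$ interchanged relative to how you stated it, though by symmetry this is immaterial.
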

In other words, the total fiber of a cube can be computed as fiber of the total fibers of two opposite faces.
\begin{proof}
The existence and uniqueness of $\epsilon$ is clear, since the category is a poset.

We have an isomorphism $P(S) \wequi P(S') \times \{\emptyset, \{s_0\}\}, (U, X) \mapsto U \cup X.$ The functor \[ \phi_S: \scr C \to \Fun(P(S), \scr C) \wequi \Fun(\Delta^1, \Fun(P(S'), \scr C)) \] factors as \[ \scr C \xrightarrow{\phi'} \Fun(\Delta^1, \scr C) \xrightarrow{\phi''} \Fun(\Delta^1, \Fun(P(S'), \scr C)), \] where $\phi' = \phi_{\{\emptyset, \{s_0\}\}}$ and $\phi'' = \Fun(\Delta^1, \phi_{S'})$. It follows from Lemma \ref{lemm:tfib-technical} that the right adjoint factors as\NB{We are using something slightly non-trivial here about adjoints and functor categories.} $\tfib \circ \Fun(\Delta^1, \tfib)$, whence the result.
\end{proof}

\begin{proof}[Proof of Proposition \ref{prop:fibn-sequence}.]
Since $F$ is 2-excisive, $F \circ C_3(X)$ is cartesian, whence $\tfib(F \circ C_3(X)) \wequi 0$. Applying Corollary \ref{corr:tfib-top-bottom}, we deduce that the total fibers of the following two diagrams (the top and bottom faces of $F \circ C_3(X)$) are equivalent:
\begin{equation*}
\begin{CD}
* @>>> F \Sigma X \quad@.\quad  F \Sigma X @>>> F(\Sigma X \coprod \Sigma X) \\
@AAA      @AAA          @AAA                 @AAA                  \\
FX @>>> *         @.   *         @>>>        F \Sigma X
\end{CD}
\end{equation*}
By definition, the total fiber on the right is $cr_2 F(\Sigma X, \Sigma X)$ and the total fiber on the left is $\fib(FX \to \Omega F \Sigma X)$. This concludes the proof.
\end{proof}

\begin{remark}
The proof shows that more generally for any $n$-excisive functor $F$, we have \[ cr_n(\Sigma X, \dots, \Sigma X) \wequi \tfib(F \circ C_n(X)). \]
\end{remark}

\end{subappendices}

\bibliographystyle{amsalpha}
\bibliography{powerops}

\end{document}